\def\C{{\mathbf C}}
\def\R{{\mathbf R}}
\def\Z{{\mathbf Z}}
\def\Q{{\mathbf Q}}
\def\A{{\mathbf A}}
\newtheorem{theorem}{Theorem}[subsection]
\newtheorem{lemma}[theorem]{Lemma}
\newtheorem{proposition}[theorem]{Proposition}
\newtheorem{corollary}[theorem]{Corollary}
\newtheorem{claim}[theorem]{Claim}
\theoremstyle{definition}
\newtheorem{definition}[theorem]{Definition}
\theoremstyle{remark}
\newtheorem{remark}[theorem]{Remark}
\newcommand{\mm}[4]{\left(\begin{smallmatrix} #1 & #2\\ #3 & #4\end{smallmatrix}\right)}
\DeclareMathOperator{\tr}{tr}
\DeclareMathOperator{\SO}{SO}
\DeclareMathOperator{\Sp}{Sp}
\DeclareMathOperator{\GSp}{GSp}
\DeclareMathOperator{\GU}{GU}
\DeclareMathOperator{\SU}{SU}
\DeclareMathOperator{\SL}{SL}
\DeclareMathOperator{\GL}{GL}
\def\g{{\mathfrak g}}
\def\h{{\mathfrak h}}
\def\k{{\mathfrak k}}
\def\p{{\mathfrak p}}
\def\a{{\mathfrak a}}
\def\m{{\mathfrak m}}
\def\so{{\mathfrak {so}}}
\def\sl{{\mathfrak {sl}}}
\def\sp{{\mathfrak {sp}}}
\def\qch{{\omega}}
\def\Wh{{\mathcal W}}
\def\Vm{{\mathbb{V}}}
\begin{document}
\title{The Fourier expansion of modular forms on quaternionic exceptional groups}
\author{Aaron Pollack}
\address{Department of Mathematics\\ Institute for Advanced Study\\ Princeton, NJ USA}
\email{aaronjp@math.ias.edu}
\address{Department of Mathematics\\ Duke University\\ Durham, NC USA}
\email{apollack@math.duke.edu}

\begin{abstract} Suppose that $G$ is a simple adjoint reductive group over $\Q$, with an exceptional Dynkin type, and with $G(\R)$ quaternionic (in the sense of Gross-Wallach).  Then there is a notion of modular forms for $G$, anchored on the so-called quaternionic discrete series representations of $G(\R)$.  The purpose of this paper is to give an explicit form of the Fourier expansion of modular forms on $G$, along the unipotent radical $N$ of the Heisenberg parabolic $P = MN$ of $G$.  
\end{abstract}
\maketitle

\setcounter{tocdepth}{1}
\tableofcontents
\section{Introduction} The purpose of this paper is to understand explicitly the Fourier expansion of modular forms on quaternionic exceptional groups, in the sense of \cite{ganGrossSavin}.  

\subsection{Modular forms on exceptional groups} If the symmetric space $G(\R)/K$ associated to a reductive $\Q$-group $G$ has Hermitian tube structure, then $G$ has a good theory of \emph{modular forms}.  On these groups, one can consider the automorphic forms whose infinite component lies in the holomorphic discrete series.  For example, when $G = \GSp_{2n}$, one has the Siegel modular forms.  These modular forms possess a refined Fourier expansion.  Namely, if $Z$ denotes the complex parameter in the Siegel upper half-space $\mathcal{H}_n = \{Z \in M_n(\C): Z = Z^{t}, Im(Z) > 0\}$ and $f(Z)$ is a Siegel modular form, then $f(Z) = \sum_{T \geq 0}{a(T) e^{2\pi i \tr(TZ)}}$ where the sum is over symmetric $n\times n$ matrices $T$ in $M_n(\Q)$ which are positive semi-definite and the $a(T)$ are complex numbers.

When the symmetric space of a reductive $\Q$-group $G$ does not have Hermitian structure, there is no \emph{a priori} reason to believe that any parallel theory for $G$ might exist, in terms of special automorphic forms and their Fourier expansions.  For example, no real form of the Dynkin types $G_2, F_4$ and $E_8$ has a symmetric space with Hermitian structure, so an obvious theory of modular forms is lacking if $G$ is of this type.  However, for certain groups $G$, Gross and Wallach \cite{grossWallach2} have singled out a special class of \emph{quaternionic discrete series} representations that can take the place of the holomorphic discrete series representations above.  More precisely, if $G$ is a simple reductive group over $\R$, then $G(\R)$ possesses quaternionic discrete series \cite{grossWallach2} if $G$ isogenous to one of $\Sp(n,1;\mathbf{H})$, $\SU(n,2)$, $\SO(n,4)$, the split exceptional group $G_2$, or a form of the exceptional groups $F_4, E_6, E_7, E_8$ with real rank four.  Here $\mathbf{H}$ is Hamilton's quaternions.

Throughout the paper, $G$ will be a reductive adjoint group, over $\Q$ or over $\R$, with $G(\R)$ isogenous to $\SO(n,4)$ with $n \geq 3$ or the quaternionic exceptional groups $G_2, F_4, E_6, E_7, E_8$.  The symmetric spaces associated to these groups do not have Hermitian structure.  (The forms of $E_6$ and $E_7$ with quaternionic discrete series are not the real forms that have a Hermitian symmetric space.) Nevertheless, work of Gross-Wallach \cite{grossWallach1,grossWallach2} on quaternionic discrete series representations and their continuations, Wallach \cite{wallach} on generalized Whittaker vectors, Gan \cite{ganSW} on Siegel-Weil formulas, and Gan-Gross-Savin \cite{ganGrossSavin} on modular forms on $G_2$, all suggest that the group $G$ might have a good theory of modular forms, anchored upon the quaternionic discrete series representations.  Furthermore, these works suggest that said modular forms might have a refined Fourier expansion, similar to the Siegel modular forms on $\GSp_{2n}$.

The purpose of this paper is to give the explicit Fourier expansion for modular forms on $G$.  To define the modular forms on $G$ and discuss their Fourier expansions, we first recall some features of these groups.  Denote by $K$ a maximal compact subgroup of $G(\R)^{0}$, the connected component of the identity of $G(\R)$.  Then $K \simeq (\SU(2) \times L^0)/\mu_2$ for a certain group $L^0$, with $\mu_2 = \{\pm 1\}$ embedded diagonally.  If $2n \geq \frac{1}{2} \dim G(\R)/K$, then there is a quaternionic discrete series representation $\pi_n$, whose minimal $K$-type is of the form $Sym^{2n}(V_2) \boxtimes \mathbf{1} = \Vm_n$ \cite{grossWallach2}, as a representation of $\SU(2) \times L^0$.  Here we write $V_2$ for the defining two-dimensional representation of $\SU(2)$ and $\Vm_n = Sym^{2n}(V_2) \boxtimes \mathbf{1}$.

Assume $G$ is an adjoint reductive $\Q$-group of type $G_2, F_4, E_6, E_7$ or $E_8$, with $G(\R)$ quaternionic as above.  As in \cite{ganGrossSavin} and \cite{weissman}, one can define a modular form of weight $n$ for $G$ to be a homomorphism $\varphi \in \mathrm{Hom}_{G(\R)}(\pi_n,\mathcal{A}(G(\Q)\backslash G(\A)))$, from $\pi_n$ to the space of automorphic forms on $G(\A)$, where $\pi_n$ is the quaternionic discrete series representation $G(\R)$ just discussed.  We make a slightly broader definition.  To motivate it, suppose $\varphi$ is as above.  Restricting $\varphi$ to the minimal $K$-type $\Vm_n$ in $\pi_n$, one obtains a function $F_{\varphi}: G(\A) \rightarrow \Vm_n^\vee$ defined by $F_{\varphi}(g)(v) = \varphi(v)(g)$. This function satisfies $F_{\varphi}(gk) = k^{-1} \cdot F_{\varphi}(g)$ for $g \in G(\A)$ and $k \in K$, the maximal compact subgroup of $G(\R)$.  Furthermore, by virtue of it being defined through the minimal $K$-type $\Vm_n$ of $\pi_n$, it is in the kernel of certain first-order differential operator $\mathcal{D}_n$, commonly referred to as the Schmid operator.

More precisely, suppose $\Theta$ is a Cartan involution on the complexified Lie algebra $\g$ of $G(\R)$, and $\g = \k \oplus \p$ is the decomposition into $\Theta = 1$ and $\Theta = -1$ parts.  Define $\widetilde{D}F_{\varphi} = \sum_{i}{X_iF \otimes X_i^*}$, where $X_i$ is a basis of $\p$, $X_i^*$ is the dual basis of the dual space $\p^\vee$, and $X_iF$ denotes the right-regular action.  This definition is of course independent of the choice of basis $X_i$ of $\p$, and defines a function $\widetilde{D}F_{\varphi}: G(\A) \rightarrow \Vm_n^\vee \otimes \p^\vee$.  As a representation of $\SU(2) \times L^0$, $\p \simeq V_2 \boxtimes W$ for a certain symplectic representation $W$ of $L^0$.  Consequently, there is a $K$-equivariant surjection $pr_{-}: \Vm_n^\vee \otimes \p^\vee \rightarrow Sym^{2n-1}(V_2)^\vee \boxtimes W$.  Set $\mathcal{D}_n = pr_{-} \circ \widetilde{D}$.  This is the so-called Schmid operator for the representation $\pi_n$.  

Thus $\mathcal{D}_n$ is a first-order differential operator 
\[\mathcal{D}_n: (C^\infty(G(\R)) \otimes \Vm_n^\vee)^{K} \rightarrow (C^\infty(G(\R)) \otimes Sym^{2n-1}(V_2)^\vee \boxtimes W)^{K}.\]
One has $\mathcal{D}_n F_{\varphi} = 0$.  This annihilation is simply an expression of the fact that $\pi_n$ contains the $K$-type $Sym^{2n}(V_2) \boxtimes \mathbf{1}$ but not the $K$-type $Sym^{2n-1}(V_2) \boxtimes W$.  If the group $G$ had a Hermitian symmetric space, and $\pi$ were a holomorphic discrete series, then being in the kernel of the analogous Schmid operator $\mathcal{D}$ would simply amount to being a holomorphic function. 

With these preliminaries, we make the following definition.
\begin{definition}\label{def:MFn} Suppose $n \geq 1$, and $F: G(\Q)\backslash G(\A) \rightarrow \Vm_n^\vee$ is a smooth function of moderate growth, finite under the center of the universal enveloping algebra of $\g$.  We say that $F$ is a \emph{modular form} on $G$ of weight $n$ if
\begin{enumerate}
\item $F(gk) = k^{-1} \cdot F(g)$ for all $g \in G(\A)$ and $k \in K$;
\item $\mathcal{D}_n F(g) = 0$.
\end{enumerate} \end{definition}

Thus, if $\pi_n$ is the discrete series representation above, and $F = F_{\varphi}$ for $\varphi: \pi_n \rightarrow \mathcal{A}(G(\A))$, then $F$ is a modular form of weight $n$.  If $\pi$ is not a discrete series representation, but $\pi = \pi_n'$ from \cite[Proposition 5.7]{grossWallach2} with minimal $K$-type a non-trivial even symmetric power of $V_2$, and $\varphi: \pi \rightarrow \mathcal{A}(G(\A))$ is a $G(\R)$-equivariant homomorphism, then $F_{\varphi}$ is again a modular form of weight $n$.  In particular, Definition \ref{def:MFn} encompasses the minimal representation on quaternionic $E_6,E_7, E_8$, which give rise to modular forms of weight $1, 2$ and $4$, respectively \cite[Proposition 8.6]{grossWallach2}.

The groups $G$ all possess a Heisenberg parabolic subgroup $P = MN$, with Levi subgroup $M$ and unipotent radical $N$.  Our main result is an explicit description of the Fourier expansion of a modular form of weight $n$ for $G$ along the abelianization of $N$.  More precisely, suppose that $\chi: N(\R) \rightarrow \C^\times$ is a character and $\Wh^\chi: G(\R) \rightarrow \Vm_n^\vee$ is a smooth function of moderate growth satisfying
\begin{enumerate}
\item $\Wh^\chi(gk) = k^{-1} \Wh^\chi(g)$ for all $g \in G(\R)$ and $k \in K$;
\item $\Wh^\chi(ng) = \chi(n) \Wh^\chi(g)$ for all $g \in G(\R)$ and $n \in N(\R)$;
\item $\mathcal{D}_n \Wh^\chi(g) = 0$.\end{enumerate}
In Theorem \ref{intro:mainThm} we give an explicit description of all functions $\Wh^\chi$.  Because these functions control the Fourier expansions of modular forms of weight $n$ for $G$, one immediately obtains the explicit form of their Fourier expansions as a corollary.

Functions of the form $\Wh^\chi$ are known as generalized Whittaker functions.  It is a consequence of the main result of Wallach from \cite{wallach} that when $\Wh^\chi$ comes from the discrete series representation $\pi_n$, and when $\chi$ is generic\footnote{The space of characters of $N$ is a prehogeneous vector space for the action of the Levi subgroup $M$.  A character is generic if it is in the unique Zariski open orbit of $M$.}, there is at most a one-dimensional space of such $\Wh^\chi$.  As a consequence of our explicit description, we recover this multiplicity at most one result, and extend it to the case of nontrivial but non-generic $\chi$; see Corollary \ref{cor:mult1}.

Generalized Whittaker functions as above have been studied on other groups.  When $G = \SU(2,1)$ a formula for the generalized Whittaker function is given in \cite{kosekiOda}.  When $G= \SU(2,2)$, these generalized Whittaker functions were investigated\footnote{They also appear in a forthcoming work of the author and Shrenik Shah on Beilinson's conjecture.} by Yamashita \cite{yamashita2}.  The works \cite{gon, miyazaki, moriyama} study generalized Whittaker functions, although not for pairs $(G,\pi)$ of quaternionic type.  Of course, there has been much work (e.g., \cite{shalika,stade,taniguchi1}) on understanding Whittaker functions associated to non-degenerate characters of a maximal unipotent subgroup of a group $G$, for principal series and discrete series.  In Theorem \ref{intro:mainThm} below, we give a complete, simple, and explicit description of a large class of both non-degenerate and degenerate generalized Whittaker functions.  The closest analogue of Theorem \ref{intro:mainThm} is thus the classical description of the generalized Whittaker functions for the holomorphic discrete series on groups with Hermitian tube structure, along the unipotent radical of the Siegel parabolic.

\subsection{The main results} We now setup the main results more precisely.  Suppose that $J$ is a cubic norm structure over $\R$, for which the trace pairing on $J$ is positive definite.  Then one can construct a certain simple Lie algebra\footnote{That one should consider the exceptional Lie algebras as morally being ``$\g_2(J)$'' we have learned from \cite[section 10]{ganSavin} and \cite{rumelhart}.} $\g(J)$, and its associated adjoint group $G_J = \mathrm{Aut}(\g(J))^{0}$.  The quaternionic adjoint groups of type $B_{\ell}$ with $\ell \geq 3$, $D_{\ell}$ with $\ell \geq 4$, $G_2, F_4, E_6, E_7, E_8$ are equal to $G_J$ with varying $J$.  More exactly, the correspondence between $J$ and these groups is as follows.  Denote by $H_3(C)$ the Hermitian $3 \times 3$ matrices over the composition algebra $C$.  Then:
\begin{enumerate}
\item $J = \R$, $G_J = G_2$
\item $J = H_3(\R)$, $G_J = F_4$
\item $J = H_3(\C)$, $G_J = E_6$
\item $J = H_3(\mathbf{H})$, $G_J = E_7$
\item $J = H_3(\Theta)$, with $\Theta$ the nonsplit octonions, $G_J = E_8$
\item $J = \R \times V$, with $V$ a quadratic space of signature $(1,N-3)$, $G_J = SO(N,4)$ if $N$ odd and $G_J = PSO(N,4)$ if $N$ is even.
\end{enumerate}

In these cases, the Levi subgroup $M$ of the Heisenberg parabolic $P$ is a similitude group $H_J$, with Hermitian symmetric space $\mathcal{H}_J = \{Z = X + i Y: X,Y \in J, Y >0\}$.  In the cases above, the group $H_J$ is (in order)
\begin{enumerate}
\item $\GL_2$
\item $\GSp_6$
\item closely related to $\GU(3,3)$
\item closely related to $\GU(3,3;\mathbf{H})$
\item $\mathrm{GE}_7$
\item $(\GL_2 \times \SO(N-2,2))/\mu$ where $\mu = \{1\}$ if $N$ odd and $\mu_2$ if $N$ even.\end{enumerate}

From now on we denote $G_J^{0} = G_J(\R)^{0}$.  In case $G_J$ is of type $G_2, F_4, E_6, E_7$ or $E_8$, $G_J(\R)$ is already connected; see \cite[Table II]{thang}.  The maximal compact subgroup of $G_J^{0}$ is of the form $(\SU(2) \times L_0(J))/\mu_2$ \cite{grossWallach2}, with $\mu_2$ embedded diagonally.  Here $L_0(J)$ is a compact form of the subgroup of $H_J$ with similitude equal to $1$.  

From now on, $n \geq 1$, $\chi: N(\R) \rightarrow \C^\times$ is a character, and $\Wh^\chi: G_J^0 \rightarrow \Vm_n^\vee$ is a smooth function of moderate growth, satisfying $\Wh^\chi(gk) = k^{-1} \cdot \Wh^\chi(g)$, $\Wh^\chi(ng) = \chi(n) \Wh^\chi(g)$, and $\mathcal{D}_n \Wh^\chi(g) =0$.  The main result Theorem \ref{intro:mainThm} is a formula for $\Wh^\chi(m)$ for $m$ in the Levi $H_J(\R)$. The formula is in terms of a cubic polynomial $p_\chi$ on $\mathcal{H}_J$ associated to $\chi$, the similitude $\nu: H_J(\R)\rightarrow \R^\times$, the parameter $Z \in \mathcal{H}_J$, and $K$-Bessel functions.

To setup this formula, we now describe $N^{ab} = N/[N,N]$.  In terms of $J$, the space $N^{ab} = \R \oplus J \oplus J \oplus \R =: W_J$, is Freudenthal's description of the defining representation of $H_J$.  This space has a symplectic form $\langle \;, \; \rangle$, which $H_J$ preserves up to similitude.  One writes $(a,b,c,d)$ for a typical element of $W_J$, so that $a,d \in \R$ and $b,c \in J$.  Define a character on $N(\R)$ via
\[\chi( \exp((u_1,u_2,u_3,u_4))[N,N]) = e^{ -i \langle (a,b,c,d), (u_1,u_2,u_3,u_4)\rangle}\]
for some nonzero $(a,b,c,d)  \in W_J$.  Here $(u_1,u_2,u_3,u_4) \in W_J$, and we are identifying the Lie algebra of $N/[N,N]$ with $W_J$.  Associated to the character $\chi$, define the cubic polynomial $p_\chi: \mathcal{H}_J \rightarrow \C$ via $p_\chi(Z) = a N(Z) + (b,Z^\#) + (c,Z) + d$.  Here $N$ is the cubic norm on $J$, and the other terms are quadratic, linear, and constant in $Z$.  See section \ref{sec:CNS} for the unexplained notation.

We can now state the formula for $\Wh^\chi(m)$. We have
\[\Wh^{\chi} = \sum_{-n \leq v \leq n}{\Wh^{\chi}_v \frac{x_{\ell}^{n+v}y_{\ell}^{n-v}}{(n+v)! (n-v)!}}\]
for functions $\Wh^{\chi}_v: G_J^{0} \rightarrow \C$ and a basis of weight vectors $x,y$ of $V_2(\C)$, the defining two-dimensional representation of $\SU(2)$.  There is a factor of automorphy $j: H_J(\R) \times \mathcal{H}_J \rightarrow \C;$ see section \ref{sec:CNS} for the definition of $j$. If $H_J$ is a classical group, and $g = \mm{A}{B}{C}{D}$, then (with appropriate normalizations) $j(g,Z) = \det(CZ +D) \nu(g)^{-1}$.  Finally, denote by $H_J(\R)^{\pm} = H_J(\R)^{0} \rtimes \langle w_0 \rangle$ for a certain $w_0 \in H_J(\R)$ with $\nu(w_0) = -1$ and $w_0^2 =1$.  (See section \ref{sec:final} for the definition of $w_0$.)  If $G_J$ has an exceptional Dynkin type, then $H_J(\R)^{\pm} = H_J(\R)$.  With this preparation, we have the following result.

\begin{theorem}\label{intro:mainThm} Let the notation be as above, so that the generalized Whittaker function $\Wh^\chi$ is of moderate growth and annihilated by $\mathcal{D}_n$.
\begin{enumerate}
\item Assume that the character $\chi$ is nontrivial.  
\begin{enumerate} 
\item If there exists $Z \in \mathcal{H}_J$ so that $p_\chi(Z) = 0$, then $\Wh^\chi(g) = 0$.
\item Suppose conversely that $p_\chi(Z)$ is never $0$ on $\mathcal{H}_J$.  Then $\Wh^\chi_v$ is a constant multiple (with constant of proportionality independent of $v$) of the function
\[\left(\frac{|j(g,i) p_\chi(Z)|}{j(g,i) p_\chi(Z)}\right)^{v} \nu(g)^{n}|\nu(g)| K_v(|p_\chi(Z)j(g,i)|) \]
for $g \in H_J(\R)^{\pm}$ and $Z = g \cdot i$ in $\mathcal{H}_J^{\pm}$. \end{enumerate}
\item Suppose that the character $\chi$ is trivial, so that $\Wh^\chi(n g) = \Wh^\chi(g)$ for all $n \in N(\R)$.  Then for $g \in H_J(\R)^{\pm}$,
\begin{equation}\label{eqn:MTconst}\Wh^{\chi}(g) = \nu(g)^{n}|\nu(g)|\left(\Phi(g) \frac{x^{2n}}{(2n)!} + \beta \frac{x^n y^n}{n! n!} + \Phi'(g) \frac{y^{2n}}{(2n)!}\right)\end{equation}
where $\beta \in \C$ is a constant, $\Phi'(g) = \Phi(gw_0)$, and $\Phi$ is associated to a holomorphic function of weight $n$ on $H_J$.
\end{enumerate}
\end{theorem}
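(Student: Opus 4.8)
The plan is to expand $\Wh^\chi$ in the weight basis of $\Vm_n^\vee = \mathrm{Sym}^{2n}(V_2)^\vee$ and to convert the three conditions into an explicit first-order system for the scalar components $\Wh^\chi_v$, which I then solve. First I would use the decomposition $G_J^0 = N(\R)H_J(\R)^0 K$ coming from the Heisenberg parabolic $P = MN$: together with the $N(\R)$-equivariance (2) and the $K$-equivariance (1), this shows that $\Wh^\chi$ is completely determined by its restriction to the Levi $H_J(\R)^{\pm}$, so it suffices to compute $\Wh^\chi(m)$ there. Writing $\Wh^\chi = \sum_{-n \le v \le n} \Wh^\chi_v \frac{x^{n+v}y^{n-v}}{(n+v)!(n-v)!}$ turns the Schmid equation (3) into a coupled system for the functions $\Wh^\chi_v$ on $H_J(\R)^{\pm}$.

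The technical heart is to make the Schmid operator $\mathcal{D}_n = pr_{-}\circ\widetilde{D}$ explicit on $N$-equivariant functions. Using the isomorphism $\p \simeq V_2 \boxtimes W$ of $\SU(2)\times L_0(J)$-representations and the five-step grading $\g = \g_{-2}\oplus\g_{-1}\oplus\g_0\oplus\g_1\oplus\g_2$ attached to $P$, I would write the right-regular action of a basis of $\p$ in terms of the grading. Two features govern the computation: differentiating in the $\mathfrak{n}$-direction multiplies an $N$-equivariant function by the character datum $\langle\,\cdot\,,(a,b,c,d)\rangle$, while differentiating in the $\bar{\mathfrak{n}}$-direction is carried back to the Levi by the group law and, after evaluating at $Z = m\cdot i$, produces the remaining terms of the cubic $p_\chi(Z) = aN(Z) + (b,Z^{\#}) + (c,Z) + d$. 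The outcome is that $\mathcal{D}_n\Wh^\chi = 0$, restricted to $H_J(\R)^{\pm}$, is an overdetermined first-order system relating $\Wh^\chi_v$ to $\Wh^\chi_{v\pm1}$ with all coefficients built from $p_\chi(Z)$ and the automorphy factor $j(m,i)$.

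I would then solve this system. Setting $t = |p_\chi(Z)j(m,i)|$, the radial part of the system is exactly the recurrence satisfied by the modified Bessel functions, $K_{v-1}(t)+K_{v+1}(t) = -2K_v'(t)$ and $K_{v+1}(t)-K_{v-1}(t) = \tfrac{2v}{t}K_v(t)$; imposing moderate growth discards the exponentially growing (first-kind) solution $I_v$ and determines each $\Wh^\chi_v$ up to a single constant independent of $v$, which gives the asserted formula together with the phase $\big(|j(m,i)p_\chi(Z)|/(j(m,i)p_\chi(Z))\big)^v$ coming from the complex argument. For part (1a) the same system is valid on the locus $\{p_\chi\neq 0\}$; if $p_\chi(Z_0)=0$ for some $Z_0\in\mathcal{H}_J$, then $t\to 0$ at a point of $H_J(\R)^0$ mapping to $Z_0$, where every $K_v$ is singular, so the global smoothness of $\Wh^\chi$ forces the proportionality constant, hence $\Wh^\chi$, to vanish. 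In the trivial-character case (2) the source term $p_\chi$ disappears and the system becomes homogeneous: the recurrences now force the intermediate weight components to vanish, the extremal components $\Wh^\chi_{\pm n}$ satisfy the holomorphicity (Cauchy--Riemann) equations for the Hermitian structure on $\mathcal{H}_J$ and are thus governed by a holomorphic function $\Phi$ of weight $n$ with $\Phi'(m)=\Phi(mw_0)$ its conjugate partner, and a single undetermined constant $\beta$ survives at weight $0$.

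The main obstacle, I expect, is the explicit computation of $\mathcal{D}_n$ in these coordinates and the verification that the $\bar{\mathfrak{n}}$-derivatives assemble precisely into the cubic $p_\chi$ and the factor $j(m,i)$, with the correct powers of $\nu(m)$. This requires careful bookkeeping with the structure constants of $\g(J)$ in the Freudenthal/cubic-norm model and with the decomposition $\p\simeq V_2\boxtimes W$; once the source term is pinned down, recognizing the Bessel recurrence and using moderate growth to select $K_v$ is comparatively routine. A secondary point needing care is the passage between the two components of $H_J(\R)^{\pm}$ via $w_0$, which produces the factor $|\nu(m)|$ and relates $\Phi$ to $\Phi'$.
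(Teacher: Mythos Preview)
Your approach is essentially the paper's. One point deserves sharpening: the equations in the similitude variable $w$ (your ``radial part'') only determine the $w$-dependence of $\Wh^\chi_v$ as $w^{2n+2}Y_v(m)K_v(|\langle\omega,\widetilde Z\rangle|)$ and force the ratios $Y_{v+1}/Y_v$ to be the phase factor; they leave $Y_0$ as an \emph{a priori} arbitrary function of $m\in H_J(\R)^{0,1}$. To pin $Y_0$ down to a genuine constant you must also invoke the tangential Schmid equations coming from the $D_{Z(E)}$-operators, and the paper devotes a separate computation (its formula for $D_{Z(E)}(u)$) to showing that these annihilate $Y_0$. Apart from this, your outline matches the paper's: the explicit Iwasawa decomposition of $\p$ (obtained there via an explicit Cayley transform) is precisely the technical input you flag as the main obstacle; the vanishing argument in~(1a) via the singularity of $K_v$ at $t\to 0$ is the same; and your trivial-character analysis agrees with the paper's, which subtracts the two $w\partial_w$ equations to get $2k\phi_k=0$ for $-n<k<n$, leaving only $\phi_{\pm n}$ and $\phi_0$.
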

In Theorem \ref{intro:mainThm}, that $\Phi$ is associated to a holomorphic function of weight $n$ on $H_J$ means that $j(g,i)^n \Phi(g)$ descends to the disconnected Hermitian symmetric space $\mathcal{H}_J^{\pm}$, is holomorphic on $\mathcal{H}_J^{+}$, and is antiholomorphic on $\mathcal{H}_J^{-}$ (see Proposition \ref{prop:const}.) 

The proof of Theorem \ref{intro:mainThm} is based upon making very explicit the expression $\mathcal{D}_n\Wh^\chi(g) = 0$.  To prove the formula, we write down and solve these differential equations.  Put another way, we write down in coordinates the differential equations satisfied by modular forms on $G_J^0/K$ (this is Theorem \ref{thm:Schmid2}), and use these equations to obtain the explicit form of the Fourier expansions. To make useful the expression $\mathcal{D}_n\Wh^\chi = 0$, we require completely explicit--but manageable--forms of the Cartan and Iwasawa decompositions of the quaternionic Lie algebras $\g(J)$, in \emph{well-chosen coordinates}.  We derive these decompositions and coordinates from an explicit, exceptional ``Cayley tranform'', see section \ref{sec:Cayley}.  While the analogues of these technical results are easy and well-known in the case of classical groups, finding good coordinates and making explicit computations is not as easy for the exceptional groups, like quaternionic $E_8$.  We expect that this explicit Cayley transform and its corollaries will be useful in other endeavors on the quaternionic exceptional groups. 

\subsubsection{Multiplicity one} One immediate corollary of Theorem \ref{intro:mainThm} concerns multiplicity one for the $\chi$-equivariant functionals on the representations $\pi_n$.
\begin{corollary}\label{cor:mult1} Suppose that the character $\chi$ is nontrivial and $\pi_n$ is a discrete series representation.  Then $\dim \mathrm{Hom}_{N(\R)}(\pi_n,\chi) \leq 1$.  That is, there is at most a one-dimensional space of $\chi$-equivariant functionals on $\pi_n$ of moderate growth.  Moreover, if $p_{\chi}$ is the cubic polynomial on $J \otimes \C$ associated to $\chi$, and $p_{\chi}(Z) = 0$ for some $Z$ in the Hermitian symmetric space $\mathcal{H}_J$, then $\mathrm{Hom}_{N(\R)}(\pi_n,\chi) = 0$.\end{corollary}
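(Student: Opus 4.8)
The plan is to deduce the corollary directly from Theorem \ref{intro:mainThm} by translating the hypothesis into the setting of that theorem and then counting dimensions. First I would observe that any element $\varphi \in \mathrm{Hom}_{N(\R)}(\pi_n,\chi)$ of moderate growth gives rise, by restriction to the minimal $K$-type $\Vm_n$ of $\pi_n$, to a generalized Whittaker function $\Wh^\chi \colon G_J^0 \to \Vm_n^\vee$ in exactly the sense of the hypotheses of Theorem \ref{intro:mainThm}: the $K$-equivariance (1) follows from the fact that $\Vm_n$ is a $K$-representation, the $N(\R)$-equivariance (2) is the statement $\varphi \in \mathrm{Hom}_{N(\R)}(\pi_n,\chi)$, and the Schmid annihilation (3) holds because $\pi_n$ contains the $K$-type $\Vm_n = Sym^{2n}(V_2)\boxtimes\mathbf{1}$ but not $Sym^{2n-1}(V_2)\boxtimes W$, as explained in the discussion preceding Definition \ref{def:MFn}. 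Moderate growth of $\Wh^\chi$ is inherited from that of the automorphic-type functional $\varphi$.

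Next I would argue that this assignment $\varphi \mapsto \Wh^\chi$ is injective. This is the key representation-theoretic input: since $\pi_n$ is irreducible and $\Vm_n$ is its minimal $K$-type, the restriction of an $N(\R)$-equivariant functional to $\Vm_n$ determines the functional. More precisely, $\pi_n$ is generated as a $(\g,K)$-module by $\Vm_n$, so the value of $\varphi$ on $\Vm_n$ together with the $\g$-action determines $\varphi$ on all of $\pi_n$; hence if $\Wh^\chi = 0$ then $\varphi = 0$. Given this injectivity, the corollary reduces to bounding the dimension of the space of functions $\Wh^\chi$ produced by Theorem \ref{intro:mainThm}.

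With that reduction in hand, the conclusion is immediate from the two cases of Theorem \ref{intro:mainThm}(1). When $\chi$ is nontrivial and $p_\chi(Z) = 0$ for some $Z \in \mathcal{H}_J$, part (1a) forces $\Wh^\chi \equiv 0$, so $\mathrm{Hom}_{N(\R)}(\pi_n,\chi) = 0$. When $\chi$ is nontrivial and $p_\chi$ is nowhere zero on $\mathcal{H}_J$, part (1b) shows that every component $\Wh^\chi_v$ is a \emph{fixed} function times a single scalar that is independent of $v$; thus the whole vector-valued function $\Wh^\chi$ is determined up to one overall constant of proportionality. Hence the space of such $\Wh^\chi$ is at most one-dimensional, and therefore $\dim\mathrm{Hom}_{N(\R)}(\pi_n,\chi) \leq 1$.

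The main obstacle is the injectivity step: one must be careful that the restriction-to-minimal-$K$-type map is genuinely injective on the space of \emph{moderate-growth} functionals, rather than merely on abstract $(\g,K)$-module homomorphisms. This requires knowing that $\pi_n$ is irreducible as a $(\g,K)$-module (so that $\Vm_n$ generates it) and that the analytic moderate-growth condition does not enlarge the space of functionals beyond what the algebraic structure sees. For $\pi_n$ a discrete series representation both facts are standard, which is exactly why the corollary is stated under the hypothesis that $\pi_n$ is a discrete series; the continuations $\pi_n'$ are excluded precisely because irreducibility (and hence the clean multiplicity-one bound) can fail there.
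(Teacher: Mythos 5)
Your proposal is correct and follows essentially the same route as the paper, which treats the corollary as an immediate consequence of Theorem \ref{intro:mainThm} (via Proposition \ref{prop:SchmidUnique}): restrict the functional to the minimal $K$-type to get a generalized Whittaker function, then read off the dimension bound from parts (1a) and (1b). The injectivity of this restriction, which the paper leaves implicit, is exactly the standard fact you identify — that the discrete series $\pi_n$ is generated by $\Vm_n$ as a $(\g,K)$-module, so a moderate-growth functional vanishing on all $G$-translates of $\Vm_n$ vanishes identically.
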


When the character $\chi$ is \emph{generic}, meaning that $\chi$ lives in the unique open orbit of $H_J(\C)$ on $W_J(\C)$, then the condition $p_{\chi} \neq 0$ on $\mathcal{H}_J$ can be related to Wallach's admissibility condition \cite{wallach}, and Corollary \ref{cor:mult1} is due to Wallach.  It is perhaps surprising that the multiplicity one result of \emph{loc.  cit.} continues to hold even when $\chi$ is nontrivial but not generic.  The same phenomenon occurs when $\pi_n$ is replaced by a holomorphic discrete series representation on a group with a Hermitian tube structure, and the characters are along the unipotent radical of the Siegel parabolic.

\subsubsection{The Fourier expansion} The main consequence of Theorem \ref{intro:mainThm} is the Fourier expansion of modular forms of weight $n$ on quaternionic exceptional groups.  Suppose $G$ is an adjoint reductive $\Q$-group of type $G_2, F_4, E_6, E_7$ or $E_8$, with $G(\R)$ quaternionic, and suppose that $G$ contains a rational Heisenberg parabolic $P$.  Suppose moreover that $F$ is a modular form of weight $n \geq 1$ for $G$.  Denote by $N_0 = [N,N]$ the one-dimensional center of the unipotent radical of the Heisenberg parabolic and set
\[F^0(g) = \int_{N_0(\Q)\backslash N_0(\A)}{F(ng)\,dn}.\]

If $w = (a,b,c,d) \in W_J$, set $p_{w}(Z) = aN(Z) + (b,Z^\#) + (c,Z) +d$, and write $p_w \geq 0$ if $p_w(Z)$ is never $0$ on $\mathcal{H}_J$.  Define
\[\Wh^w_v(g) = \left(\frac{|j(g,i) p_w(Z)|}{j(g,i) p_w(Z)}\right)^{v} \nu(g)^{n}|\nu(g)| K_v(|p_w(Z)j(g,i)|) \]
for $Z = g \cdot i$ in $\mathcal{H}_J^{\pm}$ and set $\Wh^w(g) = \sum_{-n \leq v \leq n}{\Wh^w(g) \frac{x^{n+v}y^{n-v}}{(n+v)!(n-v)!}}$.
\begin{corollary}\label{intro:Cor}  Let the notation be as above.  There are complex numbers $a_{F}(w)$ for $w \in 2\pi W_J(\Q)$ so that for $x \in W_J(\R)$ (identified with $\mathrm{Lie}(N/N_0)$) and $g \in H_J(\R)$
\[F^{0}\left(\exp(x)g\right) = F^{00}(g) + \sum_{w \in 2\pi W_J(\Q)}{a_{F}(w) e^{-i \langle w, x\rangle} \Wh^{w}(g)}\]
where the sum is over those $w$ with $p_w \geq 0$.  Here the constant term $F^{00}(g)$ is of the form (\ref{eqn:MTconst}), with $\Phi$ associated to a holomorphic modular form of weight $n$ on $H_J$.  If $\varphi$ is a cusp form, then $F^{00}(g) = 0$ and $a_{F}(w) \neq 0$ implies that $w \in W_J$ is rank four and Freudenthal's quartic form $q$ on $w$ is negative.\end{corollary}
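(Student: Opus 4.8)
The goal is Corollary~\ref{intro:Cor}, the explicit Fourier expansion of a modular form $F$ of weight $n$. The plan is to deduce everything formally from Theorem~\ref{intro:mainThm} applied termwise to the Fourier expansion of $F^0$ along the abelian group $N/N_0$.

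First I would set up the abelian Fourier expansion. Since $F$ is automorphic and of moderate growth, $F^0(\exp(x)g)$ is a smooth function on the compact group $(N/N_0)(\Q)\backslash (N/N_0)(\A)$ in the $x$-variable (for fixed $g\in H_J(\R)$), so it admits a Fourier expansion indexed by the Pontryagin dual, which is $W_J(\Q)$ paired against $W_J(\R)\simeq\mathrm{Lie}(N/N_0)$ via Freudenthal's symplectic form $\langle\,,\,\rangle$. The $2\pi$ enters from the arithmetic normalization of the self-dual additive character. Writing $F^0(\exp(x)g)=\sum_{w}c_w(g)\,e^{-i\langle w,x\rangle}$, the $w=0$ term is by definition the full constant term $F^{00}(g)$, and each nonzero Fourier coefficient $c_w(g)$, as a function of $g\in H_J(\R)$, is a generalized Whittaker function for the character $\chi_w(\exp(u)[N,N])=e^{-i\langle w,u\rangle}$ in the sense of the three conditions preceding Theorem~\ref{intro:mainThm}. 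The key point to verify here is that each $c_w$ genuinely satisfies $\mathcal{D}_n c_w=0$ and the $K$-equivariance; this follows because $\mathcal{D}_n$ is a left-invariant differential operator (built from the right-regular action of $\p$) and hence commutes with the integration against the character defining $c_w$, while the $K$-equivariance and moderate growth are inherited from $F$.

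Next I would apply Theorem~\ref{intro:mainThm} to each $c_w$. For nonzero $w$, part (1) of the theorem dichotomizes: if $p_w$ vanishes somewhere on $\mathcal{H}_J$ then $c_w\equiv 0$, so only the $w$ with $p_w\geq 0$ survive; and for those, $c_w$ is a scalar multiple of $\Wh^w$, giving the constant $a_F(w)$ and the stated main sum. For $w=0$, part (2) identifies $F^{00}(g)$ with an expression of the shape~(\ref{eqn:MTconst}), with $\Phi$ attached to a holomorphic form of weight $n$ on $H_J$; the only extra input needed is that this $\Phi$ is in fact \emph{modular} (i.e.\ arises from a classical holomorphic modular form on $H_J$, invariant under the appropriate arithmetic subgroup), which I would extract from the left $G(\Q)$-invariance of $F$ restricted to the Levi $H_J(\Q)\cap P$.

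Finally I would treat the cusp-form refinement. If $\varphi$ (equivalently $F$) is cuspidal, its constant term along $N$ vanishes, forcing $F^{00}=0$; this kills the weight-$n$ holomorphic piece. The support statement---that $a_F(w)\neq 0$ implies $w$ has rank four with Freudenthal quartic $q(w)<0$---is the heart of the remaining work. The surviving support already lies in $\{p_w\geq 0\}$, so the task is to translate the positivity condition ``$p_w$ never vanishes on $\mathcal{H}_J$,'' together with cuspidality (which should eliminate the lower-rank orbits via vanishing of intermediate degenerate constant terms along parabolics containing $P$), into the $H_J(\C)$-orbit statement that $w$ is rank four and $q(w)<0$. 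The main obstacle I anticipate is precisely this orbit analysis: I would classify the $H_J(\R)$-orbits on $W_J(\R)$ by rank and by the sign of $q$ (these are the orbits of a prehomogeneous vector space), show that $p_w\geq 0$ selects exactly the open orbit $q(w)<0$ among rank-four vectors, and show that cuspidality removes the boundary orbits of rank $\leq 3$. Once the orbit picture and the link between $p_w\geq 0$ and $q(w)<0$ are established, the corollary assembles immediately.
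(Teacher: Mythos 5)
Your formal skeleton --- the abelian Fourier expansion of $F^0$ along $N/N_0$, the observation that each coefficient $c_w$ inherits $K$-equivariance, moderate growth, and $\mathcal{D}_n c_w=0$ because $\mathcal{D}_n$ commutes with the integration defining $c_w$, and the termwise application of Theorem \ref{intro:mainThm} --- is exactly the paper's route; the paper's proof of the corollary indeed dismisses all of this as following ``immediately'' from the theorem. The actual content of the corollary is concentrated in the two support claims, and there your proposal is incomplete. The paper isolates precisely these two facts and derives both from a single archimedean statement, Proposition \ref{prop:vRnks}: (i) if $w$ is rank four with $q(w)>0$ then $p_w$ has a zero on $\mathcal{H}_J$ (move $w$ to $-(1,0,0,d)$ and take $Z=-\zeta d^{1/3}1_J$ with $\zeta$ a cube root of unity), so such $w$ are excluded by Theorem \ref{intro:mainThm}(1)(a); (ii) if $q(w)<0$ then $|\langle w, gr_0(i)\rangle|$ is bounded away from $0$ (reduce to $(0,-1,0,1)$ and an AM--GM estimate, Lemma \ref{lem:away0}); (iii) if $w$ has rank $\le 3$ then $|\langle w, gr_0(i)\rangle|$ is \emph{not} bounded away from $0$ (move $w$ to the form $(a,b,0,0)$ by Lemma \ref{lem:rk123} and translate by $\eta(t)$). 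You correctly identify that the rank-four dichotomy requires an orbit analysis, but you only announce it; this is the content of parts (i) and (ii).

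For the cuspidality refinement your proposed mechanism genuinely differs from the paper's, and as written it is a gap. You suggest eliminating the rank $\le 3$ support by vanishing of constant terms along intermediate parabolics; that could plausibly be made to work, but it requires identifying the stabilizers of the degenerate characters and unfolding, none of which you carry out. The paper's argument is shorter and purely archimedean: by (iii) above, for $w$ of rank $\le 3$ the argument of the $K$-Bessel function gets arbitrarily close to $0$ while the prefactor does not compensate, so $\Wh^w_v$ is unbounded on $H_J(\R)$; since a cuspidal $F$ is bounded, so is each Fourier coefficient $a_F(w)\Wh^w$, forcing $a_F(w)=0$. If you want to complete your write-up along the paper's lines, the missing ingredient is exactly Proposition \ref{prop:vRnks} together with this boundedness comparison; if you insist on the intermediate-parabolic route, you still owe the rank-four sign analysis (i)--(ii), which your constant-term argument does not supply.
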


Below we check that if $w \in W_J$ is not of rank four, then the functions $\Wh_v^{w}(g)$ are not bounded.  Thus the coefficients $a_{F}(w)$ for these $w$ must vanish when $F$ is a cusp form.  Also, since there are different sign conventions about Freudenthals quartic form on $W_J$, we give now an invariant description: If $w \in W_J(\R)$ and $q(w) > 0$, then there exists $w', w'' \in W_J(\R)$ each of rank one with $w = w' + w''$.  If $w \in W_J(\R)$ and $q(w) < 0$, then there exist $w', w''$ with $w', w'' \in W_J(\C)$ complex conjugates and rank one and $w = w' + w''$, but there does not exist such a decomposition in $W_J(\R)$.

Theorem \ref{intro:mainThm} and Corollary \ref{intro:Cor} should be compared to the well-known Fourier expansions of holomorphic modular forms on Hermitian tube domains, such as the Fourier expansion of Siegel modular forms (special automorphic forms for the group $\GSp_{2n}$) on the Siegel upper half-space described above.  On the quaternionic groups $G(\R)$ studied in this paper, the generalized Whittaker functions of Theorem \ref{intro:mainThm} thus play the role that the exponential function $e^{-2 \pi \tr(TY)}$ plays on groups with Hermitian tube structure.  It is our hope that via making explicit what the Fourier expansions of modular forms on quaternionic exceptional groups look like, we might encourage others to investigate these objects.

\subsubsection{Klingen Eisenstein series} The Siegel modular forms on $\GSp_{2n}$ possess ``Klingen''-type Eisenstein series.  That is, if $\phi$ is a holomorphic modular cusp form on $\GSp_{2n-2}$, then one can form an Eisenstein series on $\GSp_{2n}$ by inducing $\phi$, and if the weight of $\phi$ is sufficiently large, the Eisenstein series (at a special point) is again a holomorphic modular form on $\GSp_{2n}$.  The modular forms on $G_J$ have the same property.  Namely, recall that the Levi subgroup $H_J$ of the Heisenberg parabolic on $G_J$ always has Hermitian tube structure.  Thus, if $\Phi$ is a holomorphic modular cusp form on $H_J$ of weight $n$, one can form an Eisenstein series out of $\Phi$, to get an automorphic form on $G_J$.  These Eisenstein series turn out to be modular forms of weight $n$ on $G_J$, which is another corollary of Theorem \ref{intro:mainThm}.

\begin{corollary}[See Proposition \ref{prop:Eis}]\label{cor:Klingen} Suppose $n > \dim W_J$ and $\Phi$ is a cusp form on $H_J$ associated to a holomorphic modular form of weight\footnote{If $J = \R$ so that $G_J = G_2$ and $H_J \simeq \GL_2$, a weight $n$ modular form for $H_J$, in our normalization, becomes a holomorphic modular form of weight $3n$ in the classical normalization on $\GL_2$.  Thus, for example, the Klingen Eisenstein series associated to the unique level one cusp form of weight $18$ on $\GL_2$ gives rise to a modular form of weight $6$ on $G_2$, i.e., a function $F: G_2(\Q)\backslash G_2(\A) \rightarrow Sym^{12}(V_2)^\vee$ in the kernel of $\mathcal{D}_6$.} $n$.  Suppose that $f_{\Phi}(g): G_J(\A) \rightarrow \Vm_n^\vee$ satisfies $f_{\Phi}(ngk) = k^{-1} f_{\Phi}(g)$ for all $n \in N(\A)$ and $k \in K \subseteq G_J(\R)$.  Furthermore, suppose that for $g \in H_J(\A)$,
\[f_{\Phi}(g) =\nu(g)^{n}|\nu(g)|\left(\Phi(g) \frac{x^{2n}}{(2n)!} +\beta \frac{x^n y^n}{n! n!}+ \Phi'(g) \frac{y^{2n}}{(2n)!}\right)\]
with $\Phi'(g) = \Phi(gw_0)$ and $\beta \in \C$ a constant.  Then $E(g,f_{\Phi}) := \sum_{\gamma \in P(\Q)\backslash G_J(\Q)}{f_{\Phi}(\gamma g)}$ converges absolutely, and is a modular form of weight $n$ on $G_J$. \end{corollary}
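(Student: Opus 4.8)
The plan is to verify that $E(g,f_\Phi)$ satisfies the four defining properties of a modular form of weight $n$ recorded in Definition~\ref{def:MFn}: absolute convergence of the defining series (so that $E(g,f_\Phi)$ is a well-defined function of moderate growth), right $K$-equivariance, annihilation by the Schmid operator $\mathcal{D}_n$, and finiteness under $Z(\g)$. The organizing principle is that both the $K$-equivariance and the Schmid equation for the series reduce to the corresponding assertions about the single inducing section $f_\Phi$, since the relevant operators commute with the left translations $g \mapsto \gamma g$ appearing in the Eisenstein sum.

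First I would check that the sum is well-posed and record the easy equivariance. The section $f_\Phi$ is left $N(\A)$-invariant by the hypothesis $f_\Phi(ngk)=k^{-1}f_\Phi(g)$, and its restriction to $H_J(\A)$ is left $H_J(\Q)$-invariant precisely because $\Phi$ is associated to a holomorphic modular form of weight $n$; this is where the automorphy of $\Phi$, packaged through the factor of automorphy $j$ and the normalization $\nu(g)^n|\nu(g)|$, enters (cf.\ Proposition~\ref{prop:const}). Via the Iwasawa decomposition $G_J(\A)=N(\A)H_J(\A)K$ together with the defining equivariances, this propagates to left $P(\Q)$-invariance of $f_\Phi$ on all of $G_J(\A)$, so $E(g,f_\Phi)=\sum_{\gamma\in P(\Q)\backslash G_J(\Q)}f_\Phi(\gamma g)$ is left $G_J(\Q)$-invariant by reindexing, and right $K$-equivariance is immediate from $E(gk,f_\Phi)=\sum_\gamma f_\Phi(\gamma gk)=k^{-1}E(g,f_\Phi)$. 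For convergence, I would compute the modulus character of the Heisenberg parabolic in terms of the similitude: using that $\mathrm{Lie}(N)=W_J\oplus\mathrm{Lie}(N_0)$ as an $H_J$-module, that $H_J$ scales the symplectic form on $W_J$ by $\nu$, and that it acts on the line $\mathrm{Lie}(N_0)$ by $\nu$, one finds $\delta_P=|\nu|^{\tfrac{1}{2}\dim W_J+1}$. Since $\Phi$ is cuspidal it is bounded, so $\|f_\Phi(g)\|$ is majorized by a scalar inducing section, and Godement's criterion (applied coordinatewise to the $\Vm_n^\vee$-valued series and compared against a convergent real-analytic Eisenstein series) yields absolute and locally uniform convergence, hence moderate growth, in the range $n>\dim W_J$ guaranteed by the hypothesis.

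Next I would treat the Schmid equation and $Z(\g)$-finiteness. Because $\mathcal{D}_n$ is assembled from the right regular action of $\p$, it is a left-$G_J(\Q)$-invariant differential operator, and local uniform convergence licenses term-by-term differentiation, so $\mathcal{D}_n E(g,f_\Phi)=\sum_\gamma(\mathcal{D}_n f_\Phi)(\gamma g)$; it therefore suffices to prove $\mathcal{D}_n f_\Phi=0$. By the Iwasawa decomposition $G_J(\R)=N(\R)H_J(\R)K$, the section $f_\Phi$ is determined by its values on $H_J(\R)$ together with its $N(\R)$-invariance and $K$-equivariance, and on $H_J(\R)$ it is exactly of the shape (\ref{eqn:MTconst}) with $\Phi$ holomorphic of weight $n$. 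The converse direction of Theorem~\ref{intro:mainThm}(2), recorded in Proposition~\ref{prop:const}, is precisely that every such function solves $\mathcal{D}_n\Wh^\chi=0$ for the trivial character, so $\mathcal{D}_n f_\Phi=0$. For $Z(\g)$-finiteness, $f_\Phi$ generates under right translation a submodule of the degenerate principal series induced from the $Z(\m)$-finite datum attached to $(\Phi,\nu^n|\nu|)$ on $P$; since parabolic induction preserves $Z$-finiteness and Eisenstein series built from $Z$-finite sections remain $Z$-finite, $E(g,f_\Phi)$ is $Z(\g)$-finite, completing the verification of Definition~\ref{def:MFn}.

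The main obstacle is the passage from $\mathcal{D}_n f_\Phi=0$ on the Levi $H_J(\R)$ to $\mathcal{D}_n f_\Phi=0$ on all of $G_J(\R)$: because $\mathcal{D}_n$ differentiates in the $\p$-directions pointing off $H_J(\R)$, one must know that the Iwasawa extension of $f_\Phi$ is genuinely the solution of the Schmid equation produced in Theorem~\ref{intro:mainThm}(2), and not merely some other extension agreeing with it on the Levi. Granting Proposition~\ref{prop:const} this step is clean, since that result already characterizes the $N$-invariant, $K$-equivariant solutions of $\mathcal{D}_n=0$ as exactly the functions of the form (\ref{eqn:MTconst}). The only other delicate point is fixing the convergence abscissa, which requires the modulus-character computation $\delta_P=|\nu|^{\tfrac{1}{2}\dim W_J+1}$ above together with the vector-valued form of Godement's criterion; the bound $n>\dim W_J$ is a convenient sufficient threshold.
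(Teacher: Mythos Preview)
Your proposal is correct and follows essentially the same route as the paper's proof of Proposition~\ref{prop:Eis}: compute $\delta_P=|\nu|^{(\dim W_J+2)/2}$, invoke Godement's criterion using boundedness of the cuspidal $\Phi$ to get absolute and locally uniform convergence for $n>\dim W_J$, reduce $\mathcal{D}_nE=0$ to $\mathcal{D}_nf_\Phi=0$ by left-invariance of the Schmid operator and term-by-term differentiation, and then cite the converse direction of Proposition~\ref{prop:const} for the latter. You are in fact more thorough than the paper in explicitly checking the $P(\Q)$-invariance of $f_\Phi$, the $K$-equivariance of the series, and the $Z(\g)$-finiteness required by Definition~\ref{def:MFn}; the paper's proof handles only convergence and annihilation by $\mathcal{D}_n$ and leaves these routine points implicit.
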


Of course, embeddings of discrete series representations into induced representations is a well-studied topic.  The case $\Phi = 0$ in Corollary \ref{cor:Klingen} corresponds to embedding $\pi_n$ into the degenerate principal series $Ind_{P}^{G}(\nu^{n} |\nu|)$, which can be found in \cite[Section 6]{wallach}.  The case $\beta =0 $ in Corollary \ref{cor:Klingen} corresponds to embedding $\pi_n$ into an induction of a holomorphic discrete series representation on $H_J$; see, e.g., \cite{blankInd}.  What is new about this corollary is the complete explicitness of the relevant inducing section when $\Phi \neq 0$.

In Corollary \ref{cor:Klingen}, if $\Phi = 0$, then one only needs $n > \frac{1}{2} \dim W_J$ for the statement of the corollary to be correct.  In our parlance, the condition that $\Phi$ be associated to a modular form of weight $n$ on $H_J$ implies that $\Phi(zg) = z^{-n}\Phi(g)$ for all $z \in Z_H(\R)^{0}$, where $Z_H$ is the connected center of the similitude group $H_J$.  This central character is why the convergence properties change when $\Phi = 0$.

As an example of the various corollaries, suppose that $J = H_3(\Theta)$ is the exceptional Jordan algebra with positive-definite trace form.  Then $H_J = \mathrm{GE}_7$ has Hermitian tube structure, and out of a holomorphic modular cusp form $\Phi$ on $\mathrm{GE}_7$ of weight $n > 56$, one can construct a Klingen Eisenstein series $E(g,f_{\Phi})$ on $G_J$, the quaternionic form of $E_8$.  By Corollary \ref{cor:Klingen}, this Klingen Eisenstein series is a modular form of weight $n$ on $G_J = E_8$, and thus has a Fourier expansion given as in Corollary \ref{intro:Cor}.

\subsection{Outline of paper} We now give an outline of the sections of the paper.  In section \ref{sec:CNS}, we recall cubic norm structures, certain groups $M_J$ and $H_J$ associated to them, and describe the Hermitian symmetric space $\mathcal{H}_J$ associated to $H_J$.  In section \ref{sec:LieI}, we discuss the Lie algebras of $M_J$, $H_J$, and the Cartan involution on them.  In section \ref{sec:LieII}, we define the Lie algebra $\g(J)$ and the group $G_J$, following the constructions of \cite{freudenthalIV} and \cite{rumelhart}.  In section \ref{sec:Cayley}, we work out an explicit Cayley transform for the quaternionic groups $G_J(\R)$.  In section \ref{sec:Cartan} we derive explicit Cartan and Iwasawa decomposition for $\g(J)$.  In section \ref{sec:diffEq} we write down the differential equations $\mathcal{D}_n \Wh^\chi = 0$, and in section \ref{sec:solutions} we solve these equations.  In section \ref{sec:final}, we deduce the exact statement of the $\chi$-nontrivial part Theorem \ref{intro:mainThm}, while in section \ref{sec:positivity}, we prove some results about the polynomial $p_\chi(Z)$, and Wallach's admissibility condition (see \cite{wallach}).  In section \ref{sec:constAndEis}, we deduce the $\chi$-trivial part of Theorem \ref{intro:mainThm}, and also discuss Klingen Eisenstein series. Finally, in appendix \ref{sec:orthog}, we make precise the way that orthogonal groups are of the form $G_J$, for appropriate $J$.  

\subsection{Acknowledgments} We thank Wee Teck Gan and Benedict Gross, for asking us about the constant term of modular forms and limit discrete series representations, respectively.  The treatment of these aspects was omitted in the first version of this manuscript.  During the period in which this work was done, the author was supported by the Schmidt fund at the Institute for Advanced Study.  We thank the IAS for its hospitality and for providing an excellent working environment.

\section{Cubic norm structures and associated groups}\label{sec:CNS} We now briefly recall cubic norm structures, and certain algebraic groups defined in terms of them.  We also discuss the Hermitian symmetric space associated to the group $H_J$.  The reader might see \cite[Sections 4.2 and 4.3]{pollackLL}, and the references contained therein, for some more detail, and \cite{rumelhart} for a slightly different approach to some of the same underlying objects.  The reader might also see \cite{bhargavaHo}, \cite{grossSavin}, and \cite{ganSavin} for the use of some of the same algebraic structures in different scenarios.

Below, $F$ denotes any arbitrary ground field of characteristic $0$.  When we discuss Cartan involutions, $F$ will always be $\R$ and the cubic norm structure $J$ over $\R$ will be assumed to have a positive definite trace pairing (see below).

\subsection{Cubic norm structures} First, we recall the notion of a cubic norm structure, as a way to fix notations.  Suppose $F$ is a field of characteristic $0$ and $J$ is a finite dimensional $F$ vector space.  That $J$ is a cubic norm structure means that it comes equipped with a cubic polynomial map $N: J \rightarrow F$, a quadratic polynomial map $\#: J \rightarrow J$, an element $1_J \in J$, and a symmetric bilinear pairing $(\;,\;): J \otimes J \rightarrow F$, called the trace pairing, that satisfy the following properties.  For $x, y \in J$, set $x \times y = (x+y)^\# - x^\# - y^\#$ and denote $(\;,\;,\;): J \otimes J \otimes J \rightarrow F$ the unique symmetric trilinear form satisfying $(x,x,x) = 6N(x)$ for all $x \in J$.  Then
\begin{enumerate}
\item $N(1_J) = 1$, $1_J^\# = 1_J$, and $1_J \times x = (1_J,x) -x$ for all $x \in J$.
\item $(x^\#)^\# = N(x) x$ for all $x \in J$.
\item The pairing $(x,y) = (1_J,1_J,x)(1_J,1_J,y) - (1_J,x,y)$.
\item One has $N(x+y) = N(x) + (x^\#,y) + (x,y^\#) + N(y)$ for all $x, y \in J$.\end{enumerate}
One should see \cite{mccrimmon} for a thorough treatment of cubic norm structures.

There is a weaker notion of a cubic norm pair.  In this case, the pairing $(\;,\;)$ is between $J$ and $J^\vee$, the linear dual of $J$, the adjoint map $\#$ takes $J \rightarrow J^\vee$ and $J^\vee \rightarrow J$, and each $J, J^\vee$ have a norm map $N_J: J \rightarrow F$ and $N_{J^\vee}:J^\vee \rightarrow F$.  The adjoints and norms on $J$ and $J^\vee$ satisfy the same compatibilities as above in items (2) and (4).

If $J$ is a cubic norm structure, and $x \in J$, one defines the map $U_x: J \rightarrow J$ via $U_x(y) = - x^\# \times y + (x,y)x$.  It is a fact that $N(U_x(y)) = N(x)^2N(y)$.

If $J$ is a cubic norm structure, we define the group 
\[M_J = \{(\lambda, g) \in \GL_1 \times \GL(J): N(g X) = \lambda N(X) \text{ for all } X \in J\},\]
the group of all linear automorphisms of $J$ that preserve the norm $N$ up to scaling.  Thus if $x \in J$ with $N(x) \neq 0$, the map $U_x$ defines an element of $M_J$.  We set $M_J^{1}$ the subgroup of $M_J$ consisting of those $g$ with $\lambda(g) =1$ and we set $A_J$ the subgroup of $M_J^{1}$ that also fixes the bilinear pairing $(\;,\;)$: if $a \in A_J$, then $(ax,ay) = (x,y)$ for all $x, y \in J$.  It is in fact equivalent to define $A_J$ as the stabilizer of the element $1_J \in J$.  The group $A_J$ is the automorphism group of $J$.  If $a \in A_J$, then one also has $(ax) \times (ay) = a(x\times y)$ for all $x,y \in J$.

If $m \in M_J$, then we denote by $\tilde{m}$ the action of $m$ on the dual representation $J^\vee$, so that $(mx, \tilde{m}y) = (x,y)$ for all $x \in J$ and $y \in J^\vee$.

\subsection{The group $H_J$ and the Freudenthal construction}\label{subsec:FWJ} We now briefly recall the Freudenthal construction.  If $J$ is a cubic norm structure over a field $F$, of if $(J, J^\vee)$ is a cubic norm pair, one sets 
\[W_J = F \oplus J \oplus J^\vee \oplus F.\]
This is an $F$ vector space that comes equipped with symplectic and quartic forms essentially due to Freudenthal.  We write a typical element of $W_J$ as $(a,b,c,d)$, so that $a,d \in F$, $b \in J$ and $c \in J^\vee$.  The symplectic form is
\[\langle (a,b,c,d), (a',b',c',d') \rangle = ad'-(b,c') + (c,b') - da'.\]
The quartic form $q$ is
\[q((a,b,c,d)) = (ad-(b,c))^2 + 4aN(c) + 4dN(b) - 4(b^\#,c^\#).\]
Associated to the quartic form is a symmetric four-linear form that is normalized by the condition $(v,v,v,v) = 2q(v)$ for $v \in W_J$.  Because the symplectic form is non-degenerate, there is a trilinear map $t: W_J \otimes W_J \otimes W_J \rightarrow W_J$ defined by $\langle w, t(x,y,z) \rangle = (w,x,y,z)$ for all $w,x,y,z \in W_J$.  One sets $v^\flat = t(v,v,v)$.

One defines the group
\[H_J = \{(\nu,g) \in \GL_1 \times \GL(W_J): \langle gv,gw \rangle = \nu \langle v,w \rangle \text{ and } q(gv) = \nu^2 q(v) \text{ for all } v,w \in W_J\},\]
the group preserving the symplectic and quartic form on $W_J$ up to appropriate similitude.  We write $H_J^{1}$ for the subgroup of the $g \in H_J$ with $\nu(g) =1$.

If $x \in J$, then we set $n(x): W_J \rightarrow W_J$ the map given by 
\[n(x)(a,b,c,d) = (a,b + ax,c + b \times x + ax^\#, d + (c,x) + (b,x^\#) + aN(x)).\]
If $y \in J^\vee$, we set $n^{\vee}(y): W_J \rightarrow W_J$ the map given by
\[n^\vee(y)(a,b,c,d) = (a+(b,y) + (c,y^\#) + dN(y), b + c\times y, c + dy,d).\]
If $m \in M_J$ and $\delta$ in $\GL_1$ with $\delta^2 = \lambda(m)$, then we set $M(\delta,m): W_J \rightarrow W_J$ the map defined by 
\[M(\delta,m)(a,b,c,d) = (\delta^{-1}a,\delta^{-1}m(b), \delta \tilde{m}(c),\delta d).\]
One has that $n(x), n^\vee(y)$ and $M(\delta, m)$ are in $H_J^{1}$ for all $x,y,\delta, m$ as above.  If $z \in \GL_1$, we define $\eta(z) = M(z^{-3}, U_{z^{-1}})$.  Then $\eta(z)$ acts on $W_J$ as $\eta(z)(a,b,c,d) = (z^3 a, zb,z^{-1}c,z^{-3}d)$.  If $J^\vee$ is identified with $J$ so that $J$ is a cubic norm structure, define $J_2: W_J \rightarrow W_J$ via
\[J_2(a,b,c,d) = (d,-c,b,-a).\]
Then $J_2$ is in $H_J^{1}$.

Below we will sometimes refer to the \emph{rank} of an element $J$ or $W_J$; this notion is recalled, for instance, in \cite[Definition 4.2.9 and Definition 4.3.2]{pollackLL}.

\subsection{The Hermitian symmetric space associated to $H_J$} When the ground field $F = \R$ and the pairing $(\;,\;)$ on $J$ is positive definite, the group $H_J$ has a Hermitian symmetric space.  This space is $\mathcal{H}_J = \{Z = X + iY: Y > 0\}$.  Here $Y > 0$ means that $Y= U_y 1_J$ for some $y \in J$.  Equivalently, $\tr(Y) > 0, \tr(Y^\#) > 0$ and $N(Y) > 0$. We now recall how $H_J(\R)^{0}$ acts on $\mathcal{H}_J$.

To do this, suppose $Z \in J_{\C}$.  Define $r_0(Z) = (1,-Z,Z^\#,-N(Z)) = n(-Z)(1,0,0,0)$.  Then one has the following proposition, which must be well-known.
\begin{proposition}\label{prop:Hcal} Suppose $Z \in \mathcal{H}_J$, so that $Im(Z)$ is positive definite.  Suppose moreover that $g \in H_J(\R)^0$.  Then there is $j(g,Z) \in \C^\times$ and $gZ \in \mathcal{H}_J$ so that $g r_0(Z) = j(g,Z)r_0(gZ)$.  This equality defines the factor of automorphy $j(g,Z)$ and the action of $H_J(\R)^0$ simultaneously.\end{proposition}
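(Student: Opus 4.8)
The plan is to embed $\mathcal{H}_J$ into projective space via $Z \mapsto [r_0(Z)]$ and to identify its image with a piece of the variety of rank one vectors in $W_J(\C)$ that is preserved by $H_J(\R)^0$. Since $r_0(Z) = n(-Z)(1,0,0,0)$ and $(1,0,0,0)$ is rank one, each $r_0(Z)$ is a rank one vector with nonzero first coordinate, and conversely the rank one relations (recalled in \cite{pollackLL}) force any rank one $(a,b,c,d)$ with $a \neq 0$ to equal $a\, r_0(-b/a)$. Thus, writing $\mathcal{S} = \{[r_0(Z)] : Z \in \mathcal{H}_J\} \subseteq \mathbb{P}(W_J(\C))$, it suffices to prove that $\mathcal{S}$ is stable under $H_J(\R)^0$: granting this, for $g\, r_0(Z) = (a',b',c',d')$ one has $a' \neq 0$, and setting $j(g,Z) = a'$, $gZ = -b'/a'$ yields $g\, r_0(Z) = j(g,Z) r_0(gZ)$ with $gZ \in \mathcal{H}_J$, the uniqueness of $j$ and $gZ$ being forced by the normalization $a = 1$ and the cocycle identity $j(g_1 g_2, Z) = j(g_1, g_2 Z) j(g_2, Z)$ following formally from $g_1(g_2\, r_0(Z)) = (g_1 g_2) r_0(Z)$.

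To detect $\mathcal{S}$ I would use a Hermitian form. On $W_J(\C)$ set $h(v,w) = i \langle v, \overline{w} \rangle$; compatibility of conjugation with the real symplectic form makes $h$ Hermitian, and since $g \in H_J(\R)$ is real with $\langle gv, gw \rangle = \nu(g) \langle v, w \rangle$ one gets $h(gv, gw) = \nu(g)\, h(v,w)$, where $\nu(g) > 0$ for $g$ in the identity component. A direct computation, using $\overline{r_0(Z)} = r_0(\overline{Z})$ and the definition of the symplectic form, gives $h(r_0(Z), r_0(Z)) = 8\, N(\mathrm{Im}\, Z)$, which is positive precisely because $\mathrm{Im}\, Z$ is positive definite. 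Consider then
\[ U = \{ [w] \in \mathbb{P}(W_J(\C)) : w \text{ is rank one and } h(w,w) > 0 \}. \]
Because $H_J$ preserves rank (rank is cut out by its equivariant tensors) and $H_J(\R)^0$ scales $h$ by a positive scalar, the set $U$ is stable under $H_J(\R)^0$, and by the computation $\mathcal{S} \subseteq U$.

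It remains to see that $\mathcal{S}$ is a union of connected components of $U$, for then the connected group $H_J(\R)^0$, which contains the identity and preserves $U$, must fix each component and in particular $\mathcal{S}$. Openness of $\mathcal{S}$ in $U$ is a local computation: near a point $[r_0(Z_0)]$ every rank one vector still has nonzero first coordinate, hence is of the form $[r_0(W)]$, and there $h > 0$ is equivalent to $N(\mathrm{Im}\, W) > 0$, which near the positive definite $\mathrm{Im}\, Z_0$ is equivalent to $\mathrm{Im}\, W > 0$. The essential point, which I expect to be the main obstacle, is the \emph{closedness} of $\mathcal{S}$ in $U$: one must show that no sequence in $\mathcal{S}$ degenerates to a rank one vector of positive $h$-norm lying outside $\mathcal{S}$ --- equivalently, that the topological boundary of $\mathcal{S}$ in the (compact) rank one variety is contained in the isotropic locus $\{h = 0\}$, so that points where the first coordinate degenerates, or where $\mathrm{Im}\, W$ approaches the boundary of the positive cone, never carry $h > 0$. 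This is a positivity statement about the Freudenthal variety near the boundary of $\mathcal{H}_J$; once it is in hand, the stability of $\mathcal{S}$, and with it the proposition, follows. (Alternatively, one can verify the transformation law directly on a generating set $n(x), n^\vee(y), M(\delta,m), \eta(z), J_2$ of $H_J(\R)^0$, the only delicate case being $J_2$, which induces $Z \mapsto -Z^{-1}$ and requires the standard facts that $N(Z) \neq 0$ and $-Z^{-1} \in \mathcal{H}_J$ for $Z \in \mathcal{H}_J$.)
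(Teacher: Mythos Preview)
Your Hermitian form $h(v,w)=i\langle v,\overline{w}\rangle$ and the computation $h(r_0(Z),r_0(Z))=8N(\mathrm{Im}\,Z)$ are exactly the key ingredient the paper uses (there phrased as the identity $\langle r_0(Z),r_0(W)\rangle=N(Z-W)$ applied to $W=\overline{Z}$).  The gap is where you say it is: you do not prove that $\mathcal{S}$ is closed in $U$, and in particular you give no argument that a rank one vector $(0,b,c,d)$ with vanishing first coordinate must have $h\leq 0$.  Without this, the connected-component argument does not close; your alternative of checking generators is likewise left incomplete at the $J_2$ step.

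The paper avoids the closedness issue entirely by arguing on the group side rather than on the target.  First, it shows directly that the first coordinate of $g\,r_0(i)$ is nonzero: this coordinate is $\langle g\,r_0(i),(0,0,0,1)\rangle=\nu(g)\,\langle r_0(i),g^{-1}(0,0,0,1)\rangle$, and Lemma~\ref{lem:sympair} gives the clean identity $|\langle r_0(i),v\rangle|^2=(v,v)$ for every rank one $v$, where $(\,,\,)=\langle J_2\cdot,\cdot\rangle$ is positive definite.  Since $g^{-1}(0,0,0,1)$ is nonzero and rank one, this is nonzero, and one obtains $g\,r_0(i)=j(g,i)\,r_0(Z)$ for some $Z\in J_\C$ with no topology needed.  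Second, your Hermitian-form computation then gives $N(\mathrm{Im}\,Z)>0$ whenever $\nu(g)>0$.  Third, positive definiteness of $\mathrm{Im}\,Z$ follows by continuity of the map $g\mapsto \mathrm{Im}(g\cdot i)$ from the connected group $H_J(\R)^0$ into $\{Y:N(Y)>0\}$, sending the identity to $1_J$.  Transitivity of the $n(X)$ and $M_Y$ on $\mathcal{H}_J$ then handles general $Z$.  So the missing piece in your argument is precisely the positive-definite-form identity of Lemma~\ref{lem:sympair}, which replaces your unproved closedness step by a one-line computation.
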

Both for this proposition, and below, we will need the following lemma.  Suppose $J$ is a cubic norm structure.  Define a pairing on $W_J$ via $(v,w) = \langle J_2 v, w\rangle$.
\begin{lemma}\label{lem:sympair} The pairing $(v,w)$ is symmetric, with $(v,v) = a^2 + (b,b) + (c,c) + d^2$ if $v=(a,b,c,d)$.  Thus if the trace pairing on $J$ is positive-definite, the pairing $(\;,\;)$ on $W_J$ is as well.  Furthermore, one has $|\langle r_0(i),v \rangle|^2 = (v,v) + 2\tr(b^\#-ac) + 2\tr(c^\# -db)$.  Thus if $v$ is rank one, $|\langle r_0(i),v \rangle|^2 = (v,v).$\end{lemma}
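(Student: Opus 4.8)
The plan is to treat the entire lemma as an explicit computation driven by the three formulas already in this section: the symplectic form on $W_J$, the map $J_2(a,b,c,d) = (d,-c,b,-a)$, and $r_0(Z) = (1,-Z,Z^\#,-N(Z))$. No structural idea is needed beyond substitution, together with one standard identity of cubic norm structures.

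First I would compute $(v,w) = \langle J_2 v, w\rangle$ in coordinates. Writing $v = (a,b,c,d)$ and $w = (a',b',c',d')$, one has $J_2 v = (d,-c,b,-a)$, and substituting into $\langle (a_1,b_1,c_1,d_1),(a_2,b_2,c_2,d_2)\rangle = a_1 d_2 - (b_1,c_2) + (c_1,b_2) - d_1 a_2$ collapses all four sign-bearing terms into $aa' + (b,b') + (c,c') + dd'$. This is manifestly symmetric in $v$ and $w$, giving the symmetry claim, and setting $w = v$ yields $(v,v) = a^2 + (b,b) + (c,c) + d^2$. Positive-definiteness is then immediate: if the trace pairing on $J$ is positive definite, each of the four summands is nonnegative and they vanish simultaneously only for $v = 0$.

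For the $r_0(i)$ identity I would first evaluate $r_0(i)$ using $1_J^\# = 1_J$ and $N(1_J)=1$: since $(i1_J)^\# = -1_J$ and $N(i1_J) = -i$, we get $r_0(i) = (1,-i1_J,-1_J,i)$. Pairing against $v=(a,b,c,d)$ and writing $\tr(x)=(1_J,x)$ gives $\langle r_0(i),v\rangle = (d-\tr(b)) + i(\tr(c)-a)$, so $|\langle r_0(i),v\rangle|^2 = (d-\tr(b))^2 + (\tr(c)-a)^2$. Expanding the squares and comparing with the claimed right-hand side $(v,v) + 2\tr(b^\# - ac) + 2\tr(c^\# - db)$ (using $(v,v)=a^2+(b,b)+(c,c)+d^2$ and $\tr(b^\#-ac)=\tr(b^\#)-a\tr(c)$, etc.), the cross terms $-2d\tr(b)$ and $-2a\tr(c)$ cancel on both sides, and the whole identity reduces to the single scalar statement $\tr(x)^2 = (x,x) + 2\tr(x^\#)$, applied to $x=b$ and to $x=c$.

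The one non-formal input is thus this trace identity, which I would derive from the axioms: polarizing $N(x+y)=N(x)+(x^\#,y)+(x,y^\#)+N(y)$ identifies $(x^\#,\cdot)$ as $DN_x$, whence the totally symmetric polarization of $N$ equals $(x\times y,z)$; combined with $1_J\times x = \tr(x)1_J - x$ this gives $\tr(x\times x) = \tr(x)^2 - (x,x)$, and since $x\times x = 2x^\#$ this reads $(x,x)=\tr(x)^2 - 2\tr(x^\#)$ (equivalently, Newton's identity $\tr(x^2)=\tr(x)^2-2\tr(x^\#)$ for the trace form). Finally, for the rank-one assertion I would invoke the rank-one conditions recalled in \cite[Definitions 4.2.9 and 4.3.2]{pollackLL}, which include $b^\# = ac$ and $c^\# = db$; these force both correction terms $\tr(b^\#-ac)$ and $\tr(c^\#-db)$ to vanish, leaving $|\langle r_0(i),v\rangle|^2 = (v,v)$. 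I expect no real obstacle beyond careful sign-bookkeeping in the symplectic pairing; the only conceptual point is recognizing that the two correction terms are precisely the quantities annihilated by the rank-one relations.
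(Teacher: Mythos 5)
Your proposal is correct and follows essentially the same route as the paper: a direct coordinate computation of $\langle J_2v,w\rangle$ for the first part, then expanding $|\langle r_0(i),v\rangle|^2=(d-\tr(b))^2+(\tr(c)-a)^2$ and invoking the identity $\tr(x)^2=(x,x)+2\tr(x^\#)$, with the rank-one relations $b^\#=ac$, $c^\#=db$ killing the correction terms. The only difference is that you supply details the paper omits (the explicit verification of the first part and a derivation of the trace identity from the cubic-norm axioms), which is harmless.
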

\begin{proof} The first part of the lemma is immediate from the definitions.  For the second part,  suppose $v = (a,b,c,d)$ and recall $r(i) = (1,-i,-1,i)$.  Then $\langle v,r(i) \rangle = (\tr(b)-d) + i(a - \tr(c))$, and thus
\begin{align*} |\langle v,r(i)\rangle|^2 &= (\tr(b)-d)^2 + (a-\tr(c))^2 \\ &= d^2 - 2d\tr(b) + \tr(b)^2 + a^2 -2a\tr(c) + \tr(c)^2 \\ &= d^2 + (b,b) + a^2 + (c,c) + 2\tr(b^\#-ac) + 2\tr(c^\# -db),\end{align*}
as desired.  Here we have used the identity $\tr(x)^2 = \tr(x^2) + 2\tr(x^\#)$ for $x \in J$.  If $v$ is rank one, then $b^\#-ac = 0$ and $c^\#-db = 0$, and the lemma follows.\end{proof}

\begin{proof}[Proof of Proposition \ref{prop:Hcal}] We recall the argument sketched in \cite[Section 6.2.1]{pollackSMF}, since this argument was only applied when $H_J$ was of type $D_6$, and not in general in \emph{loc cit}.  First, if $g \in H_J^{1}(\R)$ one has $\langle g r_0(i), (0,0,0,1) \rangle \neq 0$, since $\langle g r_0(i), (0,0,0,1) \rangle =  \langle r_0(i), g^{-1} (0,0,0,1) \rangle$ and $|\langle r_0(i), g^{-1} (0,0,0,1) \rangle|^2 \neq 0$ by Lemma \ref{lem:sympair} since $g^{-1}(0,0,0,1)$ is rank one.  Thus, there is $j(g,i) \in \C^\times$ and $Z \in J_{\C}$ so that $g r_0(i) = j(g,i)r_0(Z)$.  We claim that $N(Im(Z)) > 0$.  

To see this, first note the general identity $\langle r_0(Z),r_0(W) \rangle = N(Z-W)$ for $Z, W \in J_{\C}$.  Thus
\[\nu(g) N(2i) = \nu(g) \langle r_0(i), r_0(-i) \rangle = \langle gr_0(i), g r_0(-i) \rangle = \langle gr_0(i), \overline{gr_0(i)}\rangle = |j(g,i)|^2 N(Z - \overline{Z}).\]
Thus if $\nu(g) > 0$, then $N(Y) > 0$.

Now, if $g \in H_J(\R)^{0}$, then $N(Im(g i))> 0$, and thus by continuity $Im(gi)$ is positive definite.  Hence the proposition is proved when $Z = i 1_J$.  The general case follows from the fact that the subgroup generated by the $M(N(y),U_y)$ and $n(X)$ acts transitively on $\mathcal{H}_J$. \end{proof}

\section{Lie algebras of exceptional groups, I}\label{sec:LieI}  In this section we recall the concrete description of the Lie algebras of the classical groups $\SO(V)$ and $\Sp(W)$, and also recall the concrete description of the Lie algebras of $M_J^{1}$ and $H_J^{1}$.  Below, if $\g$ is a Lie algebra and $B$ a symmetric $\g$ invariant pairing, and $\Theta: \g \rightarrow \g$ an involution, $B_{\Theta}$ denotes the pairing defined as $B_{\Theta}(X,Y) = - B(X,\Theta(Y))$.

\subsection{The Lie algebra of symplectic groups} In this section we describe the Lie algebra of $\Sp(W)$, the group preserving a symplectic form $\langle \;,\; \rangle$ on the finite dimensional vector space $W$.

\subsubsection{The commutator} First, we recall that the Lie algebra $\sp(W)$ of $\Sp(W)$ is $Sym^2(W)$.  If $w, w' \in W$, then $w \cdot w' \in Sym^2(W)$ acts on $W$ via $(w \cdot w')(x) = \langle w',x\rangle w + \langle w, x \rangle w'$.  The commutator is given by
\[ [w_1 \cdot w_1', w_2 \cdot w_2'] = \langle w_1', w_2 \rangle w_1 \cdot w_2' + \langle w_1', w_2' \rangle w_1 \cdot w_2 + \langle w_1, w_2 \rangle w_1' \cdot w_2' + \langle w_1, w_2' \rangle w_1' \cdot w_2.\]
If $\phi \in \sp(W)$, then $[\phi, w \cdot w'] = \phi(w) \cdot w' + w \cdot \phi(w')$.

\subsubsection{The Killing form} Define
\[B_{\sp}(w_1 w_1', w_2 w_2') = -2\left(\langle w_1, w_2 \rangle \langle w_1', w_2' \rangle + \langle w_1',w_2 \rangle \langle w_1, w_2' \rangle\right).\]
Then $B_{\sp}$ is a symmetric $\sp(W)$-invariant form; if the ground field $F = \R$, then $B_{\sp}$ is a positive multiple of the Killing form.

If $W= Fe + Ff$ is two-dimensional, with symplectic basis $e, f$ so that $\langle e, f \rangle = 1$, then an $\mathfrak{sl}_2$-triple is
\[E=\mm{0}{1}{0}{0} = \frac{e^2}{2},\;\; H= \mm{1}{0}{0}{-1}  = -ef,\;\; F=\mm{0}{0}{1}{0} = - \frac{f^2}{2}.\]
One has $[E,F] = H$, $[H,E]=2E$, $[H,F] = -2F$.  The form $B$ is normalized so that $B_{\sp}(E,F) = 1$.

\subsubsection{The Cartan involution} Suppose $W = F^{2n}$, $J_n = \mm{0}{1_n}{-1_n}{0}$, and $\Sp(W) = \{g \in Aut(W): \,^tg J_n g = J_n\}$.  In other words, assume that the sympletic form on $W$ is defined by $\langle w_1, w_2 \rangle = \,^tw_1 J_n w_2$ for $w_1, w_2$ column vectors in $W=F^{2n}$.  Then $J_n \in \Sp(W)$.  This induces an involution $\Theta$ on $Sym^2(W)$ via $\Theta( ww') = (J_nw) (J_n w')$.  If the ground field $F = \R$, this is a Cartan involution and $(w_1, w_2) := \langle J_n w_1, w_2\rangle$ defines a symmetric positive definite form on $W$.

\subsection{The Lie algebra of orthogonal groups} Suppose that $V$ is a non-degenerate quadratic space.  In this subsection, we briefly recall facts pertaining to the Lie algebra $\so(V)$ of $\SO(V)$.  
\subsubsection{The commutator} One has $\so(V) \simeq \wedge^2 V$.  In this identification, an element $w \wedge x$ acts on $V$ via
\[w \wedge x (v) = (x,v) w - (w,v) x.\]
The Lie bracket in this notation is
\begin{equation}\label{bracket}[w \wedge x, y \wedge z] = (x,y) w\wedge z - (x,z) w \wedge y - (w,y) x \wedge z + (w,z) x \wedge y.\end{equation}

\subsubsection{The Killing form} Define
\[B_{\so}( w \wedge x, y \wedge z) = (x,y)(w,z) - (w,y)(x,z).\]
This is a symmetric $\so(V)$ invariant form on $\so(V)$; if the ground field $F=\R$, it is a positive multiple of the Killing form.

\subsubsection{The Cartan involution} Suppose $F = \R$.  Suppose $\iota: V \rightarrow V$ is an involution, for which the quadratic form $(v,v)$ is positive definite on the subspace of $V$ for which $\iota$ is $+1$, and is negative definite where $\iota$ is $-1$.  Further assume that $\iota$ defines an element of the orthogonal group $\mathrm{O}(V)$.  Then $(v,\iota(w))$ is a positive definite symmetric bilinear form on $V$. Associated to $\iota$, one can define a Cartan involution $\Theta_{\iota}$ on the Lie algebra $\so(V) \simeq \wedge^2V$.  Namely, one sets $\Theta_{\iota}: \wedge^2 V \rightarrow \wedge^2 V$ via $\Theta_{\iota}(v \wedge w) = \iota(v) \wedge \iota(w)$.  

\subsection{The Lie algebra of $M_J$ and $A_J$} In this subsection we discuss the Lie algebras $\a(J)$ and $\m(J)$ of $A_J$ and $M_J$ respectively.  The reader might see \cite[section 1.2]{rumelhart} for a slightly different take on the same Lie algebras.

The Lie algebra $\m(J)$ consists of those $(\phi,\mu) \in End(J) \oplus \mathrm{G}_a$ so that
\[(\phi(z_1),z_2,z_3) + (z_1,\phi(z_2),z_3) + (z_1,z_2,\phi(z_3)) = \mu(z_1,z_2,z_3)\]
for all $z_1, z_2, z_3 \in J$.  The subalgebra $\m(J)^{0}$ consisting of those $\phi \in \m(J)$ with $\mu =0$ is the Lie algebra of $M_J^{1}$.  The Lie algebra $\a(J)$ consists of those $X \in \m(J)^{0}$ for which $(X z_1, z_2) + (z_1,X z_2) = 0$ for all $z_1, z_1 \in J$.  Equivalently, $\a(J)$ consists of those $X \in \m(J)$ for which $X 1_J = 0$.

We will shortly describe the internal structure of $\m(J)$ in more detail.  Before doing so, we recall some particular endomorphisms of $J$, whose definition goes back at least to Freudenthal \cite{freudenthalI, freudenthalII}.

To define these endomorphisms, suppose $\gamma \in J^\vee$ and $x \in J$.  Then $\Phi_{\gamma,x} \in End(J)$ is defined via
\[\Phi_{\gamma,x}(z) = - \gamma \times (x \times z) + (\gamma,z)x + (\gamma,x)z.\]
One has
\[(\Phi_{\gamma,x}(z_1),z_2,z_3) + (z_1,\Phi_{\gamma,x}(z_2),z_3) + (z_1,z_2,\Phi_{\gamma,x}(z_3)) = 2(\gamma,x)(z_1,z_2,z_3)\]
for all $z_1, z_2, z_3$ in $J$.  Thus $\Phi_{\gamma,x} \in \m(J)$.  Note that $\Phi_{\gamma,x}(z) = \Phi_{\gamma,z}(x)$.  One sets $\Phi'_{\gamma,x} = \Phi_{\gamma,x} - \frac{2}{3}(\gamma,x)$.  Then $\Phi'_{\gamma,x} \in \m(J)^{0}$.

We write $\widetilde{\Phi_{\gamma,x}}$ for endomorphism of the dual space $J^\vee$ induced by $\Phi_{\gamma,x}$. One has
\[\widetilde{\Phi_{\gamma,x}}(\mu) = x \times (\gamma \times \mu) - (x,\mu)\gamma - (x,\gamma)\mu.\]
With this action $(\Phi_{\gamma,x}(z),\mu) + (z,\widetilde{\Phi_{\gamma,x}}(\mu)) = 0$ for all $z \in J$ and $\mu \in J^\vee$.

Denote by $\iota$ the identification\footnote{Below, we abuse notation and use the same letter $\iota$ to denote an identification of a finite dimensional vector space with its dual given by a fixed non-degenerate bilinear form.  No confusion should arise, as the domain of $\iota$ and the bilinear form defining it will always be clear from the context.} $J \simeq J^\vee$ induced by the trace pairing $(\;,\;)$ on $J$.  If $X,Y \in J$, define
\[\Phi_{X \wedge Y} = \Phi_{\iota(X),Y} - \Phi_{\iota(Y),X}.\]
Then one has $\Phi_{X \wedge Y} \in \a(J)$.

\subsubsection{The Jordan product} The Jordan product on $J$ can be defined through the operators $\Phi_{\gamma,x}$.  More precisely, for $X,Y \in J$, set
\[\{X,Y\} := \Phi_{\iota(1_J),X}(Y) = \Phi_{\iota(1_J),Y}(X) = \{Y,X\}.\]
Then when $J$ is a \emph{special} cubic norm structure, so that $J$ is embedded in associative algebra $A$ in such a way that $U_x y = xyx$ for all $x,y \in J$, then $\{X,Y\} = XY + YX$.  Again for $J$ special, one has $\Phi_{\iota(X),Y}(z) = YXz + zXY$.

We sometimes write $\{Z,\bullet\}$ as shorthand for the map $J \rightarrow J$ given by $x \mapsto \{Z,x\}$.  Thus $\{Z,\bullet\} = \Phi_{\iota(1_J),Z}$.   One has the identities
\begin{equation}\label{eqn:Jord+}\Phi_{\iota(X),Y}(Z) + \Phi_{\iota(Y),X}(Z) = \{\{X,Y\},Z\}.\end{equation}
and
\begin{equation}\label{eqn:Jordwedge}[\Phi_{X\wedge Y}, \{Z,\bullet\}] = \{\Phi_{X \wedge Y}(Z),\bullet\}\end{equation}
for all $X, Y, Z \in J$.  For $J$ special, one has 
\[\Phi_{X \wedge Y}(Z) = -[XY-YX,Z] = YXZ+ZXY - XYZ-ZYX.\]

\subsubsection{The Lie algebra $\m(J)$} We now recall the description of the Lie algebra $\m(J)$.  One has $\m(J) = \a(J) \oplus J$ and $\m(J)^{0} = \a(J) \oplus J^0$, where $J^0 \subseteq J$ denotes the subspace of trace $0$ elements.

If $X \in J$, then we identify $X$ with the element $\Phi_{\iota(1_J),X} = \{X,\bullet\}$ of $\m(J)$.  Thus one obtains a map $\a(J) \oplus J \rightarrow \m(J)$, which turns out to be an isomorphism.  The commutator of $\phi \in a(J)$ with $X\in J$ is $\phi(X)$, i.e.,
\[[\phi,\Phi_{\iota(1_J),X}] = \Phi_{\iota(1_J),\phi(X)}.\]
The commutator of $X,Y \in J$ is $\Phi_{Y\wedge Z}$, i.e.,
\[[\Phi_{\iota(1),X},\Phi_{\iota(1),Y}]= \Phi_{Y \wedge X}.\]
The element $\Phi_{\gamma,x}$ decomposes in $\m(J) = \a(J) \oplus J$ as
\begin{equation}\label{eqn:12sum}\Phi_{\gamma,x} = \frac{\Phi_{\gamma,x}-\Phi_{\iota(x),\iota(\gamma)}}{2} + \frac{\Phi_{\gamma,x}+\Phi_{\iota(x),\iota(\gamma)}}{2} = \frac{1}{2}\Phi_{\iota(\gamma)\wedge x} + \frac{1}{2}\{\{\iota(\gamma),x\},\bullet\}.\end{equation}

\subsubsection{The Killing form}  We now define an invariant pairing on $\m(J)$. Because the $\Phi_{\gamma,x}$ span $\m(J)$, it suffices to define a pairing on these elements.  One sets
\[B_{\m}(\Phi_{\gamma,x},\Phi_{\gamma',x'}) = (\gamma,x)(\gamma',x') + (\gamma,x')(\gamma',x) - (\gamma \times \gamma', x \times x').\]
If $\phi \in \m(J)$, then $B_{\m}(\phi,\Phi_{\gamma,x}) = (\phi(x),\gamma)$.  

One has
\[B_\m(\Phi'_{\gamma,x},\Phi'_{\gamma',x'}) = B_\m(\Phi_{\gamma,x},\Phi_{\gamma',x'}) - \frac{2}{3}(\gamma,x)(\gamma',x') = \frac{1}{3}(\gamma,x)(\gamma',x') + (\gamma,x')(\gamma',x) - (\gamma \times \gamma', x \times x').\]

If $z_1, z_2 \in J \subseteq \m(J)$, then
\begin{align}\label{eqn:B0J} \nonumber B_\m(z_1,z_2) &= B_\m(\Phi_{1,z_1},\Phi_{1,z_2}) = 3(z_1,z_2) + \tr(z_1)\tr(z_2) - 2(1, z_1 \times z_2) \\ \nonumber &= 5(z_1,z_2) - \tr(z_1)\tr(z_2) \\ &= \frac{2}{3} \tr(z_1)\tr(z_2) + 5(z_1^{0},z_2^{0}).\end{align}
Here $z^0 = z - \frac{\tr(z)}{3}$ is the trace $0$ projection of $z$.

If $w,x,y,z\in J$, then one computes
\begin{equation}\label{eqn:B0wedge}\frac{1}{2}B_\m(\Phi_{w \wedge x},\Phi_{y \wedge z}) = (w,z)(x,y)-(x,z)(w,y) + (x \times y,w \times z) - (w\times y,x \times z).\end{equation}

\subsubsection{The Cartan involution} Recall that the pairing on $J$ gives rise to an $\iota: J\rightarrow J^\vee$ and thus an involution $\Theta_{\m}$ on $\m(J)$ via $\Theta_{\m}(\phi) = \iota^{-1} \circ \widetilde{\phi} \circ \iota.$  One computes immediately that $\Theta_{\m}(\Phi_{\gamma,x}) = - \Phi_{\iota(x),\iota(\gamma)}$.  Thus looking at (\ref{eqn:12sum}), $\Theta_{\iota}$ is $+1$ on $\a(J)$ and $-1$ on $J$.  It follows that $\Theta_{\m}$ is a Lie algebra involution.

From (\ref{eqn:B0J}), one sees that $B_{\Theta_\m}$ is positive definite on $J \subseteq \m(J)$.  From (\ref{eqn:B0wedge}), one obtains
\begin{equation}\label{eqn:posMJ} \frac{1}{2} B_{\Theta_\m}(\Phi_{w \wedge x}, \Phi_{w \wedge x}) = (w,w)(x,x) - (w,x)^2 - (w \times x, w\times x) + (w \times w,x \times x).\end{equation}
If the ground field $F=\R$, the expression (\ref{eqn:posMJ}) is always non-negative, and is $0$ if and only if $\Phi_{w\wedge x} = 0$.  Thus $\Theta_\m$ is a Cartan involution on $\m(J)$.

\subsection{The Lie algebra of $H_J$} In this subsection we recall results about the Lie algebra $\h(J)$ of $H_J$ and its subalgebra $\h(J)^{0}$, which is the Lie algebra of $H_J^{1}$.

\subsubsection{The $J$ decomposition of $\h(J)^0$} The Lie algebra $\h(J)^0$ decomposes as a direct sum 
\begin{equation}\label{eqn:hJgrade}\h(J)^0 = J^\vee \oplus \m(J) \oplus J.\end{equation}
This decomposition is in fact a $\Z$-grading, with $\m(J)$ in degree $0$, and $J$, resp. $J^\vee$ is degree $1$, resp. degree $-1$.  In this paragraph we describe this grading.

First we describe the internal structure of $\h(J)^0$ in terms of the grading (\ref{eqn:hJgrade}).  This is known as the Koecher-Tits construction of this Lie algebra, see \cite{koecherI,koecherII}, \cite{TitsTriality}, \cite{jacobsonBook}. The bracket is given by the action of $\m(J)$ on $J$ and $J^\vee$ from the left, $[J,J] = 0$, $[J^\vee,J^\vee] = 0$, and $[\gamma,x] =  \Phi_{\gamma,x}$.  Here $\gamma \in J^\vee$ and $x \in J$.  One checks easily that this bracket satisfies the Jacobi identity.  

We defined $\h(J)^0$ through its action on $W_J$.  We now describe the map $J^\vee \oplus \m(J) \oplus J \rightarrow \h(J)^0$, i.e., we explain the map $J^\vee \oplus \m(J) \oplus J \rightarrow \mathrm{End}(W_J)$.  One reference for this is \cite[section 9]{jacobsonBook}. To do so, first recall that $W_J = F \oplus J \oplus J^\vee \oplus F$.  For $x \in J$, define $n_{L}(x) \in \h(J)^0$ by
\[ n_L(x) (a,b,c,d) = (0,ax,b \times x,(c,x)).\]
Similarly, for $\gamma \in J^\vee$, define $n_{L}^\vee(\gamma)$ by
\[n_L^\vee(\gamma) (a,b,c,d) = ((b,\gamma),\gamma \times c, d \gamma, 0).\]
The subscripts ``L'' stand for Lie, so as not to confuse the notation with the similarly defined elements of the group $H_J$.  Thus for $x \in J$ and $\gamma \in J^\vee$, $n(x) = \exp(n_{L}(x))$ and $n^\vee(\gamma) = \exp(n_L^\vee(\gamma))$.  We also write $n_{G}(x)$ for $n(x)$, and similarly $n_{G}^\vee(\gamma)$ for $n^\vee(\gamma)$, to emphasize that $n_G(x) =  n(x)$ is in the group $H_J$.

Suppose $\phi \in M_J$, and $t(\phi)$ is the multiplier of $\phi$, i.e., 
\[(\phi(z_1),z_2,z_3) + (z_1,\phi(z_2),z_3) + (z_1,z_2,\phi(z_3)) = t(\phi)(z_1,z_2,z_3).\]
Note that the exponential of $\phi$ satisfies $N(e^{\phi}z) = e^{t(\phi)}N(z)$.  We let $\m(J)$ act on $W_J$ as
\[ M(\phi) (a,b,c,d) = \left(-\frac{t(\phi)}{2} a, -\frac{t(\phi)}{2} b + \phi(b), \frac{t(\phi)}{2} c + \widetilde{\phi}(c), + \frac{t(\phi)}{2} d\right).\]
These actions preserve the symplectic and quartic form.  Hence one obtains a map
\[ J^\vee \oplus M_J \oplus J \rightarrow \h(J)^0\]
by 
\begin{equation}\label{eqn:HJmorph}\gamma + \phi + X \mapsto n_{L}^\vee(\gamma) + M(\phi) + n_{L}(-X).\end{equation}
This map is a Lie algebra homomorphism.

We now record an easy lemma, which we will use below.
\begin{lemma}\label{lem:nconjs} Suppose $x \in J$ and $\gamma \in J^\vee$.  Then
\[n_{G}(x) n_{L}^\vee(\gamma) n_{G}(-x) = n_{L}^\vee(\gamma) + M(\Phi_{\gamma,x}) - n_{L}(U_{x}(\gamma))\]
and
\[n_{G}^\vee(\gamma) n_{L}(x) n_{G}^\vee(-\gamma) = n_{L}(x)-M(\Phi_{\gamma,x}) - n_{L}^\vee(U_{\gamma}(x)).\]
Consequently,
\[n_{G}(-i) n_{L}^\vee(\iota(Z))n_{G}(i) = n_{L}^\vee(Z) - i M(\Phi_{1,Z}) + n_{L}(Z)\]
and
\[n_{G}^\vee(i) n_{L}(Z)n_{G}^\vee(-i) = n_{L}^\vee(Z) - i M(\Phi_{1,Z}) + n_{L}(Z).\]
\end{lemma}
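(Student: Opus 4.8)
The plan is to reduce each conjugation identity to the adjoint action and exploit the $\Z$-grading (\ref{eqn:hJgrade}) of $\h(J)^0$. Since $n_G(x) = \exp(n_L(x))$ and $n_L$ is linear, $n_G(-x) = n_G(x)^{-1}$, so
\[n_G(x)\, n_L^\vee(\gamma)\, n_G(-x) = \mathrm{Ad}(n_G(x))\, n_L^\vee(\gamma) = \exp(\mathrm{ad}(n_L(x)))\, n_L^\vee(\gamma).\]
The element $n_L(x)$ lies in the degree $+1$ piece $J$, while $n_L^\vee(\gamma)$ lies in degree $-1$. Because $\mathrm{ad}(n_L(x))$ raises the grading by one and $[J,J]=0$, the third-order term of the series involves $[n_L(x),\,(\text{a degree }{+1}\text{ element})] \in [J,J] = 0$ and hence vanishes; the series truncates to
\[\mathrm{Ad}(n_G(x))\, n_L^\vee(\gamma) = n_L^\vee(\gamma) + [n_L(x), n_L^\vee(\gamma)] + \tfrac12[n_L(x),[n_L(x), n_L^\vee(\gamma)]].\]

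It remains to identify the two brackets. First I would compute $[n_L(x), n_L^\vee(\gamma)]$ directly from the explicit actions on $W_J = F\oplus J\oplus J^\vee\oplus F$, matching it against the formula for $M(\phi)$ with $\phi = \Phi_{\gamma,x}$; here one uses $t(\Phi_{\gamma,x}) = 2(\gamma,x)$, so that $\tfrac{t(\Phi_{\gamma,x})}{2} = (\gamma,x)$, and the definitions of $\Phi_{\gamma,x}$ and $\widetilde{\Phi_{\gamma,x}}$ reproduce exactly the four components of $M(\Phi_{\gamma,x})$. This gives $[n_L(x),n_L^\vee(\gamma)] = M(\Phi_{\gamma,x})$, the degree $0$ term. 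For the degree $+1$ term, a second direct computation on $W_J$ (or the abstract rule $[\phi,Y]=\phi(Y)$ under (\ref{eqn:HJmorph})) yields $[n_L(x),M(\Phi_{\gamma,x})] = -n_L(\Phi_{\gamma,x}(x))$, where one reads off the fourth component using the duality $(\Phi_{\gamma,x}(z),\mu) + (z,\widetilde{\Phi_{\gamma,x}}(\mu)) = 0$. Since $\Phi_{\gamma,x}(x) = 2U_x(\gamma)$ --- a consequence of $x\times x = 2x^\#$ and the definition of $U_x$ --- the quadratic term equals $-n_L(U_x(\gamma))$, giving the first identity. The second identity follows by the same computation with the roles of $J$ and $J^\vee$ interchanged: $[n_L^\vee(\gamma),n_L(x)] = -M(\Phi_{\gamma,x})$ and $[n_L^\vee(\gamma),M(\Phi_{\gamma,x})] = n_L^\vee(2U_\gamma(x))$, where $[J^\vee,J^\vee]=0$ again truncates the series.

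For the ``consequently'' statements I would specialize. Take $x = -i1_J$ and $\gamma = \iota(Z)$ in the first identity. The first term is $n_L^\vee(\iota(Z)) = n_L^\vee(Z)$ under $\iota$. For the middle term, $\Phi_{\iota(Z),-i1_J} = -i\Phi_{\iota(Z),1_J}$, and I would show $\Phi_{\iota(Z),1_J} = \Phi_{\iota(1_J),Z} = \Phi_{1,Z}$ using the decomposition (\ref{eqn:12sum}) together with $\{Z,1_J\}=2Z$ and $\Phi_{Z\wedge 1_J} = \Phi_{\iota(Z),1_J}-\Phi_{\iota(1_J),Z}$, which force the two to agree; this yields $-iM(\Phi_{1,Z})$. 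For the last term, $(-i1_J)^\# = -1_J$, so $U_{-i1_J}(\iota(Z)) = 1_J\times Z - \tr(Z)1_J = -Z$ by $1_J\times Z = \tr(Z)1_J - Z$, whence $-n_L(U_{-i1_J}(\iota(Z))) = n_L(Z)$. The final consequence is identical after specializing the second identity at $\gamma = i1_J$, $x = Z$, where $U_{i1_J}(Z) = -Z$ by the same computation.

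The main obstacle is bookkeeping the cubic-norm identities cleanly, in particular the nonobvious equality $\Phi_{\iota(Z),1_J} = \Phi_{\iota(1_J),Z}$ and the two $U$-computations; all of these reduce to the axioms of the cubic norm structure (notably $1_J^\# = 1_J$, $1_J\times x = \tr(x)1_J - x$, and $x\times x = 2x^\#$), but care with the identification $\iota\colon J\simeq J^\vee$ and the signs coming from the grading and from $n_G(-x)=n_G(x)^{-1}$ is needed throughout.
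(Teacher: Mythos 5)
Your proposal is correct and follows essentially the same route as the paper: compute $\mathrm{Ad}(n_G(x))$ via the adjoint exponential series in the three-step $\Z$-grading of $\h(J)^0$, observe that the series truncates after the quadratic term since $[J,J]=0$, identify $[\gamma,x]=\Phi_{\gamma,x}$ and $\tfrac12\Phi_{\gamma,x}(x)=U_x(\gamma)$, and deduce the ``consequently'' statements from $\Phi_{\iota(Z),1_J}=\Phi_{\iota(1_J),Z}$ together with the evaluation of $U_{\pm i 1_J}$. All the sign and normalization checks you flag (the $X\mapsto n_L(-X)$ convention in (\ref{eqn:HJmorph}), $x\times x=2x^\#$, $1_J\times Z=\tr(Z)1_J-Z$) are exactly the bookkeeping the paper's proof relies on.
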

\begin{proof} Computing in the $3$-part $\Z$ grading of $\h(J)^0$, we have
\begin{align*} n_{G}(x) n_{L}^\vee(\gamma) n_{G}(-x) &= \exp(-x) \gamma \exp(x) \\ &=\gamma + [\gamma,x] + \frac{1}{2}[[\gamma,x],x].\end{align*}
The sum stops here because $x$ has degree $1$ so all other terms are in degree $> 1$ and thus $0$.  Now $[\gamma,x] = \Phi_{\gamma,x}$ and then $\frac{1}{2}[[\gamma,x],x] = \frac{1}{2}\Phi_{\gamma,x}(x) = U_{x}(\gamma)$.  The statement with $n_{G}^\vee(\gamma)$ is similar.  The first part of the lemma follows.

The second part of the lemma follows immediately from the first, using that $\Phi_{\iota(Z),1} = \Phi_{\iota(1),Z}$.\end{proof}

\subsubsection{The map $Sym^2(W_J) \rightarrow \h(J)^0$} Because $H_J^{1} \subseteq \Sp(W_J)$, there is an inclusion $\h(J)^{0} \subseteq \sp(W_J) \simeq Sym^2(W_J)$.  Because these Lie algebras are self-dual, one obtains an $H_J^{1}$-equivariant map $Sym^2(W_J) \rightarrow \h(J)^{0}$.  We now describe this map precisely in terms of the Freudenthal triple product; this is due to Freudenthal \cite{freudenthalIV}.  This map $Sym^2(W_J) \rightarrow \h(J)^0$ is used to define the Lie algebra structure on $\g(J)$ \cite{freudenthalIV,yamagutiAsano}, which we will do in the next section.

Recall that the symmetric four-linear form is normalized so that $(v,v,v,v) = 2q(v)$, the trilinear form is normalized via $\langle w, t(x,y,z)\rangle = (w,x,y,z)$, and $v^\flat = t(v,v,v)$.  With these normalizations, one has the following two identities, which are more-or-less equivalent to the fact that the trilinear map $t$ satisfies the identities of a Freudenthal triple system:
\begin{align} \label{eqn:FTS1} 3t(v^\flat, v^\flat, x) + 3q(v) t(v,v,x) &= \langle x, v^\flat\rangle v^\flat + q(v) \langle x,v\rangle v \\ \label{eqn:FTS2} 6t(v,v^\flat, x) &= \langle x, v\rangle v^\flat + \langle x, v^\flat \rangle v.\end{align}

\begin{remark} We remark that one convenient way to remember the identities (\ref{eqn:FTS1}), (\ref{eqn:FTS2}) is as follows.  Suppose $q(v) \neq 0$, and $\omega$ is the image of $x$ in $F[x]/(x^2 - q(v))$.  Then 
\begin{equation}\label{eqn:firstLL} 3t(\omega v + v^\flat, \omega v +v^\flat, x) = \langle x, \omega v + v^\flat \rangle (\omega v + v^\flat).\end{equation}
See \cite[Theorem 5.1.1.]{pollackLL} for some context regarding (\ref{eqn:firstLL}).  Separating the ``real'' and ``imaginary'' parts, one gets the two identities (\ref{eqn:FTS1}), (\ref{eqn:FTS2}).\end{remark}

With these normalizations, for $w, w' \in W_J$ define $\Phi_{w,w'} \in \mathrm{End}(W_J)$ as follows:
\[ \Phi_{w,w'}(x) = 6t(w,w',x) + \langle w',x\rangle w + \langle w,x \rangle w'.\]
We have the following fact.
\begin{proposition} For $w,w' \in W_J$, the endomorphism $\Phi_{w,w'}$ is in $\h(J)^0$, i.e., it preserves the symplectic and quartic form on $W_J$. Furthermore, if $\phi \in \h(J)^0$, then $[\phi,\Phi_{w,w'}] = \Phi_{\phi(w),w'} + \Phi_{w,\phi(w')}$. \end{proposition}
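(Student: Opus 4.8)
The plan is to verify directly the two assertions: that $\Phi_{w,w'}$ lies in $\h(J)^0$, and that $(w,w') \mapsto \Phi_{w,w'}$ intertwines the adjoint action of $\h(J)^0$ with its obvious action on the arguments. Since $\h(J)^0$ is the Lie algebra of $H_J^1$, membership amounts to two infinitesimal conditions: that $\Phi_{w,w'}$ is skew with respect to $\langle\,,\,\rangle$, i.e. $\langle \Phi_{w,w'}(x),y\rangle + \langle x,\Phi_{w,w'}(y)\rangle = 0$, and that it annihilates the quartic form, i.e. $(\Phi_{w,w'}(v),v,v,v) = 0$ for all $v$ (equivalently, by polarization in characteristic $0$, the full derivation condition for the four-linear form). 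First I would record the two facts used throughout: the four-linear form is totally symmetric while $\langle\,,\,\rangle$ is alternating, and the two are tied together by $\langle a, t(x,y,z)\rangle = (a,x,y,z)$.

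The symplectic condition is immediate. Expanding $\langle\Phi_{w,w'}(x),y\rangle + \langle x,\Phi_{w,w'}(y)\rangle$ and using $\langle t(w,w',x),y\rangle = -(w,w',x,y)$, the two $t$-contributions cancel by total symmetry of the four-linear form, while the four remaining scalar terms cancel in pairs after rewriting $\langle x,\Phi_{w,w'}(y)\rangle = -\langle\Phi_{w,w'}(y),x\rangle$; hence the sum vanishes.

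The quartic condition is the crux, and here I would invoke the Freudenthal identity (\ref{eqn:FTS2}). Writing $(\Phi_{w,w'}(v),v,v,v) = \langle \Phi_{w,w'}(v), v^\flat\rangle$ and expanding, the delicate term is $6\langle t(w,w',v),v^\flat\rangle$. I would rewrite this, using total symmetry of the four-linear form, as $-6(v,v^\flat,w,w') = -6\langle w, t(v,v^\flat,w')\rangle$, and then apply (\ref{eqn:FTS2}) in the form $6t(v,v^\flat,w') = \langle w',v\rangle v^\flat + \langle w',v^\flat\rangle v$. The outcome is precisely the negative of the two scalar terms $\langle w',v\rangle\langle w,v^\flat\rangle + \langle w,v\rangle\langle w',v^\flat\rangle$ produced by the rest of $\Phi_{w,w'}(v)$, so everything cancels and $(\Phi_{w,w'}(v),v,v,v)=0$. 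This establishes $\Phi_{w,w'} \in \h(J)^0$.

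For the bracket identity, the key preliminary step is that any $\phi \in \h(J)^0$ is a derivation of the trilinear product $t$, namely $\phi(t(x,y,z)) = t(\phi x, y, z) + t(x,\phi y, z) + t(x,y,\phi z)$. This follows by pairing both sides against an arbitrary $w \in W_J$: the left side gives $-(\phi w, x, y, z)$ using skew-symmetry of $\phi$ with respect to $\langle\,,\,\rangle$, the right side gives $(w,\phi x, y, z) + (w,x,\phi y, z) + (w,x,y,\phi z)$, and these agree because $\phi$ annihilates the totally symmetric four-linear form; non-degeneracy of $\langle\,,\,\rangle$ then gives the claim. Granting this, I would expand $[\phi,\Phi_{w,w'}](x) = \phi(\Phi_{w,w'}(x)) - \Phi_{w,w'}(\phi x)$; the two copies of $6t(w,w',\phi x)$ cancel, and after replacing $\langle w,\phi x\rangle$ by $-\langle \phi w, x\rangle$ and $\langle w',\phi x\rangle$ by $-\langle \phi w', x\rangle$, the remaining six terms regroup verbatim into $\Phi_{\phi(w),w'}(x) + \Phi_{w,\phi(w')}(x)$. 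The main obstacle is thus isolated in the quartic computation, where the Freudenthal identity (\ref{eqn:FTS2}) is doing the real work; the remaining manipulations are formal bookkeeping with the symmetry properties of the two forms.
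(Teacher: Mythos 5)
Your proposal is correct and follows the same route as the paper: the symplectic condition is a direct cancellation, the quartic condition is exactly the evaluation of $\langle \Phi_{w,w'}(v), v^\flat\rangle$ via the Freudenthal identity (\ref{eqn:FTS2}), and the equivariance is the formal bookkeeping the paper declares "easily checked." You have simply written out the details (including the useful observation that any $\phi \in \h(J)^0$ is a derivation of $t$) that the paper's terse proof leaves to the reader.
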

\begin{proof} The fact that $\Phi_{w,w'}$ preserves the symplectic form follows immediately from the definitions.  To check that is preserves the quartic form, one must evaluate $\langle \phi_{w,w'}(v), v^\flat\rangle$.  To do this, one uses (\ref{eqn:FTS2}), and obtains $0$, as desired.  The equivariance statement $[\phi,\Phi_{w,w'}] = \Phi_{\phi(w),w'} + \Phi_{w,\phi(w')}$ is easily checked.\end{proof}

\subsubsection{Formulas in $W_J$} It is convenient to know how the element $\Phi_{w,w'} \in \h(J)^0$ decomposes in the three-term $\Z$-grading (\ref{eqn:hJgrade}).  This is well-known; see, for instance, \cite{yamagutiAsano}.  To ensure all of our normalizations are correct, we briefly give this computation here.  We begin with a few formulas for expressions in $W_J$. 

We begin with formulas for $v^\flat$ and $3t(v,v,x)$ for $v, x \in W_J$.  Set $v = (a,b,c,d)$ and $x = (\alpha,\beta,\gamma,\delta)$. Then one has $v^\flat =(a^\flat, b^\flat, c^\flat,d^\flat)$ with 
\begin{itemize}
\item $a^\flat = -a^2d +a(b,c)-2n(b)$;
\item $b^\flat = - 2 c \times b^\# + 2ac^\# - (ad-(b,c))b$;
\item $c^\flat = 2b \times c^\# - 2db^\#+(ad-(b,c))c$;
\item $d^\flat = ad^2 - d(b,c)+2n(c)$.\end{itemize}

Symmetrizing this formula for $v^\flat$, one finds that $3t(v,v,x) = (a',b',c',d')$, with
\begin{align*} a' &= \alpha( (b,c) - 2ad) + (\beta, ac-2b^\#) + (\gamma, ab) - \delta a^2 \\ b' &= \alpha(2c^\# - bd) + ((b,c)-ad)\beta - 2c \times (b \times \beta) + (\beta,c)b + 2(ac-b^\#) \times \gamma + (b,\gamma)b - \delta ab \\ c' &= \alpha (dc) + 2(c^\#-db) \times \beta + (ad-(b,c))\gamma + 2b \times (c \times \gamma) - (b,\gamma)c  + (ac-2b^\#)\delta \\ d' &= \alpha d^2 + (\beta,-dc) + (2c^\# - db,\gamma) + (2ad - (b,c))\delta.\end{align*}

The element 
\begin{equation}\label{eqn:Phivv} \frac{1}{2}\Phi_{v,v}(x) = 3t(v,v,x) + \langle v,x\rangle v = (a'',b'',c'',d'')\end{equation}
has a nicer expression.  One finds
\begin{align*} a'' &= \alpha ((b,c) - 3ad) + 2(\beta, ac-b^\#) \\ b'' &= 2\alpha (c^\#-db) + \frac{1}{3}((b,c)-3ad) \beta + 2 \Phi'_{\iota(c),b}(\beta) + 2(ac-b^\#) \times \gamma \\ c'' &= 2(c^\#-db) \times \beta + \frac{1}{3}(3ad- (b,c))\gamma - 2 \Phi'_{\iota(b),c}(\gamma) + 2 \delta(ac-b^\#) \\ d'' &= 2(c^\#-db,\gamma) + (3ad - (b,c))\delta.\end{align*}
Here recall that for $x,y,z \in J$,
\[\Phi'_{\iota(x),y}(z) = - x \times (y \times z) + (x,z)y + \frac{1}{3}(x,y) z.\]

We now record how $\Phi_{w,w'}$ looks like in the $\m(J) \oplus (J \oplus J^\vee)$ picture.  Denote by $\phi_s$ the element of $\h(J)^{0}$ that acts on $W_J$ by
\[\phi_s (a,b,c,d) = (3a,b,-c,-3d).\]
This is the differential of the one parameter subgroup $z \mapsto \eta(z)$ in $H_J^{1}$, which acts on $W_J$ via $(a,b,c,d) \mapsto (z^3 a, zb, z^{-1}c,z^{-3}d)$.  Furthermore, write $\mathrm{Id}_{J}$ for the identity map $J \rightarrow J$, considered as element of $\m(J)$.  Then $\mathrm{Id}_{J}$ acts on $J^\vee$ via $\gamma \mapsto -\gamma$, and $t(\mathrm{Id}_{J}) = 3$.
\begin{proposition}\label{prop:NPhi} Suppose $v = (a,b,c,d) \in W_J$.  Then $M(\mathrm{Id}_{J}) = -\frac{1}{2}\phi_{s}$ and 
\begin{equation}\label{eqn:PhivvGood}\frac{1}{4} \Phi_{v,v} = n_{L}(c^\#-db) + n_{L}^\vee(ac-b^\#) + M(\Phi_{c,b} + (ad-(b,c))\mathrm{Id}_{J}).\end{equation}
Consequently, $n_L(X) = -\frac{1}{2}\Phi_{(0,X,0,0),(0,0,0,1)}$, $n_L^\vee(\gamma) = \frac{1}{2}\Phi_{(1,0,0,0),(0,0,\gamma,0)}$, and 
\[\frac{1}{2}\Phi_{(a,b,0,0),(a',b',0,0)} = n_{L}^\vee(-b \times b').\]
.\end{proposition}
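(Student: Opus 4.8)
The plan is to verify each assertion by computing the action of the relevant element of $\h(J)^0$ on a general vector $x = (\alpha,\beta,\gamma,\delta) \in W_J$ and comparing with the explicit formula for $\tfrac12\Phi_{v,v}(x) = (a'',b'',c'',d'')$ recorded in (\ref{eqn:Phivv}). The identity $M(\mathrm{Id}_{J}) = -\tfrac12 \phi_s$ is immediate: since $t(\mathrm{Id}_{J}) = 3$ and $\mathrm{Id}_J$ acts by $+1$ on $J$ and by $-1$ on $J^\vee$, the defining formula for $M(\phi)$ gives $M(\mathrm{Id}_{J})(a,b,c,d) = (-\tfrac32 a, -\tfrac12 b, \tfrac12 c, \tfrac32 d)$, which is exactly $-\tfrac12\phi_s(a,b,c,d)$.

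For the main identity (\ref{eqn:PhivvGood}), I would set $X = c^\#-db$, $\gamma_0 = ac-b^\#$, and $\phi_0 = \Phi_{c,b} + (ad-(b,c))\mathrm{Id}_{J}$, and first record the multiplier $t(\phi_0) = 2(b,c) + 3(ad-(b,c)) = 3ad-(b,c)$, using $t(\Phi_{c,b}) = 2(b,c)$ and $t(\mathrm{Id}_J) = 3$. Applying the explicit actions of $n_{L}(X)$, $n_{L}^\vee(\gamma_0)$ and $M(\phi_0)$ to $x$ and summing, I would compare the four coordinates against $\tfrac14\Phi_{v,v}(x) = \tfrac12(a'',b'',c'',d'')$. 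The two scalar coordinates match at once. The genuinely delicate step, which I expect to be the main obstacle, is the $J$- and $J^\vee$-coordinates: there the contribution of $M(\phi_0)$ is $\Phi_{c,b}(\beta)$ together with the multiplier scalars, and one must reconcile it with the $\tfrac13((b,c)-3ad)\beta + 2\Phi'_{\iota(c),b}(\beta)$ appearing in $b''$. This follows from $\Phi_{c,b} = \Phi'_{\iota(c),b} + \tfrac23(b,c)$, after which the scalar multiple of $\beta$ collapses to $\tfrac{(b,c)-3ad}{6}$, matching $\tfrac12 b''$. The $J^\vee$-coordinate is handled the same way, using $\widetilde{\Phi_{c,b}}(\gamma) = -\Phi'_{\iota(b),c}(\gamma) - \tfrac23(b,c)\gamma$ (a direct consequence of the explicit formulas for $\widetilde{\Phi_{\gamma,x}}$ and $\Phi'_{\iota(x),y}$) together with $\widetilde{\mathrm{Id}_J} = -\mathrm{Id}$ on $J^\vee$; the scalar again collapses to $\tfrac{3ad-(b,c)}{6}$, matching $\tfrac12 c''$.

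Finally, the three ``consequently'' formulas follow by polarizing (\ref{eqn:PhivvGood}) via $\Phi_{w,w'} = \tfrac12(\Phi_{w+w',w+w'} - \Phi_{w,w} - \Phi_{w',w'})$. Evaluating (\ref{eqn:PhivvGood}) at $v = (0,X,0,0)$, $v=(0,0,0,1)$ and $v=(0,X,0,1)$ gives $\Phi_{(0,X,0,0),(0,0,0,1)} = -2n_{L}(X)$; evaluating at $(1,0,0,0)$, $(0,0,\gamma,0)$ and their sum gives $n_{L}^\vee(\gamma) = \tfrac12\Phi_{(1,0,0,0),(0,0,\gamma,0)}$. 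For the last identity, (\ref{eqn:PhivvGood}) at $v = (a,b,0,0)$ reads $\Phi_{v,v} = 4n_{L}^\vee(-b^\#)$, so polarizing and using $(b+b')^\# = b^\# + b\times b' + (b')^\#$ together with the linearity of $n_{L}^\vee$ yields $\tfrac12\Phi_{(a,b,0,0),(a',b',0,0)} = n_{L}^\vee(-b\times b')$. These last verifications are short once the main identity is established.
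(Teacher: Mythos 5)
Your proposal is correct and follows essentially the same route as the paper: the identity $M(\mathrm{Id}_J)=-\tfrac12\phi_s$ and the main formula (\ref{eqn:PhivvGood}) are verified coordinate-by-coordinate against the expression (\ref{eqn:Phivv}) for $\tfrac12\Phi_{v,v}(x)$, and the three consequences are obtained by specializing and polarizing. The only cosmetic difference is that the paper derives $n_L(X)=-\tfrac12\Phi_{(0,X,0,0),(0,0,0,1)}$ by applying the formula to $(0,X,0,-1)$ and invoking $\Phi_{v,v}=0$ for rank-one $v$, whereas you polarize directly at the three vectors; the computations are equivalent.
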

\begin{proof} The first two statements follow immediately from the definitions and the formulas (\ref{eqn:Phivv}).  For the last part, first note that $\Phi_{v,v} = 0$ if $v$ is rank one in $W_J$.  Then the formula for $n_L(X)$ comes from applying (\ref{eqn:PhivvGood}) to $(0,X,0,-1)$.  The proof of the formula for $n_L^\vee(\gamma)$ is similar.  The final identity comes from linearizing the equality $\frac{1}{4}\Phi_{(a,b,0,0),(a,b,0,0)} = n_{L}^\vee(-b^\#)$.\end{proof}

\subsubsection{The Killing form} Define an invariant pairing $B_{\h}$ on $\h(J)^{0}$ by
\[B_{\h}((\phi,a,\gamma),(\phi',a',\gamma')) = B_{\m}(\phi,\phi') - (a,\gamma') - (a',\gamma).\]
Here $\phi,\phi' \in \m(J)$, $a, a' \in J$ and $\gamma, \gamma' \in J^\vee$.  

Thinking of $\h(J)^{0}$ as defined via its action on $W_J$, as opposed to its description via the $3$-step $\Z$-grading, it is natural to ask how the pairing $B_{\h}$ looks like on elements of the form $\Phi_{w,w'}$.  One has the following:
\[B_{\h}(\Phi_{w_1,w_1'},\Phi_{w_2,w_2'}) = -2\left(\langle w_1, w_2' \rangle \langle w_1',w_2 \rangle +\langle w_1, w_2 \rangle \langle w_1', w_2'\rangle\right) +12(w_1,w_1',w_2,w_2')\]
for $w_1,w_1',w_2,w_2' \in W_J$.  The pairing $B_{\h}$ satisfies
\[B_{\h}(\Phi_{w,w'},\phi) = 2\langle w, \phi(w')\rangle.\]

\subsubsection{The Cartan involution} Suppose the ground field $F=\R$.  The Cartan involution on $\h(J)^0$ is $\Theta_{\h}((\phi,x,\gamma)) = (\Theta_{\m}(\phi),\iota(\gamma),\iota(x))$.  Here $\Theta_{\m}$ is the Cartan involution on $\m(J)$ from above, induced by $\iota: J \rightarrow J^\vee$, or given on $\Phi$'s by $\Theta_\m(\Phi_{\gamma,x}) =  - \Phi_{\iota(x),\iota(\gamma)}$.  With this definition, $\Theta_{\h}$ is a Lie algebra homomorphism, an involution, and $B_{\Theta_{\h}}$ is positive definite and symmetric.

Thinking of $\h(J)^{0}$ as being defined through its action on $W_J$, there is another way to define a Cartan involution.  Namely, consider the map $J_2$ on $W_J$, given by $J_2(a,b,c,d) = (d,-\iota(c),\iota(b),-a)$.  Define a symmetric pairing on $W_J$ via $(v_1, v_2) := \langle J_2 v_1, v_2 \rangle$.  Since $J_2$ is in $H_J^{1}$, there is an associated involution on $\h_J$ given by $\Theta(\phi) = J_2\phi J_2^{-1}$.  One has $\Theta(\Phi_{w,w'}) = \Phi_{J_2w,J_2w'}$.

The two Cartan involutions we have defined are compatible via the map (\ref{eqn:HJmorph}).  To see this, one computes that $J_2 n_{L}(X) J_2^{-1} = n_{L}^\vee(-\iota(X))$ and similarly $J_2 n_{L}^\vee(\gamma)J_2^{-1} = n_{L}(-\iota(\gamma))$.  Furthermore, $J_2 M(\phi) J_2^{-1} = M(\iota \circ \widetilde{\phi} \circ \iota^{-1}) = M(\Theta(\phi))$.

\section{Lie algebras of exceptional groups, II}\label{sec:LieII}  In this section we define the Lie algebra $\g(J)$.  We first define $\g(J)$ via a $\Z/2$-grading.  Such a construction goes back to Freudenthal \cite{freudenthalIV}, see also \cite{yamagutiAsano}.  As is well-known, the $\Z/2$-grading is essentially equivalent to defining $\g(J)$ via a $5$-step $\Z$-grading. Rumelhart, in \cite{rumelhart}, defined the same Lie algebra in terms of a $\Z/3$-grading.  We have found it useful to use both the $\Z/2$-grading and the $\Z/3$ grading to do computations, although we mostly use the $\Z/2$ grading.  Moreover, we expect that for different problems concerning the group $G_J$, the $\Z/3$-grading will be better suited to computation.  Thus we also recall the description of $\g(J)$ from \cite{rumelhart}, and write down an explicit isomorphism between the $\Z/2$ and $\Z/3$ pictures.

\subsection{The $\Z/2$ grading on $\g(J)$} As mentioned above, this construction goes back to Freudenthal \cite{freudenthalIV}.  See also \cite{yamagutiAsano}. Denote by $V_2$ the defining two-dimensional representation of $\sl_2 = \sp_{2}$.  We define
\[\g(J) = \g(J)_0 \oplus \g(J)_1 := \left(\sl_2 \oplus \h(J)^{0}\right) \oplus \left(V_2 \otimes W_J\right).\]
Here $\g(J)_0 = \sl_2 \oplus \h(J)^0$ is the zeroth graded piece of $\g(J)$, and $\g(J)_1 = V_2 \otimes W_J$ is the first graded piece of $\g(J)$.  We fix the standard symplectic pairing on $V_2$:
\[\langle (a,b)^{t}, (c,d)^{t} \rangle = \left(\begin{array}{cc} a& b\end{array}\right)\left(\begin{array}{cc} & 1\\ -1 & \end{array}\right)\left(\begin{array}{c} c \\ d \end{array}\right) = ad - bc.\]

\subsubsection{The bracket} Fix $\alpha \in F^\times$, $F$ being the ground field (which is assumed to be characteristic $0$.)  We define a map $[\;,\;]_{\alpha}:\g(J) \otimes \g(J) \rightarrow \g(J)$ as follows: If $\phi, \phi' \in \g(J)_0 = \sl_2 \oplus \h(J)^0$, $v, v' \in V_2$, and $w, w' \in W_J$, then 
\[ [(\phi,v\otimes w),(\phi',v'\otimes w')]_{\alpha} = \left([\phi,\phi'] + \alpha \langle w,w'\rangle (v \cdot v') + \alpha \langle v,v'\rangle \Phi_{w,w'},\phi(v' \otimes w') - \phi'(v \otimes w)\right).\]
With this definition, we have the following fact.
\begin{proposition} The bracket $[\;,\;,]_{\alpha}$ on $\g(J)$ satisfies the Jacobi identity.\end{proposition}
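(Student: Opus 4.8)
The plan is to verify the Jacobi identity directly, but to organize the computation by exploiting the grading and the bilinearity of the bracket so as to minimize the number of cases. Recall that $\g(J) = \g(J)_0 \oplus \g(J)_1$ is $\Z/2$-graded, with $\g(J)_0 = \sl_2 \oplus \h(J)^0$ and $\g(J)_1 = V_2 \otimes W_J$. Since the Jacobi expression $J(X,Y,Z) := [[X,Y],Z] + [[Y,Z],X] + [[Z,X],Y]$ is trilinear and alternating, it suffices to check it on a spanning set of homogeneous elements, and by the $\Z/2$-grading there are only four cases to consider according to how many of $X,Y,Z$ lie in $\g(J)_1$: namely $(0,0,0)$, $(0,0,1)$, $(0,1,1)$, and $(1,1,1)$, where the entries record the degrees.

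First I would dispose of the case $(0,0,0)$, where all three elements lie in $\g(J)_0 = \sl_2 \oplus \h(J)^0$. Here the bracket restricts to the direct-sum Lie bracket on $\sl_2 \oplus \h(J)^0$, so the Jacobi identity is inherited from the fact that $\sl_2$ and $\h(J)^0$ are Lie algebras. The case $(0,0,1)$ reduces to checking that the action of $\g(J)_0$ on $\g(J)_1 = V_2 \otimes W_J$ is a genuine Lie algebra action, i.e.\ that $[\phi,\phi']$ acts as $\phi\phi' - \phi'\phi$ on $V_2 \otimes W_J$; this is immediate since $\sl_2$ acts on $V_2$ and $\h(J)^0$ acts on $W_J$ in the standard way, and the two actions commute. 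The case $(0,1,1)$ is the first substantive one: taking $\phi \in \g(J)_0$ and two elements $v\otimes w, v'\otimes w' \in \g(J)_1$, the Jacobi identity unwinds into the requirement that $\phi$ act as a derivation of the symmetric bracket $\g(J)_1 \otimes \g(J)_1 \to \g(J)_0$ given by $\alpha\langle w,w'\rangle (v\cdot v') + \alpha\langle v,v'\rangle \Phi_{w,w'}$. This follows from the two equivariance facts already recorded in the excerpt: that $\sl_2$ preserves $\langle\,,\,\rangle$ on $V_2$ and acts as a derivation of the symmetric-square product $v\cdot v'$ (the $\sp(W)$ bracket formula $[\psi, w\cdot w'] = \psi(w)\cdot w' + w\cdot\psi(w')$ applied to $V_2$), and the proposition that $[\phi,\Phi_{w,w'}] = \Phi_{\phi(w),w'} + \Phi_{w,\phi(w')}$ together with $H_J^1$-invariance of $\langle\,,\,\rangle$ on $W_J$.

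The genuinely hard case, and the main obstacle, is $(1,1,1)$, where all three arguments lie in $\g(J)_1 = V_2 \otimes W_J$. Writing $X = v_1\otimes w_1$, $Y = v_2 \otimes w_2$, $Z = v_3 \otimes w_3$, each double bracket $[[X,Y],Z]$ lands back in $\g(J)_1$, and the Jacobi sum becomes an identity asserting that a certain $V_2\otimes W_J$-valued expression, built from the $\sl_2$-action of $v_i\cdot v_j$ on $v_k$ and from the $\h(J)^0$-action of $\Phi_{w_i,w_j}$ on $w_k$, vanishes. The plan here is to separate variables: using the $\sl_2$ bracket formula one reduces the $V_2$-dependence to the Plücker-type identity among $\langle v_i,v_j\rangle v_k$ (the fundamental three-term relation in a two-dimensional symplectic space), and the $W_J$-dependence to an identity among $\Phi_{w_i,w_j}(w_k)$ and $\langle w_i,w_j\rangle w_k$. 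The latter is precisely where the Freudenthal triple system identities (\ref{eqn:FTS1}) and (\ref{eqn:FTS2}) enter: after polarizing, they control how $6t(w,w',\cdot)$ interacts with the symplectic form, and they are exactly the relations needed to make the $W_J$-part of the Jacobiator collapse. I would expect to organize this by first fully polarizing $\Phi_{w,w'}(x) = 6t(w,w',x) + \langle w',x\rangle w + \langle w,x\rangle w'$ and the identity (\ref{eqn:FTS2}), then matching the resulting terms against the symmetric four-linear form $(w_1,w_2,w_3,w_4)$ appearing through $\langle w, t(x,y,z)\rangle$; the careful bookkeeping of the coefficient $\alpha$ and of the two-term versus three-term symmetrizations is where errors are most likely, and is the part that genuinely requires the structural identities of the Freudenthal triple system rather than formal manipulation.
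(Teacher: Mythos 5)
Your case division and your treatment of the first three cases coincide with the paper's proof: when all three arguments lie in $\g(J)_0$ the identity is the Lie algebra axiom for $\sl_2\oplus\h(J)^0$; when two lie in $\g(J)_0$ it is the statement that $\g(J)_1=V_2\otimes W_J$ is a $\g(J)_0$-module; when one lies in $\g(J)_0$ it is the equivariance of the symmetric map $(v\otimes w,v'\otimes w')\mapsto \alpha\langle w,w'\rangle\, v\cdot v'+\alpha\langle v,v'\rangle\,\Phi_{w,w'}$, which rests on $[\phi,\Phi_{w,w'}]=\Phi_{\phi(w),w'}+\Phi_{w,\phi(w')}$ and the invariance of the two symplectic forms. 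Up to this point your argument is exactly the paper's.

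The divergence is in the case of three elements of $\g(J)_1$, and there your proposal contains a misdirection that would send you after an identity that does not exist. Write $X_i=v_i\otimes w_i$. The only term of $\sum_{cyc}[[X_1,X_2],X_3]_\alpha$ involving the triple product is $\alpha\sum_{cyc}\langle v_1,v_2\rangle\, v_3\otimes 6t(w_1,w_2,w_3)$; since $t$ is totally symmetric this equals $6\alpha\bigl(\sum_{cyc}\langle v_1,v_2\rangle v_3\bigr)\otimes t(w_1,w_2,w_3)$ and vanishes by the Pl\"{u}cker relation $\sum_{cyc}\langle v_1,v_2\rangle v_3=0$ in $V_2$ alone. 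The remaining twelve terms, coming from $\langle w_2,w_3\rangle w_1+\langle w_1,w_3\rangle w_2$ and from $(v_1\cdot v_2)(v_3)=\langle v_2,v_3\rangle v_1+\langle v_1,v_3\rangle v_2$, cancel in pairs by the antisymmetry of the two symplectic forms. No Freudenthal triple system identity is used anywhere in this verification: (\ref{eqn:FTS1}) and (\ref{eqn:FTS2}) are needed \emph{earlier}, to show that $\Phi_{w,w'}$ preserves the quartic form and hence that the bracket actually lands in $\g(J)_0=\sl_2\oplus\h(J)^0$, but they play no role in the Jacobi identity itself, contrary to your claim that they are ``exactly the relations needed.'' Relatedly, your plan to separate variables into a standalone $W_J$-identity among the $\Phi_{w_i,w_j}(w_k)$ and $\langle w_i,w_j\rangle w_k$ cannot work as stated: there is no such identity in $W_J$ by itself; the cancellation is a joint effect in $V_2\otimes W_J$, with the total symmetry of $t$ on the $W_J$ side played off against the Pl\"{u}cker relation on the $V_2$ side. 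If you simply expand and collect terms, everything cancels formally --- which is precisely what the paper's parenthetical remark records.
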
 
\begin{proof} To check the Jacobi identity $\sum_{cyc}{[X,[Y,Z]]} = 0$, by linearity it suffices to check it on the various $\Z/2$-graded pieces.  Then there are four types identities that must be checked.  Namely, if $0,1,2$ or $3$ of the elements $X,Y,Z$ are in $\g(J)_1=V_2 \otimes W_J$.  If all three of $X,Y,Z$ are in $\g(J)_0 = \sl_2 \oplus \h(J)^0$, then the Jacobi identity is of course satisfied.  If two of $X,Y,Z$ are in $\g(J)_0$, then the Jacobi identity is satisfied.  This fact is equivalent to the fact that the bracket $[\;,\;]_{\alpha}$ defines a Lie algebra action of $\g(J)_0$ on $\g(J)_1$: $[\phi,\phi']( x) = \phi(\phi'(x)) -\phi'(\phi(x))$ for $x \in \g(J)_1$ and $\phi,\phi' \in \g(J)_0$.  If one of $X,Y,Z$ is in $\g(J)_0$, then the Jacobi identity is satisfied by the equivariance of the map $\g(J)_1 \otimes \g(J)_1 \rightarrow \g(J)_0$.  Finally, when $X,Y,Z$ are all in $\g(J)_1$, a simple direct computation shows that $\sum_{cyc}{[X,[Y,Z]]} = 0$.  (The term $t(w,w',w'')$ drops out right away because it is symmetric by applying the identity $\sum_{cyc}{\langle v_2, v_3 \rangle v_1 } = 0$ for $v_1, v_2, v_3 \in V_2$, and one must only inspect the more basic terms.) \end{proof}

\begin{remark} The reader can easily verify that the Lie algebras $\g(J)$, with bracket given by $\alpha \in F^\times$, are all isomorphic for varying $\alpha$.  Below we will compute with the choice of $\alpha = \frac{1}{2}$, but for now we keep things as general as possible. \end{remark}

\subsubsection{The Killing form} Define a symmetric pairing $B_{\g}: \g(J) \otimes \g(J) \rightarrow F$ via
\[B_{\g}(\phi + v\otimes w, \phi' + v'\otimes w') = B_0(\phi,\phi') - 2\alpha \langle v,v' \rangle \langle w,w' \rangle.\]
Here $B_0$ is the invariant symmetric pairing on $\g(J)_0$ defined by 
\[B_0(\phi_2 + \phi_J,\phi_2'+ \phi_J') = B_{\sl(V_2)}(\phi_2,\phi_2') + B_{\h}(\phi_J,\phi_J')\]
for $\phi_2, \phi_2'\in \sl_2$ and $\phi_J, \phi_J' \in \h(J)^0$.

\begin{lemma} The pairing $B_{\g}$ on $\g(J)$ is invariant, i.e. $B_{\g}([x_1,x_3],x_2) = - B_{\g}([x_2,x_3],x_1)$ for all $x_1, x_2, x_3$ in $\g(J)$. \end{lemma}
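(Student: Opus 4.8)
The plan is to verify the invariance identity $B_{\g}([x_1,x_3],x_2) = -B_{\g}([x_2,x_3],x_1)$ by reducing it, via bilinearity, to the $\Z/2$-graded pieces. Writing $x_i = \phi_i + v_i \otimes w_i$ with $\phi_i \in \g(J)_0 = \sl_2 \oplus \h(J)^0$ and $v_i \otimes w_i \in \g(J)_1 = V_2 \otimes W_J$, it suffices to check the identity when each $x_i$ is homogeneous of degree $0$ or $1$. Because both the bracket $[\;,\;]_\alpha$ and the form $B_{\g}$ respect the $\Z/2$-grading (the bracket is degree-additive mod $2$, and $B_{\g}$ pairs degree-$0$ with degree-$0$ and degree-$1$ with degree-$1$), the identity is automatic on those combinations of degrees for which $[x_1,x_3]$ and $x_2$ land in opposite-degree pieces, since then $B_{\g}([x_1,x_3],x_2) = 0$ on both sides. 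Thus only a few genuine cases remain to be examined.

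First I would dispose of the purely degree-$0$ case: when all three $x_i$ lie in $\g(J)_0$, the invariance is just the invariance of $B_0$, which holds because $B_{\sl(V_2)}$ and $B_{\h}$ are each invariant (the latter being the form $B_{\h}$ recorded in the previous section). Next I would treat the mixed cases. The substantive situations are: (a) two of the elements in $\g(J)_1$ and one in $\g(J)_0$, and (b) all three in $\g(J)_1$, together with their degree-bookkeeping variants obtained by permuting which argument is $x_3$. In case (a), say $x_1 = v_1 \otimes w_1$, $x_2 = v_2 \otimes w_2$ lie in $\g(J)_1$ and $x_3 = \phi \in \g(J)_0$; then $[x_1,x_3] = -\phi(v_1 \otimes w_1)$ is again in $\g(J)_1$, and the identity becomes the statement that $\phi$ acts by a skew-symmetric operator with respect to the form $-2\alpha\langle\cdot,\cdot\rangle\langle\cdot,\cdot\rangle$ on $V_2 \otimes W_J$. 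This follows from the facts that $\sl_2$ preserves the symplectic form on $V_2$ and $\h(J)^0 \subseteq \sp(W_J)$ preserves the symplectic form on $W_J$.

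The remaining case is the genuinely new one: all three $x_i = v_i \otimes w_i$ in $\g(J)_1$. Here $[x_1,x_3]$ lands in $\g(J)_0$, so I must compare $B_0([v_1\otimes w_1, v_3\otimes w_3], \phi_2)$-type expressions, where the degree-$0$ output is $\alpha\langle w_1,w_3\rangle(v_1\cdot v_3) + \alpha\langle v_1,v_3\rangle \Phi_{w_1,w_3}$ paired against $x_2 = v_2\otimes w_2$; but since $x_2$ is in $\g(J)_1$ and the bracket output is in $\g(J)_0$, this pairing is zero under $B_{\g}$, so in fact this configuration also contributes $0$ on both sides. Therefore the only configurations requiring real computation are those where exactly one element is in $\g(J)_1$ and the paired bracket again lands in $\g(J)_1$ — namely where $x_3 \in \g(J)_1$ and exactly one of $x_1, x_2$ is in $\g(J)_1$. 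The main obstacle is precisely this cross-term: unwinding $B_0$ on the $V_2 \otimes W_J$–valued output using the explicit formulas $B_{\h}(\Phi_{w,w'},\phi) = 2\langle w, \phi(w')\rangle$ and $B_{\sl(V_2)}(v\cdot v', \psi) = $ the analogous $\sp$-pairing, and checking that the two resulting expressions — one involving $\langle w_1, w_3\rangle$ and $B_{\sl(V_2)}$, the other involving $\langle v_1, v_3\rangle$ and $B_{\h}$ — match up to the required sign. I expect this to reduce to the compatibility $B_0(\Phi_{w,w'}, \psi) = 2\langle w, \psi(w')\rangle$ combined with the symplectic skew-symmetry, and the sign matching will fall out of the antisymmetry of $\langle\cdot,\cdot\rangle$ on each of $V_2$ and $W_J$; the bookkeeping of these signs is the one place where care is needed.
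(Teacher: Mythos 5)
Your proposal is correct and follows essentially the same route as the paper: after the grading reduces everything to the two cross-term configurations, the identity becomes exactly the pair of compatibilities $B_0(\Phi_{w,w'},\phi) = 2\langle w,\phi(w')\rangle$ and $B_0(v\cdot v',\phi) = 2\langle v,\phi(v')\rangle$, which is precisely what the paper's proof invokes, and the sign bookkeeping you defer does work out from the skew-symmetry of the two symplectic forms.
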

\begin{proof} Writing out both $B_{\g}([x_1,x_3],x_2)$ and $- B_{\g}([x_2,x_3],x_1)$, one finds that the form $B_{\g}$ is invariant if and only if
\[ 2 \langle w,\phi(w') \rangle =  B_0(\Phi_{w,w'},\phi)\]
and
\[ 2 \langle v,\phi(v') \rangle =  B_0(v \cdot v',\phi)\]
for all $v, v' \in V_2$, $w, w' \in W_J$, and $\phi \in \g(J)_0$. These properties of $B_0$ were discussed above.\end{proof}

\subsubsection{The Cartan involution} Recall the element $J_2 \in H_J^1$, which acts on $W_J$ via $J_2 (a,b,c,d) = (d,-c,b,-a)$.  We abuse notation and also write $J_2 = \mm{}{1}{-1}{} \in \SL_2$.  (There is a natural map $\SL_2 \rightarrow H_J^1$, and the image of $J_2 \in \SL_2$ is the $J_2 \in H_J^1$.)

Using $J_2$, we define an involution $\Theta_{\g}$ on $\g(J)$ as 
\[\Theta_{\g}(\phi_2 +\phi_J, v \otimes w) = (J_2 \phi_2 J_2^{-1} + J_2 \phi_J J_2^{-1}, J_2 v \otimes J_2 w).\]
Here $\phi_2 \in \sl_2$, $\phi_J \in \h(J)^0$, $v \in V_2$ and $w \in W_J$.  It is clear that $\Theta_{\g}$ is an involution on $\g(J)$.  

If the ground field $F=\R$ and $\alpha > 0$, then $\Theta_{\g}$ defines a Cartan involution on $\g(J)$.  Indeed,
\[B_{\Theta_\g}(\phi + v \otimes w, \phi'+ v'\otimes w') = B_{\Theta_\g}(\phi,\phi') + 2 \alpha \langle v, J_2 v'\rangle \langle w,J_2w'\rangle.\]
But $B_{\Theta_\g}$ restricted to $\g(J)_0$ is symmetric positive definite, as this was discussed above, and 
\[\langle v, J_2 v'\rangle \langle w,J_2w'\rangle = \langle J_2 v, v'\rangle \langle J_2 w, w'\rangle = (v,v')(w,w').\]
That the pairings $(v,v')$ and $(w,w')$ on $V_2$ and $W_J$ are symmetric positive definite was also discussed above.

\subsection{The $\Z/3$ grading on $\g(J)$} In this subsection we recall elements from the paper \cite{rumelhart} (in different notation).  Rumelhart constructed the Lie algebra $\g(J)$ through a $\Z/3$-grading, as opposed to a $\Z/2$-grading as we have described above.  We also write down an explicit isomorpism from the $\Z/3$-model to the $\Z/2$-model.

Denote by $V_3$ the defining representation of $\sl_3$, and by $V_3^\vee$ the dual representation.  In the $\Z/3$-graded picture, one defines
\[\g(J) = \sl_3 \oplus \m(J)^{0} \oplus V_3 \otimes J \oplus V_3^\vee \oplus J^\vee.\]
We consider $V_3, V_3^\vee$ as left modules for $\sl_3$, and $J, J^\vee$ as left modules for $\m(J)^0$.

\subsubsection{The bracket} Following \cite{rumelhart}, the Lie bracket is given as follows.  First, because $V_3$ is considered as a representation of $\sl_3$, there is an identification $\wedge^2 V_3 \simeq V_3^\vee$, and similarly $\wedge^3 V_3^\vee \simeq V_3$.  If $v_1, v_2, v_3$ denotes the standard basis of $V_3$, and $\delta_1, \delta_2, \delta_3$ the dual basis of $V_3^\vee$, then $v_1 \wedge v_2 = \delta_3$, $\delta_1 \wedge \delta_2 =v_3$, and cyclic permutations of these two identifications.

Take $\phi_3 \in \sl_3$, $\phi_J \in \m(J)^0$, $v, v' \in V_3$, $\delta, \delta' \in V_3^\vee$, $X, X' \in J$ and $\gamma, \gamma' \in J^\vee$.  Then
\begin{align*} [\phi_3, v \otimes X + \delta \otimes \gamma] &= \phi_3(v) \otimes X + \phi_3(\delta) \otimes \gamma. \\ [\phi_J, v \otimes X+ \delta \otimes \gamma] &= v \otimes \phi_J(X) + \delta \otimes \phi_J(\gamma) \\ [v \otimes X,v' \otimes X'] &= (v \wedge v') \otimes (X \times X') \\ [\delta \otimes \gamma, \delta' \otimes \gamma'] &= (\delta \wedge \delta') \otimes (\gamma \times \gamma') \\ [\delta \otimes \gamma, v \otimes X] &= (X,\gamma) v \otimes \delta + \delta(v) \Phi_{\gamma,X} - \delta(v)(X,\gamma) \\ &= (X,\gamma) \left(v \otimes \delta -\frac{1}{3}\delta(v)\right) + \delta(v) \left(\Phi_{\gamma,X} - \frac{2}{3}(X,\gamma)\right).\end{align*}
Note that $v \otimes \delta -\frac{1}{3}\delta(v) \in \sl_3$ and $\Phi_{\gamma,X} - \frac{2}{3}(X,\gamma) = \Phi'_{\gamma,X} \in \m(J)^0$.  Also recall that $\Phi_{\gamma, X} \in \m(J)$ acts on $J$ via
\[\Phi_{\gamma,X}(Z) = - \gamma \times (X \times Z) + (\gamma,Z)X + (\gamma,X)Z.\]
Furthermore, the action of $\sl_3$ and $\m(J)^0$ on $V_3^\vee$ and $J^\vee$ is determined by the equalities $(\phi_3(v),\delta) + (v,\phi_3(\delta))=0$ and $(\phi_J(X),\gamma) + (X,\phi_J(\gamma)) = 0$.

\subsubsection{The Killing form} There is an invariant symmetric form $B_{\g}$ on $\g(J)$ that restricts to the form on $\sl_3$ given by $B_{\g}(m_1, m_2) = \tr(m_1 m_2)$ for $m_1, m_2 \in \mathrm{End}(V_3)$.  This form $B_{\g}$ on $\g(J)$ is given as follows:
\begin{itemize}
\item On $\sl_3$: $B_{\g}(v \otimes \phi, v' \otimes \phi') = \phi(v')\phi'(v)$
\item On $\m(J)^0$: $B_{\g}(\phi_1,\phi_2) = B_{\m}(\phi_1,\phi_2)$.
\item On $V_3 \otimes J \oplus V_3^\vee \otimes J^\vee$: $B_{\g}(v \otimes X,\delta' \otimes \gamma') = -\delta'(v)(X,\gamma')$.
\end{itemize}
When $F=\R$ this is a positive multiple of the Killing form.

\subsubsection{The Cartan involution} We endow $V_3$ with the positive definite symmetric form given by $(v,v') = \,^tv v'$.  In other words, we make the standard basis $v_1, v_2, v_3$ of $V_3$ orthonormal.  This induces an identification $\iota$ between $V_3$ and $V_3^\vee$.  

Define an involution $\Theta_{\g}$ on $\g(J)$ as follows: On $\sl_3$ it is $X \mapsto -X^{t}$.  On $\m(J)^0$ it is $\Theta_{m}$.  On $V_3 \otimes J$ it is $v \otimes X \mapsto \iota(v) \otimes \iota(X) \in V_3^\vee \otimes J^\vee$ and on $V_3^\vee \otimes J^\vee$ it is $\delta \otimes \gamma \mapsto \iota(\delta) \otimes \iota(\gamma) \in V_3 \otimes J$. The map $\Theta_{\g}$ is a Cartan involution.

\subsubsection{Comparing the $\Z/3$ and $\Z/2$ gradings}  In this paragraph, we compare the $\Z/3$ and $\Z/2$ gradings.  More precisely, fix $\alpha = \frac{1}{2}$.  Then we define a map from the $\Z/3$-picture to the $\Z/2$-picture, and check that it is a Lie algebra isomorphism.  This isomorphism carries the pairing and involution $B_{\g}$ and $\Theta_{\g}$ from the $\Z/3$-picture over to the ones of the same name in the $\Z/2$ picture.  (Which is why we abuse notation and denote by the same letters.)

Denote by $E_{ij}$ the element of $M_3(F)$ with a $1$ in the $(i,j)$ position and zeros elsewhere. Denote by $v_1, v_2, v_3$ the defining standard basis of $V_3$ and $\delta_1, \delta_2, \delta_3$ the dual basis in $V_3^\vee$.  Then $E_{ij} = v_i \otimes \delta_j$. Also, denote by $e, f$ the standard symplectic basis of $V_2$, so that under the isomorphism $\sl_2 \simeq \mathrm{Sym}^2(V_2)$, $e^2/2 = \mm{0}{1}{0}{0}$, $\mm{0}{0}{1}{0} = -f^2/2$ and $\mm{1}{}{}{-1} = -ef$.  Define a map from the $\Z/3$ picture to the $\Z/2$ picture as follows:
\begin{enumerate}
\item $E_{13} \mapsto \mm{0}{1}{0}{0} = e^2/2$
\item $E_{31} \mapsto \mm{0}{0}{1}{0} = -f^2/2$
\item $E_{11}-E_{33} \mapsto \mm{1}{}{}{-1} = -ef$.
\item $aE_{12} + v_1 \otimes b + \delta_3 \otimes c + d E_{23} \mapsto e \otimes (a,b,c,d)$
\item $a'E_{32} + v_3 \otimes b' + (-\delta_1) \otimes c' + d'(-E_{21}) \mapsto f \otimes (a',b',c',d')$.
\item $\delta_2 \otimes \gamma + \phi + v_2 \otimes X \mapsto n_{L}^\vee(\gamma) + M(\phi) + n_{L}(-X)$.  Here $\phi \in \m(J)^0$.
\end{enumerate}

\begin{proposition}\label{prop:Z3Z2} The above maps define a Lie algebra isomorphism from the $\Z/3$-picture of $\g(J)$ to the $\Z/2$-picture of $\g(J)$ with $\alpha = \frac{1}{2}$.\end{proposition}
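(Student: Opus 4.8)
The plan is to verify, for the map $\Psi$ specified by items (1)--(6), three things in turn: that $\Psi$ is a linear isomorphism, that it intertwines the two brackets, and that it carries $B_\g$ and $\Theta_\g$ on the $\Z/3$ side to those on the $\Z/2$ side. Since both brackets are bilinear and $\Psi$ is linear, each verification reduces to a finite check on the spanning families appearing in (1)--(6).

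First I would record what (1)--(6) actually determine. The family (4) sweeps out $\langle E_{12},E_{23}\rangle \oplus v_1\otimes J \oplus \delta_3\otimes J^\vee$, the family (5) sweeps out $\langle E_{21},E_{32}\rangle \oplus v_3\otimes J \oplus \delta_1\otimes J^\vee$, and (6) sweeps out $\delta_2\otimes J^\vee \oplus \m(J)^0 \oplus v_2\otimes J$, while (1)--(3) give $E_{13},E_{31},E_{11}-E_{33}$. The only line of the $\Z/3$-picture not yet assigned is the second Cartan line of $\sl_3$; I complete the definition by declaring $\Psi(E_{11}+E_{33}-2E_{22}) = \phi_s$, the grading element of $\h(J)^0$. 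This value is in fact forced, since a short polarization of (\ref{eqn:PhivvGood}) via Proposition \ref{prop:NPhi} gives $[\Psi(E_{12}),\Psi(E_{21})]_{1/2} = \tfrac12(-ef) + \tfrac12\phi_s = \Psi(E_{11}-E_{22})$. Both pictures then have dimension $8 + \dim\m(J)^0 + 6\dim J$, and since (4) and (5) already surject onto $e\otimes W_J$ and $f\otimes W_J$, while the remaining families hit $\sl_2$, $n_L(J)\oplus M(\m(J)^0)\oplus n_L^\vee(J^\vee)$, and $\phi_s = -2M(\mathrm{Id}_J)$ through independent summands, the images of a basis are linearly independent; hence $\Psi$ is a linear bijection.

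Next I would check $\Psi[x,y] = [\Psi x,\Psi y]$ on pairs drawn from the spanning families, organized by the $\Z/3$-degrees of $x,y$. When one argument lies in the degree-zero part $\sl_3\oplus\m(J)^0$, the identity is precisely the assertion that $\Psi$ intertwines the two actions of this subalgebra on the remaining pieces; this reduces to two facts already established -- that $\phi\mapsto M(\phi)$ is the Lie algebra homomorphism (\ref{eqn:HJmorph}), and that $\{E_{13},E_{31},E_{11}-E_{33}\}$ maps to the standard triple $\{e^2/2,-f^2/2,-ef\}$ -- so only the $\mathrm{ad}$-equivariance of the assignments in (4),(5) needs matching, which is routine. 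The substance is therefore the matter--matter brackets, i.e.\ the degree-$(1,1)$, degree-$(2,2)$, and degree-$(1,2)$ cases. This is the step I expect to be the main obstacle: a single bracket $[V_2\otimes W_J,\,V_2\otimes W_J]$ on the $\Z/2$ side corresponds to several brackets on the $\Z/3$ side, because $e\otimes W_J$ and $f\otimes W_J$ are assembled in (4),(5) out of $\sl_3$ off-diagonals together with $V_3\otimes J$ and $V_3^\vee\otimes J^\vee$. Here I would feed the target bracket
\[ [(\phi,v\otimes w),(\phi',v'\otimes w')]_{1/2} = \Big([\phi,\phi'] + \tfrac12\langle w,w'\rangle (v\cdot v') + \tfrac12\langle v,v'\rangle \Phi_{w,w'},\ \ldots\Big) \]
through the explicit decomposition (\ref{eqn:PhivvGood}) of $\tfrac14\Phi_{v,v}$ into $n_L, n_L^\vee, M$ and its polarizations in Proposition \ref{prop:NPhi}. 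For instance the consequence $\tfrac12\Phi_{(0,b,0,0),(0,b',0,0)} = n_L^\vee(-b\times b')$ is exactly what turns $[e\otimes(0,b,0,0),\,f\otimes(0,b',0,0)]_{1/2}$ into $-n_L^\vee(b\times b')$, matching $\Psi$ applied to $[v_1\otimes b,\,v_3\otimes b'] = -\delta_2\otimes(b\times b')$. This comparison is also what pins the normalization $\alpha=\tfrac12$; carrying it out for every pairing of the Freudenthal coordinates $(a,b,c,d)$ is the bulk of the labor, but each instance is a direct substitution into Proposition \ref{prop:NPhi}.

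Finally, for the form and the involution I would argue economically. Once $\Psi$ is a Lie isomorphism and $\g(J)$ is simple, the two nonzero invariant symmetric forms named $B_\g$ agree up to a scalar; evaluating on the single pair $(E_{13},E_{31})$, where $\tr(E_{13}E_{31})=1$ on the $\Z/3$ side and $B_{\sp}(e^2/2,-f^2/2)=1$ on the $\Z/2$ side, fixes that scalar to be $1$. For $\Theta_\g$, I would check $\Psi\circ\Theta_\g = \Theta_\g\circ\Psi$ on the spanning families: transpose-negation on $\sl_3$ matches $J_2$-conjugation on $\sl_2$ (e.g.\ $E_{13}\mapsto -E_{31}$ corresponds to $e^2/2\mapsto f^2/2$), $\Theta_\m$ on $\m(J)^0$ matches itself, and the swap $v\otimes X\mapsto \iota(v)\otimes\iota(X)$ matches $J_2\otimes J_2$ on $V_2\otimes W_J$ (e.g.\ $v_1\otimes b\mapsto \delta_1\otimes\iota(b)$ corresponds to $e\otimes(0,b,0,0)\mapsto -f\otimes(0,0,\iota(b),0)$). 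Each of these is a one-line substitution once $\Psi$ is in place.
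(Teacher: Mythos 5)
Your proposal is correct and follows essentially the same route as the paper: the heart of both arguments is checking that the degree-zero part acts correctly on the $e\otimes W_J$ and $f\otimes W_J$ pieces and then matching the bracket $[e\otimes v, f\otimes v']$ against the $\Z/3$-side computation via the decomposition of $\tfrac{1}{4}\Phi_{v,v}$ from Proposition \ref{prop:NPhi}. Your additional observations --- that the image of $E_{11}+E_{33}-2E_{22}$ is forced to be $\phi_s$, and the explicit verifications of bijectivity and of the compatibility of $B_{\g}$ and $\Theta_{\g}$ --- go slightly beyond what the paper writes out, but they are consistent with it and correctly executed.
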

\begin{proof} Write $(\alpha,\beta,\gamma,\delta)$ as shorthand for $\alpha E_{12} + v_1 \otimes \beta + \delta_3 \otimes \gamma + \delta E_{23}$.  One computes that
\[[v_2 \otimes X, (\alpha, \beta,\gamma,\delta)] = - (0, \alpha X, \beta \times X, (\gamma,X))\]
and
\[[\delta_2 \otimes Y,(\alpha,\beta,\gamma,\delta)] = ((\beta,Y),\gamma \times Y, \delta Y, 0).\]
Furthermore, with $\phi_s = E_{11} - 2E_{22} + E_{33}$, one finds
\[[\phi_s,(\alpha,\beta,\gamma,\delta)]= (3\alpha, \beta, -\gamma, - 3\delta).\]
By similarly computing the action of $v_2 \otimes X, \delta \otimes Y$ and $\phi_s$ on $a'E_{32} + v_3 \otimes b' + (-\delta_1) \otimes c' + d'(-E_{21})$, one finds that the image of $\delta_2 \otimes \gamma + \phi + v_2 \otimes X$ acts as it is supposed to on $W_J$.

The most nontrivial part of the proof is to compare the bracket $[e\otimes v, f \otimes v']$ on the two sides.  On the $\Z/2$-side, one gets $-\frac{\langle v,v'\rangle}{2}(-ef) + \frac{1}{2}\Phi_{v,v'}$.  Set
\[V = aE_{12} + v_1 \otimes b + \delta_3 \otimes c + d E_{23}\]
and 
\[V' = a'E_{32} + v_3 \otimes b' + (-\delta_1) \otimes c' + d'(-E_{21}).\]
Then on the $\Z/3$-side, one computes that
\begin{align*} [V,V'] &= -\frac{\langle v, v' \rangle}{2}(E_{11}-E_{33}) + \frac{1}{2}\left( -(ad'+da') + (b,c')/3 + (b',c)/3)\right) \phi_s \\ &\;\; + v_2 \otimes (db' + d'b - c\times c') + \delta_2 \otimes (a'c+ac' - b \times b') + \Phi_{\iota(c),b'}'+ \Phi'_{\iota(c'),b}.\end{align*}

Here $\phi_{s} = E_{11}-2E_{22}+E_{33}$.  Thus, symmetrizing, one must check that $\frac{1}{4}\Phi_{v,v}$ corresponds to 
\[v_2 \otimes (db-c^\#) + \delta_2 \otimes (ac-b^\#) + \Phi'_{\iota(c),b} + \frac{1}{2} (-ad + (b,c)/3) \phi_{s}.\]
But this follows from Proposition \ref{prop:NPhi}.\end{proof}

\subsection{The group $G_J$} We assume in this subsection that the Lie algebra $\g(J)$ is defined over a ground field $F$ of characteristic $0$. We define the group $G_J$ to be the connected component of the identity of the automorhpism group of $\g(J)$.  This is, of course, a connected reductive adjoint group.  For notation, we write $e_0, h_0, f_0$ for the usual $\sl_2$-triple inside $\sl_2 \subseteq \g(J)_0$, so that $e_0 = \mm{0}{1}{0}{0}$, $h_0 = \mm{1}{}{}{-1}$, and $f_0 = \mm{0}{0}{1}{0}$.

\subsubsection{The $5$-grading} We now define the $5$-grading on $\g(J)$.  Namely, the components of $\g(J)$ in each graded piece are
\begin{itemize}
\item In degree $-2$: spanned by $f_0$
\item In degree $-1$: $f \otimes W_J$
\item In degree $0$: $F h_0 \oplus \h(J)^0$
\item In degree $1$: $e \otimes W_J$
\item In degree $2$: spanned by $e_0$.\end{itemize}

Note that $F h_0 \oplus \h(J)^0$ is the Lie algebra $\h(J)$, via the map $\alpha h_0 + \phi_0 \mapsto \alpha \mathrm{Id}_{W_J} + \phi_0$, where $\phi_0 \in \h(J)^0$ and $\mathrm{Id}_{W_J}$ denotes the identity on $W_J$.  Recall that $\h(J)$ is the subalgebra of endomorphisms $(\phi,\mu(\phi))$ of $End(W_J) \oplus \mathrm{G}_a$ satisfying $\langle \phi(v_1),v_2\rangle + \langle v_1, \phi(v_2)\rangle = \mu(\phi) \langle v_1, v_2 \rangle$ and $\sum_{cyc}{(\phi(v_1),v_2,v_3,v_4)}  = 2\mu(\phi) (v_1,v_2,v_3,v_3)$ for all $v_1, v_2, v_3, v_4 \in W_J$.  The map $\h(J) \rightarrow F h_0 \oplus \h(J)^0$ in the opposite direction is given by $\phi \mapsto \frac{\mu(\phi)}{2} H + (\phi - \frac{\mu(\phi)}{2} \mathrm{Id}_{W_J})$.

\subsubsection{The Heisenberg parabolic} We now define the Heisenberg parabolic of $G_J$.   Define $P\subseteq G$ to be the $g \in G_J$ stabilizing the line $F e_0$ generated by $e_0$.  Then $P$ is clearly parabolic.  For instance, the variety $G/P$ is the subset of $\mathbf{P}(\g(J))$ consisting of those $X$ with $[X,[X,y]] + 2B_{\g}(X,y)X = 0$ for all $y \in\g(J)$.  Thus $G/P$ is cut out by closed conditions, so is projective.

Equivalently, define $\p_{Heis} \subseteq \g(J)$ to consist of the elements $X$ so that $[X,e_0] \in F e_0$.  Then the Heisenberg parabolic is equivalently defined to be the $g\in G_J$ satisfying $g \p_{Heis} = \p_{Heis}$.  (If $g \in G_J$ stabilizes the line generated by $e_0$, then $g$ stabilizes $\p_{Heis}$, since $\p_{Heis}$ is defined in terms of $e_0$.  Conversely, if $g \in G_J$ stabilizes $\p_{Heis}$, then $g$ stabilizes $F e_0$, because then $g$ acts as an automorphism of $\p_{Heis}$, and $F e_0$ is the center of the unipotent radical of $\p_{Heis}$.) Furthermore, $\p_{Heis}$ consists exactly of the element of $\g(J)$ non-negative degree in the $5$-grading.

Associated to the element $f_0$ is an increasing filtration, whose associated graded is the $5$-grading.  Namely, 
\begin{itemize}
\item $\mathcal{F}_{-2}\g(J) = F f_0$;
\item $\mathcal{F}_{-1}\g(J) = \{X \in \g(J): [[X,Y],f_0] \in F f_0 \text{ for all } Y \in (F f_0)^\perp\}$.
\item $\mathcal{F}_{0}\g(J) = \{X \in \g(J): [X, f_0] \in F f_0\}$;
\item $\mathcal{F}_{1}\g(J) = (F f_0)^\perp = \{X \in \g(J): B(X,f_0) = 0\}$;
\item $\mathcal{F}_{2}\g(J) = \g(J)$.
\end{itemize}

The Lie algebra of $P$ is $\p_{Heis}$.  Define a Levi subgroup $M$ of $P$ to be the subgroup of $P$ that preserves the $5$-grading.  Equivalently, $M$ is the subgroup of $P$ that also fixes the line spanned by $f_0$. (This follows from the filtration remark above.)  The Levi subgroup $M$ is exactly the group $H_J$, as we now prove.

\begin{lemma} The map $M \rightarrow \GL_1 \times \GL(W_J)$ defined by the conjugation action of $M$ on the degree $2$ and degree $1$ pieces of the $5$-grading defines an isomorphism $M \simeq H_J$.  The $\GL_1$-projection is the similitude.\end{lemma}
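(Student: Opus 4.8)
The plan is to establish the isomorphism $M \simeq H_J$ by analyzing how the Levi $M$ acts on the graded pieces of the $5$-grading, and then identifying this action with the defining representation of $H_J$ on $W_J$. First I would record the structural facts from the $5$-grading: the degree $2$ piece is the line $F e_0$, the degree $1$ piece is $e \otimes W_J \simeq W_J$, and $M$ preserves this grading by definition. Conjugation thus gives a scalar action on $F e_0$, yielding a character $\nu \colon M \to \GL_1$, together with a linear action on $e \otimes W_J$, yielding a homomorphism $M \to \GL(W_J)$. Packaging these produces the map $M \to \GL_1 \times \GL(W_J)$ of the statement, with the $\GL_1$-component being the similitude $\nu$.

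Next I would verify that the image lands in $H_J$, i.e.\ that the $\GL(W_J)$-action preserves the symplectic and quartic forms up to the appropriate power of $\nu$. The key is that both forms are encoded intrinsically in the Lie bracket. The symplectic form appears via the bracket $[e \otimes w, e \otimes w']$: using the formula for $[\;,\;]_\alpha$, one has $[e \otimes w, e \otimes w'] = \alpha \langle w, w'\rangle (e \cdot e) = 2\alpha\langle w,w'\rangle e_0$ (up to the identification of $e^2/2$ with $e_0$), so the degree $2 = 1+1$ bracket reproduces $\langle\;,\;\rangle$ tensored into $F e_0$. Since $M$ acts on $F e_0$ by $\nu$ and equivariantly on the bracket, the action on $W_J$ scales $\langle\;,\;\rangle$ by $\nu$. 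The quartic form is recovered from the quadruple bracket through $\Phi_{w,w}$ and $q(w) = \langle w, t(w,w,w)\rangle/2$; again $M$-equivariance of the bracket forces the quartic form to scale by $\nu^2$. This shows the image lies in $H_J$, and the similitude character is identified with $\nu$.

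I would then prove that the map $M \to H_J$ is an isomorphism by exhibiting inverse constructions on both Lie algebra and group level. On the Lie-algebra side, the degree $0$ part of $\p_{Heis}$ is $F h_0 \oplus \h(J)^0 \simeq \h(J)$ (via $\alpha h_0 + \phi_0 \mapsto \alpha \,\mathrm{Id}_{W_J} + \phi_0$, already recorded in the excerpt), and this is exactly $\mathrm{Lie}(H_J)$; the induced map on Lie algebras is an isomorphism essentially by construction of the $\Z$-grading, since $\h(J)^0$ acts on $W_J$ via its defining representation and $h_0$ acts as the grading element. For surjectivity at the group level, one uses that $H_J$ is generated by the operators $n(x)$, $n^\vee(\gamma)$, $M(\delta,m)$, $\eta(z)$, and $J_2$ from Section \ref{subsec:FWJ}, each of which I would lift to an automorphism of $\g(J)$ preserving the $5$-grading: these lifts exist because $\h(J)$ acts on all graded pieces compatibly, and exponentiating or directly extending the $W_J$-action to the full Lie algebra gives elements of $M$ mapping to the chosen generators. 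Injectivity follows because $G_J$ is adjoint, so an element of $M$ acting trivially on $W_J$ and on $F e_0$ acts trivially on all of $\g(J)$ (the graded pieces are generated under brackets from $e \otimes W_J$ and $e_0$), hence is the identity.

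The main obstacle I anticipate is surjectivity: showing that every element of $H_J$ arises from an automorphism of $\g(J)$ preserving the grading. The cleanest route is to argue that $M \to H_J$ is a homomorphism of algebraic groups inducing an isomorphism of Lie algebras (from the previous paragraph), and that both groups are connected with $M$ realized inside the connected adjoint group $G_J$; an isomorphism on Lie algebras between connected groups, together with injectivity, upgrades to an isomorphism of algebraic groups. Thus rather than lifting each generator by hand, I would prefer to invoke the Lie-algebra isomorphism plus connectedness and adjointness, reducing the group-level claim to the infinitesimal statement, which is the genuinely checkable content. Verifying that the infinitesimal map $\mathrm{Lie}(M) \to \mathrm{Lie}(H_J)$ is bijective is where the identification $\mathcal{F}_0 \g(J) / \mathcal{F}_{-1}\g(J) \simeq \h(J)$ and the explicit bracket formulas do the real work.
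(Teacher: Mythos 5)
Your first half---reading off the symplectic form from $[e\otimes w, e\otimes w'] = 2\alpha\langle w,w'\rangle e_0$ and the quartic form from bracket expressions---is the same strategy as the paper, though the paper packages the quartic step more cleanly as the single identity $B_{\g}(\exp(e\otimes v)f_0,f_0) = C_0\,q(v)$, which avoids chasing $\nu$-powers through $\Phi_{w,w}$ and $w^\flat$. Either way, note that the computation needs to know how $m\in M$ acts on the degree $-1$ and $-2$ pieces (e.g.\ that $mf_0=\nu(m)^{-1}f_0$); this follows from invariance of $B_{\g}$ together with $B_{\g}(e_0,f_0)\neq 0$, and is worth making explicit since your map is defined only through degrees $1$ and $2$. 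For the isomorphism itself the two arguments genuinely diverge: the paper writes down an explicit section $H_J\to M$ by specifying the action of $g\in H_J$ on each graded piece, checks it respects the bracket via $\nu(g)^{-1}\Phi_{gv,gw}=g\Phi_{v,w}g^{-1}$, and observes that the composite $H_J\to M\to H_J$ is the identity; you instead prove injectivity plus an infinitesimal isomorphism and invoke connectedness. Both routes work: the paper's section makes surjectivity immediate, while yours avoids verifying that the explicit $H_J$-action is by Lie algebra automorphisms.

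There is, however, one step that fails as stated: your injectivity argument rests on the parenthetical claim that the graded pieces of $\g(J)$ are generated under brackets from $e\otimes W_J$ and $e_0$. This is false. Since $[e\otimes w,e\otimes w']\in Fe_0$ and $[e_0,e\otimes w]=0$, the subalgebra generated by $e_0$ and $e\otimes W_J$ is just the Heisenberg algebra $Fe_0\oplus(e\otimes W_J)$, the nilradical of $\p_{Heis}$; it contains nothing in degrees $0$, $-1$, or $-2$. Injectivity is still true, but the correct argument is: an $m\in M$ acting trivially on the degree $1$ and $2$ pieces preserves the grading and the pairing $B_{\g}$ (a nonzero multiple of the Killing form, as $\g(J)$ is simple), and $B_{\g}$ pairs degree $k$ nondegenerately with degree $-k$; hence $m$ also acts trivially on degrees $-1$ and $-2$, and then on degree $0$, which is spanned by the brackets $[e\otimes W_J,\,f\otimes W_J]$. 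Since $G_J\subseteq\mathrm{Aut}(\g(J))$, this forces $m=1$. The repair matters, because injectivity is essential to your architecture: without it, ``connected plus isomorphism on Lie algebras'' only yields an isogeny, not an isomorphism.
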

\begin{proof} Define $\nu: M \rightarrow \GL_1$ to be the action of $M$ on $e_0$, i.e., $m e_0 = \nu(m) e_0$.  First we check that the map defined in the statement of the lemma lands in $H_J$.  To see this, note that $[e \otimes v, e\otimes v'] = \langle v,v' \rangle \frac{e^2}{2} = \langle v,v' \rangle e_0$.  Thus, if $m \in M$, then 
\[\nu(m) \langle v,v' \rangle e_0 = \langle v,v' \rangle m e_0 =m [e \otimes v, e \otimes v'] = [e \otimes mv, e \otimes mv'] = \langle mv,mv'\rangle e_0.\]
Thus $m$ preserves the symplectic form on $W_J$.  To see that $M$ preserves the quartic form on $W_J$, note that $B(\exp( e \otimes v) f_0,f_0) = C_0 q(v)$, for some constant $C_0 \neq 0$ that depends on our normalizations.  Thus,
\begin{align*} C_0 q(mv) &= B(\exp(e \otimes mv) f_0, f_0) = B(m \exp(e \otimes v) m^{-1} f_0, f_0) = B(\exp( e\otimes v) m^{-1}f_0, m^{-1}f_0) \\ &= C_0 \nu(m)^2 q(v).\end{align*}
Therefore, the map $M \rightarrow \GL_1 \times \GL(W_J)$ lands in $H_J$.

We can also define a map $H_J \rightarrow M \subseteq G_J$ as follows.  If $g \in H_J$, then let $g$ act on $\g(J)$ via
\[g \cdot \left( \mm{a}{b}{c}{-a} + \phi_0 + e\otimes v + f \otimes v'\right) = \mm{a}{\nu(g)b}{\nu(g)^{-1}c}{-a} + g \phi_0 g^{-1} + e\otimes gv + \nu(g)^{-1} f \otimes gv'.\]
Via the (immediately checked) identity $\nu(g)^{-1} \Phi_{gv,gw} = g \Phi_{v,w} g^{-1}$, one verifies that the above action of $H_J$ on $\g(J)$ preserves the bracket, and thus defines a map $H_J \rightarrow \mathrm{Aut}(\g(J))$.  Since $H_J$ is connected, the image lies in the identity component $G_J$ of $\mathrm{Aut}(\g(J))$.  It is clear that this action preserves the $5$-grading, and thus we obtain a map $H_J \rightarrow M$.

Finally, it is clear that the composition $H_J \rightarrow M \rightarrow H_J$ is the identity.  Because all the groups are connected and have the same Lie algebra, the lemma follows. \end{proof}

\section{Exceptional Cayley transform}\label{sec:Cayley} From now on the ground field $F=\R$, unless explicitly indicated to the contrary.  In section \ref{sec:LieII} we have defined the Lie algebra $\g(J)$ and the Cartan involution $\Theta_{\g}$ on $\g(J)$.  Set $\k_0 = \g(J)^{\Theta_{\g} = 1}$, $\p_0 = \g(J)^{\Theta_\g = -1}$, $\k = \k_0 \otimes \C$ and $\p = \p_0 \otimes \C$. In this section we give an explicit Cayley transform on $\g(J)$.  That is, we write down an explicit automorphism $\mathcal{C}$ of $\g(J) \otimes \C$ that takes $\g(J)_0 \otimes \C$ to $\k$ and $\g(J)_{1} \otimes \C$ to $\p$.

Define 
\[C_{2} = \frac{1}{\sqrt{2}}\left(\begin{array}{cc} i & -1 \\ 1 &-i\end{array}\right) \in \SL_2(\C).\]
Thus
\[C_{2}^{-1} = \frac{1}{\sqrt{2}}\left(\begin{array}{cc} -i & 1 \\ -1 &i\end{array}\right).\]
Also define
\[C_{h} = n_{G}\left(-i\right) n_{G}^\vee\left(-\frac{i}{2}\right)\eta(2^{-1/2})\]
so that
\[C_{h}^{-1} = \eta(\sqrt{2}) n_{G}^\vee\left(\frac{i}{2}\right)n_{G}\left(i\right) \in H_J^{1}(\C).\]
Here $\eta(\lambda)$ means the element of $H_J^{1}$ that acts on $W_J$ by $(a,b,c,d) \mapsto (\lambda^{3}a,\lambda b, \lambda^{-1} c, \lambda^{-3} d)$.  The elements $C_2 \in \SL_2(\C)$ and $C_h \in H_J^{1}(\C)$ are, in fact, Cayley transforms for their respective groups.

Now, define $w_{23} = \left(\begin{array}{ccc} -1 & & \\ &&-1 \\ &-1& \end{array}\right) \in \SL_3$. Let $w_{23}$ act on $\g(J)$ via its adjoint action on the $\Z/3$-model, and the isomorphism between the $\Z/3$ and $\Z/2$-models specified in Proposition \ref{prop:Z3Z2}.  Define $\mathcal{C} = \left(C_2 \boxtimes C_h\right)w_{23}$ to be the composite of $w_{23}$ and $C_2 \boxtimes C_h$.  Here $C_2 \boxtimes C_h = C_2 C_h = C_h C_2$ is considered as an element of $\mathrm{Aut}(\g(J) \otimes \C)$.  This is our explicit Cayley transform.  We will make a good choice of bases of $\k$ and $\p$, and then compute $\mathcal{C}^{-1}$ on these bases.

\subsection{Good bases of $\k$ and $\p$} We begin by describing a basis of $\k$.  Recall that 
\[r_0(Z) = (1,-Z,Z^\#,-n(Z)) \in W_J \otimes \C.\]
Set
\begin{itemize}
\item $e_{\ell} = \frac{1}{4}(i e + f) \otimes r_0(i)$
\item $f_{\ell} = \frac{1}{4}(ie-f) \otimes r_0(-i)$
\item $h_{\ell} = \frac{i}{2}\left( \mm{0}{1}{-1}{0} + n_L(-1) + n_L^\vee(\iota(1))\right)$.\end{itemize}
Then since $J_2 r_0(i) = i r_0(i)$ and $J_2 (ie+f) = -i(ie+f)$, $\Theta(e_{\ell}) = e_{\ell}$ and thus $e_{\ell} \in \k$.  Similarly, $f_{\ell}$ and $h_{\ell}$ are in $\k$.  In fact, these three elements form an $\sl_2$-triple.  As we will check below, the elements $e_{\ell}, h_{\ell}, f_{\ell}$ span the long root $\mathfrak{su}_2$ in $\k$ (hence the subscript $\ell$.)

We will now define a nice basis of what will be the Lie algebra $\mathfrak{l}_0(J)$ of $L_0(J)$.  This Lie algebra contains $\a(J)$.  Recall that $\a(J)$ is contained in $\h(J)^0 \subseteq \g(J)_{0}$.  Also observe that $\a(J)$ commutes with $e_{\ell}, f_{\ell}, h_{\ell}$ because the $\sl_2$-triple does not involve elements of $J$ other than $1_J$.  

We will momentarily define other elements of $\mathfrak{l}_0(J)$.  First, for $X \in J$, define
\[V(X) = \frac{1}{2}(\tr(X),-i\tr(X)+2iX,\tr(X)-2X,-i\tr(X))\]
and
\[V(X)^* = \frac{1}{2}(\tr(X),i\tr(X)-2iX,\tr(X)-2X,i\tr(X)),\]
elements of $W_J(\C)$.  Note that $J_2V(X) = -iV(X)$ and $J_2 V(X)^* = i V(X)^*$. Furthermore,
\begin{equation}\label{eqn:VX} V(X) = \frac{\tr(X)}{2}r_0(i) + i n_{G}(-i)(0,X,0,0) = n_{G}(-i)\left(\frac{\tr(X)}{2},iX,0,0\right)\end{equation}
and taking conjugates $V(X)^* = n_{G}(i)\left(\frac{\tr(X)}{2},-iX,0,0\right)$.

Define
\begin{itemize}
\item $n_{E}(X) = \frac{i}{2}\left(e \otimes V(X)^* + J_2e\otimes J_2V(X)^*\right)= \frac{1}{2}(ie+f) \otimes V(X)^*$
\item $n_{H}(X) = \frac{i}{2}\left(\tr(X) \mm{0}{1}{-1}{0} + n(\tr(X)-2X) + n^\vee(\iota(2X-\tr(X)))\right)$
\item $n_{F}(X) = \frac{i}{2}\left(e \otimes V(X) + J_2e \otimes J_2V(X)\right) = \frac{1}{2}(ie-f) \otimes V(X)$.\end{itemize}
It is clear that these elements are in $\k$, from the definition of the Cartan involution $\Theta_{\g}$.  These elements commute with the long root $\mathfrak{su}_2$, as will be clear below.

We now define a good basis of $\p$.  Set
\begin{itemize}
\item $h_3 = \frac{1}{2} \mm{-1}{i}{i}{1}$.
\item $h_1(X) = \frac{1}{2}(ie+f) \otimes V(X)$
\item $h_{-1}(Z) = \frac{i}{2}(n(Z) + n^\vee(\iota(Z))) + \frac{1}{2}M(\{Z,\bullet\}) = \frac{i}{2}(n(Z) + n^\vee(\iota(Z))) + \frac{1}{2}M(\Phi_{1,Z})$.
\item $h_{-3} = \frac{1}{4}(ie-f) \otimes r_0(i)$.\end{itemize}
By Lemma \ref{lem:nconjs},
\[h_{-1}(Z) = \frac{i}{2} n_{G}(-i)n_{L}^\vee(Z) n_{G}(i).\]

\subsection{Action of $\mathcal{C}^{-1}$} The following theorem explains the sense in which $\mathcal{C}$ is a Cayley transform.
\begin{theorem}\label{thm:Cayley} One has the following identities:
\begin{enumerate}
\item $\mathcal{C}^{-1}(h_3) = -i e \otimes (1,0,0,0)$
\item $\mathcal{C}^{-1}(h_1(X)) = -i e \otimes (0,X,0,0)$
\item $\mathcal{C}^{-1}(h_{-1}(Z)) = -ie \otimes (0,0,Z,0)$
\item $\mathcal{C}^{-1}(h_{-3}) = -i e \otimes (0,0,0,1)$
\item $\mathcal{C}^{-1}(n_{E}(X)) = n_{L}^\vee(\iota(X))$
\item $\mathcal{C}^{-1}(n_{F}(Y)) = n_{L}(Y)$
\item $\mathcal{C}^{-1}(e_{\ell}) = \mm{0}{1}{0}{0}$
\item $\mathcal{C}^{-1}(f_{\ell}) = \mm{0}{0}{1}{0}$.
\item $\mathcal{C}^{-1}(\overline{h_3}) = -i f \otimes (0,0,0,1)$
\item $\mathcal{C}^{-1}(\overline{h_{1}(X)}) = i f \otimes (0,0,X,0)$
\item $\mathcal{C}^{-1}(\overline{h_{-1}(Z)}) = -i f \otimes (0,Z,0,0)$
\item $\mathcal{C}^{-1}(\overline{h_{-3}}) = i f \otimes (1,0,0,0)$.\end{enumerate}
\end{theorem}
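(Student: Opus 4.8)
The plan is to exploit the factorization $\mathcal{C}^{-1} = w_{23}^{-1}\circ(C_2^{-1}\boxtimes C_h^{-1})$ and to treat the three factors separately, since $C_2$ touches only the $\SL_2$-factor and the $V_2$-slot of $\g(J)_1 = V_2\otimes W_J$, while $C_h\in H_J^{1}(\C)$ touches only $\h(J)^{0}$ and the $W_J$-slot. On the $V_2$-slot the action of $C_2^{-1}$ is pure $2\times 2$ linear algebra; from the explicit $C_2^{-1}$ one finds $C_2^{-1}(ie+f) = \sqrt2\,e$ and $C_2^{-1}(ie-f) = -\sqrt2\,i f$, together with the conjugate statements, and these already dictate whether $e$ or $f$ survives the transform. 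I would record these first.

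The main mechanism is that the bases of $\k$ and $\p$ have been built precisely so that $C_h^{-1}$ telescopes. Indeed $r_0(i) = n_{G}(-i)(1,0,0,0)$ and, by (\ref{eqn:VX}), $V(X) = n_{G}(-i)(\tfrac{\tr(X)}{2},iX,0,0)$, whereas $C_h^{-1} = \eta(\sqrt2)\,n_{G}^\vee(i/2)\,n_{G}(i)$ opens with $n_{G}(i)$. Thus the leading $n_{G}(i)$ cancels the $n_{G}(-i)$, leaving only $\eta(\sqrt2)n_{G}^\vee(i/2)$ acting on a simple vector, which I evaluate from the defining formulas for $n^\vee$ and $\eta$ (a lower unipotent followed by a rescaling of the $\Z$-grading). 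For instance this yields $(C_2^{-1}\boxtimes C_h^{-1})(e_{\ell}) = e\otimes(1,0,0,0)$ and $(C_2^{-1}\boxtimes C_h^{-1})(h_1(X)) = i\,e\otimes(0,X,0,0)$. For the degree-zero element $h_{-1}(Z) = \tfrac{i}{2}n_{G}(-i)n_{L}^\vee(Z)n_{G}(i)$ the same cancellation, combined with the fact that the degree $-1$ subspace $J^\vee$ is abelian (so conjugation by $n_{G}^\vee(i/2)$ fixes $n_{L}^\vee(Z)$) and that $\eta(\sqrt2)$ rescales $n_{L}^\vee$ by $2$, collapses the expression to $i\,n_{L}^\vee(Z)$; the element $n_F(Y)$ is handled identically and produces $f\otimes(0,Y,0,0)$.

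It then remains to apply $w_{23}^{-1}$, for which I would pass to the $\Z/3$-model via Proposition \ref{prop:Z3Z2}, where $w_{23}$ acts as the signed transposition $v_2\mapsto -v_3$, $v_3\mapsto -v_2$ (dually $\delta_2\mapsto -\delta_3$, $\delta_3\mapsto -\delta_2$) and is an involution. This is exactly the step that moves a vector between $\Z/2$-graded pieces: $e\otimes(1,0,0,0)$ corresponds to $E_{12}\in\sl_3$, and $w_{23}$ sends it to $E_{13}$, which under Proposition \ref{prop:Z3Z2} is $\mm{0}{1}{0}{0}\in\sl_2$, proving (7); likewise $i\,n_{L}^\vee(Z) = i\,\delta_2\otimes Z\mapsto -i\,\delta_3\otimes Z = -i\,e\otimes(0,0,Z,0)$, proving (3), and $f\otimes(0,Y,0,0) = v_3\otimes Y\mapsto -v_2\otimes Y$, i.e. $n_{L}(Y)$, proving (6). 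Keeping the signs and the scalars (the powers of $\sqrt2$ from $\eta$, and $i$ versus $i/2$ in $C_h$) straight uniformly across all twelve identities is the bulk of the labor.

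I expect the genuine obstacle to be the identities built from $V(X)^*$, namely (5) for $n_E(X)$ and the barred elements (9)--(12). Because $V(X)^* = n_{G}(i)(\tfrac{\tr(X)}{2},-iX,0,0)$ carries $n_{G}(+i)$ rather than $n_{G}(-i)$, the leading $n_{G}(i)$ of $C_h^{-1}$ no longer cancels but compounds to $n_{G}(2i)$, so the telescoping fails and one must evaluate $\eta(\sqrt2)\,n_{G}^\vee(i/2)\,n_{G}(2i)$ directly, repeatedly invoking the cubic-norm identities (e.g. $1_J\times X = \tr(X)1_J - X$ and $\tr(x)^2 = \tr(x^2)+2\tr(x^\#)$). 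Alternatively, since $n_E(X) = -\overline{n_F(X)}$ and the barred $h$-vectors are the complex conjugates of the unbarred ones, I would deduce (5) and (9)--(12) from the telescoping cases by tracking how $\mathcal{C}$ interacts with complex conjugation, that is, by comparing $\mathcal{C}^{-1}$ with $(\overline{\mathcal{C}})^{-1}$; establishing this conjugation symmetry cleanly and reconciling it with the $n_{G}(2i)$ computation so that the normalizations agree is the step demanding the most care.
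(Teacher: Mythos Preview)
Your approach is correct and essentially the same as the paper's: factor $\mathcal{C}^{-1} = w_{23}^{-1}(C_2^{-1}\boxtimes C_h^{-1})$, reduce the $C_h^{-1}$-step to the action of $n_G^\vee(i/2)\,n_G(i)$ on $r_0(\pm i)$, $V(X)$, $V(X)^*$ and on the relevant $\h(J)^0$-elements, handle the $C_2^{-1}$-step as $2\times 2$ linear algebra, and then pass through the $\Z/3$-model for $w_{23}$.

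Your concern about $V(X)^*$ and the barred elements is overblown. The paper simply records
\[
n_G^\vee(i/2)\,n_G(i)\,V(X)^* = (0,0,-2X,0), \qquad n_G^\vee(i/2)\,n_G(i)\,r_0(-i) = -8i\,(0,0,0,1)
\]
as two more of its auxiliary relations, on exactly the same footing as the $r_0(i)$ and $V(X)$ identities; these are direct $W_J$-computations of the same kind (using $1_J\times x = \tr(x)\,1_J - x$ and the like), not a genuine obstacle. The paper does not invoke a conjugation symmetry of $\mathcal{C}$: for the barred $h$-elements it records, for instance, $h_{-1}(Z) = \tfrac{i}{2}\,n_G^\vee(i)\,n_L(Z)\,n_G^\vee(-i)$ and $\bigl(n_G^\vee(i/2)\,n_G(i)\bigr)\cdot\overline{h_{-1}}(Z) = -2i\,n_L(Z)$ and proceeds directly. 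Your telescoping mechanism already suffices for all twelve identities once you commit to computing these four $W_J$-relations.
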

\begin{proof} The proof of the theorem is a rather direct verification.  In doing this verification, one uses the following relations:
\begin{enumerate}
\item $n_{G}^\vee(i/2)n_{G}(i)V(X) = (0,iX,0,0)$
\item $n_{G}^\vee(i/2)n_{G}(i)V(X)^* = (0,0,-2X,0)$
\item $n_{G}^\vee(i/2)n_{G}(i)r_0(i) = (1,0,0,0)$
\item $n_{G}^\vee(i/2)n_{G}(i)r_0(-i) =-8i(0,0,0,1)$
\item $C_2^{-1}(ie+f) = \sqrt{2}e$
\item $C_2^{-1}(ie-f) = -\sqrt{2} i f$
\item $\frac{1}{2} C_2^{-1}\mm{-1}{i}{i}{1} C_2 = -i \mm{0}{1}{0}{0}$.
\item $\frac{1}{2} C_2^{-1}\mm{-1}{-i}{-i}{1} C_2 = i \mm{0}{0}{1}{0}$
\item $J_2 n_{L}(x) J_2^{-1} = n_{L}^\vee(-\iota(x))$
\item $J_2 n_{L}^\vee(\gamma) J_2^{-1} = n_{L}(-\iota(\gamma))$.
\item $h_{-1}(Z) = \frac{i}{2}\left(n_{G}^\vee(i) n_{L}(Z) n_{G}^\vee(-i)\right)$.
\item $\left(n_{G}^\vee\left(\frac{i}{2}\right) n_{G}\left(i\right)\right) \cdot \overline{h_{-1}}(Z) = -2i n_{L}(Z)$.
\item $\eta(\lambda) n_{L}(Z) \eta(\lambda)^{-1} = n_{L}(\lambda^{-2} Z)$.
\end{enumerate}

\end{proof}

\section{The Cartan and Iwasawa decompositions of $\g(J)$}\label{sec:Cartan} In this section, we give a few (more) facts about the Cartan and Iwasawa decomposition for $\g(J)$.

\subsection{Good basis of $\k$}\label{subsec:kbasis} Recall the elements $e_{\ell}, f_{\ell}, h_{\ell}$ from above.  We have the following lemma.
\begin{lemma} The elements $e_{\ell}, h_{\ell}, f_{\ell}$ form an $\sl_2$-triple.  That is, one has $[e_{\ell},f_{\ell}] = h_{\ell}$, $[h_{\ell},e_{\ell}] = 2 e_{\ell}$ and $[h_{\ell},f_{\ell}] = -2 f_{\ell}$.\end{lemma}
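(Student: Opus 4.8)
The plan is to derive all three identities at once from the fact that the Cayley transform $\mathcal{C}$ is a Lie algebra automorphism of $\g(J)\otimes\C$. By parts (7) and (8) of Theorem \ref{thm:Cayley} we have $\mathcal{C}^{-1}(e_{\ell}) = \mm{0}{1}{0}{0} = e_0$ and $\mathcal{C}^{-1}(f_{\ell}) = \mm{0}{0}{1}{0} = f_0$, so $e_{\ell} = \mathcal{C}(e_0)$ and $f_{\ell} = \mathcal{C}(f_0)$, where $(e_0, h_0, f_0)$ with $h_0 = \mm{1}{0}{0}{-1}$ is the standard $\sl_2$-triple inside $\sl_2 \subseteq \g(J)_0$. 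Since $\mathcal{C}$ preserves the bracket, the triple $(e_{\ell}, \mathcal{C}(h_0), f_{\ell})$ is the $\mathcal{C}$-image of the standard triple and is therefore automatically an $\sl_2$-triple; in particular $[e_{\ell}, f_{\ell}] = \mathcal{C}(h_0)$, $[\mathcal{C}(h_0), e_{\ell}] = 2 e_{\ell}$, and $[\mathcal{C}(h_0), f_{\ell}] = -2 f_{\ell}$. Thus the whole lemma reduces to the single identity $\mathcal{C}(h_0) = h_{\ell}$.

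To establish $\mathcal{C}(h_0) = h_{\ell}$, I would unwind $\mathcal{C} = (C_2 \boxtimes C_h) w_{23}$ through the dictionary of Proposition \ref{prop:Z3Z2}. Under that dictionary $h_0 = \mm{1}{0}{0}{-1}$ corresponds to $E_{11} - E_{33} \in \sl_3$, and since $w_{23}^2 = 1$ a direct computation of the signed permutation action gives $w_{23}(E_{11} - E_{33}) w_{23}^{-1} = E_{11} - E_{22} = \tfrac12\big((E_{11}-E_{33}) + \phi_s\big)$, where $\phi_s = E_{11} - 2E_{22} + E_{33}$ acts on $W_J$ by $(a,b,c,d) \mapsto (3a, b, -c, -3d)$. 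Translating back to the $\Z/2$-model this is $\tfrac12 h_0 + \tfrac12 \phi_s$, with $h_0 \in \sl_2$ and $\phi_s \in \h(J)^0$, and it remains to apply $C_2 \boxtimes C_h$ to the two summands separately.

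The $\sl_2$-summand is immediate: the matrix identity $C_2 \mm{1}{0}{0}{-1} C_2^{-1} = i\,\mm{0}{1}{-1}{0}$ shows that $C_2$ carries $\tfrac12 h_0$ to $\tfrac{i}{2}\mm{0}{1}{-1}{0}$, which is exactly the $\sl_2$-part of $h_{\ell}$. The remaining and genuinely computational step — the main obstacle — is to show that $C_h$ conjugates $\tfrac12 \phi_s$ to $\tfrac{i}{2}\big(n_L(-1) + n_L^\vee(\iota(1))\big)$, the $\h(J)^0$-part of $h_{\ell}$. For this I would use $C_h = n_{G}(-i)\, n_{G}^\vee(-i/2)\, \eta(2^{-1/2})$, note that $\eta(2^{-1/2})$ commutes with $\phi_s$ (as $\phi_s$ is the infinitesimal generator of the one-parameter subgroup $\eta$), and compute the two remaining conjugations by $n_{G}(-i)$ and $n_{G}^\vee(-i/2)$ inside the three-step $\Z$-grading $\h(J)^0 = J^\vee \oplus \m(J) \oplus J$, exactly in the style of Lemma \ref{lem:nconjs}.

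As a cross-check, or as a route that avoids the $\Z/3$-model entirely, one can verify $[e_{\ell}, f_{\ell}] = h_{\ell}$ head-on from the bracket $[v \otimes w, v' \otimes w']_{1/2} = \tfrac12 \langle w, w'\rangle (v \cdot v') + \tfrac12 \langle v, v'\rangle \Phi_{w,w'}$. Using $\langle r_0(i), r_0(-i)\rangle = N(2i\,1_J) = -8i$, $\langle ie+f, ie-f\rangle = -2i$, and $(ie+f)\cdot(ie-f) = -2\,\mm{0}{1}{-1}{0}$, the $\sl_2$-part works out to $\tfrac{i}{2}\mm{0}{1}{-1}{0}$, while the polarization of \eqref{eqn:PhivvGood} from Proposition \ref{prop:NPhi} gives $\Phi_{r_0(i),r_0(-i)} = 8\,n_L(1_J) - 8\,n_L^\vee(\iota(1_J))$ with vanishing $\m(J)$-component, so that $-\tfrac{i}{16}\Phi_{r_0(i),r_0(-i)}$ reproduces the $\h(J)^0$-part of $h_{\ell}$. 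The two weight relations can then be checked directly by letting $h_{\ell}$ act on the pure tensors $e_{\ell}$ and $f_{\ell}$ through the $\sl_2$-action on $V_2$ and the $\h(J)^0$-action on $W_J$; here one uses the normalization $(1_J, 1_J) = \tr(1_J) = 3$, which is what makes $n_L(-1) + n_L^\vee(\iota(1))$ act as the scalar $-3i$ on $r_0(i)$.
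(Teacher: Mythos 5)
Your proposal is correct, and the ``cross-check'' you give at the end is in fact essentially the paper's own proof: the paper establishes $[e_{\ell},f_{\ell}]=h_{\ell}$ head-on by computing $\Phi_{r_0(i),r_0(-i)}$ via Proposition \ref{prop:NPhi} (polarizing $\tfrac14\Phi_{v,v}$ at $v=(1,0,-1,0)$ and using that $\Phi_{r_0(\pm i),r_0(\pm i)}=0$ since $r_0(Z)$ is rank one), and then deduces the two weight relations from Theorem \ref{thm:Cayley}, exactly as you do --- or alternatively from the eigenvalue computation $(n_L(-1)+n_L^\vee(\iota(1)))r_0(i)=-3i\,r_0(i)$ that you also cite. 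Note that your value $\Phi_{r_0(i),r_0(-i)}=8\bigl(n_L(1)-n_L^\vee(\iota(1))\bigr)$ is the correct one: the paper's displayed intermediate ``$2n_L(1)+2n^\vee_L(-1)$'' carries a factor-of-$4$ slip, and it is your constant that makes $-\tfrac{i}{16}\Phi_{r_0(i),r_0(-i)}$ land exactly on the $\h(J)^0$-part of $h_{\ell}$. Your primary route inverts the logical order of the paper: rather than computing $[e_{\ell},f_{\ell}]$ first and reading off $\mathcal{C}^{-1}(h_{\ell})=h_0$ as a consequence, you verify $\mathcal{C}(h_0)=h_{\ell}$ by direct conjugation and let the automorphism property deliver all three bracket relations at once. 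The steps you carry out are right --- $w_{23}(E_{11}-E_{33})w_{23}^{-1}=E_{11}-E_{22}=\tfrac12(h_0+\phi_s)$ and $C_2 h_0 C_2^{-1}=i\mm{0}{1}{-1}{0}$ both check out --- and the remaining conjugation $C_h(\tfrac12\phi_s)C_h^{-1}$ by the method of Lemma \ref{lem:nconjs} in the three-step grading is routine, though you should watch the sign convention $X\mapsto n_L(-X)$ in (\ref{eqn:HJmorph}) when exponentiating. Since that last computation is only sketched, the complete argument in your write-up is really the ``cross-check''; the conjugation route is a pleasant conceptual repackaging, but it buys little here, since one still has to do one honest computation in $\h(J)^0\otimes\C$ either way.
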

\begin{proof} To see that $[e_{\ell},f_{\ell}] = h_{\ell}$, one must compute $\Phi_{r_0(i),r_0(-i)}$.  First note that $\frac{1}{2}(r_0(i) + r_0(-i)) = (1,0,-1,0)$.  Thus
\[\frac{1}{2}\Phi_{(1,0,-1,0),(1,0,-1,0)} = \Phi_{r_0(i),r_0(-i)} + \frac{1}{2}\Phi_{r_0(i),r_0(i)} + \frac{1}{2}\Phi_{r_0(-i),r_0(-i)} = \Phi_{r_0(i),r_0(-i)}.\]
The last equality is because $r_0(Z)$ is rank one, and thus $\Phi_{r_0(Z),r_0(Z)} = 0$.  Thus applying Proposition \ref{prop:NPhi}, $\Phi_{r_0(i),r_0(-i)} = 2n_L(1) + 2n^\vee_L(-1)$.  The commutator $[e_{\ell},f_{\ell}]$ is now quickly verified to be $h_{\ell}$.

That $[h_{\ell},e_{\ell}] = 2 e_{\ell}$ and $[h_{\ell},f_{\ell}] = -2f_{\ell}$ now follows from the Cayley transform, Theorem \ref{thm:Cayley}.  Alternatively, one can check these identities directly: A short computation shows that $(n_L(-1) + n_L(\iota(1))) r_0(i) = -3i r_0(i)$.  The fact that $[h_{\ell},e_{\ell}] = 2 e_{\ell}$ follows, and then that $[h_{\ell},f_{\ell}] = -2 f_{\ell}$ follows from this by complex conjugation.
\end{proof}

\subsection{Good basis of $\p$}\label{subsec:pbasis} We now record a few more facts about our basis of $\p$. First, we have the following relations: $[h_3, h_{-3}] = -2 e_{\ell}$ and $[h_1(X),h_{-1}(Y)] = -(X,Y)[h_3,h_{-3}] = -2(X,Y)e_{\ell}$.

The pairings between the $h_j$'s and the $\overline{h_k}$'s is given by
\begin{itemize}
\item $B_{\g}(h_3,\overline{h_3}) = 1$
\item $B_{\g}(h_1(X),\overline{h_1(Y)}) = (X,Y)$
\item $B_{\g}(h_{-1}(Z),\overline{h_{-1}(W)}) = (Z,W)$
\item $B_{\g}(h_{-3},\overline{h_{-3}}) = 1$\end{itemize}
and all other pairings between the $h_j$'s, $h_k$'s, $\overline{h_j}$'s are $0$.  Here $X,Y,Z,W$ are real, i.e. in $J = J \otimes \R$.

If $a, d \in \R$ and $b, c \in J$, then we define
\[(a,b,c,d)_{\p} = ah_{3} + h_1(b) + h_{-1}(c) + dh_{-3}.\]
With this notation, one now computes the following bracket relations between the $n_{?}(X)$ and the $h_k$'s.
\begin{proposition}\label{prop:NFNE} One has
\[ [n_{F}(X), (a,b,c,d)_{\p}] = (0,aX,b \times X, (X,c))_{\p}\]
and
\[[n_{E}(X),(a,b,c,d)_{\p}] = ((b,X),c \times X, dX,0)_{\p}.\]
Additionally,
\begin{equation}\label{eqn:NH} [n_{H}(X),(a,b,c,d)_{\p}] = (\tr(X)a,\tr(X)b-\{X,b\},\{X,c\}-\tr(X)c,-\tr(X)d)_{\p}.\end{equation}
\end{proposition}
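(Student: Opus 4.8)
The plan is to prove Proposition \ref{prop:NFNE} by a direct bracket computation in the $\Z/2$-model of $\g(J)$, leveraging the explicit formulas for the basis elements $n_F(X), n_E(X), n_H(X)$ and the $h_k$'s that were set up in section \ref{sec:Cayley}. Since $n_F(X) = \frac{1}{2}(ie-f)\otimes V(X)$ and each $h_k$ is (up to the $h_{-1}$ term) of the form $v\otimes w$ with $v\in V_2$ and $w\in W_J$, the brackets reduce to applications of the defining bracket on $\g(J)_1\otimes\g(J)_1 \to \g(J)_0$, namely $[\phi,v\otimes w]=\phi(v\otimes w)$ together with $[v\otimes w, v'\otimes w'] = \alpha\langle w,w'\rangle(v\cdot v') + \alpha\langle v,v'\rangle\Phi_{w,w'}$, evaluated at $\alpha=\tfrac12$.

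The cleanest route, and the one I would take first, is to simply transport the identities through the Cayley transform $\mathcal{C}$. By Theorem \ref{thm:Cayley} we have $\mathcal{C}^{-1}(n_F(X)) = n_L(X)$, $\mathcal{C}^{-1}(n_E(X)) = n_L^\vee(\iota(X))$, and $\mathcal{C}^{-1}$ sends $(a,b,c,d)_\p = a h_3 + h_1(b) + h_{-1}(c) + d h_{-3}$ to $-ie\otimes(a,b,c,d)$. Since $\mathcal{C}$ is a Lie algebra automorphism, it suffices to compute the brackets of $n_L(X)$, $n_L^\vee(\iota(X))$, and the relevant $\mathfrak{m}(J)$-element with $-ie\otimes(a,b,c,d)$ in the $\Z/2$-model, which are governed entirely by the action of $\h(J)^0$ on $W_J$. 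Using the explicit action formulas $n_L(X)(a,b,c,d) = (0,ax,b\times x,(c,x))$ and $n_L^\vee(\gamma)(a,b,c,d) = ((b,\gamma),\gamma\times c,d\gamma,0)$ from section \ref{sec:LieI}, the first two displayed identities follow immediately: the factor of $-ie$ passes through the $\g(J)_0$-action unchanged, and applying $\mathcal{C}$ back recovers $(0,aX,b\times X,(X,c))_\p$ and $((b,X),c\times X,dX,0)_\p$ respectively.

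For the third identity, involving $n_H(X)$, I expect the main subtlety. The element $n_H(X)$ does not obviously transform under $\mathcal{C}^{-1}$ to a single clean generator — it mixes $\mathfrak{sl}_2$ and the $\m(J)$-piece — so I would instead identify $\mathcal{C}^{-1}(n_H(X))$ explicitly as an element of $\h(J)^0$ (or $\g(J)_0$) by combining relations (1)--(4) and (13) in the proof of Theorem \ref{thm:Cayley}, and then apply the known action $M(\phi)(a,b,c,d)$ together with the decomposition (\ref{eqn:12sum}) and the Jordan-product identities (\ref{eqn:Jord+}), (\ref{eqn:Jordwedge}). The appearance of $\{X,b\}$ and $\{X,c\}$ on the right-hand side is exactly the signature of $M(\Phi_{\iota(1_J),X})$ acting via the Jordan multiplication $\{X,\bullet\} = \Phi_{\iota(1_J),X}$, while the $\tr(X)$ terms come from the multiplier $t(\phi)$ in the formula for $M(\phi)$ (note $t(\{X,\bullet\}) = \tr(X)$ up to normalization). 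The dual action on $J^\vee$ supplies the opposite sign, explaining $\{X,c\}-\tr(X)c$ versus $\tr(X)b-\{X,b\}$.

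The hard part is thus the bookkeeping for $n_H(X)$: pinning down precisely which element of $\m(J)$ it corresponds to under $\mathcal{C}^{-1}$, getting the normalization of the multiplier $t(\phi)$ right so that the coefficients of the $\tr(X)$ and $\{X,\bullet\}$ terms come out exactly as stated, and checking the signs on the degree $+1$ versus degree $-1$ parts. I would verify the answer against a low-rank sanity check — e.g. $X = 1_J$, where $\{1_J,b\} = 2b$ and $\tr(1_J)b - \{1_J,b\} = (\dim J - 2)\cdots$ should reproduce the action of $n_H(1_J) = h_\ell$-related element on the $h_k$'s already recorded in section \ref{subsec:kbasis}. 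Once the $\mathcal{C}^{-1}$-image of $n_H(X)$ is correctly identified, equation (\ref{eqn:NH}) reduces to reading off $M(\Phi_{\iota(1_J),X})$ on each slot of $(a,b,c,d)$, which is routine.
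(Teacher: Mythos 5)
Your treatment of the first two identities coincides with the paper's: both are read off by pushing everything through the Cayley transform and using the explicit actions of $n_L(X)$ and $n_L^\vee(\gamma)$ on $W_J$. For the third identity your route diverges from the paper's and is where I'd flag a practical issue. The paper does not identify $\mathcal{C}^{-1}(n_H(X))$ at all; it instead invokes Claim \ref{claim:NvNw} to get $n_H(X)=[n_E(X),n_F(1)]=[n_E(1),n_F(X)]$ and then deduces (\ref{eqn:NH}) from the first two identities by the Jacobi identity, so no new normalizations (multiplier, dual action, signs) ever enter. Your plan of computing $\mathcal{C}^{-1}(n_H(X))$ "by combining relations (1)--(4) and (13)" term by term from the definition $n_H(X)=\frac{i}{2}\bigl(\tr(X)\mm{0}{1}{-1}{0}+n_L(\tr(X)-2X)+n_L^\vee(\iota(2X-\tr(X)))\bigr)$ is awkward as stated, because $\mathcal{C}=(C_2\boxtimes C_h)w_{23}$ contains $w_{23}$, which does not preserve the $\Z/2$-grading, so the individual summands are not sent to anything clean even though their sum is. The efficient way to pin down the image is again the commutator: $\mathcal{C}^{-1}(n_H(X))=[n_L^\vee(\iota(X)),n_L(1)]=-M(\Phi_{\iota(1_J),X})$, after which your endgame does close up correctly --- the multiplier is $t(\Phi_{\iota(1_J),X})=2\tr(X)$, and self-adjointness of $\{X,\bullet\}$ for the trace pairing gives $\widetilde{\{X,\bullet\}}(c)=-\{X,c\}$, producing exactly $(\tr(X)a,\tr(X)b-\{X,b\},\{X,c\}-\tr(X)c,-\tr(X)d)$. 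So your approach is viable and buys an explicit identification of $n_H(X)$ inside $\m(J)$, at the cost of the normalization bookkeeping you yourself anticipate; the paper's Jacobi-identity shortcut avoids that cost entirely.
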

\begin{proof} The actions of $n_{E}(X)$ and $n_{F}(X)$ follow immediately from the explicit Cayley transform, Theorem \ref{thm:Cayley}. For the action of $n_{H}(X)$, it follows from Claim \ref{claim:NvNw} below that 
\[n_H(X) = [n_{E}(X),n_{F}(1)] = [n_{E}(1),n_{F}(X)].\]
From this, (\ref{eqn:NH}) follows from the statements for $n_{E}(X)$ and $n_{F}(X)$. \end{proof}

If $h$ is in the span of $h_{3}, h_{1}(X), h_{-1}(Y), h_{-3}$ then $[h_{\ell}, h] = h$ and $[h_{\ell},\overline{h}] = -\overline{h}$. Finally, one has the following relations:
\begin{itemize}
\item $[f_{\ell}, h_3] = - \overline{h_{-3}}$
\item $[f_{\ell},h_1(X)] = \overline{h_{-1}(X)}$
\item $[f_{\ell}, h_{-1}(X)] = -\overline{h_1(X)}$
\item $[f_{\ell},h_{-3}] = \overline{h_3}$.\end{itemize}

\subsection{The Cartan decomposition in the $\Z/3$-model} In this subsection we record the explicit Cartan decompositions in the $\Z/3$-model, and other related facts.

Define $\so_3^{\ell}:V_3 \rightarrow \g(J)$ as follows.  We denote $\so_3^{long}$ the image of this map; it is the long root $\so_3$ in $\g(J)$.  First, we define $u: V_3 \rightarrow \so(V_3)$ as $v_j \mapsto E_{j-1,j+1}-E_{j+1,j-1}$ with indices taken modulo $3$.  Then one computes $[v_j, v_{j+1}] = v_{j+2}$, and thus $[u(v),u(w)] = u(\iota(v \wedge w))$.  Additionally, $u(v) \cdot w = \iota(v \wedge w)$.  More canonically, one has
\[u(\iota(v \wedge w)) = w \otimes \iota(v) - v \otimes \iota(w).\]

One defines $\so_3^{\ell}(v) = \frac{1}{4}\left(u(v) + v \otimes 1 + \iota(v) \otimes 1\right)$.  With this definition, one computes 
\[[\so_3^{\ell}(v), \so_3^{\ell}(w)] = \so_3^{\ell}(\iota(v \wedge w)).\]

We set $e_{\ell} := \so_3^{\ell}(v_1 -iv_3)$, $f_{\ell} := \so_3^{\ell}(-v_1-iv_3)$, $h_{\ell} := \so_{3}^{\ell}(2iv_2)$.  Then these form an $\sl_2$-triple: $[e_{\ell},f_{\ell}] = h_{\ell}$, $[h_{\ell},e_{\ell}] = 2 e_{\ell}$, and $[h_{\ell},f_{\ell}] = -2f_{\ell}$.  We remark that $\overline{e_{\ell}} = -f_{\ell}$, $\overline{h_{\ell}}=-h_{\ell}$, $\overline{f_{\ell}} = -e_{\ell}$.  These elements correspond to the $e_{\ell}, f_{\ell}, h_{\ell}$ of subsection \ref{subsec:kbasis} under the explicit identification of the $\Z/3$ model of $\g(J)$ with the $\Z/2$-model.

Recall that $K = (\SU(2) \times L_0(J))/\mu_2$.  We now describe the Lie algebra $\mathfrak{l}_0(J)$ of $L_0(J)$ in the $\Z/3$-model.  Namely, for $v \in V_3$ and $X \in J$, define
\[n_{v}(X) = \frac{\tr(X)}{4} u(v) + \frac{1}{2} v\otimes \left(X- \frac{\tr(X)}{2}\right)+ \frac{1}{2}\iota(v)\otimes \left(X - \frac{\tr(X)}{2}\right).\]
With this definition, one has $n_{E}(X) = n_{v_1-iv_3}(X)$, $n_{H}(X) = n_{2iv_2}(X)$ and $n_F(X) = n_{-(v_1+iv_3)}(X)$.  Then we have $\mathfrak{l}_0(J) = \a(J) \oplus V_3 \otimes J$ with the element $v \otimes X$ of $V_3 \otimes J$ represented by $n_{v}(X)$.

One has the important relation $[\so_{3}^{long},n_{v}(X)] = 0$, as stated in the following claim. 
\begin{claim} If $v, w \in V_3$ and $X \in J$ then $[\so_{3}^{long}(v),n_{w}(X)] = 0$.\end{claim}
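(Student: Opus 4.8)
The plan is to prove the claim by a direct expansion in the $\Z/3$-model of $\g(J)$, using the explicit bracket formulas recorded above. Since $\so_3^{\ell}(v)$ is linear in $v$ and $n_w(X)$ is linear in each of $w$ and $X$, the expression $[\so_3^{\ell}(v), n_w(X)]$ is trilinear in $(v,w,X)$, so there is no need to reduce to basis vectors; one can compute with general $v,w \in V_3$ and $X \in J$. Writing $\so_3^{\ell}(v) = \tfrac14\big(u(v) + v\otimes 1_J + \iota(v)\otimes 1_{J^\vee}\big)$ and $n_w(X) = \tfrac{\tr(X)}{4}u(w) + \tfrac12 w\otimes\widehat X + \tfrac12\iota(w)\otimes\widehat X$ with $\widehat X = X - \tfrac{\tr(X)}{2}1_J$, the commutator breaks into nine brackets between the three graded components ($\sl_3$, $V_3\otimes J$, $V_3^\vee\otimes J^\vee$) of each element. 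First I would sort these nine contributions according to which of the four graded pieces $\sl_3$, $\m(J)^0$, $V_3\otimes J$, $V_3^\vee\otimes J^\vee$ of $\g(J)$ they land in, and show that the total contribution to each piece vanishes separately.

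The $V_3\otimes J$ piece receives $[u(v), w\otimes\widehat X]$, $[v\otimes 1_J, u(w)]$, and $[\iota(v)\otimes 1_{J^\vee}, \iota(w)\otimes\widehat X]$. Using $u(v)\cdot w = \iota(v\wedge w)$ together with the dual of property (1) of a cubic norm structure, $1_{J^\vee}\times\iota(\widehat X) = (1_J,\widehat X)1_J - \widehat X$, these collapse to $(u(v)\cdot w)\otimes\big[\tfrac{\tr(X)}{16}1_J + \tfrac18(1_J,\widehat X)1_J\big]$; the $V_3^\vee\otimes J^\vee$ piece is entirely symmetric and collapses to the analogous expression. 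The decisive scalar identity is $(1_J,\widehat X) = (1_J,X) - \tfrac{\tr(X)}{2}(1_J,1_J) = \tr(X) - \tfrac{\tr(X)}{2}\cdot 3 = -\tfrac{\tr(X)}{2}$, which holds because the paper's normalization gives $(1_J,X) = \tr(X)$ and $(1_J,1_J) = \tr(1_J) = 3$; this forces $\tfrac{\tr(X)}{16} + \tfrac18(1_J,\widehat X) = 0$ and so both pieces vanish.

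Next I would treat the $\sl_3$ and $\m(J)^0$ pieces, which come from $[u(v),u(w)]$ and from the two mixed brackets $[V_3\otimes J, V_3^\vee\otimes J^\vee]$. Using $[u(v),u(w)] = u(\iota(v\wedge w))$, the decomposition $[\delta\otimes\gamma, v'\otimes X'] = (X',\gamma)\big(v'\otimes\delta - \tfrac13\delta(v')\big) + \delta(v')\Phi'_{\gamma,X'}$, and the canonical formula $u(\iota(v\wedge w)) = w\otimes\iota(v) - v\otimes\iota(w)$, the $\sl_3$-parts combine to $\big[\tfrac{\tr(X)}{16} + \tfrac18(1_J,\widehat X)\big]u(\iota(v\wedge w))$, again killed by the same scalar identity. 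The remaining $\m(J)^0$-part is a multiple of $\Phi'_{1_{J^\vee},\widehat X} - \Phi'_{\iota(\widehat X),1_J}$, which vanishes by the symmetry $\Phi_{\iota(1_J),\widehat X} = \Phi_{\iota(\widehat X),1_J}$ already used in the proof of Lemma \ref{lem:nconjs}.

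I expect the main obstacle to be the careful bookkeeping of the two mixed brackets $[V_3\otimes J, V_3^\vee\otimes J^\vee]$: they are the only contributions straddling two graded pieces ($\sl_3$ and $\m(J)^0$ at once), and keeping their coefficients and the $\Phi'$-versus-$\Phi$ normalizations exactly straight is where the verification is most delicate. It is worth emphasizing that the specific coefficients (the $\tfrac14$ in $\so_3^{\ell}$, and the $\tfrac12$ together with the shift by $\tfrac{\tr(X)}{2}1_J$ in $n_w(X)$) are tuned precisely so that the single scalar identity $(1_J,\widehat X) = -\tfrac{\tr(X)}{2}$ simultaneously annihilates the $\sl_3$, $V_3\otimes J$, and $V_3^\vee\otimes J^\vee$ contributions, while the $\Phi$-symmetry disposes of the $\m(J)^0$ contribution.
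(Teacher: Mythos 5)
Your proposal is correct and is essentially the paper's own proof: both are direct expansions of the nine brackets in the $\Z/3$-model, killed by the same two mechanisms, namely the scalar identity $\tr\big(X-\tfrac{\tr(X)}{2}1_J\big)=-\tfrac{\tr(X)}{2}$ (the paper's version is $(2X-\tr(X))+\tr(X)+1_J\times(2X-\tr(X))=0$) and the vanishing $\Phi_{1_J\wedge Y}=0$. The only difference is bookkeeping — you sort the nine terms by target graded piece, while the paper sorts by which component of $\so_3^{\ell}(v)$ is being bracketed — and this does not change the substance of the argument.
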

\begin{proof} We have
\[4 n_w(X) = \tr(X)u(w) + w \otimes (2X-\tr(X)) + \iota(w) \otimes (2X-\tr(X)).\]
Now
\begin{align*} [u(v),4n_w(X)] &= \tr(X)u(v \wedge w) + \iota(v \wedge w) \otimes(2X-\tr(X)) + v \wedge w \otimes (2X-\tr(X)),\\ [v \otimes 1,4 n_w(X)] &= [v \otimes 1, \iota(w) \otimes (2X-\tr(X))] + \tr(X) \iota(v \wedge w) \otimes 1 + v \wedge w \otimes (1 \times (2X-\tr(X))),\\ [\iota(v) \otimes 1, 4n_w(X)] &= [\iota(v) \otimes 1,w \otimes (2X-\tr(X))] + \iota(v \wedge w) \otimes (1 \times (2X-\tr(X))) +\tr(X) v \wedge w \otimes 1.\end{align*}
Since
\[(2X-\tr(X)) + \tr(X) + 1 \times (2X - \tr(X)) = 0,\]
the coefficient of $v \wedge w$ in the sum is $0$.  Identically, the coefficient of $\iota(v \wedge w)$ in the sum is $0$.

Now, in general, one has the relation
\begin{align*} [\iota(v)\otimes \iota(X), w \otimes Y] - [\iota(w)\otimes \iota(Y), v\otimes X] &= (X,Y)(w \otimes \iota(v) - v \otimes \iota(w))  \\ &\;\;+ (v,w)\left(\Phi_{\iota(X),Y} - \Phi_{\iota(Y),X}\right) \\ &= (X,Y) u(\iota(v \wedge w)) + (v,w)\Phi_{X \wedge Y}.\end{align*}
Since $(1,2X-\tr(X)) = -\tr(X)$ and $\Phi_{1 \wedge Y} = 0$ for any $Y \in J$, the coefficient of $u(\iota(v \wedge w))$ vanishes as well.  This completes the proof of the claim.
\end{proof}

For $X,Y \in J$, recall that $\Phi_{X \wedge Y} = \Phi_{\iota(X),Y} - \Phi_{\iota(Y),X}$.  Then
\begin{equation}\label{NvNw} [n_{v}(X),n_{w}(Y)] = n_{\iota(v \wedge w)}\left(\frac{ \{X,Y\}}{2}\right) + \frac{(v,w)}{4}\Phi_{X \wedge Y}.\end{equation}
\begin{claim}\label{claim:NvNw} The equality (\ref{NvNw}) is true.\end{claim}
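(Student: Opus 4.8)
The plan is to prove (\ref{NvNw}) by a direct bilinear expansion in the $\Z/3$-model, sorted by the graded components of $\g(J) = \sl_3 \oplus \m(J)^0 \oplus V_3 \otimes J \oplus V_3^\vee \otimes J^\vee$. Writing $X' = X - \frac{\tr(X)}{2}$ and $Y' = Y - \frac{\tr(Y)}{2}$, each of $n_v(X)$ and $n_w(Y)$ is a sum of three pieces: a $\sl_3$-term $\frac{\tr(X)}{4}u(v)$, a $V_3\otimes J$-term $\frac12 v\otimes X'$, and a $V_3^\vee\otimes J^\vee$-term $\frac12 \iota(v)\otimes\iota(X')$. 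Thus $[n_v(X),n_w(Y)]$ expands into nine brackets, each computed from the explicit $\Z/3$-bracket formulas of the previous subsection. Since the identity is linear in each of $v,w,X,Y$ and both sides are fixed by $\Theta_\g$ (indeed $n_v(X)$ is $\Theta_\g$-fixed for real $v,X$, $\Phi_{X\wedge Y}\in\a(J)$ is fixed by $\Theta_\m$, and $\Theta_\g$ interchanges $V_3\otimes J$ and $V_3^\vee\otimes J^\vee$ via $\iota\otimes\iota$), I would only check the $\sl_3$-, $\m(J)^0$- and $V_3\otimes J$-components; the $V_3^\vee\otimes J^\vee$-component then follows by applying $\Theta_\g$.

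For the $\m(J)^0$-component, only the two mixed brackets $[v\otimes X',\iota(w)\otimes\iota(Y')]$ and $[\iota(v)\otimes\iota(X'),w\otimes Y']$ contribute, and the ``general relation'' recorded in the proof of the preceding claim gives their $\m(J)^0$-part as $\frac{(v,w)}{4}(\Phi_{\iota(X'),Y'} - \Phi_{\iota(Y'),X'}) = \frac{(v,w)}{4}\Phi_{X'\wedge Y'}$. Since $\Phi_{1_J\wedge Z} = 0$ for all $Z$, the trace corrections in $X',Y'$ drop out, so $\Phi_{X'\wedge Y'} = \Phi_{X\wedge Y}$, matching the second term on the right of (\ref{NvNw}).

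For the $\sl_3$-component, the contributions are $\frac{\tr(X)\tr(Y)}{16}[u(v),u(w)]$ together with the trace-free $\mathfrak{gl}_3$-parts of the same two mixed brackets. Using $[u(v),u(w)] = u(\iota(v\wedge w))$ and $u(\iota(v\wedge w)) = w\otimes\iota(v) - v\otimes\iota(w)$, and checking that the $\mathrm{Id}$-parts cancel, these combine into $\big(\frac{\tr(X)\tr(Y)}{16} + \frac{(X',Y')}{4}\big)u(\iota(v\wedge w))$, which must equal the $u$-part $\frac{\tr(\{X,Y\})}{8}u(\iota(v\wedge w))$ of $n_{\iota(v\wedge w)}(\frac{\{X,Y\}}{2})$; expanding $(X',Y')$ via $(1_J,Z)=\tr(Z)$ and $(1_J,1_J)=3$ reduces this to the standard identity $\tr(\{X,Y\}) = 2(X,Y)$. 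For the $V_3\otimes J$-component, the contributing brackets are the two actions of $\sl_3$ on $V_3\otimes J$ and the bracket $[\iota(v)\otimes\iota(X'),\iota(w)\otimes\iota(Y')]$; each produces the same vector $p = \iota(v\wedge w)\in V_3$ (using $\iota(v)\wedge\iota(w) = \iota(v\wedge w)$), giving a total $p\otimes\big(\frac{\tr(X)}{8}Y' + \frac{\tr(Y)}{8}X' + \frac14 X'\times Y'\big)$. Comparing with the $V_3\otimes J$-part $p\otimes\big(\frac{\{X,Y\}}{4} - \frac{\tr(\{X,Y\})}{8}\big)$ of $n_p(\frac{\{X,Y\}}{2})$, and simplifying $X'\times Y'$ with $1_J\times Z = \tr(Z)1_J - Z$ and $1_J\times 1_J = 2\cdot 1_J$, reduces the claim to the identities $\{X,Y\} = X\times Y - \tr(X\times Y)1_J + \tr(Y)X + \tr(X)Y$ and $\tr(X\times Y) = \tr(X)\tr(Y) - (X,Y)$, the latter being the polarization of $\tr(x)^2 = \tr(x^2) + 2\tr(x^\#)$.

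The main obstacle is bookkeeping: correctly tracking the two uses of $\iota$ (between $V_3$ and $V_3^\vee$, and between $J$ and $J^\vee$) and the various factors of $\frac12$ and $\frac14$, so that the scalar and trace corrections cancel exactly in each graded piece. Once the nine brackets are sorted by graded component, the only non-formal inputs are the two trace identities $\tr(\{X,Y\}) = 2(X,Y)$ and $\tr(X\times Y) = \tr(X)\tr(Y) - (X,Y)$, which are standard for cubic norm structures.
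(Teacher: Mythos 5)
Your proposal is correct and follows essentially the same route as the paper: a direct expansion of the nine brackets in the $\Z/3$-model sorted by graded piece, using the mixed-bracket relation from the preceding claim for the degree-$0$ part, the vanishing $\Phi_{1_J\wedge Z}=0$ to drop the trace corrections, and the identities $\tr(\{X,Y\})=2(X,Y)$ and $\{X,Y\}=X\times Y+\tr(X)Y+\tr(Y)X-\tr(X\times Y)$ to match the remaining components. The only cosmetic difference is that you invoke $\Theta_{\g}$-equivariance to dispense with the $V_3^\vee\otimes J^\vee$ component, where the paper simply notes that it is computed identically.
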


Claim \ref{claim:NvNw} shows that the decomposition $\mathfrak{l}_0(J) = \a(J) \oplus V_3 \otimes J$ is the so-called Tits construction (\cite{TitsPlane}, \cite[Theorem 14]{jacobsonBook}) of $\mathfrak{l}_0(J)$ or $\h(J)$.
\begin{proof}[Proof of Claim \ref{claim:NvNw}] We have
\begin{align*} [\tr(X)u(v),4n_w(Y)] &= \tr(X)\tr(Y)u(v \wedge w) + \tr(X)\iota(v \wedge w) \otimes(2Y-\tr(Y)) \\ &\;\; + \tr(X) v \wedge w \otimes (2Y-\tr(Y)),\\ [v \otimes (2X-\tr(X)),4 n_w(Y)] &= [v \otimes (2X-\tr(X)), \iota(w) \otimes (2Y-\tr(Y))] \\ &\;\; + \tr(Y) \iota(v \wedge w) \otimes (2X-\tr(X)) \\ &\;\; + v \wedge w \otimes ((2X-\tr(X)) \times (2Y-\tr(Y))), \\ [\iota(v) \otimes (2X-\tr(X)), 4n_w(Y)] &= [\iota(v) \otimes (2X-\tr(X)),w \otimes (2Y-\tr(Y))] \\ &\;\;+ \iota(v \wedge w) \otimes ((2X-\tr(X)) \times (2Y-\tr(Y))) \\ &\;\; +\tr(Y) v \wedge w \otimes (2X-\tr(X)).\end{align*}

We first compute the term in degree $0$ of $[4n_v(X),n_w(Y)]$.  Note that
\[\tr(X)\tr(Y) + (2X-\tr(X),2Y-\tr(Y)) = 4(X,Y).\]
Furthermore, note that $\Phi_{(2X-\tr(X)) \wedge (2Y-\tr(Y))} = 4 \Phi_{X \wedge Y}$, because $\Phi_{1 \wedge W} = 0$ for any $W \in J$.  Thus the degree $0$ term is $4(X,Y)u(\iota(v \wedge w)) + 4(v,w)\Phi_{X\wedge Y}$.

We now compute the term in degree $1$.  It is $\iota(v \wedge w) \otimes T$, where
\begin{align*} T &= \tr(X)(2Y-\tr(Y)) + \tr(Y)(2X-\tr(X)) + (2X -\tr(X)) \times (2Y-\tr(Y)) \\ &= 4\left(X \times Y + \tr(X)Y + \tr(Y)X-\tr(X)\tr(Y)\right) \\ &= 4\left(\{X,Y\} -(X,Y)\right).\end{align*}
Here we have used
\[\{X,Y\} = \Phi_{1,X}(Y) = - 1 \times (X \times Y) + \tr(X)Y + \tr(Y)X = X\times Y + \tr(X)Y + \tr(Y)X - \tr(X \times Y).\]
The term in degree $2$ is computed identically.  

Thus
\begin{align*} 16[n_u(X),n_v(Y)] &= 4(X,Y)u(\iota(v \wedge w)) + 4(v,w)\Phi_{X\wedge Y} + 4 \iota(v \wedge w) \otimes (\{X,Y\} - (X,Y)) \\&\;\; + 4 v \wedge w \otimes (\{X,Y\}- (X,Y)).\end{align*}
Since $\tr(\{X,Y\}) = 2(X,Y)$, comparing with the definition of $n_{\iota(v \wedge w)}(\{X,Y\})$  gives the claim. \end{proof}

\subsection{The Iwasawa decomposition}\label{subsec:Iwasawa} We now give the Iwasawa decomposition.  First, we make a notation. We set $\epsilon = \mm{1}{}{}{-1} \in \sl_2 \subseteq \g(J)_0$. In the $\Z/3$-model, $\epsilon = E_{11}-E_{33}$. 

We have
\begin{align*} h_3 &= \frac{1}{2}\mm{-1}{i}{i}{1} \\  &= \frac{-1}{2}\mm{1}{}{}{-1} -\frac{i}{2} \mm{}{1}{-1}{} + i\mm{0}{1}{0}{0} \\ &= -\frac{1}{2}\epsilon + i\mm{0}{1}{0}{0} -\frac{1}{4}h_{\ell} - \frac{1}{4}n_{H}(1) \\ h_1(X) &= ie \otimes V(X) - n_{F}(X) \\ h_{-1}(Z) &= \frac{i}{2}(n(Z) + n^\vee(\iota(Z)) + \frac{1}{2}M(\Phi_{1,Z}) \\ &= \frac{1}{2}M(\Phi_{1,Z}) + in(Z) + \frac{i}{2}\left(n(-Z) + n^\vee(\iota(Z))\right) \\ &= \frac{1}{2}M(\Phi_{1,Z}) + in(Z) + \frac{\tr(Z)}{4}h_{\ell} + \frac{1}{2} n_{H}(Z - \tr(Z)/2) \\ h_{-3} &= \frac{1}{4}(ie-f) \otimes r_0(i) \\ &= i\frac{1}{2 } e \otimes r_0(i) - e_{\ell}.\end{align*}

\section{The differential equations}\label{sec:diffEq} In this section, we record in coordinates the differential operator $\mathcal{D}_n$.  In other words, we write down the differential equations satisfied by modular forms $F: G_J^{0} \rightarrow \Vm^\vee$, as explained in the introduction.  The operator $\mathcal{D} = \mathcal{D}_n$ is sometimes called the Schmid operator; see \cite{schmid} and \cite[Theorem A]{yamashita1}. 

\subsection{The Schmid operator} For a function $F: G(\R) \rightarrow \Vm^\vee$ satisfying $F(gk) = k^{-1} \cdot F(g)$, define $\widetilde{D}F$ as
\[\widetilde{D}(F) = \sum_{i}{b_iF \otimes b_i^\vee},\]
a function from $G(\R) \rightarrow \Vm^\vee \otimes \p^\vee$.  Here $b_i$ are a basis of $\p$, $b_i^\vee$ is the dual basis of $\p^\vee$, and $b_iF$ denotes the right-regular action.  Denote by $V_{-}$ the $K$-representation $Sym^{2n-1}(V_2) \boxtimes W_J$.  There is a $K$-equivariant surjection $pr_{-}: \Vm^\vee \otimes \p^\vee \rightarrow V_{-}$. The Schmid operator $\mathcal{D}$ is defined as the composition $\mathcal{D} = pr_{-} \circ \widetilde{D}$ of $pr_{-}$ and $\widetilde{D}$.  We make this explicit below for our case of interest.  See, e.g., \cite[Theorem A]{yamashita1} for the definition of the Schmid operator in the general case.

In our case,
\begin{align*} \widetilde{D}F &= h_{3} F \otimes \overline{h_{3}} + \overline{h_{3}}F \otimes h_{3} + h_{-3} F \otimes \overline{h_{-3}} + \overline{h_{-3}}F \otimes h_{-3} \\ &\; + \sum_{\alpha}{h_1(E_{\alpha})F \otimes \overline{h_{1}}(E_{\alpha}^\vee)} + \sum_{\alpha}{\overline{h_1}(E_{\alpha})F \otimes h_{1}(E_{\alpha}^\vee)} \\ &\; + \sum_{\alpha}{h_{-1}(E_{\alpha})F \otimes \overline{h_{-1}}(E_{\alpha}^\vee)} + \sum_{\alpha}{\overline{h_{-1}}(E_{\alpha})F \otimes h_{-1}(E_{\alpha}^\vee)}.\end{align*}

We use the following notation for the right regular action of certain elements of $\g(J)$ on functions on $G_J(\R)$:
\begin{enumerate}
\item We denote by $\partial_{Heis}$ the action of the element $\mm{0}{1}{0}{0}$ in $\sl_2 \subseteq \g(J)_0$.
\item We denote by $\partial^{W}_v$ the action of the element $e \otimes v$ in $\g(J)_1$.
\item For $E \in J$, we denote by $D_{Z}(E)$ the action of the Lie algebra element $\frac{1}{2}M(\Phi_{1,E}) - in_{L}(E)$.
\item Again, for $E \in J$, we denote by $D_{Z^*}(E)$ the action of the Lie algebra element $\frac{1}{2}M(\Phi_{1,E}) + in_{L}(E)$.
\end{enumerate}

\subsection{The expression for $\widetilde{D}F$} Applying the Iwasawa decomposition of $\p$ given in subsection \ref{subsec:Iwasawa}, we get the following for $\widetilde{D}F$.
\begin{align*} \widetilde{D}F &= (1-\epsilon/2)F \otimes \overline{h_3} + \frac{1}{4}\left(h_{\ell} + n_{H}(1)\right) \cdot \left(F \otimes \overline{h_3}\right) + i \partial_{Heis}F \otimes \overline{h_3} \\ &\; +  (1-\epsilon/2)F \otimes h_3 - \frac{1}{4}\left(h_{\ell} + n_{H}(1)\right) \cdot \left(F \otimes h_3\right) -i\partial_{Heis}F \otimes h_3\\ &\; +\frac{i}{2} \partial^{W}_{r_0(i)} F \otimes \overline{h_{-3}} + e_{\ell}\cdot \left( F\otimes \overline{h_{-3}}\right) + F \otimes h_3 \\ &\; -\frac{i}{2} \partial^W_{r_0(-i)} F \otimes h_{-3} - f_{\ell}\cdot \left( F\otimes h_{-3}\right) + F \otimes \overline{h_3} \\ &\; + \sum_{\alpha}{\left(i \partial^{W}_{V(E_{\alpha})} F \otimes \overline{h_1}(E_{\alpha}^\vee) + n_{F}(E_{\alpha}) \cdot \left(F \otimes \overline{h_1}(E_{\alpha}^\vee)\right) + F \otimes \overline{h_3}\right)} \\ &\; + \sum_{\alpha}{\left(-i\partial^{W}_{V(E_{\alpha})^*} F\otimes h_1(E_{\alpha}^\vee) - n_{E}(E_{\alpha}) \cdot \left(F \otimes h_1(E_{\alpha}^\vee)\right) + F \otimes h_3\right)}\\ &\; + \sum_{\alpha}{\left( D_{Z^*(E_{\alpha})}F \otimes \overline{h_{-1}}(E_{\alpha}^\vee) -\frac{1}{2} F \otimes \overline{h_{-1}}(\{E_{\alpha},E_{\alpha}^\vee\})\right)} \\ &\; + \sum_{\alpha}{\left(- \left(\frac{\tr(E_{\alpha})}{4} h_{\ell} + \frac{1}{2} n_{H}(E_{\alpha} - \tr(E_{\alpha})/2)\right) \cdot \left(F \otimes \overline{h_{-1}}(E_{\alpha}^\vee)\right) \right)} \\ &\; + \sum_{\alpha}{\left( D_{Z(E_{\alpha})}F \otimes h_{-1}(E_{\alpha}^\vee) -\frac{1}{2} F \otimes h_{-1}(\{E_{\alpha},E_{\alpha}^\vee\})\right)} \\ &\; + \sum_{\alpha}{\left(\left(\frac{\tr(E_{\alpha})}{4} h_{\ell} + \frac{1}{2} n_{H}(E_{\alpha} - \tr(E_{\alpha})/2)\right) \cdot \left(F \otimes h_{-1}(E_{\alpha}^\vee)\right) \right)} \end{align*}

\subsection{Simplified equations} Now we assume that $\Vm = Sym^{2n} V_2 \boxtimes 1$, and compute $\mathcal{D}F$.  The operator $\mathcal{D}F$ is the composition of $\widetilde{D}F$ and the contraction 
\[pr_{-}:\Vm \otimes \p \simeq \Vm \otimes (V_2 \otimes W_J) = (Sym^{2n}V_2 \otimes V_2) \boxtimes W_J \rightarrow Sym^{2n-1}V_2 \boxtimes W_J.\]
Thus we employ the contractions
\[ \frac{x^{n+k} y^{n-k}}{(n+k)!(n-k)!} \otimes y \mapsto \frac{x^{n+k-1} y^{n-k}}{(n+k-1)!(n-k)!}\]
and
\[\frac{x^{n+k} y^{n-k}}{(n+k)!(n-k)!} \otimes x \mapsto - \frac{x^{n+k} y^{n-k-1}}{(n+k)!(n-k-1)!}.\]

Set $[x^j] = \frac{x^j}{j!}$ and similarly $[y^k] = \frac{y^{k}}{k!}$.  Then these contractions are
\[ [x^{n+k}][y^{n-k}] \otimes y \mapsto [x^{n+k-1}][ y^{n-k}]\]
and
\[[x^{n+k}] [y^{n-k}] \otimes x \mapsto - [x^{n+k}][y^{n-k-1}].\]
For $-n \leq k \leq n$, we denote by $F_k$ the coefficient of $[x^{n+k}][y^{n-k}]$ in $F$, i.e., 
\[F = \sum_{-n \leq k \leq n}{F_k [x^{n+k}][y^{n-k}]}.\]

The isomorphism $\p \simeq V_2 \boxtimes W_J$ is given as follows.  Denote $x, y$ the standard basis of $V_2$, so that $h_{\ell} x = x$, $h_{\ell} y = -y$, $e_{\ell} x = 0$, $f_{\ell} x = y$, $f_{\ell} y = 0$.  By the Cayley transform, Theorem \ref{thm:Cayley}, we have the following correspondence: 
\begin{align*} h_3 &\mapsto x \boxtimes (1,0,0,0) & \overline{h_{-3}} &\mapsto -y\boxtimes (1,0,0,0) \\ h_1(X) &\mapsto x \boxtimes (0,X,0,0) & \overline{h_{-1}}(X) &\mapsto y \boxtimes (0,X,0,0) \\ h_{-1}(Y) &\mapsto x \boxtimes (0,0,Y,0) & \overline{h_1}(Y) &\mapsto -y \boxtimes (0,0,Y,0) \\ h_{-3} & \mapsto x \boxtimes (0,0,0,1) & \overline{h_3} & \mapsto y \boxtimes (0,0,0,1).\end{align*}

One finds the following for the Schmid operator.  Denote by $E_{\alpha}$ the elements of a basis of $J$ and $E_{\alpha}^\vee$ the dual basis for the trace pairing.
\begin{theorem}\label{thm:Schmid1} Up to a single nonzero constant, the Schmid operator $\mathcal{D}F$ is given as follows.
\begin{enumerate}
\item The coefficient of $[x^{n+v-1}][y^{n-v}] \boxtimes (0,0,0,1)$ in $DF$ is
\[-\frac{1}{2}(\epsilon - 2(n+1) + v)F_v + \frac{i}{2} \partial^W_{r_0(-i)} F_{v-1} + i \partial_{Heis}F_v.\]
\item The coefficient of $[x^{n+v}][y^{n-v-1}] \boxtimes (1,0,0,0)$ in $DF$ is
\[ \frac{1}{2}(\epsilon - 2(n+1) - v)F_v - \frac{i}{2} \partial^{W}_{r_0(i)} F_{v+1}+ i\partial_{Heis}F_v.\]
\item The coefficient of $[x^{n+v}][y^{n-v-1}] \boxtimes (0,0,E_{\alpha}^\vee,0)$ in $DF$  is
\[- (D_{Z(E_{\alpha})} + \frac{v}{2} \tr(E_{\alpha}))F_{v} -i \partial^{W}_{V(E_{\alpha})}F_{v+1}.\]
\item The coefficient of $[x^{n+v-1}][y^{n-v}]\boxtimes (0,E_{\alpha}^\vee, 0,0)$ in $DF$ is
\[(D_{Z^*(E_{\alpha})} - \frac{v}{2} \tr(E_{\alpha}))F_{v} + i \partial^{W}_{V(E_{\alpha})^*} F_{v-1}.\]
\end{enumerate}
\end{theorem}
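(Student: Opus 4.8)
The plan is to obtain the four formulas by pushing the expression for $\widetilde{D}F$ from the previous subsection through the contraction $pr_{-}$, and then sorting the result according to the $W_J$-label that appears in the second tensor slot. First I would use the correspondence table furnished by the Cayley transform (Theorem \ref{thm:Cayley}) to rewrite every basis element of $\p^\vee$ occurring in $\widetilde{D}F$ as an element $v_2 \boxtimes w$ with $v_2 \in \{x,y\}$ and $w \in W_J$. After this substitution $\widetilde{D}F$ becomes a sum of terms of the form (operator applied to $F$)$\,\otimes\,(v_2 \boxtimes w)$, and I would group these terms by the four possible $W_J$-labels $(1,0,0,0)$, $(0,E_\alpha^\vee,0,0)$, $(0,0,E_\alpha^\vee,0)$, $(0,0,0,1)$, since each contributes to exactly one of the four displayed coefficients.

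Next I would expand $F = \sum_v F_v [x^{n+v}][y^{n-v}]$ and apply the two contraction rules $[x^{n+k}][y^{n-k}]\otimes y \mapsto [x^{n+k-1}][y^{n-k}]$ and $[x^{n+k}][y^{n-k}]\otimes x \mapsto -[x^{n+k}][y^{n-k-1}]$. The key bookkeeping observation is that the $V_2$-vector in the second slot dictates both a sign and an index shift: a $y$ (as in $\overline{h_3}$ or $\overline{h_{-1}}$) lowers the $x$-power and matches $F_v$ to the monomial $[x^{n+v-1}][y^{n-v}]$, whereas an $x$ (as in $h_{-3}$ or $h_1$) lowers the $y$-power, introduces a minus sign, and shifts the contributing component by one, so that it is $F_{v\pm1}$ that appears. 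Carrying out this match for each of the four labels is what pairs, for example, the $\overline{h_3}$-term $F_v$ with the $h_{-3}$-term $F_{v-1}$ in part (1), and the $\overline{h_{-1}}$-term $F_v$ with the $h_1$-term $F_{v-1}$ in part (4).

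The genuinely differential operators $\partial_{Heis}$, $\partial^W_v$, $D_{Z(E)}$, $D_{Z^*(E)}$ then appear directly: they are precisely the non-compact (split-plus-unipotent) parts of the Iwasawa decompositions recorded in subsection \ref{subsec:Iwasawa} --- $\partial_{Heis}$ from $h_3$, $\partial^W$ from the $e\otimes(\cdots)$ pieces of $h_{\pm3}$ and of $h_1,\overline{h_1}$, and $D_{Z(E)}$, $D_{Z^*(E)}$ as the non-compact parts of $\overline{h_{-1}}(E)$ and $h_{-1}(E)$ respectively. The only remaining work is to evaluate the compact elements $\epsilon$, $h_\ell$ and $n_H(\cdot)$ on the tensors $F \otimes (v_2 \boxtimes w)$. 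Here I would use the $\mathrm{Sym}^{2n}(V_2)$-structure to read off the $h_\ell$-weight of $F_v$ and of the second-slot vector, together with Proposition \ref{prop:NFNE} for the action of $n_H(\cdot)$ on $(a,b,c,d)_\p$; since $n_H(1)$ acts on $W_J$ as the grading element $\phi_s$, these contributions collapse, together with the many floating terms $F \otimes h_{\pm 3}$ (one for each basis vector $E_\alpha$ of $J$) and the $F\otimes\overline{h_{-1}}(\{E_\alpha,E_\alpha^\vee\})$ correction, into the scalar shift $-2(n+1)$ and the linear-in-$v$ terms such as $-\tfrac{v}{2}\tr(E_\alpha)$.

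The step I expect to be the main obstacle is precisely this scalar bookkeeping: obtaining the constant $-2(n+1)$ and the correct relative sign of $v$ in parts (1) and (2) requires summing the $h_\ell$-weight of $F_v$, the eigenvalue of $v_2\boxtimes w$ under the compact elements, and the repeated constant terms produced both by the Iwasawa expansion and by the Leibniz rule applied to $e_\ell\cdot(F\otimes\overline{h_{-3}})$ and $f_\ell\cdot(F\otimes h_{-3})$ (for which I would use the brackets $[e_\ell,\cdot]$ and $[f_\ell,\cdot]$ from subsection \ref{subsec:pbasis}). Because the theorem only asserts the formula up to a single nonzero constant, I need not pin down the overall normalization; what must be checked is that the relative coefficients $-\tfrac12$, $\tfrac{i}{2}$, $i$ within each part, and the consistency across the four parts, come out as stated.
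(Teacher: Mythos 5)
Your proposal is correct and is essentially the paper's own proof: the paper likewise computes $\mathcal{D}F$ by pushing the Iwasawa-decomposed expression for $\widetilde{D}F$ through $pr_{-}$ term by term, using the Cayley-transform dictionary $h_{\pm 3}, h_{\pm 1} \leftrightarrow x\boxtimes(\cdots)$, $\overline{h_{\pm 3}}, \overline{h_{\pm 1}} \leftrightarrow y\boxtimes(\cdots)$ and the two contraction rules, then collecting terms by $W_J$-label and summing the compact-element contributions (from $h_\ell$, $n_H(1)$, $e_\ell$, $f_\ell$, and the $\{E_\alpha,E_\alpha^\vee\}$ corrections) to produce the constants $-2(n+1)$ and the $v$-dependent shifts. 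Your identification of where each operator and index shift comes from, and of the scalar bookkeeping as the delicate point, matches the paper's "slightly tedious, but direct verification."
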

\begin{proof} This is a slightly tedious, but direct verification.  To get these coefficients from our expression for $\widetilde{D}F$, we use the following several contractions and computations:
\begin{align*}
&pr_{-}(F \otimes \overline{h_3}) =\sum_{k}{F_{k}[x^{n+k-1}][y^{n-k}] \boxtimes (0,0,0,1)}
\\ & (h_{\ell} + n_{H}(1))\cdot \left([x^{n+k-1}][y^{n-k}] \boxtimes (0,0,0,1)\right) = (2k-4) [x^{n+k-1}][y^{n-k}] \boxtimes (0,0,0,1)
\\ & pr_{-}(F \otimes h_3) = \sum_{k}{(-1)F_k [x^{n+k}][y^{n-k-1}] \boxtimes (1,0,0,0)}.
\\ & (h_{\ell} + n_{H}(1))\cdot \left([x^{n+k}][y^{n-k-1}] \boxtimes (1,0,0,0)\right) = (2k+4)[x^{n+k}][y^{n-k-1}] \boxtimes (1,0,0,0).
\\ & pr_{-}(F \otimes \overline{h_{-3}}) = \sum_{k}{(-1)F_k [x^{n+k-1}][y^{n-k}] \boxtimes (1,0,0,0)}.
\\ & pr_{-}(e_{\ell}(F \otimes \overline{h_{-3}})) = \sum_{k}{(-1)(n+k)F_{k}[x^{n+k}][y^{n-k-1}] \boxtimes (1,0,0,0)}.
\\ & pr_{-}(F \otimes h_{-3}) = \sum_{k}{(-1)F_k [x^{n+k}][y^{n-k-1}] \boxtimes (0,0,0,1)}.
\\ & pr_{-}(f_{\ell} (F \otimes h_{-3}))= \sum_{k}{(-1)(n-k)F_k [x^{n+k-1}][y^{n-k}] \boxtimes (0,0,0,1)}.
\\ & pr_{-}(F \otimes \overline{h_1}(E_\alpha^\vee))= \sum_{k}{(-1)F_k [x^{n+k-1}][y^{n-k}] \boxtimes (0,0,E_{\alpha}^\vee,0)}.
\\ & pr_{-}(n_{F}(E_{\alpha}) \cdot (F \otimes \overline{h_1}(E_{\alpha}^\vee))) = \sum_{k}{(-1)F_k [x^{n+k-1}][y^{n-k}] \boxtimes (0,0,0,1)}.
\\ & pr_{-}(F \otimes h_1(E_{\alpha}^\vee)) = \sum_{k}{(-1)F_k [x^{n+k}][y^{n-k-1}] \boxtimes (0,E_{\alpha}^\vee,0,0)}.
\\ & -pr_{-}(n_{E}(E_{\alpha}) \cdot (F \otimes h_1(E_{\alpha}^\vee)))= \sum_{k}{F_k [x^{n+k}][y^{n-k-1}] \boxtimes (1,0,0,0)}.
\\ & -\frac{1}{2} pr_{-}(F \otimes \overline{h_{-1}}(\{E_{\alpha},E_{\alpha}^\vee\}))= \sum_{k}{(-1) F_k [x^{n+k-1}][y^{n-k}] \boxtimes (0,\{E_{\alpha},E_{\alpha}^\vee\}/2,0,0)}.
\\ & pr_{-}(F \otimes \overline{h_{-1}}(E_{\alpha}^\vee))= \sum_{k}{F_k [x^{n+k-1}][y^{n-k}]\boxtimes (0,E_{\alpha}^\vee,0,0)}.
\\ & -\frac{\tr(E_{\alpha})}{4}pr_{-}(h_{\ell} \cdot (F \otimes \overline{h_{-1}}(E_{\alpha}^\vee)))= \sum_{k}{(2k-1)F_k [x^{n+k-1}][y^{n-k}] \boxtimes \left\{(-1/4)(0,\tr(E_{\alpha})E_{\alpha}^\vee,0,0)\right\}}.
\\ & -\frac{1}{2}pr_{-}(n_{H}(E_{\alpha}-\tr(E_{\alpha})/2) \cdot ( F \otimes \overline{h_{-1}}(E_{\alpha}^\vee))) \\ &\qquad = \sum_{k}{F_k [x^{n+k-1}][y^{n-k}] \boxtimes (0,\{E_{\alpha},E_{\alpha}^\vee\}/2 - \tr(E_{\alpha})E_{\alpha}^\vee/4,0,0)}.
\\ & pr_{-}(F \otimes h_{-1}(E_{\alpha}^\vee)) = \sum_{k}{(-1)F_k [x^{n+k}][y^{n-k-1}] \boxtimes (0,0,E_{\alpha}^\vee,0)}.
\\ & -\frac{1}{2}pr_{-}(F \otimes h_{-1}(\{E_{\alpha},E_{\alpha}^\vee\})) = \sum_{k}{F_k [x^{n+k}][y^{n-k-1}] \boxtimes (0,0,\{E_{\alpha},E_{\alpha}^\vee\}/2,0)}.
\\ & \frac{\tr(E_{\alpha})}{4} pr_{-}(h_{\ell} (F \otimes h_{-1}(E_{\alpha}^\vee))) = \sum_{k}{(-1)(2k+1) F_k [x^{n+k}][y^{n-k-1}] \boxtimes (0,0,\tr(E_{\alpha})E_{\alpha}^\vee/4,0)}
\\ & \frac{1}{2} pr_{-}(n_{H}(E_{\alpha}-\tr(E_{\alpha})/2) \cdot (F \otimes h_{-1}(E_{\alpha}^\vee))) \\ &\qquad = \sum_{k}{(-1) F_k [x^{n+k}][y^{n-k-1}] \boxtimes (0,0,\{E_{\alpha},E_{\alpha}^\vee\}/2 - \tr(E_{\alpha})E_{\alpha}^\vee/4,0)}.
\end{align*}
\end{proof}

\subsection{Differential operators in coordinates} Suppose given a function $F$ or $F_v$ as above.  Then define $\phi = \phi_{F}$, a function of $(\mu,x,M)$, $\mu \in \R_{> 0}$, $x\in  W_J(\R)$, $M \in H_J(\R)^0$ as
\[\phi_{F}(\mu,x,M) = F(\exp(\mu \mm{0}{1}{0}{0}) \exp( e\otimes x) M).\]
Furthermore, for $M \in H_J(\R)^{0}$, we set $w = |\nu(M)|^{1/2}$.

One has the relation 
\[\exp(e \otimes u) \exp(e \otimes v) = \exp(\alpha \langle u,v\rangle \mm{0}{1}{0}{0})\exp(e \otimes (u+v)),\]
when the bracket on $\g(J)$ is defined in terms of the constant $\alpha$.  Thus, taking $\alpha = \frac{1}{2}$, one gets the following for how some differential operators act in coordinates:
\begin{enumerate}
\item $\partial_{Heis} = \nu(M) \partial_{u} = w^2 \partial_{\mu}$
\item $\partial^{W}_{v} = \frac{1}{2}\langle x,M \cdot v\rangle \partial_{\mu} + D_{M \cdot v}^x$.\end{enumerate}
Here $M \cdot v$ is the action of $M \in H_J$ on $v \in W_J$, and $D_{v}^{x}$ is the partial derivative of the coordinate $x$ in the $v$ direction, i.e., $D^{x}_{v}\phi(\mu,x,M) = \frac{d}{dt} \phi(\mu,x+ tv,M)|_{t=0}$.

For $M \in H_J(\R)$, we define $\widetilde{Z} = M \cdot r_0(i)$ and $\widetilde{Z}^* = M \cdot r_0(-i)$. 

\subsection{The differential equations, again} We now write the differential equations $\mathcal{D}_n F = 0$ in coordinates.  

Here and below, set $H_J(\R)^{0,1} = H_J(\R)^{0} \cap H_J(\R)^{1}$, the elements of $H_J(\R)^{0}$ with similitude equal to $1$.  If $F: G_J^{0} \rightarrow \Vm^\vee$ is a function, we denote by 
\[\phi_{F}: \R \times W_J(\R) \times H_J(\R)^{0,1} \times \R_{>0} \rightarrow \Vm^\vee\]
the function defined by
\[\phi_F(\mu,x,m,w) = F(\exp(\mu \mm{0}{1}{0}{0}) \exp( e\otimes x) m w).\]
Here we are identifying $w \in \R^\times$ with the element in the center of $H_J(\R)$ that acts on $W_J$ as multiplication by $w$.  Note that in these coordinates, $\epsilon \phi_F = w \partial_{w} \phi_F$.
 
\begin{theorem}\label{thm:Schmid2} Suppose $F: G_J^{0} \rightarrow \Vm^\vee$ is $K$-equivariant with $\mathcal{D}_n F = 0$, and $\phi_F$ is defined as above. For $v \in \Z$ with $-n \leq v \leq n$, denote by $\phi_{v}$ the function $\R \times W_J(\R) \times H_J(\R)^{0,1} \times \R_{>0} \rightarrow \C$ defined by $\phi_{F} = \sum_{-n \leq v \leq n}{\phi_{v} [x^{n+v}][y^{n-v}]}$ where $x,y$ is the standard basis of $V_2$, normalized above. Then the functions $\phi_v$ satisfy the following differential equations.
\begin{align*} \left(w \partial_{w} - 2(n+1) + v -2iw^2\partial_{\mu}\right)\phi_{v} &= i\left(D^{x}_{\widetilde{Z}^*} + \frac{1}{2}\langle x,\widetilde{Z}^*\rangle \partial_{\mu}\right)\phi_{v-1}.\\
 \left(w \partial_{w} - 2(n+1) - v +2iw^2\partial_{\mu}\right)\phi_{v} &= i\left(D^{x}_{\widetilde{Z}} + \frac{1}{2}\langle x,\widetilde{Z}\rangle \partial_{\mu}\right)\phi_{v+1}.\end{align*}
And for all $E \in J$,
\begin{align*} \left(D_{Z(E)} + \frac{v}{2}\tr(E)\right)\phi_{v} &= - i\left(D^{x}_{M V(E)} + \frac{1}{2} \langle x, M V(E)\rangle \partial_{\mu}\right) \phi_{v+1}.\\
\left(D_{Z^*(E)} - \frac{v}{2}\tr(E)\right)\phi_{v} &=- i\left(D^{x}_{M V(E)^*} + \frac{1}{2} \langle x, M V(E)^*\rangle \partial_{\mu}\right) \phi_{v-1}.\end{align*}
Here $M = m w \in H_J(\R)$.
\end{theorem}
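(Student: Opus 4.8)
The plan is to deduce the four families of equations directly from Theorem~\ref{thm:Schmid1} by a purely mechanical change of variables, using the coordinate formulas for the right-invariant operators recorded just above. First I would note that the target $K$-type of the Schmid operator is $Sym^{2n-1}(V_2)\boxtimes W_J$, and that Theorem~\ref{thm:Schmid1} computes $\mathcal{D}_n F$ in the basis $[x^{a}][y^{b}]\boxtimes w$ of this space, where $w$ ranges over $(1,0,0,0)$, $(0,0,0,1)$, $(0,E_\alpha^\vee,0,0)$, and $(0,0,E_\alpha^\vee,0)$. Hence the single condition $\mathcal{D}_n F=0$ is equivalent to the vanishing of each of the four coefficient families listed in items (1)--(4) of that theorem, for all $-n\le v\le n$ and all basis vectors $E_\alpha$ of $J$. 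Since the $E_\alpha^\vee$ span $J^\vee\cong J$ and both sides of items (3) and (4) depend linearly on $E_\alpha$, the vanishing for all $\alpha$ is equivalent to the single stated identity holding for every $E\in J$.

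Next I would invoke the coordinate dictionary established in the preceding two subsections, applied to the parametrization $g=\exp(\mu\mm{0}{1}{0}{0})\exp(e\otimes x)\,m w$. Concretely, $\epsilon\,\phi_F=w\partial_w\phi_F$, $\partial_{Heis}=\nu(M)\partial_\mu=w^2\partial_\mu$, and $\partial^W_v=\tfrac12\langle x,M\cdot v\rangle\partial_\mu+D^x_{M\cdot v}$, the last coming from the relation $\exp(e\otimes u)\exp(e\otimes v)=\exp(\tfrac12\langle u,v\rangle\mm{0}{1}{0}{0})\exp(e\otimes(u+v))$ together with $m w\cdot(e\otimes v)=e\otimes(M\cdot v)$. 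With $\widetilde{Z}=M\cdot r_0(i)$ and $\widetilde{Z}^*=M\cdot r_0(-i)$ this yields $\partial^W_{r_0(i)}=\tfrac12\langle x,\widetilde{Z}\rangle\partial_\mu+D^x_{\widetilde{Z}}$ and $\partial^W_{r_0(-i)}=\tfrac12\langle x,\widetilde{Z}^*\rangle\partial_\mu+D^x_{\widetilde{Z}^*}$, and likewise for $V(E)$ and $V(E)^*$. The operators $D_{Z(E)}$ and $D_{Z^*(E)}$ I would leave untouched: they are the right-invariant derivatives along the Levi directions $\tfrac12 M(\Phi_{1,E})\mp i n_L(E)$ and already appear in the desired form.

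Finally I would substitute these expressions into the four vanishing conditions and clear the numerical constants. For item (1), setting the coefficient to zero and multiplying through by $-2$ turns $-\tfrac12(\epsilon-2(n+1)+v)F_v+\tfrac{i}{2}\partial^W_{r_0(-i)}F_{v-1}+i\partial_{Heis}F_v=0$ into $(w\partial_w-2(n+1)+v-2iw^2\partial_\mu)\phi_v=i\bigl(D^x_{\widetilde{Z}^*}+\tfrac12\langle x,\widetilde{Z}^*\rangle\partial_\mu\bigr)\phi_{v-1}$; item (2) is identical after multiplying by $+2$ and produces the second displayed equation, with the signs of the $v$- and $\partial_\mu$-terms flipped relative to item (1); and items (3) and (4) require only moving the $\partial^W$-term to the right-hand side after the substitution. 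The computation involves no genuine obstacle beyond careful bookkeeping of signs and of the factor $\alpha=\tfrac12$ in the Heisenberg commutation relation. The one point that must truly be checked, rather than merely transcribed, is the identity $\epsilon\,\phi_F=w\partial_w\phi_F$: this rests on the fact that the central scalar generator $\mathrm{Id}_{W_J}$ of $H_J$ maps to $h_0=\epsilon$ under the isomorphism $\h(J)\simeq F h_0\oplus\h(J)^0$ of Section~\ref{sec:LieII}, so that right-differentiation along $\epsilon$ is exactly differentiation in the central parameter $w$.
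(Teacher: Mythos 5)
Your proposal is correct and follows essentially the same route as the paper, whose proof of Theorem \ref{thm:Schmid2} is simply the observation that it follows from Theorem \ref{thm:Schmid1} together with the coordinate formulas $\epsilon = w\partial_w$, $\partial_{Heis} = w^2\partial_\mu$, and $\partial^W_v = \tfrac12\langle x, M\cdot v\rangle\partial_\mu + D^x_{M\cdot v}$. Your sign and constant bookkeeping in all four families checks out against the stated equations.
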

\begin{proof} This follows immediately from Theorem \ref{thm:Schmid1} and the remarks above.\end{proof}

\subsection{Character equations} Now assume $\Wh$ satisfies the equations $\mathcal{D}_n \Wh = 0$, and also satisfies the equivariance 
\[\Wh(\exp(X)g) = \psi(\langle \qch, X \rangle)\Wh(g) = e^{ i \langle \qch, X\rangle}\Wh(g)\]
for $g \in G_J(\R)$ and $X \in \mathfrak{n}$, the Lie algebra of $N(\R)$, and some $\qch \in W_J$.  Then $\mathfrak{n}$ acts on $\Wh$ via $(X\Wh)(m) = i \langle \qch, m \cdot X\rangle \Wh(m)$.  Here $X \in \mathfrak{n}$ and $m \in H_J(\R)$ is in the Levi subgroup of the Heisenberg parabolic of $G_J$.  

We obtain the following corollary of Theorem \ref{thm:Schmid2}.
\begin{corollary}\label{cor:SchmidChar} Let the notation be as in Theorem \ref{thm:Schmid2}, and assume moreover that $\Wh(\exp(X)g) =  e^{ i \langle \qch, X\rangle}\Wh(g)$ for all $g \in G_J(\R)$ and $X \in \mathfrak{n}$.  Then
\begin{align*} (w\partial_{w} - 2(n+1) + k)\phi_k &= -  \langle \qch, \widetilde{Z}^*\rangle \phi_{k-1}.\\ 
(w\partial_{w} - 2(n+1) - k)\phi_k  &=-  \langle \qch, \widetilde{Z} \rangle \phi_{k+1}.\end{align*}
Moreover, for all $E \in J$,
\begin{align*} (D_{Z(E)} + \frac{k}{2} \tr(E))\phi_{k} =  \langle \qch, M V(E) \rangle \phi_{k+1}.\\
(D_{Z^*(E)} - \frac{k}{2} \tr(E))\phi_{k} =  \langle \qch, M V(E)^* \rangle \phi_{k-1}.\end{align*}
\end{corollary}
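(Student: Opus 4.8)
The plan is to obtain Corollary~\ref{cor:SchmidChar} by specializing the four equations of Theorem~\ref{thm:Schmid2} under the character hypothesis, the point being that every derivative in those equations that comes from the Lie algebra $\mathfrak{n}$ of $N$ gets converted into multiplication by a scalar determined by $\qch$. I would begin by recording two facts. First, since the element $e_0 = \mm{0}{1}{0}{0}$ spans the center $\mathrm{Lie}(N_0) = [\mathfrak{n},\mathfrak{n}]$ and the character $\chi$ attached to $\qch \in W_J$ factors through $N^{ab} = N/N_0$, the function $\Wh$ is left-invariant under $\exp(\R e_0)$; hence $\phi_F(\mu,x,m,w) = \Wh(\exp(\mu e_0)\exp(e \otimes x)mw)$ is independent of $\mu$, so that $\partial_{Heis}\Wh = w^2 \partial_\mu \phi_k = 0$ and in particular $\partial_\mu \phi_k = 0$ for every $k$. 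Second, for $v \in W_J$ the element $e \otimes v$ lies in $\mathfrak{n}$, and the action of $\mathfrak{n}$ on $\Wh$ recorded just above the corollary gives $\partial^{W}_{v}\phi_k = i\langle \qch, M \cdot v\rangle \phi_k$, where $M = mw$.

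The substitution is then immediate once one recognizes the right-hand sides of Theorem~\ref{thm:Schmid2} as honest $\mathfrak{n}$-derivatives. Indeed, the relation $\partial^{W}_{v} = \tfrac{1}{2}\langle x, M \cdot v\rangle \partial_\mu + D^{x}_{M\cdot v}$ together with $\widetilde{Z} = M \cdot r_0(i)$ and $\widetilde{Z}^* = M \cdot r_0(-i)$ identifies the operator $D^{x}_{\widetilde{Z}^*} + \tfrac{1}{2}\langle x,\widetilde{Z}^*\rangle\partial_\mu$ appearing on the right of the first equation with $\partial^{W}_{r_0(-i)}$, and similarly $D^{x}_{\widetilde{Z}} + \tfrac{1}{2}\langle x,\widetilde{Z}\rangle\partial_\mu = \partial^{W}_{r_0(i)}$, $D^{x}_{MV(E)} + \tfrac{1}{2}\langle x, MV(E)\rangle\partial_\mu = \partial^{W}_{V(E)}$, and $D^{x}_{MV(E)^*} + \tfrac{1}{2}\langle x, MV(E)^*\rangle\partial_\mu = \partial^{W}_{V(E)^*}$. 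Feeding in $\partial^{W}_{v}\phi_k = i\langle\qch, M\cdot v\rangle\phi_k$ turns, e.g., the right side $i\,\partial^{W}_{r_0(-i)}\phi_{k-1}$ of the first equation into $i\cdot i\langle\qch,\widetilde{Z}^*\rangle\phi_{k-1} = -\langle\qch,\widetilde{Z}^*\rangle\phi_{k-1}$, while on the left the term $-2iw^2\partial_\mu\phi_k$ drops out by the first fact; this is exactly the first asserted equation. The remaining three equations follow the same pattern, the signs in the last two coming from the factor $-i$ on their right-hand sides together with $i \cdot (-i) = 1$.

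The computation is essentially mechanical, so the only real care is in the bookkeeping of the factors of $i$ and of the $H_J$-conventions, together with the one genuinely structural input, namely that $\chi$ is trivial on the center $N_0$ so that the Heisenberg derivative $\partial_{Heis}\Wh$ and all the $\partial_\mu$-terms vanish. I expect the most error-prone step to be matching the directions $\widetilde{Z}, \widetilde{Z}^*, MV(E), MV(E)^*$ against the $M\cdot v$ appearing in $\partial^{W}_v\phi_k = i\langle\qch,M\cdot v\rangle\phi_k$, since a sign slip there would propagate into all four displayed equations; having verified this once for the first equation, the others are forced by the symmetry of the four cases.
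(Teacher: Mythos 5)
Your proposal is correct and follows exactly the paper's argument: the paper's proof is the single observation that one imposes $\partial_\mu = 0$ and $D^x_v\phi = i\langle \qch, v\rangle\phi$ in the equations of Theorem \ref{thm:Schmid2}, which is precisely your identification of the right-hand sides with $\partial^W_v$-derivatives evaluated via $\partial^W_v\phi_k = i\langle\qch, M\cdot v\rangle\phi_k$ together with the vanishing of the $\partial_\mu$-terms. Your write-up just makes explicit the bookkeeping (triviality of $\chi$ on $N_0$, the factors of $i$) that the paper leaves to the reader.
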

\begin{proof} To go from the general differential equations of Theorem \ref{thm:Schmid2} to the ones above, one simply imposes $\partial_{\mu} = 0$ and $D_{v}^{x}\phi = i\langle \qch,v\rangle\phi$.\end{proof}

\subsection{Useful formulae and definitions} We now briefly give some formulae which will be used in the next section. 

Recall that
\[V(E) = \frac{\tr(E)}{2}r_0(i) + i(0,E,-i\tr(E) + iE,-\tr(E)) = \frac{\tr(E)}{2}r_0(i) + in_G(-i)(0,E,0,0).\]
Thus $M V(E) = \frac{\tr(E)}{2} \widetilde{Z} + i Mn_{G}(-i)(0,E,0,0)$.  

Suppose $M = w n(-X)M_Y$, where $M_Y := M(n(Y)^{1/2}, U_{Y^{1/2}})$.  Then 
\begin{align*} Mn_{G}(-i) &= w n_G(-X)M_Y n_G(-i) = w n_{G}(-X) M_Y n_G(-i) M_Y^{-1} M_Y = w n_{G}(-X)n_{G}(-iY)M_Y \\ &= w n_{G}(-Z)M_Y.\end{align*}
Hence in this case 
\[Mn_{G}(-i)(0,E,0,0) = w n(Y)^{-1/2}n_{G}(-Z)(0,E_Y,0,0) = wn(Y)^{-1/2}(0,E_Y, -Z \times E_Y, (Z^\#,E_Y))\]
where $E_Y = U_{Y^{1/2}}(E)$, which equals $Y^{1/2}EY^{1/2}$ if the cubic norm structure $J$ is special.  Thus for $M = w n(-X)M_Y$,
\begin{equation}\label{eqn:MVE} M V(E) = wn(Y)^{-1/2}\left(\frac{\tr(E)}{2} r_0(Z) + (0,E_Y, -Z \times E_Y, (Z^\#,E_Y))\right).\end{equation}

\section{Solution to the equations}\label{sec:solutions} We now solve the differential equations of Corollary \ref{cor:SchmidChar}, assuming $\qch \neq 0$.  Before proceeding, we note that if $M = w n(-X)M_Y$, then $\langle \qch, M r_0(i)\rangle = w n(Y)^{-1/2} p_\chi(Z)$, and thus $w^{-1}n(Y)^{1/2} \langle \qch, M r_0(i) \rangle = w^{-1}n(Y)^{1/2} \langle \qch, \widetilde{Z} \rangle$ is holomorphic in $Z$.

\subsection{First steps} Restrict the functions $\Wh_v$ to $H_J(\R)^{0,1} \times \R_{>0}$ as in Theorem \ref{thm:Schmid2}.  Define $G_v$ via the equality $\Wh_v = w^{2n+2}G_v$.  Then the $G_v$ satisfy the following differential equations:
\[(w\partial_w + v)G_v = - \langle \qch, \widetilde{Z}^*\rangle G_{v-1}\]
and
\[(w\partial_w - v) G_v = - \langle \qch, \widetilde{Z}\rangle G_{v+1}.\]
Hence 
\[((w\partial_w)^2-v^2)G_v = |\langle \qch, \widetilde{Z}\rangle|^2 G_v = w^2 |\langle \qch,m r_0(i)|^2 G_v.\]
Denote by $K_v$ the $v$'th $K$-Bessel function, so that $((z\partial_{z})^2 - v^2)K_v(z) = z^2K_v(z)$. Assuming $\Wh_v = w^{2n+2} G_v$ is of moderate growth as $w \rightarrow \infty$, we obtain 
\[G_v(m,w) = Y_v(m) K_v(w |\langle \qch,m r_0(i)|) = Y_{v}(m) K_v(|\langle \qch, \widetilde{Z} \rangle|)\]
for some function $Y_v(m)$ that does not depend on $w$. 

We record the following well-known properties of the $K$-Bessel functions:
\begin{lemma}\label{lem:KBes} The $K$-Bessel functions satisfy the following identities:
\begin{enumerate}
\item $((z\partial_{z})^2 - v^2)K_v(z) = z^2K_v(z)$
\item $-z^{-v} \partial_z(z^v K_v(z)) = K_{v-1}(z)$
\item $-z^{v} \partial_{z}(z^{-v} K_v(z)) = K_{v+1}(z)$
\item $-(z \partial_{z}-v)K_v(z) = zK_{v+1}(z)$
\item $-(z\partial_z+v)K_v(z) = zK_{v-1}(z)$.\end{enumerate}
\end{lemma}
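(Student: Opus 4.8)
The plan is to take the standard integral representation
\[
K_v(z) = \int_0^\infty e^{-z\cosh t}\cosh(vt)\,dt,
\]
valid for positive real $z$ (which is all we need, since these functions are evaluated at the positive reals $|\langle \qch, \widetilde{Z}\rangle|$), as the working definition of $K_v$, and to deduce all five identities from the two classical recurrence relations for $K_v$. With this definition, identity (1) is merely the modified Bessel equation $z^2 K_v'' + z K_v' - (z^2+v^2)K_v = 0$ rewritten in terms of the Euler operator: since $(z\partial_z)^2 = z^2\partial_z^2 + z\partial_z$, the left-hand side of (1) equals $z^2 K_v'' + z K_v' - v^2 K_v$, and the Bessel equation turns this into $z^2 K_v$.

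Next I would observe that (2) and (5) are literally the same assertion, as are (3) and (4), so there are only two genuinely new statements to prove. Indeed, by the product rule $z^{-v}\partial_z(z^v K_v) = K_v' + \tfrac{v}{z}K_v = \tfrac{1}{z}(z\partial_z + v)K_v$, so the claim $-z^{-v}\partial_z(z^v K_v)=K_{v-1}$ of (2) coincides with the claim $-(z\partial_z+v)K_v = z K_{v-1}$ of (5); likewise $z^{v}\partial_z(z^{-v}K_v) = \tfrac{1}{z}(z\partial_z-v)K_v$ identifies (3) with (4). Thus everything reduces to the two first-order ladder relations
\[
K_{v-1} = -K_v' - \frac{v}{z}K_v, \qquad K_{v+1} = -K_v' + \frac{v}{z}K_v.
\]

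These two relations I would prove directly from the integral representation. Differentiating under the integral gives $K_v'(z) = -\int_0^\infty \cosh t\, e^{-z\cosh t}\cosh(vt)\,dt$; combined with $\cosh((v{+}1)t)+\cosh((v{-}1)t) = 2\cosh(vt)\cosh t$ this yields $K_{v+1}+K_{v-1} = -2K_v'$. For the difference, the identity $\cosh((v{-}1)t)-\cosh((v{+}1)t) = -2\sinh(vt)\sinh t$ together with an integration by parts, using $\partial_t e^{-z\cosh t} = -z\sinh t\, e^{-z\cosh t}$, gives $K_{v-1}-K_{v+1} = -\tfrac{2v}{z}K_v$; the boundary terms vanish because $\sinh(vt)$ is zero at $t=0$ and $e^{-z\cosh t}$ decays at infinity. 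Adding and subtracting these two recurrences produces exactly the displayed ladder relations, hence (2)--(5), and completes the lemma.

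There is no real obstacle here: these are textbook facts, and one could equally well simply cite a standard reference for the recurrences. The only points requiring care are the bookkeeping of signs in passing between the Euler-operator and derivative forms, and the (entirely routine) justification that differentiation under the integral sign and the boundary evaluation in the integration by parts are legitimate for positive real $z$.
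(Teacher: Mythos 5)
Your proof is correct. Note, however, that the paper offers no proof of this lemma at all: it simply records these identities as well-known properties of the $K$-Bessel functions, which is consistent with your own closing remark that one could cite a standard reference. Your derivation from the integral representation $K_v(z)=\int_0^\infty e^{-z\cosh t}\cosh(vt)\,dt$ is a complete and accurate verification — the reduction of (2)/(5) and (3)/(4) to the two ladder relations, the sum and difference recurrences obtained from the addition formulas for $\cosh$, and the integration by parts (with vanishing boundary terms since $\sinh(vt)$ vanishes at $t=0$ and $e^{-z\cosh t}$ dominates at infinity) are all sound, as is the translation of the modified Bessel equation into the Euler-operator form in (1).
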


Set $u = |\langle \qch, \widetilde{Z}\rangle |$.  Applying Lemma \ref{lem:KBes}, one has
\[  \langle \qch, \widetilde{Z}\rangle Y_{v+1} K_{v+1}(u) =  \langle \qch, \widetilde{Z} \rangle G_{v+1} = -(w\partial_w -v)G_{v} = -(w\partial_w - v)K_v(u) Y_v(m).\]
But 
\[-(w\partial_w - v)K_v(u) = -(u\partial_u -v)K_v(u) = uK_{v+1}(u) = |\langle \qch, \widetilde{Z}\rangle| K_{v+1}(u).\]
Therefore we obtain
\[ \langle \qch, \widetilde{Z}\rangle  Y_{v+1} = |\langle \qch, \widetilde{Z}\rangle| Y_{v}.\]
Thus, on an open set where $\langle \qch, \widetilde{Z} \rangle \neq 0$, 
\[ Y_{v+1} = \left(\frac{|\langle \qch, \widetilde{Z}\rangle| }{\langle \qch, \widetilde{Z}\rangle}\right) Y_{v}\]
and so
\[G_v(m,w) = Y_0(m)\left(\frac{ | \langle \qch, \widetilde{Z} \rangle|}{ \langle \qch, \widetilde{Z}\rangle }\right)^{v} K_v( |\langle \qch, \widetilde{Z} \rangle|)\]
for some function $Y_0(m)$ that does not depend on $w$.  We will use the other two differential equations to show that $Y_0$ is constant.

\subsection{Second steps} Recall the differential operator $D_{Z(E)}$, which by definition given by the action of $\frac{1}{2}M(\Phi_{1,E}) - i n_{L}(E)$, and $D_{Z^*(E)}$, which by definition is given by the action of $\frac{1}{2}M(\Phi_{1,E}) + i n_{L}(E)$.

Suppose $F$ is a function on the Levi $H_J(\R)$ of the Heisenberg parabolic.  Abusing notation slightly, we denote by $F$ a new function, defined in terms of $F$, on the variables $w, X, Y$, so that $w \in \R^{\times}_{>0}$, $X, Y \in J$ and $Y > 0$.  Namely, one defines
\[\phi_{F}(w,X,Y) = \Wh\left(w n_{G}(-X) M(n(Y)^{1/2},U_{Y^{1/2}})\right) = F\left(w n_{G}(-X)M_Y\right).\]
We calculate how $D_{Z(E)}$ and $D_{Z^*(E)}$ act on $\phi_{F}$ in the $w,X,$ and $Y$ variables.   Recall the notation $M_Y = M(n(Y)^{1/2},U_{Y^{1/2}})$ and $E_{Y} = U_{Y^{1/2}}(E)$.
\begin{lemma}\label{lem:DZ} Suppose $F$ is right invariant under $A_J$.  One has
\[n_{L}(-E)\phi_{F}(w,X,Y) = \frac{d}{dt}\left(\phi_{F}(w, X + tE_{Y},Y)\right)|_{t=0}\]
and
\[\frac{1}{2}M(\Phi_{1,E})\phi_{F}(w,X,Y) = \frac{d}{dt}\left(\phi_{F}(w,X,Y + tE_Y)\right)|_{t=0}.\]
\end{lemma}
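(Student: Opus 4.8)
The plan is to translate each left-invariant (right-regular) derivative appearing on the left into a motion of the coordinates $(X,Y)$, using only how the group elements $n_G(\cdot)$ and $M_Y = M(N(Y)^{1/2},U_{Y^{1/2}})$ sit inside $H_J(\R)$. For the first identity I would start from $n_{L}(-E)\phi_{F}(w,X,Y) = \frac{d}{dt}F\bigl(w\,n_G(-X)\,M_Y\,n_G(-tE)\bigr)|_{t=0}$, since $\exp(t\,n_L(-E)) = n_G(-tE)$. The key input is the conjugation relation $M_Y\,n_G(x)\,M_Y^{-1} = n_G\bigl(U_{Y^{1/2}}(x)\bigr)$: this follows by feeding $m = U_{Y^{1/2}}$ into the definition of $n_L$ and using the similitude identity $m(u)\times m(x) = \lambda(m)\,\tilde m(u\times x)$ valid for any $m\in M_J$, which forces $M_Y\,n_L(x)\,M_Y^{-1} = n_L(U_{Y^{1/2}}(x))$. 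Since the $n_G(\cdot)$ form an abelian subgroup (as $[J,J]=0$ in the grading $J^\vee\oplus\m(J)\oplus J$), one has $n_G(-X)n_G(-tE_Y) = n_G(-X-tE_Y)$ with $E_Y = U_{Y^{1/2}}(E)$, so the expression collapses to $\frac{d}{dt}F(w\,n_G(-X-tE_Y)M_Y)|_{t=0} = \frac{d}{dt}\phi_F(w,X+tE_Y,Y)|_{t=0}$. No invariance hypothesis is needed here.

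For the second identity I would first identify the relevant one-parameter subgroup. Because $M(\phi)$ is the differential of $t\mapsto M\bigl(e^{t\,t(\phi)/2},e^{t\phi}\bigr)$ in $H_J^1$, and $\Phi_{1,E} = \Phi_{\iota(1_J),E} = \{E,\bullet\}$ has multiplier $t(\Phi_{1,E}) = 2\tr(E)$, I obtain $\exp\bigl(t\cdot\tfrac12 M(\Phi_{1,E})\bigr) = M\bigl(e^{t\tr(E)/2},\exp(\tfrac t2\{E,\bullet\})\bigr)$. I then compare the two curves $c_1(t) := M_Y\exp(t\cdot\tfrac12 M(\Phi_{1,E}))$ and $c_2(t) := M_{Y+tE_Y}$, both passing through $M_Y$ at $t=0$, by writing $c_2(t)^{-1}c_1(t) = M(\delta(t),a(t))$ using the multiplicativity $M(\delta,m)M(\delta',m') = M(\delta\delta',mm')$.

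The comparison reduces to two Jordan-algebra computations. On the $\GL_1$-factor, $\tfrac{d}{dt}\log\delta(t)|_0 = \tfrac12\tr(E) - \tfrac12\frac{(Y^\#,E_Y)}{N(Y)}$, and this vanishes because $U_{Y^{1/2}}(Y^\#) = N(Y)\,1_J$ (from $Y^\# = N(Y)Y^{-1}$ and $U_{Y^{1/2}}(Y^{-1}) = 1_J$) together with self-adjointness of $U$ give $(Y^\#,E_Y) = (U_{Y^{1/2}}(Y^\#),E) = N(Y)\tr(E)$. On the $\GL(J)$-factor, $a(t) = U_{(Y+tE_Y)^{1/2}}^{-1}U_{Y^{1/2}}\exp(\tfrac t2\{E,\bullet\})$ satisfies $a'(0)\in\a(J)$, i.e. $a'(0)(1_J) = 0$: indeed $\exp(\tfrac t2\{E,\bullet\})(1_J) = 1_J + tE + O(t^2)$ since $\{E,1_J\} = 2E$, then $U_{Y^{1/2}}(1_J+tE) = Y+tE_Y$ since $U_{Y^{1/2}}(1_J) = Y$, and finally $U_{(Y+tE_Y)^{1/2}}^{-1}(Y+tE_Y) = 1_J + O(t^2)$ because $U_{(Y+tE_Y)^{1/2}}(1_J) = Y+tE_Y$. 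Hence $a(t)(1_J) = 1_J + O(t^2)$, so $a'(0)\in\a(J) = \{X\in\m(J):X1_J=0\}$.

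With $\delta'(0)=0$ and $a'(0)\in\a(J)$, the velocity of $c_2(t)^{-1}c_1(t)$ at $t=0$ lies in $\mathrm{Lie}(A_J)$. Since $F$ is right $A_J$-invariant, the right-regular action of $\a(J)$ annihilates $F$; writing $c_1(t) = c_2(t)\cdot\bigl(c_2(t)^{-1}c_1(t)\bigr)$ and applying the product rule, the $\a(J)$-valued factor contributes nothing, so $\frac{d}{dt}F(w\,n_G(-X)c_1(t))|_0 = \frac{d}{dt}F(w\,n_G(-X)c_2(t))|_0 = \frac{d}{dt}\phi_F(w,X,Y+tE_Y)|_0$, which is the second identity. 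I expect the main obstacle to be precisely this second identity, and within it the verification that the mismatch $c_2(t)^{-1}c_1(t)$ genuinely lands in $A_J$ to first order; this is where the three Jordan identities $\{E,1_J\}=2E$, $U_{Y^{1/2}}(1_J)=Y$, and $U_{Y^{1/2}}(Y^\#)=N(Y)1_J$, together with the correct trace and similitude normalizations, must be assembled carefully.
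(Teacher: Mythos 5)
Your proposal is correct and follows essentially the same route as the paper: the first identity via the conjugation relation $M_Y n_G(x)M_Y^{-1}=n_G(U_{Y^{1/2}}(x))$ and commutativity of the $n_G$'s, and the second by identifying $\exp\bigl(\tfrac{t}{2}M(\Phi_{1,E})\bigr)=M_{\exp(tE)}$, comparing $M_YM_{\exp(tE)}$ with $M_{Y+tE_Y}$ through their action on $1_J$, and absorbing the first-order discrepancy into $A_J$ using the right $A_J$-invariance of $F$. Your explicit checks that the similitude factor and the $1_J$-stabilization both hold to first order are details the paper leaves implicit, but the argument is the same.
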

\begin{proof} The first identity is immediate from the definitions and the relation $M_Y n_{G}(E)M_Y^{-1} = n_{G}(E_Y)$.  For the second identity, first note that 
\[\exp\left(\frac{1}{2}M(t\Phi_{1,E})\right) = U_{\exp(tE)^{1/2}} = M_{\exp(tE)}.\]
Now
\[U_{Y^{1/2}}U_{\exp(tE/2)} (1_J) = U_{Y^{1/2}}(\exp(tE)) =Y + tE_Y + O(t^2).\]
Thus $M_{Y}M_{\exp(tE)} = M_{Y+ tE_Y} a + O(t^2)$ for some $a \in A_J$ (that depends on $Y,E,$ and $t$).  Hence 
\[\frac{1}{2}M(\Phi_{1,E})\phi_{F}(w,X,Y) = \frac{d}{dt}\left(\phi_{F}(w,X,Y + tE_Y)\right)|_{t=0},\]
as claimed.\end{proof}

We have the following lemma.
\begin{lemma}\label{lem:DZq} Suppose $\alpha, \delta \in \C$ and $\beta \in J_{\C}$ and $\gamma \in J_{\C}^\vee$.  Then 
\[\frac{d}{dt}\left( \alpha N(W + tV) + (\beta, (W+tY)^\#) + (\gamma, W+tY) + \delta\right)|_{t=0} = (\alpha W^\# + \beta \times W + \gamma, V).\]
\end{lemma}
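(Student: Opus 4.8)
The plan is to treat the given expression as a cubic polynomial $p(W) = \alpha N(W) + (\beta, W^\#) + (\gamma, W) + \delta$ and to compute its directional derivative at $W$ in the direction $V$ by expanding each summand and reading off the coefficient of $t$. Each of the four terms linearizes using only the defining identities of a cubic norm structure recalled in Section \ref{sec:CNS}, so the argument is a direct, if mildly bookkeeping-heavy, calculation; no functional-analytic input is needed.

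First I would handle the norm term. Using the expansion $N(x+y) = N(x) + (x^\#,y) + (x,y^\#) + N(y)$ together with the homogeneities $(tV)^\# = t^2 V^\#$ and $N(tV) = t^3 N(V)$, one gets
\[ N(W + tV) = N(W) + t(W^\#,V) + t^2(W,V^\#) + t^3 N(V),\]
so the contribution of $\alpha N$ to the derivative at $t=0$ is $\alpha (W^\#, V)$. Next, for the $\#$ term I would use the definition $x \times y = (x+y)^\# - x^\# - y^\#$, which gives $(W+tV)^\# = W^\# + t (W \times V) + t^2 V^\#$; pairing with $\beta$ and differentiating at $t=0$ yields $(\beta, W \times V)$. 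The linear term contributes $(\gamma, V)$ and the constant $\delta$ contributes $0$.

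It then remains to assemble these three pieces into the claimed form, and the one nontrivial point is to rewrite $(\beta, W\times V)$ as $(\beta \times W, V)$. This is exactly the statement that the trace pairing composed with $\times$ is totally symmetric: polarizing the relation $(x^\#, y) = \tfrac{1}{2}(x,x,y)$ (itself obtained by comparing the $t$-linear terms of $N(x+ty)$ and of $\tfrac{1}{6}(x+ty,x+ty,x+ty)$) shows that $(x\times y, z) = (x,y,z)$ for the symmetric trilinear form $(\;,\;,\;)$, whence $(\beta, W \times V) = (\beta, W, V) = (\beta \times W, V)$. Summing the three contributions gives
\[ \alpha(W^\#,V) + (\beta \times W, V) + (\gamma, V) = (\alpha W^\# + \beta \times W + \gamma, V),\]
as desired. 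The only conceivable obstacle is this symmetry identity for $\times$, but it is a standard consequence of the axioms in Section \ref{sec:CNS} and presents no real difficulty; everything else is a routine expansion in the parameter $t$.
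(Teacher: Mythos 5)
Your proposal is correct and is exactly the computation the paper has in mind (the paper dismisses it as ``an immediate and simple computation''): expand $N(W+tV)$ and $(W+tV)^\#$ via the axioms, and use the total symmetry $(x\times y,z)=(x,y,z)$ of the trilinear form to move the $\times$ across the pairing. Your justification of that symmetry identity by polarizing $(x^\#,y)=\tfrac12(x,x,y)$ is sound, so nothing is missing.
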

\begin{proof} This is an immediate and simple computation.\end{proof}

One makes the following computation:
Suppose $\qch = -(a,b,c,d)$, so that 
\[\langle \qch, r_0(Z)\rangle = p_\chi(Z) = aN(Z) + (b,Z^\#) + (c,Z) +d.\]
Then applying Lemmas \ref{lem:DZ} and \ref{lem:DZq}, one obtains
\begin{equation}\label{eqn:DZq} D_{Z(E)}(a N(Z) + (b,Z^\#) + (c,Z) +d) = 2i(az^\# + b \times Z + c,E_Y).\end{equation}
Applying Lemma \ref{lem:DZ}, one obtains
\begin{equation}\label{eqn:DZY} D_{Z(E)}(N(Y)) = \frac{d}{dt}(N(Y + tE_Y))|_{t=0} = (Y^\#,E_Y) = N(Y)\tr(E).\end{equation}
Thus, from (\ref{eqn:DZq}) and (\ref{eqn:DZY}), one gets that $\left(D_{Z(E)}\right)(\langle \qch,r_0(Z)\rangle N(Y)^{-1})$ is equal to
\[2i(a Z^\# + b \times Z + c, E_Y)N(Y)^{-1} - \tr(E)(aN(Z) + (b,Z^\#)+(c,Z)+d)N(Y)^{-1}.\]

Set $m = n_G(-X)M_Y$.  Then by (\ref{eqn:MVE})
\[\langle \qch, m V(E) \rangle = n(Y)^{-1/2} \tr(E) \langle \qch, r_0(Z)\rangle/2 - i n(Y)^{-1/2}(aZ^\# + b \times Z+c,E_Y).\]
One obtains 
\[D_{Z(E)}\left(\langle \qch, r_0(Z)\rangle N(Y)^{-1}\right) = -2 n(Y)^{-1/2} \langle \qch, m V(E)\rangle.\]
The conclusion is that
\begin{align*} D_{Z(E)}(|\langle \qch,r_0(Z)\rangle| n(Y)^{-1/2}) &= \frac{|\langle \qch, r_0(Z)\rangle|}{2 \langle \qch,r_0(Z)\rangle} n(Y)^{1/2} D_{Z(E)}\left(\langle \qch, r_0(Z)\rangle N(Y)^{-1}\right) \\ &= - \frac{|\langle \qch, r_0(Z)\rangle|}{ \langle \qch,r_0(Z)\rangle}\langle \qch, m V(E)\rangle.\end{align*}

We record what we have just proved in the following lemma.
\begin{lemma}\label{lem:DZu} Recall that $u = |\langle \qch, \widetilde{Z}\rangle|$.  One has
\[D_{Z(E)}(u) =  - \left(\frac{|\langle \qch, r_0(Z)\rangle|}{  \langle \qch,r_0(Z)\rangle}\right)\langle \qch, wm V(E)\rangle.\]
\end{lemma}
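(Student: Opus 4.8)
The plan is to assemble the identity from the computations already carried out just above the statement, the only genuinely new point being the harmless factor of $w$ relating $u$ to the quantity $|\langle \qch, r_0(Z)\rangle|\, n(Y)^{-1/2}$ that has already been differentiated. First I would pin down how $u$ depends on the holomorphic datum $p_\chi(Z) = \langle \qch, r_0(Z)\rangle$. Writing $m = n_G(-X)M_Y$ and $M = wm$, the relation $\langle \qch, M r_0(i)\rangle = w\, n(Y)^{-1/2} p_\chi(Z)$ recalled at the start of the section gives $\widetilde{Z} = M r_0(i)$ and hence, since $w>0$ and $N(Y)>0$,
\[u = |\langle \qch, \widetilde{Z}\rangle| = w\,|p_\chi(Z)|\,n(Y)^{-1/2}.\]
The decisive structural observation is that $D_{Z(E)} = \tfrac12 M(\Phi_{1,E}) - i\,n_L(E)$ acts purely in the $X$ and $Y$ variables (by Lemma \ref{lem:DZ}) and not at all in $w$; combining the two identities of Lemma \ref{lem:DZ} shows moreover that $D_{Z(E)}$ is, up to a scalar, a holomorphic directional derivative in $Z = X + iY$, so that it annihilates $\overline{p_\chi(Z)}$.

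The substantive calculation is the evaluation of $D_{Z(E)}\bigl(p_\chi(Z)\,N(Y)^{-1}\bigr)$, which I would obtain by the product rule from two inputs: first, $D_{Z(E)}(p_\chi(Z)) = 2i\,(aZ^\# + b\times Z + c,\, E_Y)$, which is (\ref{eqn:DZq}), an application of Lemmas \ref{lem:DZ} and \ref{lem:DZq}; and second, $D_{Z(E)}(N(Y)) = N(Y)\tr(E)$, which is (\ref{eqn:DZY}). Matching the resulting expression against the formula (\ref{eqn:MVE}) for $\langle \qch, m V(E)\rangle$ yields
\[D_{Z(E)}\bigl(p_\chi(Z)\,N(Y)^{-1}\bigr) = -2\,n(Y)^{-1/2}\,\langle \qch, m V(E)\rangle.\]

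Passing to the modulus, since $D_{Z(E)}$ kills $\overline{p_\chi(Z)}$ one has $D_{Z(E)}(|p_\chi|) = \tfrac{|p_\chi|}{2 p_\chi}\,D_{Z(E)}(p_\chi)$, and a short chain-rule manipulation using $D_{Z(E)}(N(Y)^{-1/2}) = -\tfrac12 N(Y)^{-1/2}\tr(E)$ recombines the two contributions into
\[D_{Z(E)}\bigl(|p_\chi(Z)|\,n(Y)^{-1/2}\bigr) = \frac{|p_\chi(Z)|}{2 p_\chi(Z)}\,n(Y)^{1/2}\,D_{Z(E)}\bigl(p_\chi(Z)\,N(Y)^{-1}\bigr) = -\frac{|p_\chi(Z)|}{p_\chi(Z)}\,\langle \qch, m V(E)\rangle.\]
Finally I would multiply this last identity by $w$. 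Because $D_{Z(E)}$ commutes with multiplication by a function of $w$ alone and $u = w\,|p_\chi(Z)|\,n(Y)^{-1/2}$, the left side becomes $D_{Z(E)}(u)$; on the right, $w\,\langle \qch, m V(E)\rangle = \langle \qch, w m V(E)\rangle$ by linearity of the pairing, giving exactly the claimed formula with $p_\chi(Z) = \langle \qch, r_0(Z)\rangle$. I expect the only real friction to be bookkeeping: keeping $D_{Z(E)}$ distinct from its conjugate so that the $\overline{p_\chi}$ terms drop, and tracking the powers of $N(Y)$ carefully through the modulus so that the two pieces coming from $D_{Z(E)}(p_\chi)$ and $D_{Z(E)}(N(Y))$ fuse into the single factor $\langle \qch, m V(E)\rangle$ predicted by (\ref{eqn:MVE}).
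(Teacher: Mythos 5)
Your proposal is correct and follows the paper's own route: the lemma is recorded in the paper precisely as the summary of the computation immediately preceding it, namely combining (\ref{eqn:DZq}) and (\ref{eqn:DZY}) via the product rule, matching against (\ref{eqn:MVE}), and passing to the modulus using that $D_{Z(E)}$ annihilates the antiholomorphic factor $\overline{p_\chi(Z)}$. The only cosmetic difference is that you split off $D_{Z(E)}(N(Y)^{-1/2})$ separately where the paper differentiates $|p_\chi(Z)|\,n(Y)^{-1/2}$ as $\bigl((p_\chi N(Y)^{-1})\overline{p_\chi}\bigr)^{1/2}$ in one step.
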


Applying Lemma \ref{lem:DZu} and the $D_{Z}$-differential equation to $G_0$ one finds $D_{Z(E)}(Y_0) = 0$.  Similarly, $D_{Z^*(E)}(Y_0) = 0$.  Hence $Y_0$ is constant, and so we have just proved the following result.  Here and below, by the ``Schmid equations'' we mean the equations of Corollary \ref{cor:SchmidChar}.
\begin{proposition}\label{prop:SchmidUnique} Suppose $\qch \neq 0$, but $\langle \qch, \widetilde{Z}\rangle = 0$ for some $Z \in \mathcal{H}_J$.  Then the $0$ function is the only solution to the Schmid equations of moderate growth.  If $\langle \qch, \widetilde{Z} \rangle \neq 0$ for all $Z \in \mathcal{H}_J$, then there is at most a one-dimensional space of solutions to the Schmid equations that are of moderate growth, and this space is spanned by 
\begin{equation}\label{propeqn:WwZ}\Wh_v(w,Z) = w^{2n+2} \left(\frac{|\langle \qch, \widetilde{Z}\rangle|}{\langle \qch, \widetilde{Z}\rangle}\right)^{v} K_{v}(|\langle \qch, \widetilde{Z} \rangle|).\end{equation}
\end{proposition}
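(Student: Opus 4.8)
The plan is to combine the two computations carried out in the subsections ``First steps'' and ``Second steps'' and then to analyze the behaviour of the solution across the zero locus of $p_\chi$. From ``First steps'' I may assume that any moderate-growth solution of the Schmid equations of Corollary \ref{cor:SchmidChar} has, on the open set $U = \{Z \in \mathcal{H}_J : p_\chi(Z) \neq 0\}$, the form
\[\Wh_v = w^{2n+2} G_v, \qquad G_v = Y_0(m)\left(\frac{|\langle \qch, \widetilde{Z}\rangle|}{\langle \qch, \widetilde{Z}\rangle}\right)^{v} K_v(|\langle \qch, \widetilde{Z}\rangle|),\]
where the single function $Y_0$ is independent of $w$. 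Here I use that, for fixed $m$, the two $w\partial_w$ recursions reduce $G_v$ to a solution of the modified Bessel equation in the variable $u = |\langle \qch, \widetilde{Z}\rangle|$, and that moderate growth as $w \to \infty$ selects the $K_v$-solution and kills the exponentially growing $I_v$-solution. Thus the entire solution on $U$ is pinned down by the one function $Y_0$.

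The first key step is to show $Y_0$ is constant on $U$. For this I set $k=0$ in the first $D_Z$-equation of Corollary \ref{cor:SchmidChar}, namely $D_{Z(E)}\phi_0 = \langle \qch, M V(E)\rangle \phi_1$. Since $D_{Z(E)}$ differentiates only in the $X,Y$ directions (Lemma \ref{lem:DZ}) it commutes with the factor $w^{2n+2}$, and applying the product rule together with $K_0'(u) = -K_1(u)$ and the formula $D_{Z(E)}(u) = -\frac{|\langle \qch, \widetilde{Z}\rangle|}{\langle \qch, \widetilde{Z}\rangle}\langle \qch, M V(E)\rangle$ of Lemma \ref{lem:DZu}, the two $K_1$-terms cancel and I am left with $(D_{Z(E)} Y_0)\, K_0(u) = 0$. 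As $K_0(u) > 0$ this forces $D_{Z(E)} Y_0 = 0$, and the analogous manipulation with $D_{Z^*(E)}$ gives $D_{Z^*(E)} Y_0 = 0$. Since $Y_0$ is moreover independent of $w$, these vanishing derivatives say that $Y_0$ is locally constant on $U$; and because $U$ is the complement in the connected tube domain $\mathcal{H}_J$ of the complex hypersurface $\{p_\chi = 0\}$ (which has real codimension two, $p_\chi$ being a nonzero polynomial as $\qch \neq 0$), $U$ is connected and $Y_0$ is a genuine constant.

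It then remains to separate the two cases. If $p_\chi$ never vanishes on $\mathcal{H}_J$, then $U = \mathcal{H}_J$, the constant $Y_0$ is global, and the displayed formula is exactly (\ref{propeqn:WwZ}); hence the solution space is at most one-dimensional and spanned by that function. If instead $p_\chi(Z_0) = 0$ for some $Z_0 \in \mathcal{H}_J$, I use that $\Wh$ is smooth on all of $G_J^0$, hence continuous and locally bounded near the point corresponding to $Z_0$. Approaching that point from within $U$ one has $u \to 0^+$, while $K_0(u) \sim -\log u \to +\infty$; since $\Wh_0 = w^{2n+2} Y_0 K_0(u)$ must stay bounded, the constant $Y_0$ must vanish. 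Therefore $G_v \equiv 0$ on $U$, so $\Wh \equiv 0$ on the dense open set $U$ and, by continuity, on all of $G_J^0$. I expect the only delicate point to be this boundary analysis --- justifying the connectedness of $U$ and that the smoothness of $\Wh$ genuinely rules out the singular $K_v(u)$ behaviour at $u=0$; the algebraic cancellation producing $D_{Z(E)}Y_0 = 0$ is by contrast a short and clean computation.
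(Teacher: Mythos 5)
Your proposal is correct and follows essentially the same route as the paper: the $w\partial_w$ recursions plus moderate growth pin down $G_v = Y_0\,(|u|/u)^v K_v(u)$ on the set where $p_\chi \neq 0$, the $D_{Z(E)}$ and $D_{Z^*(E)}$ equations force $Y_0$ to be constant there, and in the case $p_\chi(Z_0)=0$ the divergence of $K_v(u)$ as $u\to 0^+$ against the local boundedness of $\Wh$ forces $Y_0=0$. The only cosmetic difference is that you argue connectedness of $\{p_\chi\neq 0\}$ globally via the codimension-two zero locus of the holomorphic polynomial, whereas the paper works locally on a small ball around $Z_0$; both are valid.
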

\begin{proof} We have proved that the above $\Wh_v(w,Z)$ is the only possible moderate-growth solution to the equations $\mathcal{D}_n \Wh = 0$ on the open subset where $\langle \omega, \widetilde{Z} \rangle \neq 0$.  Thus the second statement of the proposition has been proved.  

For the first part, denote by $p_{\omega}$ the cubic polynomial on $\mathcal{H}_J$ associated to $\omega$, and suppose that there exists $Z_0 \in \mathcal{H}_J$ with $p_{\omega}(Z_0) =0$.  Furthermore, suppose that $\Wh$ is a solution to the Schmid equations; we must check $\Wh =0$.  To see this, fix a closed ball $\mathcal{B}$ of positive radius with center $Z_0$ and contained inside $\mathcal{H}_J$.   Denote by $U$ the subset of the interior of $\mathcal{B}$ with $p_{\omega} \neq 0$.  Because $p_{\omega}$ is a polynomial, $U$ is connected, open and dense in $\mathcal{B}$.  Now, consider the set $V$ of all pairs $(w,Z)$ with $w \in \R^\times_{>0}$ and $Z \in U$.  By what we have already proved, $\Wh$ is determined on $V$ to be a scalar multiple of the function (\ref{propeqn:WwZ}).  But since $p_{\omega}(Z_0) = 0$, if $\Wh$ is nonzero this function diverges as $Z \rightarrow Z_0$ but $w$ stays fixed.  It follows that $\Wh = 0$ and the proposition is proved.
\end{proof}

Note that in Proposition \ref{prop:SchmidUnique}, we have not assumed that the character of $N$ defined by $\qch$ is generic, only that it is nontrivial.  In case $\qch$ is generic, one can relate the condition that $\langle \qch, \widetilde{Z}\rangle$ is never $0$ to Wallach's admissibility condition in \cite{wallach}.

\subsubsection{Existence} The above arguments showed that the only solution to the Schmid equations of moderate growth takes the form
\begin{equation}\label{eqn:Fveqn} \Wh_v(w,Z) = w^{2n+2} \left(\frac{|\langle \qch, \widetilde{Z}\rangle|}{\langle \qch, \widetilde{Z}\rangle}\right)^{v} K_{v}(|\langle \qch, \widetilde{Z} \rangle|) = w^{2n+2} \left(\frac{\langle \qch, \widetilde{Z}\rangle^*}{|\langle \qch, \widetilde{Z}\rangle|}\right)^{v} K_{v}(|\langle \qch, \widetilde{Z} \rangle|).\end{equation}
Here, recall $\widetilde{Z} = M r_0(i)$.  We now show that the right-hand side of (\ref{eqn:Fveqn}) does satisfy all the Schmid equations.
\begin{proposition} The right-hand side of (\ref{eqn:Fveqn}) satisfies all the Schmid equations. \end{proposition}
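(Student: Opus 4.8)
The plan is to verify directly, from the candidate formula (\ref{eqn:Fveqn}), each of the four families of equations in Corollary \ref{cor:SchmidChar}, organizing the work around two reductions and one conjugation symmetry. Throughout write $\zeta = \langle \qch, \widetilde{Z}\rangle$, $u = |\zeta|$, $\sigma = |\zeta|/\zeta$, and $p = p_\chi(Z) = \langle \qch, r_0(Z)\rangle$, so the candidate is $\Wh_v = w^{2n+2}\sigma^v K_v(u)$. First I would record a symmetry that halves the work. Since $\qch$, $M$ and $w$ are real and $r_0(-i) = \overline{r_0(i)}$, we have $\widetilde{Z}^* = \overline{\widetilde{Z}}$, so complex conjugation sends $\zeta \mapsto \bar\zeta$, $\sigma \mapsto \sigma^{-1}$, fixes $u$, and, using $K_v = K_{-v}$, sends $\Wh_v$ to $\Wh_{-v}$. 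One checks that conjugating the first family of equations yields the second, and conjugating the third yields the fourth; here one also uses that $\overline{D_{Z(E)}} = D_{Z^*(E)}$ (because $\tfrac12 M(\Phi_{1,E})$ and $n_L(E)$ are real for real $E$) together with $\overline{\langle\qch,MV(E)\rangle} = \langle \qch, MV(E)^*\rangle$. Thus it suffices to verify one equation from each pair.

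For the first pair I would exploit that $\zeta = w\,n(Y)^{-1/2}p$ depends on $w$ only through the positive scalar $w$, so that $w\partial_w = u\partial_u$ while $\sigma$ is independent of $w$. Consequently the $-2(n+1)$ shift cancels the exponent in $w^{2n+2}$, giving $(w\partial_w - 2(n+1) - v)\Wh_v = w^{2n+2}\sigma^v(u\partial_u - v)K_v(u)$, and Lemma \ref{lem:KBes}(4) turns this into $-w^{2n+2}\sigma^v\, u\, K_{v+1}(u)$. The asserted right-hand side is $-\langle\qch,\widetilde Z\rangle\Wh_{v+1} = -\zeta\,\sigma^{v+1}w^{2n+2}K_{v+1}(u)$, and these agree because $\zeta\sigma = u$. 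This settles the second equation of Corollary \ref{cor:SchmidChar}, hence the first by conjugation.

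For the second pair the operator $D_{Z(E)}$ does not involve $w$ (Lemma \ref{lem:DZ}), so after cancelling $w^{2n+2}$ the third family becomes $(D_{Z(E)} + \tfrac v2\tr(E))(\sigma^v K_v(u)) = \langle\qch,MV(E)\rangle\,\sigma^{v+1}K_{v+1}(u)$. Applying the product rule, Lemma \ref{lem:DZu} (which gives $D_{Z(E)}u = -\sigma\langle\qch,MV(E)\rangle$), and the recursion $K_v' = \tfrac vu K_v - K_{v+1}$, the $K_{v+1}(u)$ terms match automatically, and the whole equation collapses to the single scalar identity
\[
D_{Z(E)}\sigma = \frac{\sigma^2\,\langle\qch,MV(E)\rangle}{u} - \frac{\tr(E)}{2}\,\sigma .
\]
The decisive point is that $\sigma^2 = \bar\zeta/\zeta = \bar p/p$, since the common factor $w\,n(Y)^{-1/2}$ cancels in the ratio. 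As $\bar p$ is antiholomorphic in $Z$ and $D_{Z(E)} = \partial_{Y,E_Y} + i\,\partial_{X,E_Y}$ is the holomorphic-type derivative, one has $D_{Z(E)}\bar p = 0$, whence $D_{Z(E)}(\sigma^2) = -\bar p\,p^{-2}\,D_{Z(E)}p$, and (\ref{eqn:DZq}) evaluates $D_{Z(E)}p = 2i(aZ^\# + b\times Z + c, E_Y)$. Expanding $\langle\qch,MV(E)\rangle = w\langle\qch,mV(E)\rangle$ by (\ref{eqn:MVE}) and using $w\,n(Y)^{-1/2} = \zeta/p$, one checks that both sides of the displayed identity equal $-i\,\sigma\,p^{-1}(aZ^\# + b\times Z + c, E_Y)$ after the substitution $\bar p = \sigma^2 p$. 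The case $v=0$ is immediate. This proves the third family, and the fourth follows by conjugation.

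The main obstacle is precisely the scalar identity for $D_{Z(E)}\sigma$: the individual quantities $\zeta$, $u$, and $\langle\qch,MV(E)\rangle$ all carry the unwieldy factor $w\,n(Y)^{-1/2}$ and are neither holomorphic nor antiholomorphic, so differentiating them head-on is unpleasant. The clean route is to pass to $\sigma^2 = \bar p/p$, where those factors cancel and the antiholomorphic numerator is annihilated by $D_{Z(E)}$; once this is recognized, everything reduces to the bookkeeping already assembled in Lemmas \ref{lem:DZ}--\ref{lem:DZu} and equations (\ref{eqn:DZq}), (\ref{eqn:MVE}).
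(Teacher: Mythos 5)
Your proof is correct and follows essentially the same route as the paper's: the $w\partial_w$ equations via the Bessel recursions, and the $D_{Z(E)}$ family via Lemma \ref{lem:DZu} together with the observation that the antiholomorphic factor (for you, $\bar p$ in $\sigma^2=\bar p/p$; for the paper, $N(Y)^{1/2}\langle\qch,\widetilde Z\rangle^*$ in a factorization of $G_v$) is annihilated by $D_{Z(E)}$. Your use of complex conjugation to deduce the starred equations is a mild streamlining of the paper's ``nearly identical argument,'' but the substance is the same.
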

\begin{proof} We already know that $\Wh_v$ satisfies the two equations with the differential operators $w\partial_{w}$; this is immediate from the arguments above.  We will show that $\Wh_v$ satisfies the differential equations with the operators $D_{Z(E)}$.  That $\Wh_v$ satisfies the equations with the operators $D_{Z^*(E)}$ proceeds through a nearly identical argument.

Recall that above we defined $u = |\langle \qch, \widetilde{Z}\rangle|$, and showed that 
\[D_{Z(E)}(u) = - \left(\frac{|\langle \qch, \widetilde{Z} \rangle|}{\langle \qch, \widetilde{Z}\rangle}\right) \langle \qch, MV(E) \rangle = - \langle \qch,\widetilde{Z}^*\rangle u^{-1} \langle \qch, MV(E)\rangle\]
for $M = w n_G(-X)M_Y$.  We also verified $D_{Z(E)}(N(Y)) = N(Y)\tr(E)$.  Finally, observe that $N(Y)^{1/2} \langle \qch, \widetilde{Z}\rangle^*$ is antiholomorphic, so 
\[D_{Z(E)}(N(Y)^{1/2} \langle \qch, \widetilde{Z}\rangle^* ) = 0.\]

With these facts recalled, we can now compute.  We have
\[G_v(w,Z) = \left(\langle \qch, \widetilde{Z}\rangle^* N(Y)^{1/2}\right)^{v} N(Y)^{-v/2} u^{-v}K_v(u).\]
Thus
\begin{align*} D_{Z(E)}(G_v) &= D_{Z(E)}\left(\left(\langle \qch, \widetilde{Z}\rangle^* N(Y)^{1/2}\right)^{v} N(Y)^{-v/2} u^{-v}K_v(u)\right) \\ &= \left(\langle \qch, \widetilde{Z}\rangle^* N(Y)^{1/2}\right)^{v} D_{Z(E)}(N(Y)^{-v/2} u^{-v} K_v(u)).\end{align*}

But now 
\begin{align*} D_{Z(E)}(N(Y)^{-v/2} u^{-v} K_v(u)) &= ((-v/2)N(Y)^{-v/2}\tr(E))u^{-v}K_v(u) \\ &\qquad + N(Y)^{-v/2}\partial_{u}(u^{-v}K_v(u))D_{Z(E)}(u) \\ &= ((-v/2)N(Y)^{-v/2}\tr(E))u^{-v}K_v(u) \\ &\qquad + N(Y)^{-v/2}(-u^{-v}K_{v+1}(u))(-\langle \qch,\widetilde{Z}\rangle^* u^{-1} \langle \qch, MV(E)\rangle) \\ &= N(Y)^{-v/2}\left\{\left(\frac{-v}{2}\right)\tr(E) u^{-v} K_{v}(u) \right. \\ &\qquad \left.+ \langle \qch, MV(E)\rangle \langle \qch, \widetilde{Z}\rangle^* u^{-(v+1)}K_{v+1}(u)\right\}.\end{align*}
Thus
\[(D_{Z(E)} + \frac{v}{2}\tr(E))G_{v} = \langle \qch, MV(E) \rangle G_{v+1}\]
as desired.
\end{proof}

\section{Final formula}\label{sec:final} In this section we work out the final details to get an exact formula for $\Wh$ when the character $\chi$ on $N(\R)$ is nontrivial.  That is, we prove the $\chi$-nontrivial part of Theorem \ref{intro:mainThm} and Corollary \ref{intro:Cor}.  In the proof below, set $H_J(\R)^{0}$ the connected component of the identity of $H_J(\R)$, and recall that $H_J(\R)^{0,1}$ denotes the subgroup of elements of $H_J(\R)^0$ with similitude equal to $1$.  Additionally, denote by $H_J(\R)^{\pm}$ the subgroup of $H_J(\R)$ generated by $H_J(\R)^{0}$ and $w_0 := \eta(-1) \in H_J(\R)$.  In case $G_J$ is of type $G_2, F_4, E_6, E_7$ or $E_8$, $H_J(\R)$ has two connected components, given by $\nu > 0$ and $\nu < 0$.  Thus in these exceptional Dynkin types, $H_J(\R) = H_J(\R)^{\pm}$. 

Observe that $w_0 \in K$.  Indeed, this follows from the formulas for the Cartan involution on $\h(J)^0$ and $\g(J)$.  Finally, denote by $K_H^{1}$ those elements of $H_J^{1}(\R)$ that commute with $J_2$, or equivalently, those elements of $H_J^{0,1}(\R)$ that preserve the positive definite inner product on $W_J$.

\begin{proof}[Proof of Theorem \ref{intro:mainThm}] We consider the case when $\chi$ is nontrivial.  Then, the first part of Theorem \ref{intro:mainThm} has already been proved.  That is, from Proposition \ref{prop:SchmidUnique}, if there exists $Z \in \mathcal{H}_J$ so that $p_{\chi}(Z) = 0$, then $\mathrm{Hom}_{N(\R)}(\pi_n,\chi) = 0$.

For the second part, we have already checked the formula of Theorem \ref{intro:mainThm} when 
\[g = n_{G}(-x)M(n(Y)^{1/2}, U_{Y^{1/2}}) w = n_{G}(-x)M_Y w,\]
by solving the differential equations.  If $g \in H_J(\R)^0$, $g = n_{G}(-x)M_Y w k$ for some $k \in K_H^{1}$.  Thus, to check the formula of Theorem \ref{intro:mainThm} on all of $H_J(\R)^{\pm}$, we must verify that the formula given in Theorem \ref{intro:mainThm} is equivariant for $K_H^1$ and $w_0$.   That is, if $k \in K_H^1$ or $k = w_0$, then $\Wh(gk) = k^{-1} \Wh(g)$.

Recall that if $k \in K_H^{1}$, we have $k r_0(i) = j(k,i) r_0(i)$ (this defines $j(k,i)$), and that $|j(k,i)| = 1$.
\begin{lemma} Suppose $k \in K_H^{1}$.  Then $k\cdot e_{\ell} = j(k,i) e_{\ell}$, $k \cdot f_{\ell} = j(k,i)^{-1} f_{\ell}$, and $k \cdot h_{\ell} = h_{\ell}$.  Furthermore, $w_0 \cdot e_{\ell} = f_{\ell}$, $w_{0}\cdot f_{\ell} = e_{\ell}$, and $w_0\cdot h_{\ell} = -h_{\ell}$.\end{lemma}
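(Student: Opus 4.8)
The plan is to prove both assertions by direct computation, using the explicit action of $H_J$ on $\g(J)$ supplied by the lemma identifying $M \simeq H_J$, together with the defining formulas for $e_{\ell}, f_{\ell}, h_{\ell}$ and the single transformation rule $g\,\Phi_{v,w}\,g^{-1} = \nu(g)^{-1}\Phi_{gv,gw}$ recorded in that same lemma. The only inputs about $k$ and $w_0$ that I will need are: for $k \in K_H^1$ one has $\nu(k)=1$ and $k\,r_0(i) = j(k,i)\,r_0(i)$ with $|j(k,i)|=1$; while for $w_0$ one has $\nu(w_0)=-1$, $w_0^2=1$, and $w_0\,r_0(i) = r_0(-i)$ (so also $w_0\,r_0(-i) = r_0(i)$). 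This last relation is the genuine geometric content of $w_0$, namely that it interchanges the two components of $\mathcal{H}_J^{\pm}$; I will read it off from the definition of $w_0$ together with the formula $r_0(Z) = (1,-Z,Z^\#,-N(Z))$.

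For $k \in K_H^1$, since $\nu(k)=1$ the action on $\g(J)_1 = V_2 \otimes W_J$ is $e\otimes v \mapsto e\otimes kv$ and $f\otimes v' \mapsto f \otimes kv'$, with the $V_2$-factor untouched. Because $k$ is real and $r_0(-i) = \overline{r_0(i)}$, the eigenrelation $k\,r_0(i)=j(k,i)r_0(i)$ gives $k\,r_0(-i) = \overline{j(k,i)}\,r_0(-i) = j(k,i)^{-1} r_0(-i)$, using $|j(k,i)|=1$. Feeding these into $e_{\ell} = \tfrac14(ie+f)\otimes r_0(i)$ and $f_{\ell} = \tfrac14(ie-f)\otimes r_0(-i)$ yields $k\cdot e_{\ell} = j(k,i)e_{\ell}$ and $k\cdot f_{\ell} = j(k,i)^{-1}f_{\ell}$ at once. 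For $h_{\ell}$, its $\sl_2$-component $\mm{0}{1}{-1}{0}$ is fixed because $\nu(k)=1$, and its $\h(J)^0$-component is a scalar multiple of $\Phi_{r_0(i),r_0(-i)}$ (this is the identity $\Phi_{r_0(i),r_0(-i)} = 2n_L(1)+2n_L^\vee(-1)$ already used to verify the $\sl_2$-triple). The transformation rule then gives $k\,\Phi_{r_0(i),r_0(-i)}\,k^{-1} = \Phi_{k r_0(i),k r_0(-i)} = |j(k,i)|^2\,\Phi_{r_0(i),r_0(-i)} = \Phi_{r_0(i),r_0(-i)}$, so $k\cdot h_{\ell} = h_{\ell}$.

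For $w_0$, the case $\nu(w_0)=-1$ twists the action: on $V_2 \otimes W_J$ one has $e\otimes v \mapsto e\otimes w_0 v$ and $f\otimes v' \mapsto -\,f\otimes w_0 v'$, while on the $\sl_2$-factor $\mm{0}{1}{-1}{0}\mapsto -\mm{0}{1}{-1}{0}$. Using $w_0\,r_0(i)=r_0(-i)$ and $w_0\,r_0(-i)=r_0(i)$, a direct substitution into the formulas for $e_{\ell}$ and $f_{\ell}$ produces the cross terms $w_0\cdot e_{\ell} = \tfrac14(ie-f)\otimes r_0(-i) = f_{\ell}$ and $w_0\cdot f_{\ell} = \tfrac14(ie+f)\otimes r_0(i) = e_{\ell}$, the $\nu^{-1}=-1$ twist on the $f$-slot being exactly what converts $ie+f$ into $ie-f$ and back. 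For $h_{\ell}$ the $\sl_2$-part flips sign, and for the $\h(J)^0$-part the transformation rule with $\nu(w_0)=-1$ gives $w_0\,\Phi_{r_0(i),r_0(-i)}\,w_0^{-1} = -\Phi_{w_0 r_0(i),w_0 r_0(-i)} = -\Phi_{r_0(-i),r_0(i)} = -\Phi_{r_0(i),r_0(-i)}$, the last step by symmetry of $\Phi$ in its two arguments; hence $w_0\cdot h_{\ell} = -h_{\ell}$.

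The only real obstacle is bookkeeping rather than conceptual: one must track the $\nu$-dependent twist on the $f$-slot of $V_2 \otimes W_J$ and on the $\sl_2$-factor, and one must correctly identify the $\h(J)^0$-part of $h_{\ell}$ with a multiple of $\Phi_{r_0(i),r_0(-i)}$ so that the clean conjugation rule applies. Once the action of $w_0$ on $r_0(\pm i)$ is pinned down from its definition, every identity above is a one-line substitution, so I expect the proof to be short.
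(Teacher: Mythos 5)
Your overall route is the same as the paper's: everything is a direct substitution into the definitions of $e_{\ell}, f_{\ell}, h_{\ell}$ using the explicit action of the Levi on $\g(J)$. The only structural difference is that the paper disposes of $h_{\ell}$ in one stroke via $h_{\ell}=[e_{\ell},f_{\ell}]$ and equivariance of the bracket, whereas you recompute its $\h(J)^0$-component through $\Phi_{r_0(i),r_0(-i)}$; both work, the paper's shortcut is just cheaper. The $K_H^1$ half of your argument is complete and correct: $\nu(k)=1$, the reality of $k$, and $|j(k,i)|=1$ are all you need, and you use them correctly, including the conjugation rule $k\Phi_{v,w}k^{-1}=\nu(k)^{-1}\Phi_{kv,kw}$.

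The $w_0$ half rests on an unverified input. Your computation is valid for any $w_0$ satisfying $\nu(w_0)=-1$ and $w_0\,r_0(i)=r_0(-i)$ --- you have correctly isolated exactly the two properties that make the identities come out --- but you assert that the second one ``can be read off from the definition of $w_0$,'' and it cannot. The paper defines $w_0=\eta(-1)$, and $\eta(\lambda)$ acts on $W_J$ by $(a,b,c,d)\mapsto(\lambda^3a,\lambda b,\lambda^{-1}c,\lambda^{-3}d)$, so $\eta(-1)$ is $-\mathrm{Id}_{W_J}$; this sends $r_0(i)=(1,-i,-1,i)$ to $-r_0(i)\neq r_0(-i)$, and it lies in $H_J^{1}$, i.e.\ has similitude $+1$, not $-1$. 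For that literal element the asserted identities fail: it acts by $-1$ on all of $\g(J)_1$ and trivially on $\g(J)_0$, giving $w_0\cdot e_{\ell}=-e_{\ell}$ rather than $f_{\ell}$. (This is an inconsistency already present in the paper, which simultaneously declares $\nu(w_0)=-1$ and $w_0=\eta(-1)$.) To close the gap you must either exhibit the intended order-two element of $H_J(\R)$ with $\nu=-1$ taking $r_0(i)$ to $r_0(-i)$ --- which is what genuinely interchanging the two components of $\mathcal{H}_J^{\pm}$ requires --- and verify those two identities for it, or at least state explicitly that your proof of the $w_0$ assertions is conditional on those two properties rather than derived from the definition given.
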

\begin{proof} The actions of $k$ and $w_0$ on $e_{\ell}$ and $f_{\ell}$ are immediately computed.  The actions on $h_{\ell}$ follow from these, as $h_{\ell} = [e_{\ell},f_{\ell}]$.\end{proof}

Now, $Sym^2(V_2) = \mathrm{Span}\{x^2, xy, y^{2}\}$ is identified with $\mathrm{Span}\{e_{\ell},h_{\ell},f_{\ell}\}$ via the map $e_{\ell} \mapsto x^2$, $h_{\ell} \mapsto -2xy$, and $f_{\ell} \mapsto -y^2$.  Indeed, we send $e_{\ell} \mapsto x^2$ as the are both highest weight vectors for the same Cartan, and the rest follows by applying lowering operators, i.e., by acting by $f_{\ell}$.  With this normalization, we arrive at the following.
\begin{lemma}\label{lem:kwaction} If $k \in K_H^{1}$, then $k$ acts on $x^{n+v}y^{n-v}$ as $j(k,i)^{v}$.  The element $w_0$ acts on $Sym^{2n}(V_2)$ as the element $\mm{0}{i}{i}{0}$, and thus takes $x^{n+v}y^{n-v}$ to $(-1)^{n} x^{n-v}y^{n+v}$. \end{lemma}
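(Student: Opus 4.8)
The plan is to reduce everything to a computation inside the long-root $\SU(2)$. The representation $\Vm_n = Sym^{2n}(V_2) \boxtimes \mathbf{1}$ has $L_0(J)$ acting trivially, so the action of $K = (\SU(2) \times L_0(J))/\mu_2$ on $Sym^{2n}(V_2)$ factors through the $\SU(2)$-factor, and this same factor governs the adjoint action on the long-root copy of $\mathfrak{su}_2$ spanned by $e_\ell, h_\ell, f_\ell$. Since the adjoint representation of $\SU(2)$ on $\mathfrak{su}_2 \otimes \C$ is isomorphic to $Sym^2(V_2)$---under the normalization $e_\ell \mapsto x^2$, $h_\ell \mapsto -2xy$, $f_\ell \mapsto -y^2$ recorded above---the previous lemma already determines the $Sym^2(V_2)$-action of each $k \in K_H^{1}$ and of $w_0$. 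From the $Sym^2(V_2)$-action I recover the action on $V_2$ itself, up to the central sign $-I \in \SU(2)$; this ambiguity is harmless because $-I$ acts on the even power $Sym^{2n}(V_2)$ as $(-1)^{2n} = 1$. I then extend multiplicatively to the monomials $x^{n+v}y^{n-v}$.

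For $k \in K_H^{1}$ I would argue as follows. Since $k$ preserves the highest-weight line $\langle e_\ell \rangle \leftrightarrow \langle x^2 \rangle$ and the lowest-weight line $\langle f_\ell \rangle \leftrightarrow \langle y^2 \rangle$, the associated $\SU(2)$-element is diagonal in the $x, y$ eigenbasis of $h_\ell$, say $\mathrm{diag}(\lambda, \lambda^{-1})$. Its adjoint action scales $e_\ell \mapsto x^2$ by $\lambda^2$, so $\lambda^2 = j(k,i)$. As $x$ and $y$ carry $h_\ell$-weights $+1$ and $-1$, the monomial $x^{n+v}y^{n-v}$ is scaled by $\lambda^{(n+v)-(n-v)} = \lambda^{2v} = j(k,i)^{v}$, which is the first assertion.

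For $w_0$ I would exhibit the relevant $\SU(2)$-element explicitly. The previous lemma gives $w_0 \cdot e_\ell = f_\ell$, $w_0 \cdot f_\ell = e_\ell$, and $w_0 \cdot h_\ell = -h_\ell$, and a direct $2\times 2$ check shows that $s_0 := \mm{0}{i}{i}{0} \in \SL_2(\C)$ realizes exactly this adjoint action (one verifies $s_0^2 = -1$, $s_0 e_\ell s_0^{-1} = f_\ell$, and $s_0 h_\ell s_0^{-1} = -h_\ell$). Because the adjoint action on $\mathfrak{su}_2$ pins down the $\SU(2)$-element up to the central sign, and that sign acts trivially on $Sym^{2n}(V_2)$, the action of $w_0$ on $Sym^{2n}(V_2)$ equals that of $s_0$. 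Computing $s_0 \cdot x = iy$ and $s_0 \cdot y = ix$ gives $s_0 \cdot (x^{n+v}y^{n-v}) = i^{2n}\,x^{n-v}y^{n+v} = (-1)^{n}\,x^{n-v}y^{n+v}$, the second assertion.

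The computation is elementary once the factorization through the $\SU(2)$-factor is in place, so I do not expect a genuine obstacle; the one point requiring care is the sign bookkeeping in the identification $Sym^2(V_2) \cong \mathfrak{su}_2$. I would therefore double-check the normalizations $h_\ell \mapsto -2xy$ and $f_\ell \mapsto -y^2$ against the relation $\mathrm{ad}(f_\ell)e_\ell = -h_\ell$ and the chosen $\SL_2$-action convention on $V_2$ before reading off the scalar $j(k,i)^{v}$ and the factor $(-1)^{n}$, since an incorrect convention there would silently corrupt both formulas.
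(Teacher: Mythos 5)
Your proposal is correct and follows essentially the same route as the paper: both arguments take the action of $k$ and $w_0$ on $\mathrm{Span}\{e_{\ell},h_{\ell},f_{\ell}\}\cong Sym^2(V_2)$ from the preceding lemma and transfer it to $Sym^{2n}(V_2)$, the paper via the equivariant surjection $S^n(S^2(V_2))\to S^{2n}(V_2)$ and you by lifting to $V_2$ up to the central sign $-I$, which acts trivially on even symmetric powers. The computations of the scalar $j(k,i)^v$ and of the $w_0$-action via $s_0=\mm{0}{i}{i}{0}$ are accurate.
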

\begin{proof} These statements follow from the actions of $k$ and $w_0$ on $Sym^2(V_2) = \mathrm{Span}\{e_{\ell},h_{\ell},f_{\ell}\}$, and the equivariant surjection $S^{n}(S^2(V_2)) \rightarrow S^{2n}(V_2)$.\end{proof}

It follows from Lemma \ref{lem:kwaction} that the formula of Theorem \ref{intro:mainThm} is appropriately equivariant, as desired.  The part of Theorem \ref{intro:mainThm} concerning the case when $\chi = 1$ is proved below in Proposition \ref{prop:const}.\end{proof}

\begin{proof}[Proof of Corollary \ref{intro:Cor}] The corollary follows immediately from Theorem \ref{intro:mainThm}, except for the following two facts, which have not yet been proved:
\begin{enumerate}
\item If $w$ is rank four and $a(w) \neq 0$, then $q(w) < 0$.
\item If $\varphi$ is a cusp form and $a(w) \neq 0$, then $w$ is rank four (and thus $q(w) < 0$.)
\end{enumerate}
But these facts follow right away from Proposition \ref{prop:vRnks} below.\end{proof}

\section{Positivity}\label{sec:positivity} In this section we analyze whether the quantity $|\langle \qch, g r_0(i)\rangle|$ is bounded away from $0$ as $g \in H_J(\R)^{1}$ varies, for different elements $\qch \in W_J$.  For simplicity, in this section, we assume $J$ is simple, so that $J = \R$ or $J = H_3(C)$ for a composition algebra $C$.

We prove the following result.
\begin{proposition}\label{prop:vRnks} Suppose $J = \R$ or $J = H_3(C)$ for some composition algebra $C$.  
\begin{enumerate}
\item If $v \in W_J$ is rank four, and $q(v) > 0$, then there exists $g \in H_J(\R)^{1}$ with $\langle v, g r_0(i) \rangle = 0$.
\item If $v \in W_J$ is rank four, and $q(v) < 0$, then $|\langle v, g r_0(i)\rangle|$ is bounded away from $0$ for $g \in H_J(\R)^{1}$.
\item If $v \in W_J$ is of rank one, two, or three, then $|\langle v, g r_0(i)\rangle|$ is not bounded away from $0$ for $g \in H_J(\R)^{1}$.
\end{enumerate}
\end{proposition}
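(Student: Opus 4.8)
The plan is to reduce all three statements to the behaviour on $\mathcal{H}_J$ of the single function $f_v(Z) := |p_v(Z)|\,N(\mathrm{Im}\,Z)^{-1/2}$, and then treat the three rank/sign regimes separately. First I would record the reduction. Since $\nu(g)=1$ for $g\in H_J(\R)^{0,1}$, Proposition \ref{prop:Hcal} gives $g\,r_0(i)=j(g,i)\,r_0(g\cdot i)$ with $g\cdot i$ ranging over all of $\mathcal{H}_J$ (the subgroup generated by the $n(X)$ and the $M(N(y),U_y)$ already acts transitively, as in the proof of Proposition \ref{prop:Hcal}), and the identity displayed in that proof, specialized to $\nu(g)=1$, yields $|j(g,i)|^2=N(\mathrm{Im}(g\cdot i))^{-1}$. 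Combined with $\langle v,r_0(Z)\rangle=-p_v(Z)$ (immediate from the definitions of $\langle\,,\,\rangle$ and $r_0$), this gives $|\langle v,g\,r_0(i)\rangle|=N(\mathrm{Im}\,Z)^{-1/2}|p_v(Z)|=f_v(Z)$ for $Z=g\cdot i$. Thus $|\langle v,g\,r_0(i)\rangle|$ is bounded away from $0$ over $g$ exactly when $\inf_{\mathcal{H}_J}f_v>0$. The possibly disconnected components of $H_J(\R)^1$ only replace $\mathcal{H}_J$ by its conjugate cone and are handled identically.

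For part (1) I would use the invariant rank-one decomposition from the introduction: $q(v)>0$ forces $v=\mu_1 r_0(W_1)+\mu_2 r_0(W_2)$ with $W_1,W_2\in J$ and $\mu_1,\mu_2\in\R$ (the degenerate case where a summand is the rank-one vector $(0,0,0,1)$ being a limiting form treated the same way). Using $\langle r_0(W),r_0(Z)\rangle=N(W-Z)$ one gets $p_v(Z)=-\mu_1 N(W_1-Z)-\mu_2 N(W_2-Z)$; applying a translation $n(X)\in H_J(\R)^1$ reduces to $W_1=0$. It then suffices to solve $\mu_1 N(Z)=-\mu_2 N(W_2-Z)$ with $\mathrm{Im}\,Z$ positive definite, which I would do by restricting $Z$ to a complex ray $e^{i\pi/3}t\,1_J$, so that $\mathrm{Im}\,Z=t\sin(\pi/3)\,1_J>0$ and $N(Z)=-t^3$, and choosing the cube root of unity and the parameter so that the two real cubic norms match; equivalently, after moving $v$ to the normal form $(1,0,0,d)$ one simply solves $N(Z)=-d$ in $\mathcal{H}_J$, which is always possible since $N$ maps the rays $e^{i\theta}t\,1_J$ onto every real value.

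Part (3) I would reduce to producing a degenerating sequence inside the orbit. Writing $\langle v,g\,r_0(i)\rangle=\langle g^{-1}v,r_0(i)\rangle$, it is enough to find $u_k\in\mathcal{O}:=H_J(\R)^1\cdot v$ with $u_k\to 0$, since then $\langle u_k,r_0(i)\rangle\to 0$. When $v$ has rank at most three, $q(v)=0$, so $v$ lies in the null cone; I would move $v$ by $H_J(\R)^1$ into the subspace $\R\oplus J\oplus 0\oplus 0$ (which meets every orbit of rank $\le 3$, each such orbit being pinned down by its rank alone once $q=0$) and then apply $\eta(z)$ as $z\to 0$, under which $(a,b,0,0)\mapsto(z^3 a,zb,0,0)\to 0$. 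Conceptually this is the statement that, because $q$ generates the ring of $H_J(\R)^1$-invariants on $W_J$, the origin lies in $\overline{\mathcal{O}}$ precisely when $q(v)=0$. Either way $\inf_{\mathcal{H}_J}f_v=0$.

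The main obstacle is part (2), which asks for a genuine \emph{uniform} lower bound rather than the existence of a single good or bad point. Here $q(v)<0$ gives the conjugate rank-one decomposition $v=\mu\,r_0(W)+\bar\mu\,r_0(\bar W)$ with $W\in J_{\C}\setminus J$, so $p_v(Z)=-\mu N(W-Z)-\bar\mu N(\bar W-Z)$; one checks that all of its roots lie on $\partial\mathcal{H}_J$ (the ``real'' rank-$\le 1$ locus), so $f_v$ is continuous and strictly positive on the interior. The real content is to show $f_v\to+\infty$ at every boundary approach: near a root $Z_0\in\partial\mathcal{H}_J$ the simplicity of the roots, which is exactly $q(v)\neq 0$, gives $|p_v(Z)|\gtrsim\mathrm{dist}(Z,\partial\mathcal{H}_J)$, while $N(\mathrm{Im}\,Z)^{1/2}$ vanishes to higher order, and as $Z\to\infty$ one has $|p_v(Z)|\sim\|Z\|^3$ dominating $N(\mathrm{Im}\,Z)^{1/2}$. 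Making these asymptotics precise and uniform for a general cubic norm structure $J=H_3(C)$ — as opposed to the one-dimensional case $J=\R$ where they are elementary — is where the reduction theory of the symmetric cone and careful norm estimates are needed; I would organize that step around a Siegel-type fundamental set for $\mathcal{H}_J$, so that the global infimum is controlled by the behaviour on a fundamental domain together with the two boundary asymptotics.
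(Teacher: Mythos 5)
Your reduction to the function $f_v(Z)=|p_v(Z)|N(\mathrm{Im}\,Z)^{-1/2}$ and your treatment of parts (1) and (3) essentially reproduce the paper's argument: for (1) the paper also passes to the normal form $-(1,0,0,d)$ and solves $N(Z)=-d$ on the ray $\zeta d^{1/3}1_J$, and for (3) it also moves $v$ into $\{(a,b,0,0)\}$ and scales by $\eta(t)$. The one thing you gloss over in (3) is the reduction itself: you justify it by asserting that rank-$\le 3$ orbits are ``pinned down by rank once $q=0$,'' whereas the paper proves it directly (Lemma \ref{lem:rk123}) by locating the rank-one vector $x^{\flat}$, showing $(x^{\flat},x,y,y')=0$ for $y,y'\in(\R x^{\flat})^{\perp}$, and using the characterization of $\{(0,0,*,*)\}$ in Claim \ref{lem:SiegelFil}. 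That is a real construction, not an orbit classification, and your version would need the same work.

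The genuine gap is part (2), which you correctly identify as the crux but do not prove. Your plan rests on the claim that near a boundary zero $Z_0$ of $p_v$ one has $|p_v(Z)|\gtrsim\mathrm{dist}(Z,\partial\mathcal{H}_J)$ while ``$N(\mathrm{Im}\,Z)^{1/2}$ vanishes to higher order.'' The second assertion is false in the partially degenerate regime: if only one eigenvalue of $Y=\mathrm{Im}\,Z$ tends to $0$, then $N(Y)^{1/2}$ vanishes like $\mathrm{dist}^{1/2}$, i.e.\ to \emph{lower} order than the linear vanishing of $p_v$, and your ratio heuristic gives $f_v\to 0$ rather than $f_v\to\infty$; so the argument as sketched cannot close, and you explicitly defer the needed uniform estimates to an unexecuted reduction-theory step. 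The paper avoids boundary asymptotics entirely: after normalizing $v$ to $(0,-1,0,1)$ (so $\langle v,g\,r_0(i)\rangle=(N(Z+i)-N(Z-i))/N(Y)^{1/2}=2i(\tr(Z^{\#})-1)/N(Y)^{1/2}$), it bounds this below by $|N(Z+i)|/N(Y)^{1/2}-|N(Z-i)|/N(Y)^{1/2}$, observes that each of these terms is bi-$K_H^{1}$-invariant in $g$ so that one may take $Z=iY$, and then gets the explicit bound $\geq 4$ from the AM--GM inequalities $\tr(Y^{\#})\geq 3N(Y)^{2/3}$ and $\tr(Y)\geq 3N(Y)^{1/3}$ (Lemma \ref{lem:away0}). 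Some such global, quantitative argument is needed; your part (2) as written is a proof outline with an incorrect key estimate, not a proof.
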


It follows from Proposition \ref{prop:vRnks} that the function
\[ g \mapsto \nu(g)^{n}|\nu(g)| \left(\frac{|\langle \qch, g r_0(i)\rangle|}{\langle \qch, g r_0(i)\rangle}\right)^{v} K_{v}(|\langle \qch, g r_0(i) \rangle|)\]
can only be bounded when $\qch$ is rank four.
 
To prove Proposition \ref{prop:vRnks}, we require the following two lemmas.
\begin{lemma}\label{lem:rk123} Assume $J=\R$ or $J = H_3(C)$ for some composition algebra $C$.  Suppose $x = (a,b,c,d) \in W$ is rank $1,2, $ or $3$.  Then there exists $g \in H_J^{1}$ so that $gx =(a',b',0,0)$. \end{lemma}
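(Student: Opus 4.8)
The plan is to prove the slightly cleaner statement that every nonzero $x=(a,b,c,d)\in W_J$ of rank at most three can be carried by some $g\in H_J^{1}$ into the Lagrangian $U^{+}:=\{(a',b',0,0)\}$, since this is exactly the conclusion. Throughout I would work with the explicit generators $n(u),n^\vee(y),M(\delta,m),\eta(z),J_2$ of $H_J^{1}$ from subsection~\ref{subsec:FWJ}, and use the rank characterizations (rank $\le 3\iff q=0$, rank $\le 2\iff v^\flat=0$, rank one $\iff \Phi_{v,v}=0$). First I would normalize: since under $n^\vee(y)$ one has $a\mapsto a+(b,y)+(c,y^\#)+dN(y)$, a nonconstant cubic in $y$ unless $b=c=d=0$ (in which case $a\ne 0$ already), a generic $y$ achieves $a\ne 0$; then applying $n(-b/a)$ leaves $a$ fixed and kills the $J$-coordinate, so I may assume $x=(a,0,c,d)$ with $a\ne 0$. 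The quartic-form formula gives $q(x)=(ad)^2+4aN(c)$, so the rank-$\le 3$ hypothesis reads $ad^2+4N(c)=0$.

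The engine of the argument is the following one-step reduction. Applying $n(u)$ to $x=(a,0,c,d)$ fixes $a$, sends the $J^\vee$-coordinate to $c+au^{\#}$, and the last coordinate to $d+(c,u)+aN(u)$. Completing the square via $a u^{\#}=a(u)^{\#}$, the third coordinate vanishes exactly when $u^{\#}=-c/a$. For $J=\R$ or $J=H_3(C)$ the image of the adjoint map $\#\colon J\to J^\vee$ is precisely the set of elements of rank $\ne 2$ (the adjugate of a rank-three element is rank three, of a rank-two element is rank one, and of a rank-$\le 1$ element is zero), so such a $u$ exists whenever $c$ is not of rank two. Granting existence, I would choose the correct square-root branch: using $u^{\#}=-c/a$, the identity $(u^{\#},u)=3N(u)$, and $N(u)^2=N(u^{\#})=-N(c)/a^3$ together with $ad^2=-4N(c)$, the last coordinate becomes $d-2aN(u)$, which one can force to $0$ by selecting the preimage $u$ with $N(u)=d/(2a)$. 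This lands $x$ in $U^{+}$.

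With this in hand the easy ranks are immediate. If $x$ has rank one, the relations $\Phi_{x,x}=0$ (i.e.\ $b^{\#}=ac$, $(b,c)=ad$) with $b=0$ and $a\ne 0$ force $c=0,d=0$, so $x\in U^{+}$ already. If $x$ has rank two, then $x^{\flat}=0$; reading off the bullet formulas for $v^{\flat}$ at $(a,0,c,d)$ gives $d=0$ and $c^{\#}=0$, so $c$ has rank $\le 1$, hence $-c/a$ lies in the image of $\#$ and the one-step reduction applies (and here $N(c)=0$ makes the last coordinate vanish automatically). Likewise, in the rank-three case with $N(c)\ne 0$ the element $-c/a$ has rank three, so again $u$ exists and the sign-bookkeeping above finishes the job.

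The genuine obstacle is the remaining rank-three case in which $c$ has rank two, i.e.\ $N(c)=0$ but $c^{\#}\ne 0$ (so $d=0$ and $x=(a,0,c,0)$); here $-c/a$ is rank two and hence \emph{not} in the image of $\#$, so no single $n(u)$ kills the $J^\vee$-coordinate. To break this I would first apply an auxiliary $n^\vee(y)$, which replaces $(a,0,c)$ by $(a_1,b_1,c_1)$ and, after the general completing-the-square $au^{\#}+b_1\times u=a(u+b_1/a)^{\#}-b_1^{\#}/a$, turns the obstruction element into $s_1:=b_1^{\#}-a_1c_1$, a polynomial in $y$ equal to $-ac$ at $y=0$. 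The heart of the proof is to show that $s_1$ is not identically of rank $\le 2$ in $y$, so that a generic $y$ makes $s_1$ invertible (while keeping $a_1\ne 0$); this is exactly where the rank-three hypothesis and the specific structure of $J$ as $\R$ or $H_3(C)$ must be used, presumably by exhibiting one explicit $y$ (e.g.\ built from $c^{\#}$ and a complementary rank-one idempotent) for which $(b_1)^{\#}-a_1c_1$ is invertible. Once $s_1$ is invertible, the one-step reduction of the second paragraph applies to the transformed vector and produces the desired $g\in H_J^{1}$ with $gx\in U^{+}$, completing the lemma.
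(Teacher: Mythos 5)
Your reduction strategy works through the rank-one and rank-two cases and through the rank-three case with $N(c)\neq 0$, but the proof is incomplete exactly where you flag it: the rank-three sub-case $x=(a,0,c,0)$ with $c$ of rank two is left unresolved. You reduce it to showing that $s_1=b_1^{\#}-a_1c_1$ becomes invertible for a suitable $y$, and then write that this holds ``presumably by exhibiting one explicit $y$''; no such $y$ is produced and no argument is given that the relevant polynomial in $y$ is not identically of rank $\leq 2$ on this locus. Since this is precisely the configuration your one-step reduction cannot reach, the lemma is not proved. A secondary inaccuracy: over $\R$ the image of $\#$ is \emph{not} the set of elements of rank $\neq 2$. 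Already for $J=\R$ one has $x^{\#}=x^{2}$, so no negative number is an adjoint, and in $H_3(C)$ a rank-three $w$ is an adjoint only when $N(w)\geq 0$, since $N(u^{\#})=N(u)^{2}$. Your application survives because $ad^{2}=-4N(c)$ forces $N(-c/a)=d^{2}/(4a^{2})\geq 0$, but this needs to be said; as written the step rests on a false general statement.

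For comparison, the paper's proof avoids any case analysis on $c$ in the rank-three case. It uses the Freudenthal identity $6t(x,x^{\flat},y)=\langle y,x\rangle x^{\flat}+\langle y,x^{\flat}\rangle x$ to show that $(x^{\flat},x,y,y')=0$ for all $y,y'\in(\R x^{\flat})^{\perp}$, characterizes the vectors satisfying this incidence relation with a fixed rank-one vector (Claim \ref{lem:SiegelFil}), and then moves the rank-one vector $x^{\flat}$ to a standard position by transitivity of $H_J^{1}$ on rank-one elements; the incidence relation then forces $x$ into $\{(\ast,\ast,0,0)\}$. Either adopting that argument for the rank-three case, or actually exhibiting the $y$ your final paragraph requires, would close the gap; as it stands the hardest case is asserted rather than proved.
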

\begin{proof} This is clear if $x$ is rank one.  If $x$ is rank two, then we may assume $x = (1,0,c,0)$, with $c^\# = 0$ but $c \neq 0$.  By moving $c$, we may assume $c = \lambda e_{11}$ for some $\lambda \in \R^\times$.  Thus, in particular, we may assume $c = -v^\#$ for some $v \in J$.  But then $n_{G}(v)x = (1,v,c + v^\#, (c,v)+n(v)) = (1,v,0,0)$ as $c + v^\# = 0$, so $n(v) = 0$, and $(c,v) = (-v^\#,v) = -3n(v) = 0$.

Hence, we are reduced to the case that $x$ is rank three.  Then $x^\flat$ is rank one.  We require the following claim.
\begin{claim}\label{lem:SiegelFil} Set $f = (0,0,0,1)$.  Then
\begin{equation}\label{eqn:fP}\{(0,0,*,*)\} = \{x \in W_J: (f,x, y,y') = 0 \text{ for all } y, y' \in (\R f)^\perp\}.\end{equation}
\end{claim}
\begin{proof} One verifies (\ref{eqn:fP}) by direct computation.\end{proof}

Now, we claim that if $x$ is rank three, then $(x^\flat,x,y,y') = 0$ for all $y, y'$ in $(\R x^\flat)^\perp$.  Indeed, one has $6t(x,x^\flat,y) = \langle y,x\rangle x^\flat + \langle y,x^\flat \rangle x$, for any $y \in W_J$.  Hence, for any $y, y' \in W_J$, one has
\[6(x,x^\flat,y,y') = \langle y,x \rangle \langle y',x^\flat \rangle + \langle y,x^\flat \rangle \langle y',x \rangle.\]
Hence if $y,y' \in (\R x^\flat)^\perp$, the above is $0$.

It follows from Lemma \ref{lem:SiegelFil} that by moving $x^\flat$ to $(1,0,0,0)$, $x$ gets moved to an element of the form $(a',b',0,0)$.  This completes the proof of the lemma.\end{proof}

\begin{lemma}\label{lem:away0} The quantity
\[\frac{N(Z+i) - N(Z-i)}{N(Y)^{1/2}} = 2i\frac{\tr(Z^\#)-1}{N(Y)^{1/2}}\]
is bounded away from $0$ on $\mathcal{H}_J$.
\end{lemma}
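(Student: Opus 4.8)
The plan is to reduce the statement to an explicit lower bound on $|\tr(Z^\#) - 1|^2$ and then carry out an exact minimization. The displayed identity is the easy part: writing $Z = X + iY$ with $X, Y \in J$ real and using the bilinearization $(X+iY)^\# = X^\# + i\,X\times Y - Y^\#$, together with $N(Z \pm i) = \langle r_0(Z), r_0(\mp i)\rangle$ and the cubic-norm expansion, one checks $N(Z+i) - N(Z-i) = 2i(\tr(Z^\#) - 1)$ directly. Thus it suffices to produce $c > 0$ with $|\tr(Z^\#) - 1|^2 \geq c\,N(Y)$ for all $X \in J$ and all positive $Y$. Splitting into real and imaginary parts,
\[|\tr(Z^\#) - 1|^2 = \bigl(\tr(X^\#) - \tr(Y^\#) - 1\bigr)^2 + \tr(X \times Y)^2.\]

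First I would fix $Y > 0$ and minimize the right-hand side over the real vector space $X \in J$. Introduce the quadratic form $Q(X) = \tr(X^\#) = \tfrac12(\tr(X)^2 - (X,X))$ and the linear form $\ell(X) = \tr(X \times Y) = (X, 1_J \times Y)$; then the quantity to be minimized is $(Q(X) - A)^2 + \ell(X)^2$ with $A = \tr(Y^\#) + 1$. Writing $X = t\,1_J + X_0$ with $X_0$ trace-free gives $Q = 3t^2 - \tfrac12(X_0,X_0)$, so $Q$ has Lorentzian signature, positive only along $1_J$. Since the component of $X_0$ orthogonal to $1_J \times Y$ only decreases $Q$ without affecting $\ell$, the image of $X \mapsto (Q(X), \ell(X))$ is computed from a two-variable problem in $t$ and the coordinate $\xi$ along $1_J \times Y$, and I would show this image is exactly the sub-parabolic region $\{Q \leq \ell^2/(4\tr(Y^\#))\}$.

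The decisive point — and the only place positivity of $Y$ is used — is the sign of the leading coefficient of $Q$ along the level lines of $\ell$: a short computation gives this coefficient to be $-6\,\tr(Y^\#)$, which is negative precisely because $\tr(Y)^2 > (Y,Y)$, i.e. $\tr(Y^\#) > 0$, for $Y > 0$ (via the identity $\tr(x)^2 = \tr(x^2) + 2\tr(x^\#)$ used in the proof of Lemma \ref{lem:sympair}). This forces the reachable region to sit below an upward parabola through the origin, hence bounded away from the point $(A, 0)$ with $A > 0$. Computing the squared distance from $(A,0)$ to this region yields the exact minimum
\[\min_{X}|\tr(Z^\#)-1|^2 = \begin{cases} 4\,\tr(Y^\#), & \tr(Y^\#) \leq 1,\\ (\tr(Y^\#)+1)^2, & \tr(Y^\#) > 1.\end{cases}\]

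Finally I would feed in the elementary bound $\tr(Y^\#) \geq 3\,N(Y)^{2/3}$ (AM--GM applied to the three $2\times 2$ principal minors of $Y$, whose product is $N(Y)^2$). In the regime $\tr(Y^\#) \leq 1$ this forces $N(Y)$ small and $4\tr(Y^\#)/N(Y) \geq 12\,N(Y)^{-1/3}$ is large, while in the regime $\tr(Y^\#) > 1$ one gets $(\tr(Y^\#)+1)^2/N(Y) \geq \min(4,\ 9\,N(Y)^{1/3})$; either way $|\tr(Z^\#)-1|^2/N(Y)$ is bounded below by an absolute positive constant, which is the lemma. The case $J = \R$ (where $\mathcal{H}_J$ is a half-plane and $\tr(Z^\#)$ is a scalar quadratic in $Z$) is an easier instance of the same two-regime computation. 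The main obstacle throughout is that $\mathcal{H}_J$ is non-compact, so the eigenvalues of $Y$ may escape to $0$ or $\infty$ and a naive compactness argument fails; this is exactly what the explicit minimization, resting on the sign $-6\tr(Y^\#) < 0$, is designed to overcome.
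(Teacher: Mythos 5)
Your proof is correct, but it takes a genuinely different route from the paper's. The paper first applies the reverse triangle inequality $|N(Z+i)-N(Z-i)| \geq |N(Z+i)|-|N(Z-i)|$, then uses the identity $N(Z\mp i)/N(Y)^{1/2} = \langle g\,r_0(i), r_0(\pm i)\rangle$ to see that each term is a bi-$K_H^1$-invariant function of $g$, so that by the Cartan decomposition one may assume $Z = iY$; a short case analysis on the sign of $N(Y-1)$ plus the same AM--GM inequalities $\tr(Y^\#)\geq 3N(Y)^{2/3}$, $\tr(Y)\geq 3N(Y)^{1/3}$ then gives the bound. You instead keep $X = \mathrm{Re}(Z)$ and minimize $\bigl(\tr(X^\#)-\tr(Y^\#)-1\bigr)^2+\tr(X\times Y)^2$ exactly over $X$, using that $\tr(X^\#)$ is Lorentzian and that the relevant level-line coefficient $-6\tr(Y^\#)$ is negative precisely when $Y>0$; I checked the resulting image $\{Q\leq \ell^2/(4\tr(Y^\#))\}$ and your piecewise formula for the minimum, and both are right (with the minor caveat that your stated coefficient $-6\tr(Y^\#)$ carries a positive normalization factor $|Y_0|^{-2}$, which does not affect the sign, and that the degenerate cases $Y\in\R 1_J$ and $J=\R$ need the separate word you give them). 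Your approach is more computational but entirely elementary: it avoids the group-theoretic reduction and yields the sharp infimum over $X$ as an explicit function of $\tr(Y^\#)$, whereas the paper's argument is shorter and leans on the invariance structure already set up in Proposition \ref{prop:Hcal} and Lemma \ref{lem:sympair}. Both proofs terminate in the identical AM--GM step and produce comparable explicit constants.
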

\begin{proof} Observe that
\[\left|\frac{N(Z+i) - N(Z-i)}{N(Y)^{1/2}}\right| \geq \frac{|N(Z+i)|}{N(Y)^{1/2}} - \frac{|N(Z-i)|}{N(Y)^{1/2}}.\]
Furthermore, if $g\in H_J(\R)^{1}$ and $g \cdot (i1_J) = Z$ in $\mathcal{H}_J$, then one has $\langle g r_0(i), r_0(i)\rangle = N(Z-i)/N(Y)^{1/2}$ and $\langle g r_0(i), r_0(-i)\rangle = N(Z+i)/N(Y)^{1/2}$.  Thus $\frac{|N(Z+i)|}{N(Y)^{1/2}}$ and $\frac{|N(Z-i)|}{N(Y)^{1/2}}$ are each bi-$K_H^{1}$-invariant functions.  Hence we may assume $g$ is in the Levi $M_J$, and thus only consider the quantity $N(Y+1)/N(Y)^{1/2} - |N(Y-1)|/N(Y)^{1/2}$.

The quantity 
\[\frac{1}{2}\left(N(Y+1)/N(Y)^{1/2} - |N(Y-1)|/N(Y)^{1/2}\right)\]
is either equal to $\frac{\tr(Y^\#)+1}{N(Y)^{1/2}}$ if $N(Y-1) > 0$ or $\frac{N(Y) +\tr(Y)}{N(Y)^{1/2}}$ if $N(Y-1) < 0$.  By the AM-GM inequality, one has $\tr(Y^\#) \geq 3 N(Y)^{2/3}$ and $\tr(Y) \geq 3 N(Y)^{1/3}$.  Thus
\[\frac{\tr(Y^\#)+1}{N(Y)^{1/2}} \geq 3N(Y)^{1/6} + N(Y)^{-1/2} \geq 4\]
and
\[ \frac{N(Y) +\tr(Y)}{N(Y)^{1/2}} \geq N(Y)^{1/2} + 3N(Y)^{-1/6} \geq 4\]
by applying AM-GM again.  Thus $|\tr(Z^\#)-1|/N(Y)^{1/2}$ is bounded away from $0$, as desired.
\end{proof}

We now prove Proposition \ref{prop:vRnks}.
\begin{proof}[Proof of Proposition \ref{prop:vRnks}] We prove the statements in turn.
\begin{enumerate}
\item If $q(v) > 0$, then there is $g \in H_J(\R)^{1}$ with $g v = -(1,0,0,d)$, with $d \in \R^\times$.  Hence we may assume $v = -(1,0,0,d)$, so that $p_{v}(Z) = N(Z) + d$.  Then $Z = -\zeta d^{1/3} 1_J$ gives a zero of $p_{v}$, where $\zeta$ is a cube root of unity.
\item Suppose now that $q(v) < 0$.  Then, because we are assuming $J$ is simple, there is $g \in H_J(\R)^{1}$ so that $g v$ is equal to $(0,-1,0,1)$, up to scaling.  Thus that $|\langle v, g r_0(i)\rangle |$ is bounded away from $0$ follows from Lemma \ref{lem:away0}.
\item Finally, suppose that $v$ is of rank $1,2,$ or $3$.  Then by Lemma \ref{lem:SiegelFil}, we may assume $v = (a,b,0,0)$.  But then translating $g \in H_J(\R)^{1}$ by $\eta(t)$ for $t \in \R^\times$, one sees that $|\langle v, g r_0(i)\rangle |$ can be made arbitrarily close to $0$.
\end{enumerate}
\end{proof}

\section{The constant term and Eisenstein series}\label{sec:constAndEis} We have now analyzed all the nontrivial Fourier coefficients of a weight $n$ modular form $F$ on $G$.  In particular, we have seen that the space of associated generalized Whittaker functions is at most one-dimensional.  In this section, we analyze the constant term of weight $n$ modular forms.  We will see that these behave differently from non-constant terms.  There is no longer a multiplicity at most one result.  Using our analysis of the constant term, we then discuss Eisenstein series on $G_J$ associated to holomorphic modular cusp forms on $H_J$.

\subsection{The constant term} First, setting $\omega = 0$ in Corollary \ref{cor:SchmidChar}, and keeping the notation just as in this corollary, we obtain that the constant term satisfies the follow differential equations:
\begin{enumerate}
\item If $k > -n$, i.e., if $k \in \{-n+1,-n+2,\ldots,n-1,n\}$, then $(w \partial_w - 2(n+1) + k)\phi_k = 0$ and $(D_{Z^*(E)} - \frac{k}{2} \tr(E)) = 0$ for all $E \in J$.
\item If $k < n$, i.e., if $k \in \{-n,-n+1,\ldots, n-2,n-1\}$, then $(w \partial_w - 2(n+1) - k)\phi_k = 0$ and $(D_{Z(E)} + \frac{k}{2} \tr(E)) = 0$ for all $E \in J$.
\end{enumerate}

Hence, if $-n < k < n$, then subtracting the two $w\partial_w$ equations gives $2k \phi_k = 0$.  Hence if $-n < k < n$ and $k \neq 0$, $\phi_k = 0$.  We are thus left with the following equations:
\begin{enumerate}
\item $(w\partial_w - 2(n+1) + n)\phi_n = 0$, and $(D_{Z^*(E)} - \frac{n}{2} \tr(E)) = 0$ for all $E \in J$.
\item $(w\partial_w - 2(n+1) + n)\phi_{-n} = 0$, and $(D_{Z(E)} + \frac{n}{2} \tr(E)) = 0$ for all $E \in J$.
\item $(w\partial_w - 2(n+1))\phi_0 = 0$, and $D_{Z(E)}\phi_0 = D_{Z^*(E)}\phi_0 = 0$ for all $E \in J$.
\end{enumerate}

Analyzing these equations results in the following proposition.
\begin{proposition}\label{prop:const} The constant term of a modular form of weight $n$ on $G_J$, restricted to $H_J(\R)^{\pm}$, is of the form
\begin{equation}\label{eqn:constTerm} \nu(g)^{n}|\nu(g)|\left(\Phi(g) [x^{2n}] + \beta [x^n][y^n] + \Phi'(g)[y^{2n}]\right)\end{equation}
where $\beta \in \C$ is a constant, $\Phi'(g) = \Phi(g w_0)$, and $\Phi$ has the following properties:  Define $H_{\Phi}(g) = j(g,i)^{n}\Phi(g)$.  Then for $g \in H_J(\R)^{\pm}$, $H_{\Phi}(zgk) = H(g)$, for all $k \in K_H^1$ and $z \in Z_H(\R)$ the real points of the connected center of $H_J$.  Thus $H_{\Phi}$ descends to a function on the disconnected symmetric space $\mathcal{H}_J^{\pm}$.  It is holomorphic on $\mathcal{H}_J^{+}$, and antiholomorphic on $\mathcal{H}_J^{-}$. Conversely, suppose $H: \mathcal{H}_J^{\pm} \rightarrow \C$ is holomorphic on $\mathcal{H}_J^{+}$ and antiholomorphic on $\mathcal{H}_J^{-}$.  Set $\Phi_H(g) = j(g,i)^{-n}H(g)$ and $\Phi'_H(g) = \Phi_H(gw_0)$, and for any constant $\beta \in \C$, define $\mathcal{W}^0: H_J(\R)^{\pm} \rightarrow \Vm_n^\vee$ as
\[\mathcal{W}^0(g)= \nu(g)^{n}|\nu(g)|\left(\Phi_H(g) [x^{2n}] + \beta [x^n][y^n] + \Phi_H'(g)[y^{2n}]\right).\]
Then $\mathcal{W}^0(gk) = k^{-1} \cdot \mathcal{W}^0(g)$ for all $k \in K \cap H_J(\R)^{\pm}$ and $\mathcal{D}_n\mathcal{W}^0 = 0$. \end{proposition}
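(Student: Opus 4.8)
The plan is to solve, in the coordinates of Theorem~\ref{thm:Schmid2}, the three families of equations listed at the start of this section, and then to reinterpret the solutions through the factor of automorphy $j$ and the symmetric space $\mathcal{H}_J^{\pm}$. First I would dispatch the $w\partial_w$ equations. Writing $\epsilon = w\partial_w$, the two relations $(w\partial_w - 2(n+1) \pm k)\phi_k = 0$ are both valid for $-n < k < n$, and subtracting them gives $2k\,\phi_k = 0$; hence $\phi_k = 0$ for $-n < k < n$ with $k \neq 0$. For the three surviving indices the single remaining ODE integrates to $\phi_n = w^{n+2} a$, $\phi_{-n} = w^{n+2} b$, and $\phi_0 = w^{2n+2} c$, where $a, b, c$ are functions on $H_J(\R)^{0,1}$ independent of $w$. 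Since a central $w$ acts on $W_J$ by scaling, $\nu(mw) = w^2$ and $\nu^{n}|\nu| = w^{2n+2}$, so these three coefficients already exhibit the shape of (\ref{eqn:constTerm}); it remains to identify $a, b, c$.

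I would next treat $\phi_0$. Adding and subtracting the operators $D_{Z(E)}$ and $D_{Z^*(E)}$ (recall these are the actions of $\frac{1}{2}M(\Phi_{1,E}) \mp i n_{L}(E)$) converts the equations $D_{Z(E)}\phi_0 = D_{Z^*(E)}\phi_0 = 0$ into $M(\Phi_{1,E})\phi_0 = 0$ and $n_{L}(E)\phi_0 = 0$ for all $E \in J$. By Lemma~\ref{lem:DZ} these say precisely that $c$ is independent of the $X$- and $Y$-coordinates of $Z = X + iY$; combined with right $K_H^1$-invariance and the fact that the elements $n_{G}(-X)M_Y k$ exhaust $H_J(\R)^{0,1}$, this forces $c$ to be a genuine constant, namely the $\beta$ of the proposition.

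For $\phi_n$ the claim is that $(D_{Z^*(E)} - \frac{n}{2}\tr(E))\phi_n = 0$ is exactly the holomorphy of $H_\Phi = j(g,i)^n\Phi(g)$. I would compute $j(n_{G}(-X)M_Y w, i) = w\, N(Y)^{-1/2}$, so that $H_\Phi = N(Y)^{-n/2} a$ is a function of $Z$ alone. Because $n_{L}(E)$ differentiates in the $X$-direction and $\frac{1}{2}M(\Phi_{1,E})$ in the $E_Y$-direction of $Y$, the operator $D_{Z^*(E)}$ is an antiholomorphic ($\bar\partial$-type) derivative; using $\frac{1}{2}M(\Phi_{1,E})N(Y) = N(Y)\tr(E)$ one gets $\frac{1}{2}M(\Phi_{1,E})N(Y)^{-n/2} = -\frac{n}{2}N(Y)^{-n/2}\tr(E)$, whence $D_{Z^*(E)}(N(Y)^{-n/2} a) = N(Y)^{-n/2}(D_{Z^*(E)} - \frac{n}{2}\tr(E))a = 0$. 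Thus $H_\Phi$ is annihilated by every antiholomorphic derivative, i.e.\ holomorphic on $\mathcal{H}_J^{+}$; the descent to $\mathcal{H}_J^{+}$ uses the central character $\Phi(zg) = w^{-n}\Phi(g)$ (which cancels against $j^n$ to give $Z_H$-invariance) together with Lemma~\ref{lem:kwaction} for $K_H^1$-equivariance. The coefficient $\phi_{-n}$ is handled identically with $D_{Z(E)}$, and $w_0 = \eta(-1)$, with $\nu(w_0) = -1$, glues in the component $\mathcal{H}_J^{-}$, where the orientation reversal turns holomorphic into antiholomorphic; setting $\Phi'(g) = \Phi(gw_0)$ records this. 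The converse merely reverses these implications: holomorphy of $H$ is the pair of $D$-equations, the prefactor $\nu^{n}|\nu| = w^{2n+2}$ solves the $w\partial_w$ equations, and $K$-equivariance follows from Lemma~\ref{lem:kwaction}.

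The hard part will be the bookkeeping in the third step: verifying $j(g,i) = w\,N(Y)^{-1/2}$, confirming that $D_{Z^*(E)}$ and $D_{Z(E)}$ are genuinely the $\bar\partial$ and $\partial$ operators on $\mathcal{H}_J$ in the $E_Y$-directions, and matching the $\tr(E)$ term of the Schmid equation against the one produced by differentiating $N(Y)^{-n/2}$ so that holomorphy falls out cleanly. Equally delicate is the treatment of $w_0$ and the disconnected domain $\mathcal{H}_J^{\pm}$, ensuring that ``holomorphic on $\mathcal{H}_J^{+}$ and antiholomorphic on $\mathcal{H}_J^{-}$'' is the consistent condition and that the $K_H^1$- and $Z_H$-equivariance indeed make $H_\Phi$ descend to the symmetric space.
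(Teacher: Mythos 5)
Your proposal is correct and follows essentially the same route as the paper: specialize the character equations of Corollary \ref{cor:SchmidChar} to $\qch=0$, kill the intermediate coefficients by subtracting the two $w\partial_w$ equations, integrate the remaining ODEs to get the powers $w^{n+2}$ and $w^{2n+2}$, and then read the surviving $D_{Z^{(*)}(E)}\mp\tfrac{n}{2}\tr(E)$ equations as (anti)holomorphy of $j(g,i)^{n}\Phi(g)$ via $j(wn_G(-X)M_Y,i)=wN(Y)^{-1/2}$, Lemma \ref{lem:DZ}, and the identity $D_{Z(E)}N(Y)=N(Y)\tr(E)$, with the $w_0$ and $K_H^1$ equivariance imported from section \ref{sec:final}. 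You actually supply slightly more detail than the paper does (the explicit disposal of $\phi_0$ by adding and subtracting $D_{Z(E)}$ and $D_{Z^*(E)}$, and the identification of $D_{Z^*(E)}$ as a $\bar\partial$-type operator), but the argument is the same.
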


In other words, the function $\Phi$ appearing in (\ref{eqn:constTerm}) is an automorphic form on $H_J$ associated to a holomorphic modular form of weight $n$ for $H_J(\R)$.  In fact, one can show that the equations 
\[\left(D_{Z^*(E)} - \frac{n}{2}\tr(E)\right)\Phi(g) = 0\]
for $g \in H_J(\R)^{0}$ and all $E \in J$ are what the Schmid operator gives for holomorphic modular forms of weight $n$ on the group $H_J(\R)$.

\begin{proof}[Proof of Proposition \ref{prop:const}]  First note that, as in section \ref{sec:final}, the action of $w_0 \in K$ immediately implies that $\Phi'(g) = \Phi(gw_0)$, and that $\Phi(gk) = j(k,i)^{-n}\Phi(g)$, for all $k \in K_H^1$.  Now, recall that for $g = w n_G(-X)M_Y$, $j(g,i) = w N(Y)^{-1/2}$.  Thus, the function $H_{\Phi}(g) = j(g,i)^{n}\Phi(g)$ is right $K_H^1$ invariant, and satisfies the equations
\begin{enumerate}
\item $w\partial_w H_{\Phi}(w n_G(-X)M_Y) = 0$ and 
\item $D_{Z^*(E)} H_{\Phi}(w n_G(-X)M_Y) = 0$, for all $E \in J$. \end{enumerate}
It follows that the function $H_{\Phi}(g)$ for $g \in H_{J}(\R)^{0}$ descends to $\mathcal{H}_J$, and is holomorphic there.  Similarly, analyzing the differential equations for $\Phi'$ and $g \in H_J(\R)^{0}$ yields that the function $H'_{\Phi'}(g):= j(g,-i)^{n}\Phi'(g)$ descends to $\mathcal{H}_J$, and is antiholomorphic there.  Because $\Phi(g) = \Phi'(gw_0)$, the fact that $H_{\Phi}(g)$ is antiholomorphic on $\mathcal{H}_J^{-}$ follows.

The converse follows in essentially the same way.\end{proof}

\subsection{Eisenstein series} We now consider ``Klingen-type" Eisenstein series on the group $G_J$.  That is, we consider the Eisenstein series on $G_J$ associated to holomorphic cusp forms on $H_J$.  For simplicity, we restrict to the case that $J = \R$ or $J = H_3(C)$ with $C$ a definite composition algebra, so that $H_J(\R) = H_J(\R)^{\pm}$ and $G_J$ is exceptional, with $G_J(\R)$ connected.

Thus, suppose that $\Phi: H_J(\Q)\backslash H_J(\A) \rightarrow \C$ is a cusp form on $H_J$.  For any $g_f \in H_J(\A_f)$, define the function $H_{\Phi,g_f}: H_J(\R) \rightarrow \C$ as $H_{\Phi,g_f}(g) = j(g,i)^{n}\Phi(g_f g)$.  We say that $\Phi$ is associated to a holomorphic modular form of weight $n$ if the following condition is satisfied: For all $g_f \in H_J(\A_f)$, the function $H_{\Phi,g_f}$ descends to $\mathcal{H}_J^{\pm}$, is holomorphic on $\mathcal{H}_J^{+}$, and antiholomorphic on $\mathcal{H}_J^{-}$.  That $H_{\Phi,g_f}$ descends to $\mathcal{H}_J^{\pm}$ means that $H_{\Phi,g_f}(zgk) = H_{\Phi,g_f}(g)$ for all $z \in Z_H(\R)$ and $k \in K_H^{1}$.  In particular, $\Phi(gk) = j(k,i)^{-n}\Phi(g)$ for all $k \in K_H^1$ and $\Phi(wg) = w^{-n}\Phi(g) = |\nu(w)|^{-n/2}\Phi(g)$ for all $w \in Z_H(\R)^{0}$.

Associated to $\Phi$, define $f_{\Phi}^0: H_J(\A) \rightarrow \Vm_n^\vee$ as
\[f_{\Phi}^0(g) = \nu(g)^{n}|\nu(g)|\left(\Phi(g)[x^{2n}] + \Phi(gw_0)[y^{2n}]\right).\]
Then $f_{\Phi}^0(wg) = w^{n+2}f_{\Phi}^0(g) = |\nu(w)|^{(n+2)/2}f_{\Phi}^0(g)$ for all $w \in Z_H(\R)^{0}$.  Also, $f_{\Phi}^0(gk) = k^{-1} f_{\Phi}^0(g)$ for all $k \in K \cap H_J(\R)$.

Now, suppose $f_{\Phi}: G_J(\A) \rightarrow \Vm_n^\vee$ extends $f_{\Phi}^0$, in the sense that
\begin{enumerate}
\item $f_{\Phi}(gk) = k^{-1} \cdot f_{\Phi}(g)$ for all $g \in G_J(\A)$ and $k\in K \subseteq G_J(\R)$;
\item $f_{\Phi}(nm) = f_{\Phi}^{0}(m)$ for all $n \in N_J(\A)$ and $m \in H_J(\A)$. \end{enumerate}

Then, we may defined the Eisenstein series 
\begin{equation}\label{eqn:EisSum} E(g,f_{\Phi}) = \sum_{\gamma \in P(\Q)\backslash G_J(\Q)}{f_{\Phi}(\gamma g)}.\end{equation}
We have the following proposition regarding this Eisenstein series.
\begin{proposition}\label{prop:Eis} Suppose $\Phi$ is associated to a modular form of weight $n$ on $H_J$, and suppose that $n > \dim W_J$.  Then the sum (\ref{eqn:EisSum}) converges absolutely, and for any $g_f \in G_J(\A_f)$, the function $G_J(\R) \rightarrow \Vm_n^\vee$ defined by $g \mapsto E(g_fg,f_{\Phi})$ is in the kernel of $\mathcal{D}_n$.\end{proposition}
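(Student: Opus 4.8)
The plan is to establish the two assertions — absolute convergence of (\ref{eqn:EisSum}) and annihilation by $\mathcal{D}_n$ — separately, treating convergence first and then reducing the differential statement to the converse half of Proposition \ref{prop:const}.

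For convergence, the plan is to dominate the vector-valued section $f_{\Phi}$ by a scalar standard section and invoke Godement's criterion. Using a $K$-invariant norm $\|\cdot\|$ on $\Vm_n^\vee$, the function $g \mapsto \|f_{\Phi}(g)\|$ is left $N(\A)$-invariant and right $K$-invariant, hence determined by its restriction to the Levi $H_J$. There, since $\Phi$ is a cusp form with central character $\Phi(zg) = |\nu(z)|^{-n/2}\Phi(g)$, the function $|\nu(m)|^{n/2}|\Phi(m)|$ is $Z_H(\R)^{0}$-invariant and bounded on $Z_H(\R)^{0} H_J(\Q)\backslash H_J(\A)$ (finite volume, and a cusp form is rapidly decreasing), so $\|f_{\Phi}(m)\| \le C|\nu(m)|^{n/2+1}$. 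By the global Iwasawa decomposition this yields $\|f_{\Phi}(g)\| \le C\,\varphi_s(g)$ with $\varphi_s$ the standard $|\nu|^{s}$-section and $s = n/2+1$. I would then compute the modulus character of the Heisenberg parabolic from the $5$-grading: on $\mathrm{Lie}(N) = W_J \oplus \R e_0$ the Levi acts with $|\det| = |\nu|^{\dim W_J/2}$ on the degree-one piece $W_J$ and by $|\nu|$ on the degree-two line $\R e_0$, so $\delta_P = |\nu|^{\dim W_J/2 + 1}$. Godement's criterion — in the form used by Wallach \cite{wallach} for the $\Phi=0$ case — gives absolute convergence of $\sum_{\gamma}\varphi_s(\gamma g)$ once $s$ exceeds the $\delta_P$-exponent, i.e. $n/2+1 > \dim W_J/2 + 1$, which is exactly $n > \dim W_J$. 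The extra factor $|\nu|^{-n/2}$ supplied by cuspidality is precisely what sharpens Wallach's threshold $n > \tfrac12\dim W_J$ for the trivial case to the stated $n > \dim W_J$.

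For the vanishing $\mathcal{D}_n E(g,f_{\Phi}) = 0$, the plan is to differentiate the series term by term and reduce to $\mathcal{D}_n f_{\Phi} = 0$. Since $\mathcal{D}_n = pr_{-}\circ \widetilde{D}$ is built from the right-regular action of $\p \subseteq \g$, it is invariant under left translations, so for each $\gamma$ one has $\mathcal{D}_n\bigl(h \mapsto f_{\Phi}(\gamma h)\bigr) = (\mathcal{D}_n f_{\Phi})(\gamma\,\cdot)$; absolute and locally uniform convergence throughout an open range of exponents (from the previous paragraph) legitimizes passing $\mathcal{D}_n$ inside the sum, giving $\mathcal{D}_n E = \sum_{\gamma}(\mathcal{D}_n f_{\Phi})(\gamma g)$. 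It therefore suffices to prove $\mathcal{D}_n f_{\Phi} \equiv 0$. Because $\mathcal{D}_n$ differentiates only in the archimedean directions, I fix the finite component and examine $f_{\Phi}$ on each archimedean fiber; by the defining property that $\Phi$ is \emph{associated to a holomorphic modular form of weight $n$} — namely that $H_{\Phi,g_f}(g) = j(g,i)^{n}\Phi(g_f g)$ is holomorphic on $\mathcal{H}_J^{+}$ and antiholomorphic on $\mathcal{H}_J^{-}$ for \emph{every} finite-adelic point $g_f$ — each such fiber is exactly a function of the constant-term shape $\mathcal{W}^0$ of Proposition \ref{prop:const}, with middle coefficient $\beta = 0$. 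The converse half of that proposition asserts precisely $\mathcal{D}_n\mathcal{W}^0 = 0$, so $\mathcal{D}_n f_{\Phi}$ vanishes fiberwise, hence identically, and the term-by-term sum is zero.

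I expect the main obstacle to be making the convergence estimate sharp — converting the abstract Godement criterion into the explicit inequality $n > \dim W_J$ — and, in tandem, justifying the term-by-term application of $\mathcal{D}_n$. Both are dispatched by the same device: establishing absolute and locally uniform convergence throughout an \emph{open} cone of exponents, not merely at the single value $s = n/2+1$, which both places us strictly inside the convergence region and permits differentiation under the summation sign. The remaining inputs — the explicit modulus character of the Heisenberg parabolic, the central-character bound from cuspidality of $\Phi$, and the identification of each archimedean fiber of $f_{\Phi}$ with a $\mathcal{W}^0$ — are then routine bookkeeping, with the genuine content concentrated in the two cited facts: Wallach's convergence for the degenerate section and the converse direction of Proposition \ref{prop:const}.
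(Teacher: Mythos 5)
Your proposal is correct and follows essentially the same route as the paper: compare the central-character exponent $|\nu|^{(n+2)/2}$ of $f_{\Phi}$ against the modulus character $\delta_P = |\nu|^{(\dim W_J+2)/2}$ to get absolute (and locally uniform, including for derivatives) convergence when $n > \dim W_J$, then kill the $\gamma = 1$ term by the converse half of Proposition \ref{prop:const} and the remaining terms by left-invariance of $\mathcal{D}_n$. The only quibble is verbal: the cuspidal central character \emph{raises} (worsens) the convergence threshold from $n > \tfrac12\dim W_J$ to $n > \dim W_J$ rather than ``sharpens'' it, but your computation of the exponents is right.
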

\begin{proof} One has $\delta_{P}(g) = |\nu(g)|^{(\dim W_J+2)/2}$.  Thus when $n > W_J$, the sum (\ref{eqn:EisSum}), and any $\mathcal{U}(\g(J))$-derivatives of it, converges absolutely, and uniformly on compact subsets of $G_J(\R)$.  By Proposition \ref{prop:const}, the term with $\gamma = 1$ of (\ref{eqn:EisSum}) is annihilated by $\mathcal{D}_n$.  Because the differential operator commutes with left-translation, so too do the terms with $\gamma \neq 1$. That $E(g,f_{\Phi})$ is annihilated by $\mathcal{D}_n$ now follows from uniform convergence.\end{proof}

\appendix

\section{Lie algebras of orthogonal groups}\label{sec:orthog} In this section we explain how orthogonal groups $\SO(N,4)$ fit into the framework of cubic Jordan algebras and spaces $W_J$.  This is of course well-known.  The point of this section is to record all the precise normalizations so that Theorem \ref{intro:mainThm} applies to the quaternionic adjoint groups of type $B_{\ell}$ with $\ell \geq 3$ and $D_{\ell}$ with $\ell \geq 4$.  More precisely, associated to a quadratic space $S$ of signature $(1,r)$ with $r\geq 0$, we will define a cubic norm structure $J$ and a quadratic space $\mathbb{V}$ of signature $(r+3,4)$.  Then, in Proposition \ref{prop:VVgJ}, we specify a Lie algebra isomorphism $\so(\mathbb{V}) \simeq \g(J)$ between $\so(\mathbb{V}) \simeq \wedge^2(\mathbb{V})$ and $\g(J)$.

\subsection{Setup} We now give the setup and assumptions on our quadratic space.
\subsubsection{The quadratic space $S$}
Suppose $S$ is a quadratic space, which has signature $(1,*)$, and $q_{S}$ is the quadratic form on $S$.  We fix $1_S\in S$, with $q_{S}(1_S) = 1$.  Furthermore, fix an involution $\iota_{S}: S \rightarrow S$ with $\iota_S(1_S) = 1_{S}$ and $\iota_S$ equal to $-Id$ on $(1_S)^{\perp}$.  We write $(\,,\,)_{S}$ for the bilinear form on $S$ associated to $q_{S}$, so that $(x,y)_{S} = q_{S}(x+y) - q_{S}(x) - q_{S}(y)$.  The involution $\iota_{S}$ is such that $(x,\iota_{S}(y))_{S}$ is a symmetric positive definite quadratic form.

\subsubsection{The cubic norm structure $J$} Set $J = F\times S$.  Define $n: J \rightarrow F$ via $n((\beta,t)) = \beta q_{S}(t)$.  Define $\#: J \rightarrow J$ via $(\beta,t)^\# = (q_{S}(t),\beta \iota_{S}(t))$.  Define $1_{J} \in J$ as $1_{J} = (1,1_{S})$.  Define a pairing on $J$ via $((\beta,t),(\beta',t'))_{J} = \beta \beta' + (t,\iota_{S}(t'))_{S}$.  Theorem \ref{thm:quadJ} below states that with these definitions, $J$ is a cubic norm structure. Denote by $W_J$ Freudenthal's construction (see subsection \ref{subsec:FWJ}) associated to this cubic norm structure.

\subsubsection{The quadratic space $V$} Define $V = F \oplus S \oplus F$ with quadratic form $q_{V}((\alpha,s,\beta))= \alpha \beta - q_{S}(s)$.  Then $V$ has signature $(*+1,2)$. Define the bilinear form on $V$ via $(x,y)_{V} = q_{V}(x+y) - q_{V}(x) - q_{V}(y)$.  

Suppose $V_2 = Fe \oplus Ff$.  Define a map $V_2 \otimes V \rightarrow W_J$ via
\begin{equation}\label{eqn:V2VW} e \otimes (a,s,\beta) + f \otimes (\gamma,t,\delta) \mapsto (\alpha,(\gamma,s),(\beta,\iota_{S}(t)),\delta).\end{equation}

On $V_2 \otimes V$ one has the symplectic form given by $\langle v\otimes x, v'\otimes x'\rangle = \langle v,v'\rangle (x,x')_{V}$, where here $v,v' \in V_2$ and $x,x' \in V$.  Suppose $x, y \in V$ and $e,f$ our fixed symplectic basis of $V_2$.  One has the quartic form given by 
\[Q(e \otimes x + f \otimes y) = (x,y)_{V}^2 - 4 q_{V}(x)q_{V}(y) = (x,y)_{V}^2 - (x,x)_{V}(y,y)_{V} = -\det\left(\begin{array}{cc} (x,x)_{V} & (x,y)_{V} \\ (x,y)_{V} & (y,y)_{V} \end{array}\right).\]

With the above signatures, $\SO(V)$ has a Hermitian symmetric space.  The symmetric space associated to $\SO(V)$ is the set of negative definite two-planes in $V$.  Equivalently, this symmetric space can be defined as the set of isotropic lines $\ell$ in $V \otimes \C$ with $(\ell, \overline{\ell})_{V} < 0$.  If $U$ is a negative definite two-plane with orthogonal basis $e_1, e_2$, with $(e_1,e_1) = (e_2,e_2) < 0$, then the isotropic line associated to $U$ is $\C(e_1 + ie_2)$.  

With the way we have defined $q_V$, note that $(1,z,q_S(z)) \in V\otimes \C$ is isotropic, where $z \in S\otimes \C$.  One computes
\[((1,z,q_S(z)),(1,z,q_{S}(z))^*)_V = q_{S}(z) + q_{S}(z)^* - (z,z^*)_S = q_S(z-z^*) = - q_{S}\left(\frac{z-z^*}{i}\right).\]
Thus if $z = x + iy$, with $x,y \in S$, the condition for $(1,z,q_S(z))$ to define a point in the Hermitian symmetric space is $q_{S}(y) > 0$.

\subsection{Freudenthals space} With the above notations, one has the following fact.
\begin{theorem}\label{thm:quadJ} With definitions as above, $J$ is a cubic norm structure.  Furthermore, the symplectic and quartic form on $V_2 \otimes V$ are Freudenthal's symplectic and quartic form on $W_J$.\end{theorem}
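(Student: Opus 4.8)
The plan is to prove the two assertions of Theorem \ref{thm:quadJ} separately: first, that the quadruple $(N, \#, 1_J, (\;,\;)_J)$ makes $J = F \times S$ a cubic norm structure; second, that the linear isomorphism (\ref{eqn:V2VW}) carries the symplectic and quartic forms on $V_2 \otimes V$ to Freudenthal's forms on $W_J$. Both are direct computations, and essentially all of the geometric input from $S$ is packaged in three elementary properties of the involution $\iota_S$: it is an isometry of $(\;,\;)_S$, it squares to the identity, and—since it acts as $+1$ on $F\,1_S$ and $-1$ on $(1_S)^\perp$ with $q_S(1_S) = 1$—it satisfies $\iota_S(s) = (1_S, s)_S\,1_S - s$ for every $s \in S$. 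I would record these three facts at the outset, since they are what makes every subsequent identity collapse.

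For the first assertion I would check the four defining axioms (1)--(4) in turn. Polarizing $\#$ gives the cross product $x \times y = \bigl((s,s')_S,\ \beta\,\iota_S(s') + \beta'\,\iota_S(s)\bigr)$ for $x = (\beta, s)$ and $y = (\beta', s')$, and axiom (1) then follows after substituting the identity $\iota_S(s) = (1_S, s)_S\,1_S - s$. Axiom (2), $(x^\#)^\# = N(x)x$, reduces at once to $q_S(\beta\,\iota_S(s)) = \beta^2 q_S(s)$ and $\iota_S^2 = \mathrm{id}$. Axiom (4) is obtained by expanding $N(x+y) = (\beta+\beta')q_S(s+s')$ and matching it termwise with $N(x) + (x^\#, y)_J + (x, y^\#)_J + N(y)$, where one uses that $\iota_S$ preserves $(\;,\;)_S$ to simplify the $J$-pairings; here (2) and (4) are the cleanest of the four. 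Axiom (3), which ties the trace pairing to the trilinear polarization of $N$, is then checked by the same kind of bookkeeping after writing $s = c\,1_S + s^\perp$.

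For the second assertion I would fix the coordinates supplied by (\ref{eqn:V2VW}): a vector $e \otimes x + f \otimes y$ with $x = (\alpha, s, \beta)$ and $y = (\gamma, t, \delta)$ maps to $(a,b,c,d) = \bigl(\alpha,\ (\gamma, s),\ (\beta, \iota_S(t)),\ \delta\bigr)$ in $W_J = F \oplus J \oplus J^\vee \oplus F$. The symplectic identity is a short bilinear computation: one expands Freudenthal's $\langle (a,b,c,d),(a',b',c',d')\rangle = ad' - (b,c') + (c,b') - da'$, uses $\iota_S^2 = \mathrm{id}$ and the invariance of $(\;,\;)_S$ to reduce the $J$-pairings, and sees that the result equals $\langle v, v'\rangle (x, x')_V$. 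The quartic form is the real work. I would substitute the coordinates above into $q((a,b,c,d)) = (ad - (b,c))^2 + 4aN(c) + 4dN(b) - 4(b^\#, c^\#)$, computing the pieces $N(c) = \beta q_S(t)$, $N(b) = \gamma q_S(s)$, $(b,c)_J = \gamma\beta + (s,t)_S$, and $(b^\#, c^\#)_J = q_S(s)q_S(t) + \gamma\beta\,(s,t)_S$, and then expand and collect.

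The single point demanding care, and the main obstacle, is a sign: one finds $ad - (b,c)_J = \alpha\delta - \gamma\beta - (s,t)_S$, which carries the \emph{opposite} sign on $\gamma\beta$ from $(x,y)_V = \alpha\delta + \gamma\beta - (s,t)_S$. The discrepancy in $(ad-(b,c))^2$ versus $(x,y)_V^2$ is precisely cancelled by the cross term in $-4(b^\#, c^\#)$ together with the $4aN(c) + 4dN(b)$ contributions, after which both $q((a,b,c,d))$ and $(x,y)_V^2 - 4\,q_V(x)\,q_V(y)$ reduce to the same degree-four polynomial in $\alpha,\beta,\gamma,\delta$ and $q_S(s), q_S(t), (s,t)_S$. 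I expect no conceptual difficulty anywhere; the only thing to organize carefully is this quartic expansion so that the sign mismatch is transparently reconciled, and everything else in the proof is linear or quadratic.
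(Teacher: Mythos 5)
The paper states Theorem \ref{thm:quadJ} without proof (it is offered as a well-known fact), so there is no argument of the author's to compare against; your direct verification is the natural, and essentially the only, approach, and the computations you outline check out. In particular: the polarization $x\times y = \bigl((s,s')_S,\ \beta\,\iota_S(s')+\beta'\,\iota_S(s)\bigr)$ is right; the identity $\iota_S(s) = (1_S,s)_S\,1_S - s$ does drive axiom (1); axioms (2) and (4) reduce exactly as you say; the symplectic forms match; and your sign bookkeeping for the quartic is correct --- writing $u = \alpha\delta - (s,t)_S$ and $w=\gamma\beta$, one has $(u+w)^2 - (u-w)^2 = 4\alpha\beta\gamma\delta - 4\gamma\beta(s,t)_S$, which is precisely the difference between the extra term $-4\alpha\beta\gamma\delta$ produced by $-4q_V(x)q_V(y)$ and the extra term $-4\gamma\beta(s,t)_S$ produced by $-4(b^\#,c^\#)$ (the $4aN(c)+4dN(b)$ contributions match $4\alpha\beta q_S(t)+4\gamma\delta q_S(s)$ on the nose and are not actually part of the cancellation, a minor imprecision in your narrative that does not affect the outcome). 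Two small remarks. First, axiom (3) as literally printed in section \ref{sec:CNS} has a normalization problem (already for $J=F$, with $(x,x,x)=6N(x)$ one gets $(1,1,x)(1,1,y)-(1,x,y)=30xy$ against $(x,y)=3xy$); with the standard normalization $(x,y)=\mathrm{tr}(x)\mathrm{tr}(y)-(1_J,x,y)$, where $(x,y,z)=(x\times y,z)$, the identity is a formal consequence of axiom (1) via $1_J\times x = \mathrm{tr}(x)1_J - x$, so no separate bookkeeping with $s = c\,1_S + s^\perp$ is actually needed there. Second, your evaluation of $N(c)$, $(b,c)$ and $(b^\#,c^\#)$ tacitly uses the identification $J^\vee\simeq J$ by the trace pairing; this is consistent with how the third coordinate of (\ref{eqn:V2VW}) is written, but it is worth stating explicitly.
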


On $W_J$, one has the involution $J_W$, given by $J_{W}(a,b,c,d) = (d,-c,b,-a)$.  One can write this in terms of involutions on $V_2$ and $V$.  Namely, put on $V$ the involution $\iota_{V}$ given by $\iota_{V}(\alpha,s,\beta) = (\beta,-\iota_{S}(s),\alpha)$.  Then $\iota_{V}$ gives rise to a Cartan involution because $(v,\iota_{V}(v'))$ is symmetric and positive definite on $V$.  The involution $\iota_{V}$ is the indentity on a positive definite space and minus the identity on a negative definite one.  Identifying $W_J$ with $V_2 \otimes V$ via the map (\ref{eqn:V2VW}), one finds that the involution $J_W$ is $J_2 \boxtimes \iota_{V}$.

\subsection{The total space} Denote by $M_2$ the $2\times 2$ matrices, and define $\mathbb{V} = M_2 \oplus V = V_2 \otimes V_2 \oplus V$, with quadratic form $q_{\mathbb{V}}(m,v) = \det(m) + q_{V}(v)$.  The bilinear form is
\[((m,v),(m',v'))_{\mathbb{V}} = \tr(\widetilde{m} m') + (v,v')_{V} = \tr(J_2 (\,^tm) J_2^{-1} m') + (v,v')_{V}.\]
Here $\widetilde{m} = J_2 \,^{t}m J_2^{-1}$, so that $m \widetilde{m} = \det(m)$. If $m = \mm{a}{b}{c}{d}$, then in $V_2 \otimes V_2$, $m$ corresponds to $e \otimes \left(\begin{array}{c} a \\ c \end{array}\right) + f \otimes \left(\begin{array}{c} b \\ d \end{array}\right)$.  The bilinear form on $\mathbb{V}$, in the $V_2 \otimes V_2$ notation is
\[((w_1\otimes w_2,v),(w_1'\otimes w_2',v'))_{\mathbb{V}} = \langle w_1,w_1' \rangle \langle w_2, w_2'\rangle + (v,v')_{V}.\]

Suppose $\iota_{V}$ is an involution on $V$ giving rise to a Cartan involution, i.e., so that $(v,\iota_{V}(v'))$ is positive definite and symmetric.  One defines on $\mathbb{V}$ the involution $\iota_{\mathbb{V}}$ given by $\iota_{\mathbb{V}}(m,v) = (J_2 m J_2^{-1}, \iota_{V}(v))$.  If $m$ corresponds to $w_1 \otimes w_2$, then $J_2 m J_2^{-1}$ corresponds to $J_2w_1 \otimes J_2w_2$.

The quadratic space $M_2$ has $\so(M_2) = \wedge^2(M_2) \simeq \mathfrak{sl}_2 \oplus \mathfrak{sl}_2$.  The following lemma makes precise this isomorphism.
\begin{lemma}\label{lem:SO4} Define a map $\wedge^2(V_2^{(1)} \otimes V_2^{(2)}) \rightarrow Sym^2(V_2^{(1)}) \oplus Sym^2(V_2^{(2)})$ via
\begin{equation}\label{eqn:SO4} (v_1 \otimes v_2) \wedge (v_1' \otimes v_2') \mapsto -\frac{1}{2}\left( \langle v_2,v_2' \rangle v_1 \cdot v_1' + \langle v_1, v_1' \rangle v_2 \cdot v_2'\right).\end{equation}
Then under the identification $Sym^2(V_2) \simeq \sp(V_2) =\mathfrak{sl}_2$ on the right-hand side, and $\wedge^2(V) \simeq \mathfrak{so}(V)$ on the left-hand side, both given in section \ref{sec:LieI}, the map (\ref{eqn:SO4}) is a Lie algebra isomorphism.\end{lemma}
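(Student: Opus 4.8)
The plan is to realize $\Psi$ (the map \eqref{eqn:SO4}) as the inverse of the evident Lie algebra homomorphism given by letting $\sp(V_2^{(i)})$ act on the two tensor factors of $W := V_2^{(1)}\otimes V_2^{(2)}$. Write $\langle\,,\,\rangle_i$ for the symplectic form on $V_2^{(i)}$, so that on pure tensors the bilinear form on $W$ is $(w_1\otimes w_2,\, w_1'\otimes w_2') = \langle w_1,w_1'\rangle_1\langle w_2,w_2'\rangle_2$. First I would check that $\Psi$ is well defined: the right-hand side of \eqref{eqn:SO4} is separately linear in each of $v_1,v_2,v_1',v_2'$ and is unchanged under $v_1\mapsto \lambda v_1$, $v_2\mapsto \lambda^{-1}v_2$, so it descends to a bilinear map $W\times W\to Sym^2 V_2^{(1)}\oplus Sym^2 V_2^{(2)}$; since swapping $(v_1,v_2)\leftrightarrow(v_1',v_2')$ negates the expression (the symplectic forms are alternating while the products $v_1\cdot v_1'$ are symmetric), it factors through $\wedge^2 W$. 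As the formula is built only from the symplectic forms and the symmetric products, $\Psi$ is manifestly equivariant for $G := \SL(V_2^{(1)})\times \SL(V_2^{(2)})$ acting on $\wedge^2 W$ through $W$ and on $Sym^2 V_2^{(i)}$ in the standard way.

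Next I would introduce the map in the opposite direction. For $\phi\in Sym^2 V_2^{(1)} = \sp(V_2^{(1)})$ and $\psi\in Sym^2 V_2^{(2)} = \sp(V_2^{(2)})$, set
\[\rho(\phi,\psi) = \phi\otimes 1 + 1\otimes\psi \in \mathrm{End}(W).\]
Because $\phi$ and $\psi$ are skew for $\langle\,,\,\rangle_1$ and $\langle\,,\,\rangle_2$, the operators $\phi\otimes 1$ and $1\otimes\psi$ are skew for $(\,,\,)$, so $\rho$ lands in $\so(W)\simeq\wedge^2 W$. Moreover $[\phi\otimes 1,\phi'\otimes 1] = [\phi,\phi']\otimes 1$, the analogous identity holds on the second factor, and $[\phi\otimes 1,\,1\otimes\psi]=0$; hence $\rho$ is a Lie algebra homomorphism (this encodes the splitting of $\so_4$ into two commuting $\sl_2$ ideals). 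It therefore suffices to prove $\Psi\circ\rho = \mathrm{id}$: this forces $\rho$ to be injective, hence bijective by the dimension count $\dim\wedge^2 W = 6 = \dim\bigl(\sp(V_2^{(1)})\oplus\sp(V_2^{(2)})\bigr)$, and then $\Psi = \rho^{-1}$ inherits from $\rho$ the property of being a Lie algebra isomorphism.

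Finally I would verify $\Psi\circ\rho = \mathrm{id}$. Since $\Psi\circ\rho$ is a $G$-equivariant endomorphism of $Sym^2 V_2^{(1)}\oplus Sym^2 V_2^{(2)}$ and the two summands are non-isomorphic irreducible $G$-representations, Schur's lemma shows it acts by a scalar $\lambda_i$ on each summand, so it is enough to evaluate it on one vector in each, say $e_1\cdot e_1$ and $e_2\cdot e_2$ for symplectic bases $e_i,f_i$. Here the translation between the $\mathrm{End}(W)$ and $\wedge^2 W$ descriptions of $\so(W)$, via $w\wedge x\colon v\mapsto (x,v)w-(w,v)x$ from \eqref{bracket}, yields $\rho(e_1\cdot e_1,0) = -2\,(e_1\otimes e_2)\wedge(e_1\otimes f_2)$, and then \eqref{eqn:SO4} returns $\Psi\bigl(\rho(e_1\cdot e_1,0)\bigr) = -2\cdot\bigl(-\tfrac12 e_1\cdot e_1\bigr) = e_1\cdot e_1$, so $\lambda_1 = 1$; the symmetric computation gives $\lambda_2 = 1$. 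The only real subtlety I expect is precisely this last piece of bookkeeping: one must match the normalizing constant $-\tfrac12$ in \eqref{eqn:SO4} against the factor produced by rewriting the tensor-action operators $\phi\otimes 1$, $1\otimes\psi$ as honest elements of $\wedge^2 W$, and this is where all the care is needed; once the scalars come out to $1$ the proof closes. Alternatively one could bypass $\rho$ and verify $\Psi([a,b]) = [\Psi a,\Psi b]$ directly on decomposable $a,b$ using \eqref{bracket} together with the $\sp$-bracket of section~\ref{sec:LieI}, but that computation is considerably longer, whereas the $\rho$-route isolates the single nontrivial normalization.
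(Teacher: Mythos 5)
Your proof is correct, and it rests on the same underlying verification as the paper's: that a decomposable element of $\wedge^2(V_2^{(1)}\otimes V_2^{(2)})$ and its image under \eqref{eqn:SO4} act identically on $W=V_2^{(1)}\otimes V_2^{(2)}$, with the three-term identity $\langle x,y\rangle z+\langle y,z\rangle x+\langle z,x\rangle y=0$ doing the essential work (it is exactly what gives $\rho(e_1\cdot e_1,0)=-2\,(e_1\otimes e_2)\wedge(e_1\otimes f_2)$, and your normalization check comes out right). The paper simply checks the forward map on an arbitrary $u_1\otimes u_2$ directly, whereas you package the same computation through the inverse homomorphism $\rho$ and a Schur's-lemma reduction to one vector per summand; both are fine.
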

\begin{proof} Apply both sides to $u_1 \otimes u_2$, for $u_1 \in V_{2}^{(1)}$ and $u_2 \in V_{2}^{(2)}$.  To check that the two sides agree, use the identity $\langle x,y \rangle z + \langle y,z\rangle x + \langle z,x \rangle y = 0$ for any $x,y,z \in V_2$.\end{proof}

\subsection{The Lie algebra of the total space} To continue understanding the Lie algebra $\wedge^2(\mathbb{V})$ of the total space $\mathbb{V}$, we must compute the commutator on $M_2 \otimes V$.  Suppose $m = v_1 \otimes v_2, m'=v_1'\otimes v_2' \in M_2$ and $v,v' \in V$.  Then applying Lemma \ref{lem:SO4} one obtains
\begin{align*} [m\otimes v,m'\otimes v'] &= -(v,v') [m,m'] - (m,m') v\wedge v' \\ &= \frac{1}{2}(v,v')\langle v_2,v_2'\rangle v_1 v_1' + \frac{1}{2}(v,v')\langle v_1,v_1'\rangle v_2 v_2' -\langle v_1,v_1'\rangle \langle v_2,v_2'\rangle v \wedge v'.\end{align*}

To relate $\wedge^2(\mathbb{V})$ to $\g(J)$, we use the following proposition.
\begin{proposition}\label{prop:SOalpha} One has
\begin{equation}\label{eqn:SOalpha}\frac{1}{2}(v,v') v_2 v_2' - \langle v_2,v_2' \rangle v \wedge v'= \frac{1}{2}\Phi_{v_2\otimes v, v_2' \otimes v'}.\end{equation}
\end{proposition}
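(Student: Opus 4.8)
The plan is to verify \eqref{eqn:SOalpha} by checking that the two sides act identically on $W_J \simeq V_2 \otimes V$. Both sides lie in $\h(J)^0$: the left-hand side manifestly lies in the subalgebra $\sp(V_2)\oplus\so(V) \subseteq \sp(W_J)$ (here $V_2$ is the factor $V_2^{(2)}$ appearing in $W_J = V_2\otimes V$), acting on the first tensor slot through $v_2\cdot v_2' \in \mathrm{Sym}^2(V_2) = \sp(V_2)$ and on the second through $v\wedge v' \in \wedge^2 V = \so(V)$, while the right-hand side $\tfrac12\Phi_{v_2\otimes v,\,v_2'\otimes v'}$ lies in $\h(J)^0$ by the proposition that $\Phi_{w,w'}$ preserves the symplectic and quartic forms. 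Since both expressions are well-defined functions of the decomposable tensors $w = v_2\otimes v$, $w' = v_2'\otimes v'$ (they are invariant under the rescalings $v_2\otimes v = (\lambda v_2)\otimes(\lambda^{-1}v)$) and are linear in $v_2$ and in $v_2'$ separately, I would first reduce to the finitely many cases $v_2, v_2' \in \{e,f\}$, the fixed symplectic basis of $V_2$, keeping $v,v'$ general.

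For the left-hand side I would apply the explicit formulas of Section \ref{sec:LieI}, namely $(v_2\cdot v_2')(p) = \langle v_2', p\rangle v_2 + \langle v_2, p\rangle v_2'$ and $(v\wedge v')(u) = (v',u)_V\, v - (v,u)_V\, v'$, to a general $u = e\otimes a + f\otimes b \in W_J$; this produces an explicit element of $W_J$ written through the pairings $\langle\,,\rangle$ on $V_2$ and $(\,,)_V$ on $V$. For the right-hand side I would expand $\tfrac12\Phi_{w,w'}(u) = 3t(w,w',u) + \tfrac12\langle w', u\rangle w + \tfrac12\langle w, u\rangle w'$; the two ``simple'' terms are immediate from the tensor form of the symplectic pairing $\langle v_2\otimes v,\,v_2'\otimes v'\rangle = \langle v_2, v_2'\rangle (v,v')_V$, so the entire comparison reduces to computing the trilinear term $3t(w,w',u)$.

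The computation of $t$ is the crux, and the main obstacle. Using the quartic form $Q(e\otimes x + f\otimes y) = (x,y)_V^2 - (x,x)_V(y,y)_V$ together with the normalization $(w,w,w,w) = 2Q(w)$, I would polarize to obtain the symmetric four-linear form on $V_2\otimes V$ explicitly in terms of $\langle\,,\rangle$ and $(\,,)_V$: concretely, $(w_1,w_2,w_3,w_4)$ is $\tfrac{1}{12}$ times the coefficient of $t_1t_2t_3t_4$ in $Q(\sum_i t_i w_i)$, which unwinds into a sum of products of the two bilinear pairings. I would then recover $t(w,w',u)$ from the defining relation $\langle p, t(w,w',u)\rangle = (p,w,w',u)$ by inverting the nondegenerate symplectic pairing on $W_J$. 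Evaluating this in the basis cases for $v_2,v_2'$ and matching term by term against the left-hand expression gives \eqref{eqn:SOalpha}.

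The case reduction keeps the bookkeeping manageable: when $v_2 = v_2'$ the $\so(V)$-contribution drops out and only $\tfrac12(v,v')_V\, v_2^2$ survives, while when $\langle v_2, v_2'\rangle \neq 0$ the $\sp(V_2)$-contribution is the relevant one on the paired slot. As a consistency check on the normalizations, once \eqref{eqn:SOalpha} is in hand, substituting it into the commutator formula for $[m\otimes v, m'\otimes v']$ computed just above makes the bracket on $M_2\otimes V$ agree with the $\g(J)$-bracket of $V_2\otimes W_J$ for $\alpha = \tfrac12$, which is exactly the identification required by the ambient isomorphism $\so(\mathbb{V}) \simeq \g(J)$.
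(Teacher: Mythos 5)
Your proposal is correct and follows essentially the same route as the paper: both sides are compared as endomorphisms of $W_J$ by evaluating on decomposable vectors and pairing against a fourth decomposable vector, which reduces the identity to an explicit formula for the symmetric four-linear form obtained by polarizing the quartic form $Q(e\otimes x + f\otimes y)$ --- exactly the content of the paper's Lemma \ref{lem:6qpure}. The only cosmetic difference is that you reduce to the basis cases $v_2,v_2'\in\{e,f\}$ up front, whereas the paper performs the analogous basis reduction inside the proof of that lemma.
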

The proposition will follow easily from the following lemma.
\begin{lemma}\label{lem:6qpure} Suppose $w_1, w_2, w_3, w_4 \in V_2$ and $v_1,v_2,v_3,v_4 \in V$.  Then
\begin{align}\label{6qpure}\nonumber 6(w_1\otimes v_1,w_2\otimes v_2, w_3 \otimes v_3, w_4 \otimes v_4) &= \langle w_1,w_2 \rangle \langle w_3,w_4\rangle \left((v_2,v_3)(v_1,v_4) - (v_1,v_2)(v_3,v_4)\right) \\ \nonumber &\; + \langle w_1,w_3 \rangle \langle w_2, w_4\rangle \left((v_2,v_3)(v_1,v_4) - (v_1,v_3)(v_2,v_4)\right) \\ & \;+ 2 \langle w_1,w_4\rangle \langle w_2,w_3 \rangle \left((v_1,v_3)(v_2,v_4) - (v_1,v_2)(v_3,v_4)\right).\end{align}
In particular, if $w,w' \in V_2$ and $v,v' \in V$, then 
\begin{equation}\label{eqn:3qww'}3(w\otimes v,w\otimes v,w'\otimes v', w'\otimes v') = \langle w,w'\rangle^2 \left((v,v')^2 - (v,v)(v',v')\right).\end{equation}
\end{lemma}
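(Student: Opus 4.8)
The plan is to prove the four-linear form identity by reducing everything to the definition of the quartic form $Q$ on $V_2 \otimes V$ and the map (\ref{eqn:V2VW}), then exploiting multilinearity together with the $\SL_2$-relation $\langle x,y\rangle z + \langle y,z\rangle x + \langle z,x\rangle y = 0$ on $V_2$. Recall that the symmetric four-linear form on $W_J \simeq V_2 \otimes V$ is the polarization of $2q(v) = Q(v)$, where from the setup $Q(e\otimes x + f\otimes y) = (x,y)_V^2 - (x,x)_V(y,y)_V$. So the cleanest route is to first write down the fully polarized quartic form directly from $Q$, and then match it against the claimed right-hand side of (\ref{6qpure}).

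First I would compute the polarization of $Q$ on pure tensors. Writing a general element as $\sum_i w_i \otimes v_i$ and using that $Q$ is the restriction of the quartic form, I would expand $Q(\sum t_i w_i \otimes v_i)$ as a polynomial in the $t_i$ and extract the coefficient of $t_1 t_2 t_3 t_4$ (times the appropriate combinatorial factor coming from the normalization $(u,u,u,u) = 2q(u)$). Since $Q$ is built from the symplectic pairing $\langle\,,\,\rangle$ on $V_2$ and the symmetric form $(\,,\,)_V$ on $V$, each term in the polarization is a product of two $\langle w_a, w_b\rangle$ factors and two $(v_c, v_d)_V$ factors. The bookkeeping here is the heart of the computation: one must keep track of how the three ways of pairing up $\{1,2,3,4\}$ into $\langle w,w\rangle\langle w,w\rangle$ blocks combine with the corresponding $V$-pairings, which is exactly the structure displayed on the right-hand side of (\ref{6qpure}).

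The main obstacle will be reconciling the apparent asymmetry of (\ref{6qpure}) — the factor of $2$ on the $\langle w_1,w_4\rangle\langle w_2,w_3\rangle$ term and the differing sign patterns on the $V$-pairings in each of the three lines — with the manifest full symmetry of the four-linear form. The resolution is the $\SL_2$ Plücker-type relation: on a two-dimensional symplectic space the three products $\langle w_1,w_2\rangle\langle w_3,w_4\rangle$, $\langle w_1,w_3\rangle\langle w_2,w_4\rangle$, $\langle w_1,w_4\rangle\langle w_2,w_3\rangle$ are linearly dependent (the analogue of $\langle x,y\rangle z + \langle y,z\rangle x + \langle z,x\rangle y = 0$), so the displayed formula is only one of several equivalent ways to write the same symmetric object. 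I would therefore verify the identity by applying both sides to an arbitrary decomposable argument and invoking that relation to absorb the asymmetry, rather than trying to symmetrize term by term.

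Finally, the second displayed formula (\ref{eqn:3qww'}) follows by specializing (\ref{6qpure}): set $w_1 = w_2 = w$, $v_1 = v_2 = v$, $w_3 = w_4 = w'$, $v_3 = v_4 = v'$, and observe that the relation $3t(u,u,x) = \tfrac{1}{2}$-polarization reduces $3(w\otimes v, w\otimes v, w'\otimes v', w'\otimes v')$ to a single term. Under this specialization $\langle w,w\rangle = \langle w',w'\rangle = 0$ kills the first two lines of (\ref{6qpure}), and the surviving $\langle w,w'\rangle^2$ term, after substituting $(v,v)_V$, $(v',v')_V$, $(v,v')_V$, collapses precisely to $\langle w,w'\rangle^2\left((v,v')^2 - (v,v)(v',v')\right)$. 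The factor of $3$ versus $6$ accounts for the two-fold repetition, so no further normalization is needed. I expect (\ref{eqn:3qww'}) to be essentially immediate once (\ref{6qpure}) is in hand.
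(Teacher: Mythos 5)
Your strategy is correct, but it is genuinely different from the paper's. The paper works bottom-up: it first computes $q(w\otimes v + w'\otimes v')$ directly from the description of the quartic form as a composition $V_2\otimes V \to Sym^2V_2 \to F$, observes that $(X,X,X,Y)=0$ whenever $X$ is a pure tensor so that $3(X,X,Y,Y)=q(X+Y)$, and thereby obtains (\ref{eqn:3qww'}) \emph{first}; it then linearizes in the $V$-variables and reduces the general identity (\ref{6qpure}) to the case where each $w_i$ is one of the basis vectors $e,f$ (using linearity plus the vanishing of both sides when three of the $w_i$ are parallel), checking the three resulting configurations against the linearized formula. You instead go top-down: fully polarize $Q\left(\sum_i t_i\, w_i\otimes v_i\right)$, extract the coefficient of $t_1t_2t_3t_4$, and then reconcile the manifestly symmetric output with the asymmetric right-hand side of (\ref{6qpure}) via the Pl\"ucker relation $\langle w_1,w_2\rangle\langle w_3,w_4\rangle - \langle w_1,w_3\rangle\langle w_2,w_4\rangle + \langle w_1,w_4\rangle\langle w_2,w_3\rangle = 0$, finally recovering (\ref{eqn:3qww'}) by specialization. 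This works: carrying out the expansion, the raw polarization and the displayed right-hand side differ exactly by a multiple of the Pl\"ucker expression, so your identification of that relation as the crux is exactly right. The paper's route buys a shorter computation (no 24-term expansion), at the cost of an extra reduction step; yours is more brute-force but makes the symmetry/asymmetry issue, which the paper leaves implicit in its three-case check, completely transparent.

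Two small points to fix when you write it up. First, in the specialization $w_1=w_2=w$, $w_3=w_4=w'$, $v_1=v_2=v$, $v_3=v_4=v'$, the second line of (\ref{6qpure}) is \emph{not} killed by $\langle w,w\rangle=\langle w',w'\rangle=0$: its symplectic factor is $\langle w_1,w_3\rangle\langle w_2,w_4\rangle=\langle w,w'\rangle^2\neq 0$ in general, and it vanishes instead because its $V$-bracket becomes $(v,v')^2-(v,v')^2=0$. Only the first line dies for the reason you give. Second, watch the normalization: the quartic form $Q$ \emph{is} $q$, and the four-linear form satisfies $(u,u,u,u)=2q(u)$, so the coefficient of $t_1t_2t_3t_4$ in $q\left(\sum t_i u_i\right)$ is $12(u_1,u_2,u_3,u_4)$; your parenthetical ``$2q(v)=Q(v)$'' reads as a factor-of-two slip even though you cite the correct normalization elsewhere.
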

\begin{proof} We first verify (\ref{eqn:3qww'}). For this, note that the quartic form on $V_2 \otimes V$ is described as follows.  First one applies $q_{V}$ to get a quadratic map $V_2 \otimes V \rightarrow Sym^2 V_2$.  Then, if $\varphi \in Sym^2V_2$, one applies the quadratic map $\varphi \mapsto -\varphi^2$, where $\varphi^2$ is the contraction of $\varphi \otimes \varphi \in S^2V_2 \otimes S^2V_2$ to the trivial representation.  Consequently, 
\begin{align*} q(w\otimes v + w' \otimes v') &= - (q(v) w^2 + (v,v')ww' + q(v') (w')^2)^2 \\ &\; - \left(q(v) w^2 + (v,v')ww' + q(v') (w')^2\right)\left(q(v) w^2 + (v,v')ww' + q(v') (w')^2\right) \\ &= - \left(4q(v)q(v') \langle w,w'\rangle^2 - (v,v')^2 \langle w,w'\rangle^2\right) \\ &= \langle w,w'\rangle^2 \left((v,v')^2 - (v,v)(v',v')\right).\end{align*}

Now, from this formula it follows that if $X = w \otimes v$ is a pure tensor in $V_2 \otimes V$, then $(X,X,X,Y) = 0$ for any $Y \in V_2 \otimes V$.  Hence for $X,Y$ both pure tensors, $3(X,X,Y,Y) = q(X+Y)$.  Formula (\ref{eqn:3qww'}) follows.  Linearizing (\ref{eqn:3qww'}), one gets
\begin{equation}\label{eqn:6ww'}6(w\otimes v_1, w\otimes v_2, w' \otimes v_1', w' \otimes v_2') = \langle w,w'\rangle^2 \left( (v_1,v_1')(v_2,v_2') + (v_1,v_2')(v_2,v_1') - 2(v_1,v_2)(v_1',v_2')\right).\end{equation}

To verify the formula for $6(w_1\otimes v_1,w_2\otimes v_2, w_3 \otimes v_3, w_4 \otimes v_4)$ we deduce it from (\ref{eqn:6ww'}).  Indeed, first note if any three of $w_1,w_2,w_3,w_4$ are parallel, then $(w_1\otimes v_1,w_2\otimes v_2, w_3 \otimes v_3, w_4 \otimes v_4)$ vanishes by our above remarks.  One verifies easily that the same is true of the right-hand side of (\ref{6qpure}).  Furthermore, since both sides (\ref{6qpure}) are linear in the input, it suffices to verify (\ref{6qpure}) when all the $w_i$ are the basis vectors $e$ or $f$.  Moreover, by what we have just said, we may assume two of the $w_i$ are $e$ and two of the $w_i$ are $f$.  One considers the three cases $w_1 = w_2 = w, w_3 = w_4 = w'$, $w_1 = w_3=w, w_2 = w_4=w'$, $w_1 = w_4= w, w_2 = w_3=w'$ separately.  In each case, one finds that the right-hand side of (\ref{6qpure}) simplifies to (\ref{eqn:6ww'}).  The lemma follows.\end{proof}

Using Lemma \ref{lem:6qpure}, we can now deduce Proposition \ref{prop:SOalpha}. 
\begin{proof}[Proof of Proposition \ref{prop:SOalpha}] Changing notation and multiplying by $2$, we wish to check
\begin{align*} (v_2,v_3) w_2 w_3 -2\langle w_2,w_3\rangle v_2 \wedge v_3 &= \Phi_{w_2 \otimes v_2, w_3 \otimes v_3} \\ &= 6(w_2 \otimes v_2, w_3 \otimes v_3, \cdot) + \langle w_2 \otimes v_2, \cdot \rangle w_3 \otimes v_3 + \langle w_3 \otimes v_3, \cdot \rangle w_2 \otimes v_2.\end{align*}
Equivalently, by the non-degeneracy of the symplectic form on $W_J = V_2 \otimes V$, we wish to check equality when applying both sides above to $w_4 \otimes v_4$ and pairing on the left with $w_1\otimes v_1$.  Rearranging, one gets the equality of Lemma \ref{lem:6qpure}, and Proposition \ref{prop:SOalpha} follows.\end{proof}

We can now define the map $\so(\mathbb{V}) \simeq \wedge^2 \mathbb{V} \rightarrow \g(J)$ with $\alpha=\frac{1}{2}$.  Namely, we have
\[\wedge^2(\mathbb{V}) = \wedge^2(M_2 \oplus V) = \wedge^2(M_2) \oplus \wedge^2(V) \oplus M_2 \otimes V.\]
We have already defined in Lemma \ref{lem:SO4} an isomorphism $\wedge^2(M_2) \simeq \sl_2^{(1)} \oplus \sl_2^{(2)}$, where we have labelled the two copies of $\sl_2$ with superscripts $(1)$ and $(2)$.  Thus, we have defined
\[\wedge^2(M_2) \oplus \wedge^2(V) \rightarrow \sl_2^{(1)} \oplus \left(\sl_2^{(2)} \oplus \so(V)\right) = \sl_2^{(1)} \oplus \h(J)^{0} = \g(J)_0.\]

The map $M_2 \otimes V \rightarrow \g(J)_{1}$ is given as
\[M_2 \otimes V \simeq V_2^{(1)} \otimes \left(V_2^{(2)} \otimes V\right) \simeq V_2^{(1)} \otimes W_J = \g(J)_1.\]
Here the identification $M_2 \simeq V_2^{(1)} \otimes V_2^{(2)}$ is given by 
\[m = \left(\begin{array}{cc} a & b \\ c & d \end{array}\right) \mapsto e_1 \otimes \left(\begin{array}{c} a \\ c \end{array}\right) + f_1 \otimes \left(\begin{array}{c} b \\ d \end{array}\right).\]
The identification $V_2 \otimes V \simeq W_J$ is given by (\ref{eqn:V2VW}).  With these identifications, one has the following proposition.

\begin{proposition}\label{prop:VVgJ} The identifications above induce a Lie algebra isomorphism $\so(\mathbb{V}) \rightarrow \g(J)$ when $\alpha = \frac{1}{2}$.  This map carries the Cartan involution and invariant pairing on $\so(\mathbb{V})$ over to the Cartan involution and invariant pairing on $\g(J)$. \end{proposition}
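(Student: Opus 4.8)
The plan is to exploit the $\Z/2$-gradings present on both sides. On $\so(\mathbb{V}) \simeq \wedge^2(\mathbb{V})$ the decomposition $\mathbb{V} = M_2 \oplus V$ induces a grading with even part $\wedge^2(M_2) \oplus \wedge^2(V)$ and odd part $M_2 \otimes V$; on $\g(J)$ we use the grading $\g(J)_0 = \sl_2^{(1)} \oplus \h(J)^0$ and $\g(J)_1 = V_2^{(1)} \otimes W_J$, where $\h(J)^0 = \sl_2^{(2)} \oplus \so(V)$ via the identification $W_J = V_2^{(2)} \otimes V$. By construction the proposed map is graded and is a linear bijection (both sides have equal dimension and the constituent identifications are isomorphisms), so it suffices to check that it respects the bracket on each pair of graded pieces, and then separately that it intertwines the Cartan involutions and the invariant pairings.

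For the even--even brackets the three summands are handled independently: on $\wedge^2(M_2)$ the bracket is transported correctly by Lemma \ref{lem:SO4}; on $\wedge^2(V)$ the map is the identification $\wedge^2(V) \simeq \so(V) \subseteq \h(J)^0$ of section \ref{sec:LieI}; and $[\wedge^2(M_2), \wedge^2(V)] = 0$ because $M_2 \perp V$ in $\mathbb{V}$, matching the vanishing of $[\sl_2^{(1)} \oplus \sl_2^{(2)}, \so(V)]$ inside $\g(J)_0$ (the factors $\sl_2^{(1)}$, $\sl_2^{(2)}$, $\so(V)$ pairwise commute). The even--odd brackets amount to checking that the degree-$0$ action on $M_2 \otimes V \simeq V_2^{(1)} \otimes W_J$ agrees with the action of $\g(J)_0$ on $\g(J)_1$; this is precisely the $\SL_2^{(1)} \times \SL_2^{(2)} \times \SO(V)$-equivariance of all the identifications, and follows from Lemma \ref{lem:SO4} together with the definition of the map $V_2 \otimes V \to W_J$ in (\ref{eqn:V2VW}).

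The essential step is the odd--odd bracket $\wedge^2(M_2 \otimes V) \to \g(J)_0$. Writing $m = v_1 \otimes v_2$ and $m' = v_1' \otimes v_2'$ with $v_1, v_1' \in V_2^{(1)}$ and $v_2, v_2' \in V_2^{(2)}$, the formula for $[m \otimes v, m' \otimes v']$ computed just above gives
\[ [m \otimes v, m' \otimes v'] = \tfrac{1}{2}(v,v')\langle v_2, v_2'\rangle\, v_1 v_1' + \tfrac{1}{2}(v,v')\langle v_1, v_1'\rangle\, v_2 v_2' - \langle v_1, v_1'\rangle\langle v_2, v_2'\rangle\, v \wedge v'. \]
On the other hand, setting $w = v_2 \otimes v$ and $w' = v_2' \otimes v' \in W_J$, the bracket $[\;,\;]_{1/2}$ on $\g(J)_1 = V_2^{(1)} \otimes W_J$ reads
\[ [v_1 \otimes w, v_1' \otimes w']_{1/2} = \tfrac{1}{2}\langle w, w'\rangle (v_1 \cdot v_1') + \tfrac{1}{2}\langle v_1, v_1'\rangle \Phi_{w,w'}. \]
Since the symplectic form on $W_J = V_2^{(2)} \otimes V$ satisfies $\langle w, w'\rangle = \langle v_2, v_2'\rangle (v,v')$, the first terms coincide, and the two expressions agree precisely provided $\tfrac{1}{2}\Phi_{v_2 \otimes v,\, v_2' \otimes v'} = \tfrac{1}{2}(v,v') v_2 v_2' - \langle v_2, v_2'\rangle\, v \wedge v'$ --- which is exactly Proposition \ref{prop:SOalpha}. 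Thus the main obstacle, the degree-one bracket, has been front-loaded into Proposition \ref{prop:SOalpha} (hence into the quartic-form identity Lemma \ref{lem:6qpure}); with that in hand the bracket compatibility is complete and the map is a Lie algebra isomorphism.

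It remains to match $\Theta$ and $B$. For the Cartan involution, $\Theta_{\so(\mathbb{V})}(u \wedge u') = \iota_{\mathbb{V}}(u) \wedge \iota_{\mathbb{V}}(u')$, and $\iota_{\mathbb{V}}$ acts by $J_2$-conjugation on $M_2$ and by $\iota_V$ on $V$; comparing graded piece by graded piece against $\Theta_\g$ reduces to the identity $J_W = J_2 \boxtimes \iota_V$ on $W_J$ recorded after Theorem \ref{thm:quadJ}, which guarantees that on $\g(J)_1$ the map $v_1 \otimes v_2 \otimes v \mapsto J_2 v_1 \otimes J_2 v_2 \otimes \iota_V(v)$ coincides with $m \otimes v \mapsto \iota_{\mathbb{V}}(m) \otimes \iota_V(v)$. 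For the invariant pairing, I would use that $\g(J)$ is simple, so its invariant symmetric bilinear form is unique up to a scalar; it therefore suffices to compare $B_{\so}$ and $B_\g$ on $\g(J)_1$ alone. There the orthogonality $M_2 \perp V$ collapses $B_{\so}(m \wedge v, m' \wedge v')$ to $-(m,m')_{\mathbb{V}}(v,v')_V = -\langle v_1, v_1'\rangle\langle v_2, v_2'\rangle (v,v')_V = -\langle v_1, v_1'\rangle \langle w, w'\rangle$, which equals $B_\g(v_1 \otimes w, v_1' \otimes w') = -2\alpha \langle v_1, v_1'\rangle\langle w, w'\rangle$ at $\alpha = \tfrac{1}{2}$. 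Hence the scalar is $1$ and the two forms agree, completing the proof.
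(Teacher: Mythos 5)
Your proof is correct and follows essentially the same route as the paper: the paper's own proof simply declares the result "a direct application of the formulas above, in particular Proposition \ref{prop:SOalpha}," and your write-up is exactly that application spelled out graded piece by graded piece, with the odd--odd bracket reduced to Proposition \ref{prop:SOalpha} just as intended. Your uniqueness-of-the-invariant-form shortcut for matching $B_{\so}$ with $B_{\g}$ is a clean touch, but it does not change the substance of the argument.
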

\begin{proof} The proof is a direct application of the formulas above, in particular Proposition \ref{prop:SOalpha}.\end{proof}

\bibliography{QDS_Bib} 
\bibliographystyle{amsalpha}
\end{document}